\definecolor{webgreen}{rgb}{0,.5,0}
\definecolor{webbrown}{rgb}{.8,0,0}
\definecolor{emphcolor}{rgb}{0.5,0.95,0.95}
\ifpdf \hypersetup{pdftex,
	%             pdftitle={Decision Making with Poisson process},
	%             pdfauthor={Semih Sezer},
	pdfstartview=FitH, %%Fit, FitB, FitH
	bookmarksopen=true,
	bookmarksnumbered=true
} \else \hypersetup{dvips} \fi
\newcommand {\bP}{\mathbf{P}}
\newcommand {\bbE}{\mathbf{E}}
\newcommand {\B}{\mathcal{B}}
\newcommand {\bb}{\mathfrak{b}}
\newcommand {\qq}{\mathfrak{q}}
\numberwithin{equation}{section}
\newtheorem{theorem}{Theorem}[section]
\newtheorem{proposition}{Proposition}[section]
\newtheorem{corollary}{Corollary}[section]
\newtheorem{remark}{Remark}[section]
\newtheorem{lemma}{Lemma}[section]
\newtheorem{assump}{Assumption}[section]
\numberwithin{remark}{section} \numberwithin{proposition}{section}
\numberwithin{corollary}{section}
\newcommand {\R}{\mathbb{R}}
\newcommand {\cF}{\mathcal{F}}
\newcommand {\cA}{\mathcal{A}}
\newcommand {\N}{\mathbb{N}}
\newcommand {\p}{\mathbb{P}}
\newcommand {\E}{\mathbb{E}}
\newcommand {\bE}{\mathbb{E}}
\newcommand {\bR}{\mathbb{R}}
\newcommand{\diff}{{\rm d}}
\newcommand{\lev}{L\'{e}vy }
\newcommand{\Ind}{\mathbb{I}}
\newcommand{\DD}{\mathcal{D}}
\begin{document}
	\title[Optimal dividends and capital injection]{Optimal dividends and capital injection:
	 %with an exponential time horizon: 
	 A general L\'evy model with extensions to regime-switching models}

	\thanks{This version: \today.   }
	
	\author[D. Mata L\'opez]{Dante Mata L\'opez$^\sharp$}
		\thanks{$\sharp$\, D\'epartement de math\'ematiques, Universit\'e de Qu\'ebec \`a Montr\'eal (UQAM), 201 av. Pr\'esident-Kennedy, Montr\'eal H2X 3Y7, QC, Canada. Email: mata\_lopez.dante@uqam.ca}
	\author[K. Noba]{Kei Noba$^\dagger$}
\thanks{$\dagger$\, Department of Fundamental Statistical Mathematics, The Institute of Statistical Mathematics, 
10-3 Midori-cho, Tachikawa-shi, Tokyo 190-8562, Japan. Email: knoba@ism.ac.jp}
	\author[J. L. P\'erez]{Jos\'e-Luis P\'erez$^*$}
	\thanks{$*$\, Department of Probability and Statistics, Centro de Investigaci\'on en Matem\'aticas A.C. Calle Jalisco
		s/n. C.P. 36240, Guanajuato, Mexico. Email: jluis.garmendia@cimat.mx}
	\author[K. Yamazaki]{Kazutoshi Yamazaki$^\ddagger$}
\thanks{$\ddagger$\, School of 
Mathematics and Physics, The University of Queensland, St Lucia,
Brisbane, QLD 4072, Australia. Email: k.yamazaki@uq.edu.au}
	\date{}
	
	\begin{abstract}
		This paper studies a general \lev process model of the bail-out optimal dividend problem with an exponential time horizon,
		%\red{and an independent exponential time horizon.}
 %terminated at an independent exponential time, 
 and further extends it to the regime-switching model.  We first show the optimality of a double barrier strategy in  the single-regime setting with a concave terminal payoff function. This is then applied to show the optimality of a Markov-modulated
  double barrier strategy in the regime-switching model via contraction mapping arguments.
 We solve these for a general \lev model with both positive and negative jumps, greatly generalizing the existing results on spectrally one-sided models.
%  with an additional payoff upon termination. We prove the optimality of a double barrier strategy when the underlying risk model follows a \lev process that may have positive and negative jumps and under a convexity assumption of the terminal payoff function. The main result in this paper confirms the optimality of a double barrier strategy. In addition, we apply our main result in order to solve the bailout dividend problem with regime switching in combination with some approximations with recursive operators.
		\\
		\noindent \small{\noindent  MSC 2020: 60G51
93E20
91G80
\\
			\textbf{Keywords}:  optimal dividends, 
L\'evy processes, singular control, regime switching} 
	\end{abstract}
	
	\maketitle
	
	\section{Introduction}
	%\green{[Added this.]\\}
	
	The bail-out optimal dividend problem is one important extension of de Finetti's optimal dividend problem.  Given a surplus process of a company, the objective is to derive an optimal combination of dividend and capital injection strategies  that maximizes the expected \textit{net present value} (NPV) of dividend payments minus capital injections. Unlike the classical formulation without bail-out, a strategy must be selected so that the surplus process must be kept non-negative  to avoid bankruptcy. A majority of the existing results in this subject focus on the analysis of the so-called \emph{double-barrier} strategy. It pays dividends whenever the surplus attempts to go above a certain upper barrier, say $b^*$, and injects capital when it attempts to go below zero, so that the resulting surplus process stays on the interval 
$[0, b^*]$. This is indeed a natural candidate of optimal strategy and a number of existing results have successfully verified this conjecture \cite{AvrPalPis2007,Perez_yamazaki_yu,WWCW,ZCY, ZWYC}.

The inclusion of a random termination time is another crucial extension of de Finetti's problem, which is important on its own and also provides a direct connection with a generalization with regime-switching.  It offers a more general model than the classical infinite horizon problem and supports a wide array of applications by suitably selecting the terminal payoff or cost function. This versatility is exemplified by the regime-switching model considered in this paper.
 In the regime-switching model, also known as the Markov additive (or Markov modulated) model \cite{JP2012,MMNP, NobPerYu,LL, ZY}, the dynamics of the surplus process changes depending on the macroeconomic conditions modeled in terms of a continuous-time Markov chain. The optimal strategy needs to be modulated according to the underlying state; it is therefore of importance to pursue the optimality of a Markov-modulated version of the double-barrier strategy.
To tackle this, robust results/analysis on a single-regime model with an exponential termination time are particularly important. This is because the regime-switching model, where the transition of the Markov chain occurs at (state-dependent) exponential times, can be written in terms of a collection of multiple single-regime models with exponential termination times. 
By taking advantage of this and contraction mapping arguments, Jiang and Pistorius \cite{JP2012} successfully showed the existence of a barrier-type strategy that is optimal, focusing on the Brownian motion model in the classical de Finetti's problem. Recently, it has been extended to more general spectrally one-sided \lev models by \cite{MMNP, NobPerYu}.
%
%One of the early results that successfully solved this extension is  
%
%
% where the barrier needs to be adjusted depending on the current state. For the solution of this problem,
% 
%   We refer the reader to \cite{ MMNP, NobPerYu,JP2012} for related dividend problems with regime-switching, where this dynamic programming principle approach has been employed.

	\par
	Despite this progress of the theory related to the bail-out optimal dividend problem and also its extensions, a majority of the existing results assume  spectrally one-sided \lev processes (i.e.\ \lev processes with only negative or positive jumps 
	%\blue{[How about: "with only negative or positive jumps"?]}) 
	as their underlying processes. 
%	The spectrally one-sided model in de Finetti's optimal dividend problem has been studiedextensively in the last couple of decades, mainly due to the powerful tool, called the \emph{scale function} (see \cite{Ber1996, Kyp2014}). It can express -- very concisely -- the NPV of dividends and capital injections for a general spectrally one-sided \lev process, with which the optimal bail-out dividend problem can be often solved directly. 
	%\blue{[How about: "
	The spectrally one-sided model in de Finetti's optimal dividend problem has been studied extensively in the last couple of decades, mainly due to the existence of the so-called \emph{scale function} (see \cite{Ber1996, Kyp2014}). This is because it allows us to express -- very concisely -- the NPV of dividends and capital injections for a general spectrally one-sided \lev process and hence to solve the optimal bail-out dividend problem directly.
	%" ]}

%	ain focus of this problem is 
	
%	
%	A number of existing results have successfully shown the optimality of the so-called \emph{double-barrier} strategy that continuously pushes the process upward zero when it attempts to go below zero, and also pushes it downward when it goes above a certain upper barrier. This is indeed intuitive and has been shown...
	
%	We \red{revisit} the extension of de Finetti's dividend problem with bailouts, where the goal is to find a joint optimal dividend and bailout strategy that maximises the expected \textit{net present value} (NPV) of dividend payments minus capital injections. In this problem we assume that the surplus of an insurance company is driven by a \lev process with two-sided jumps, and shareholders will pay dividends from the surplus process, while they are also required to inject capital in order to avoid bankruptcy. In addition, we consider that the dynamics is subject to termination at an independent exponential time, where the shareholders will receive a terminal payoff.
 % This method cannot be extended to a general 
	
%	However, this method only works for the spectrally one-sided case.
	
	In this paper, we study  a general \lev process model, allowing both positive and negative jumps, for which the scale-function approach is no longer available. We consider the case with an exponential termination time, so that it can be applied to solve the regime-switching extension.
	
Employing a general Lévy process allows for a flexible model of a risk process over classical Cramér-Lundberg and spectrally negative L\'evy processes. Positive jumps arise when considering random gains (see, e.g., the introduction of \cite{Albrecher, MGK}). The importance of the two-sided jump model is evidenced by several papers in the literature, where the authors use Lévy or related stochastic processes with two-sided jumps to model risk processes \cite{Cheung, PV, WXY, ZhangYang, ZYL}. In particular, Martín-González et al. \cite{MGMSP} studied the Gerber-Shiu function for Markov-modulated Lévy risk processes with two-sided jumps. Zhang \cite{Zhang} studied the Erlang(n) risk model with arbitrary negative jumps and positive exponential jumps with the existence of a constant dividend barrier. Employing a general Lévy model also offers a flexible approach to modeling perturbations, both Gaussian and non-Gaussian. By considering processes with (possibly an infinite number of) small positive jumps, it allows for more realistic models, alternatives to classical Brownian and spectrally negative stable perturbation \cite{Kolkovska}.

The case for a general \lev model is significantly more challenging than the spectrally one-sided case and thus established results on the bail-out dividend problem for a general \lev process are extremely limited. This is due to the fact that unlike the case with one-sided jumps, in the general case one cannot rely on scale functions to show the optimality of a double-barrier strategy.
% It is  crucial to develop a method that works for a general \lev process, 
However, the recent works by \cite{Nob2019} and \cite{NobYam2020} have provided us with a framework to deal with control problems driven by a general \lev process (with minor conditions), without relying on the scale function. In particular, Noba \cite{Nob2019} solved the infinite-time-horizon single-regime bail-out optimal dividend problem  for a general \lev process and showed the optimality of a double-barrier strategy.

%\red{[TODO]} This is the reason why a standard approach in the literature relies \red{on} the dynamic programming principle in order to transform the global control problem into a collection of bailout dividend problems driven by a single \lev process and subject to termination, where the termination time is associated with the first regime-switching time. Due to this dynamic programming approach, it becomes relevant to study and solve the version of the bailout dividend problem with exponential termination, where we can prove the optimality of a double barrier strategy directly, i.e., we can select a candidate optimal barrier and verify that the candidate value function solves the proper variational inequalities. 

%	Motivated by previous works in the literature regarding de Finetti's optimal dividend problem with an independent termination time, where the optimality of barrier strategies has been discussed and proven, our aim is to show that a double barrier strategy is optimal in our framework. 
	
%	In this paper, we consider the case with terminal payoff and its regime-switching extension, and show the optimality of double barrier strategies.  This can be seen as an extension of \cite{Nob2019}, where they focus on the single-regime case with infinite horizon.  
	
%			We show that the double barrier strategy remains to be optimal for the case with exponential horizon. More importantly, this is used to solve the regime-switching version, greatly generalizing \cite{Nob2019}, which only focused on the single-regime case.
	
	Our methodologies and contributions (in particular beyond  \cite{Nob2019})
	%beyond \cite{Nob2019} 
	are summarized as follows:
	\begin{enumerate}
	\item We first consider the (single-regime) problem with an independent exponential time horizon driven by a general \lev process with two-sided jumps.
			In order to prove the optimality of a double barrier strategy,  
%			we first obtain a probabilistic expression for the derivatives of the NPV associated with a double reflection strategy, with which the concavity and smoothness of the associated (candidate) value function can be efficiently analyzed 
			%\blue {[How about: " 
			we first obtain a probabilistic expression for the derivatives of the NPV associated with a double reflection strategy, to analyze the concavity and smoothness of the associated (candidate) value function. This is done by performing an analysis of how the sample paths of the controlled \lev process change when we add a small perturbation to the initial value or the value of the reflection barrier. Although this analysis follows a similar line of reasoning as in \cite{Nob2019}, our problem requires extensive techniques not provided in \cite{Nob2019} because we also need to analyze the terminal payoff term.
			% which requires extensive techniques not provided in \cite{Nob2019}. 
			
%			 where the right-derivatives of the dividend and the bailout terms has already been computed, and in this work \red{we compute the right-derivative of the terminal payoff term.} 
Despite the fact that the scale function is not available in our problem, our expression of the derivative of the NPV allows us to follow the \textit{guess and verify} procedure in order to find a suitable candidate for the optimal barrier strategy. It is chosen by using the conjecture that the slope of the value function at the upper reflection barrier becomes one. For this, it is essential to incorporate the effect of the termination time and the terminal payoff. % in order to select the candidate optimal strategy.
The optimality of our candidate double barrier strategy is then verified rigorously by 		 %Afterwards we verify the optimality of the selected strategy by 
showing that the candidate value function satisfies the proper variational inequalities.

	\item We then show how our problem with exponential termination is applied in order to deal with the more complex bailout dividend problem driven by a regime-switching \lev process, also known as \textit{Markov Additive Process} (MAP) (see, e.g., \cite{Asmussen, Ivanovs_Palmowski}), with two-sided jumps. This analysis relies on using the dynamic programming principle in order to reduce the dimension of the general problem into a collection of bailout dividend problems with exponential termination driven by a single \lev process, where we have already proven that a double reflection strategy is optimal. 
%	Thus, we can show the existence of a regime-modulated double reflection strategy by combining the results from the bailout dividend problem with exponential termination \blue{[Is this sentence actually true?]}. 
%\red{[deleted the sentence.]}
	In addition, we use the dynamic programming equation in order to define a recursive operator, and prove
	%, where we can prove \blue{[How about "recursive operator, and prove..."]} 
	that both the value function and the NPV associated with our candidate barrier strategy are solutions of a functional equation associated with said recursive operator. %Finally , we use iterative arguments in order to show that the expected NPV associated with our candidate optimal strategy agrees with the value function. 
	%\blue{[How about: "
	This allows us to use iterative arguments to show that the expected NPV associated with our candidate optimal strategy agrees with the value function. %"]}
	\end{enumerate}
%While the bailout dividend problem with regime-switching has attracted great interest, as it illustrates the effect of macroeconomic factors that affect the behaviour of the underlying surplus, the verification of the optimality of barrier strategies is much more involved due to the effect of the switching regimes.
%	While most of the existing literature deals with the assumption that the surplus behaves as a spectrally one-sided \lev process, recently there has been interest in extending previous results to processes with two-sided jumps. As opposed to the spectrally one-sided case, where the quantities of interest can be expressed in terms of the so-called scale functions, the study of models with two-sided jumps can become more difficult to handle. 
	%We remark that most of the existing literature has dealt with the case of spectrally one-sided \lev processes, while we work with the more general case with two-sided jumps. 
%	In that sense, this work serves as an extension of the results from Noba \cite{Nob2019}, where the optimality of a double barrier strategy was proven in the \lev case with two-sided jumps; however, 
	The rest of this paper is structured as follows. In Section \ref{Sec_Bailout}, we provide the setting of the bailout dividend problem with exponential termination, and driven by a \lev process with two-sided jumps. We also introduce the set of double barrier strategies, and we also propose and select our candidate optimal strategy. In Section \ref{section_verification}, we provide a verification lemma and we perform a rigorous verification of optimality of our candidate barrier strategy, by showing that its associated NPV solves the proper variational inequalities and satisfies the conditions of the verification lemma. Finally, in Section \ref{sec3}, we apply the obtained results to solve the regime-switching case,
	%a direct application of our results by solving the more complex bailout dividend problem 
	driven by a MAP with two-sided jumps. 
	We introduce the definition of a MAP, as well as the class of regime-modulated double barrier strategies. We then conclude the section by proving the existence and optimality of a regime-modulated double barrier strategy via iterative operators together with numerical examples. The proofs of some technical lemmas can be found in Appendix \ref{section_proofs}. %In addition, in Subsection \ref{lemma_example_condition} we show some examples of \lev process with paths of unbounded variation that satisfy the assumptions given in Section \ref{section_verification}. \red{[we can delete this last sentence? At this point no one has any clue about this assumption]}
	%\blue{[moved here]}\yellow{[Agreed]} 
For the rest of paper, we will distinguish the two notations $f'(x)$ and $f'_+(x)$ for a given function $f(x)$; the former denotes its standard derivative and the latter  denotes its right derivative. 
In addition, almost everywhere (a.e.) is understood in the sense of Lebesgue measure, unless stated otherwise.
	
	\section{Bail-out dividend problem with termination at an exponential time}\label{Sec_Bailout}
	We will introduce and study a modification of the bail-out optimal dividend problem driven by a general \lev process, where we add a final payoff (or cost) at an independent exponential terminal time. Our main result for a general L\'evy model is new to the literature.
	\subsection{Preliminaries}\label{Sec02}
	%\red{[Should this be moved to the beginning of Section \ref{Sec004}?]}
	{Let $(\Omega , \mathcal{F}, \p)$  be} a probability space hosting a {one-dimensional} \lev process $X=\{{X(t)} : t \geq 0 \}$.  We denote by $\mathbb{F} := (\mathcal{F}(t): t \geq 0)$ the filtration generated by $X$. 
 %\yellow{[I think we use the following three ways to represent sets: $\{\cdot:\cdot\}$, $(\cdot:\cdot)$ and $(\cdot;\cdot)$. Maybe we should unify them. ]}\red{[given the close deadline...leave it like this?]} \green{[I agree. I guess these minor changes can be done after being accepted.]}
	For $x\in\R$, we denote by $\p_x$ the law of $X$ when it starts at $x$ and by $\E_x$ its corresponding expectation operator. In particular, we {use the notation} $\mathbb{P} = \mathbb{P}_0$ and $\E = \E_0$. Throughout the paper, let $\psi$ be the characteristic exponent of $X$ that satisfies 
	\begin{align}
		e^{-t\psi(\lambda)}=\E\left[e^{\mathbf{i}\lambda {X(t)}}\right],  \quad \lambda \in \R, ~t\geq 0. 
	\end{align} 
	The characteristic exponent $\psi$ is known to take the form  
	\begin{align}
		\psi (\lambda) := -\mathbf{i}\gamma\lambda +\frac{1}{2}\sigma^2 \lambda^2 
		+\int_{\R \backslash \{0\} } (1-e^{\mathbf{i}\lambda x}+\mathbf{i}\lambda x \Ind_{\{|x|<1\}}) {\nu}(\diff x) , ~~~~~~\lambda\in\R. \label{202a}
	\end{align}
	Here, $\gamma\in\R$, $\sigma\geq 0$, and $\nu $ is a L\'evy measure on $\R \backslash \{0\}$ such that 
	\begin{align}
		\int_{\R\backslash \{ 0\}}(1\land x^2)  {\nu} (\diff x) < \infty. \label{56}
	\end{align}
	%\red{[We put the same symbol on the L\'evy measures and the strategies. Can we represent the L\'evy measure by $\nu$?]}
	Recall that the process $X$ has bounded variation paths if and only if $\sigma= 0$ and $\int_{|x|<1} |x|\nu(\diff x)<\infty$. When this holds, we can write %\red{[fixed the font of i]}\green{[Ok]}
	\begin{align}
		\psi(\lambda) := -\mathbf{i}\delta+\int_{\R \backslash \{0\} }  (1-e^{\mathbf{i} \lambda x}) {\nu} (\diff x),  \label{204}
	\end{align}
	where
	\begin{align}
		\delta := \gamma-\int_{|x|<1}x  {\nu}(\diff x). \label{def_delta}
	\end{align}
%	\blue{[deleted about $\delta > 0$]}\yellow{[OK]}
%	In this case, we assume that $\delta> 0$. \blue{[do we need to assume this?]}\red{[I guess this assumption is not necessary.]}\yellow{[Agreed.]}\yellow{[do we have to exclude subordinators like in the one-sided case?]} \blue{[get back to this later.]}

		%[made the previous modification because a more general form is used later in the proof of $g$ being right-continuous.]

	%\blue{[Added this section.]}

	%\blue{[Moved this here.]}
	
	\subsection{{Bail-out optimal dividend problem with exponential terminal time}.} \label{subsection_single_regime_formulation}
	
	A dividend/capital injection strategy $\pi = \{ ({L^\pi(t), R^\pi(t)}) : t \geq 0 \}$ is {a pair of} non-decreasing, right-continuous and $\mathbb{F}$-adapted processes  such that $L^\pi(0-) = R^\pi(0-) = 0$,
	where $L^\pi$ models the cumulative amount of dividends and $R^\pi$ models that of injected capital. %\green{Additionally, we call $\pi$ an \emph{admissible strategy} when it further satisfies the integrability condition \eqref{9}.}\yellow{[I think we also define admissible strategies at the end of this subsection, after the non-negative condition and \eqref{9} are introduced.]}\red{[Yes, and we also need to impose the condition that the surplus process is positive uniformly in time. So how about moving condition (2.5) after (2.2) and immediately after this define an admissible strategy?]} \green{[I think the reviewer is asking us to define admissible strategy in the beginning. How about responding to them that we will need to first define conditions (2.2) and (2.5) to define it in the response letter? ]}
	%satisfied
	
	Given a strategy $\pi$, its corresponding risk process is given by ${U^\pi(0-)}:=x$ under $\p_x$ and
	\begin{align*}%\label{positiveness}
		{U^\pi(t)}:= {X(t)}-{L^\pi(t)}+{R^\pi(t)}, ~~~t\geq 0.
	\end{align*}
	It is required that almost surely
	\begin{align}\label{positiveness}
	{U^\pi(t)} \geq 0 \quad \textrm{ for all } t \geq 0.
	\end{align}
%	${U^\pi(t)} \geq 0$ a.s. uniformly in $t \geq 0$.
	  
	{Let $\zeta$ be an independent exponential random variable with parameter $r> 0$ {(mean $1/r$)}
	 %variable of parameter $r>0$
	  {independent of $X$}.} %In our model, 
	  {It models}
	   %we consider that $\zeta$ is 
	   a random termination time, {upon which}
	    %and upon termination 
	    %there is 
	    a final payoff {(or cost)} %given by the quantity
	     $w(U^{\pi}(\zeta))$ {is collected. The function}
	      %with 
	      $w(\cdot):[0, \infty)\to\R$ is assumed to satisfy Assumption \ref{assum_w} below. 	%\red{[moved the following sentence here]}\green{[Ok]} 
	      It is worth noting that the independence between $X$ and $\zeta$ implies that $X$ does not jump at $\zeta$ and hence ${U^\pi(\zeta -)= U^\pi(\zeta)}$ a.s.
	
	{Let} $\beta> 1$ {be} the cost per unit of injected capital and $q>0$ {be} the discount factor.  {Associated with a strategy $\pi$ with initial capital $x\geq 0$, the expected net present value (NPV) of the dividend payments and the terminal payoff/cost minus the cost of capital injection  is defined by}
	\begin{align}
		v_\pi (x) &:= \E_{x} \left[\int_{[0, \zeta)} e^{-qt}\diff {L^\pi(t)}  -\beta\int_{[0, \zeta)} e^{-qt} \diff {R^\pi(t)}
		+e^{-q\zeta} w({U^\pi(\zeta)})\right]\\
%		=&r \int_0^\infty e^{-rs}\E_{x} \left[\int_{[0, s)} e^{-qt}\diff {L^\pi(t)}  -\beta\int_{[0, s)} e^{-qt} \diff  {R^\pi(t)}
%		+e^{-qs} w({U^\pi(s)})\right]\diff s \\
		&=\E_{x} \left[\int_{[0, \infty)} e^{-\alpha t}\diff {L^\pi(t)} -\beta\int_{[0, \infty)} e^{-\alpha t} \diff {R^\pi(t)}
		+r\int_0^\infty e^{-\alpha t} w({U^\pi(t)}) \diff t
		\right],
	\end{align}
		where 
	\begin{align} \label{alpha_def}
		\alpha := q + r. 
		%\text{\yellow{[move inline?] \blue{[keep because it is used a lot and important?]}}}
	\end{align} 
	%\blue{[above: ok to delete the middle equality?]}
	%where $\alpha= q + r$, and $w\in C[0, \infty)$ is a function representing the payoff at an exponential time. It is worth noting that $\p_x$ is the law of $X$ and $\zeta$ is an independent random variable such that $X$ does not jump at $\zeta$ implies that , therefore we have $U^\pi_{\zeta -}= U^\pi_\zeta$, $\p_x$-a.s.
	%\blue{[How about:" 
 %, and $\p_x$ is the law of $X$ \red{[Since $\p_x$ is introduced in Section \ref{SubSec'201}, should we omit it?]}. 
	%\red{[I moved the following.]}
%	\green{A dividend/capital injection strategy is called admissible if $v_\pi$ is finite.} \blue{[Previously we were defining admissible as those such that \eqref{9} holds (see below). So we have to choose one these two even though they are equivalent?]}
For the {terminal} payoff function $w$, we also require the following conditions. {These are slightly weaker than those assumed in the literature, in particular,}
%{, which is a slight generalization of
 \cite[Assumption 4.3]{NobPerYu} and \cite[Assumption 4.2]{MMNP}. 
	\begin{assump}\label{assum_w}\label{AA3}
		It is assumed that $w$ is non-decreasing, continuous and concave on $[0,\infty)$. {In addition, } $w'_+(0+) \leq \beta\alpha/r$
		%\leq \beta$ \blue{[probably can be relaxed slightly]}\red{[I agree with you. I guess we can change form $w'_+(0+)\leq \beta$ to $w'_+(0+)\leq \frac{\beta\alpha}{r}$]} 
		and $w'_+(\infty):=\lim_{x\to\infty}w'_+(x) \in[0,\alpha/r)$, 
		%\blue{[probably can be relaxed to $[0, \alpha/r]$]}\red{[$[0, \alpha/r)$ is better?]}, 
		where we use $w'_+(x)$ to represent the right derivative of the concave function $w(x)$. %\yellow{[Agree with both comments above]}
		%{When $X$ has bounded variation paths, we assume that $w\in C^2 [0, \infty)$.}\blue{[Kei: I am litlle worried about this condition, because for the fixed point Theorem part, we need to plug the value function as the function $w$...and I am not sure the value function is $ C^2 [0, \infty)$ in th bounded variation case?]} 
	%	\red{[I think this condition can be removed. Please see my comment before Lemma \ref{Lem309}.]}
		%\blue{[OK}].%\red{[Perhaps the assumption ``$w\in C^2 [0, \infty)$'' is not necessary. In \cite{NobYam2020}, we proved the optimality when $w\in C^2 [0, \infty)$ in the first step and proved the optimality for general concave $w$ by approximation in the second step.]}
	\end{assump}

	\par 	
	%{[moved the following sentence here]}\green{[Ok]}
Assumption \ref{assum_w} ensures the optimality of a barrier strategy (see Theorem \ref{Thm401}). Importantly, this is satisfied in the application of the regime-switching case we describe in Section \ref{sec3}.

	%}

	%The value function for the stochastic control problem is then formulated as 
	%\blue{[How about: "

 %Additionally, it is required that
 For the expectation $v_\pi(x)$ to be well-defined, it is required that
		\begin{align}
			\E_{x} \left[ \int_{[0, \zeta)} e^{-qt} \diff {R^\pi(t)}\right] {=\E_{x} \left[ \int_{[0,\infty)} e^{- \alpha t} \diff {R^\pi(t)}\right]}< \infty, \label{9}
		\end{align}
			Note that $\E_x \left[e^{-q\zeta} w (U^\pi(\zeta) ) \right] \geq w(0)>-\infty$ by Assumption \ref{assum_w}.
			%that satisfy \red{all the previous assumptions as well as the following condition} \green{[if we only define admissible strategy here, we need to be more specific and in particular refer to \eqref{positiveness}]}
	%For the capital injection $R^\pi$, it is assumed that for $q>0$ and $x\geq0$,
	
	We call a strategy $\pi$, an \emph{admissible strategy}, if it satisfies both the positivity assumption \eqref{positiveness}
	%for the associated risk process (stated after identity \eqref{positiveness}) 
	as well as the integrability condition \eqref{9}.
	
	The value function for the stochastic control problem is given by
	%"]}
	\begin{align}
	%v_{\pi^\ast} 
	{v} (x) := \sup_{\pi \in \Pi} v_\pi (x),\qquad\text{$x\in\R$,}\label{vf_def}
	\end{align}
	%\blue{and obtain an optimal strategy $\pi^*$, if it exists,}
	{where $\Pi$ denotes the set of all admissible strategies. }
	%\green{[do we also need integrability of $w$ because $w$ can be negative? Or is it not needed thanks to Assumption 2.2?]}
	%\red{[I guess this assumption is not necessary since $w$ is non-decreasing function (since $w$ is concave and $w^\prime(\infty)\geq 0$) and thus 
	%\begin{align}
	%\E_x \left[e^{-q\zeta} w (U^\pi_{\zeta} ) \right] \geq w(0). 
	%\end{align}
	%]}\blue{[Agreed]}
	
%	where $\zeta$ is an independent exponential random variable of parameter $r>0$ {independent of $X$}. \blue{[do we assume this holds for all $q> 0$, or $q > 0$ is fixed?]}
	
%	\eqref{9}. %\blue{[Is \eqref{9} the right reference?]}\red{[I'm sorry, I wrote two label\{9\}. I changed the second label\{9\} to label\{15\}.]} 
%	and 
%	\begin{align}
%		\E_x \left[ e^{-q\zeta} (w(U^\pi_{\zeta}) \lor 0) \right]> -\infty.
%	\end{align}
%	\blue{[Kei: I am sorry but I don't understand the above identity, isn't
%		\begin{align}
%		\E_x \left[ e^{-q\zeta} (w(U^\pi_{\zeta}) \lor 0) \right]> 0>-\infty.
%	\end{align}	
%		?]}
%	\red{[Sorry, I have a mistake. I guess we can delete it.]}
	%We also aim to obtain the optimal dividend strategy $\pi^*$ which achieves the value function in \eqref{vf_def}. 
	%\blue{[How about: "
	We aim in characterizing the value function of the problem and obtaining the optimal dividend-capital injection strategy $\pi^*$ which achieves the value function in \eqref{vf_def}, {if it exists}.%"]}
	
	Throughout this and next sections, we will make the next standard assumption on the L\'evy process $X$.
	\begin{assump}\label{AA1}
		We assume that $\E\left[|{X(1)}|\right]<\infty$. This is equivalent to the condition 
		%\begin{align}
		$\int_{\bR \backslash [-1, 1]} |x| \nu (\diff x)< \infty$
		%\end{align}
		by \cite[Theorem 3.8]{Kyp2014}. 
	\end{assump}

\begin{assump} \label{assumption_compound_poisson}%\blue{[added this]}\yellow{[Agreed]}
We assume that $X$ is not a driftless compound Poisson process. In other words, we assume that $X$ has bounded variation paths and satisfies $\delta \neq 0$ {(see \eqref{def_delta})} or $\nu(\R\backslash\{0\})=\infty$, or  has unbounded variation paths.
\end{assump}
\begin{remark} In this paper, we focus on the setting where the ruin must be avoided as in many papers in the literature (e.g., \cite{AvrPalPis2007}.). However, it is also of interest to consider the case when ruin is also allowed, with
	\begin{align*}
		v_\pi (x) := \mathbb{E}_{x} \left[\int_{[0, \eta_0^-)} e^{-qt} \diff {L^\pi(t)}  -\beta\int_{[0,\eta_0^-)} e^{-qt} \diff  {R^\pi(t)}
		+e^{-q\tau_0^-} g({U^\pi(\eta_0^-)})\right]
	\end{align*}
	where $g$ models a reward/penalty at ruin $\eta_0^-$ at which the surplus process goes strictly below zero. %\red{[we may need to change $\tau_0^-$ to something else because it is already used. How about $\eta_0^-$?]}
\end{remark}
	%\blue{[Moved this here.]}

	\subsection{Double barrier strategies for L\'evy processes}\label{Sec103}
	%\red{[Where should the main result of this chapter be written?
%I think it would be easier to put it at the beginning of the chapter, but I also feel that it should be after Assumption \ref{Ass306}. ]}
We introduce our candidate optimal strategy, which {we call} a double reflection strategy ${\pi^{(0,b)}} =\{  ({L^{(0,b)}(t), R^{(0,b)}(t))}  :  t \geq 0\}$ with an {upper} reflection barrier $b\geq0$ {and a lower reflection barrier $0$}.
%	We proceed to verify the optimality of a {double} reflection strategy ${\pi^{(0,b)}} =\{  ({L^{(0,b)}(t), R^{(0,b)}(t))}  :  t \geq 0\}$ with an appropriate {reflection} threshold $b\geq0$. 
	Under this strategy, 
	the process is pushed downward by paying dividends whenever the process attempts to upcross above $b$, while the process is pushed upward by injecting capital whenever the process attempts to downcross below $0$. %For  definition, see, e.g., \cite[Section 4]{AvrPalPis2007}. \blue{[ok to delete the previous sentence? It is referenced again soon.]} 
	The resulting surplus process
	\[
	{U^{(0,b)}(t)=X(t)-L^{(0,b)}(t)+R^{(0,b)}(t)},\qquad\text{$t \geq0$,}
	\]
%	\red{[Do we unify with $U^{(0, b)}$ or $U^b$?]}
	is the so-called doubly reflected L\'evy process (for the definition, see, e.g., \cite[Section 4]{AvrPalPis2007} or \cite[Section 3]{Nob2019}).% [I think the double barrier strategy with $b=0$ was not defined in \cite{AvrPalPis2007}, so I added a reference.]}).
	
\begin{remark} \label{remark_zero_zero} %\red{[made this remark and moved a sentence here]}\green{[Ok]}
With $b=0$, the strategy $\pi^{(0,0)}$  controls the process to stay at $0$ uniformly in time. This makes sense only when $X$ is of bounded variation. Otherwise, it is not achievable (violating \eqref{9}). Hence we only consider $\pi^{(0,0)}$ for the case of bounded variation.
% and $X$ is of unbounded variation is excluded because it {is not achievable because $R^{(0,b)}$ is of bounded variation.}
\end{remark}	

%	We can write the candidate dividend and capital injection as
%	\red{
%		\begin{align*}
%			L^{(0, b)}_t=0\lor \sup_{s\in[0, t]} (X_s+R^{(0, b)}_s -b), \quad R^{(0, b)}_t=0\lor \sup_{s\in[0, t]} \{ -(X_s-L^{(0, b)}_s) \}.
%		\end{align*}
%		[Is this true?]
%	}\blue{[I am not sure...in Avram etal. \cite{AvrPalPis2007} they provide an algorithm for constructing the double-reflected L\'evy process because they cannot write it like above...perhaps we should do the same?]}\red{[I'm sorry, this expression may not be necessary. ]}
	
	For $b \geq 0$, we denote the corresponding expected NPV by
	\begin{align}\label{v_pi}
		v_{b} (x) &
		:=v_{\pi^{(0,b)}}(x) 
%		:= \E_{x} \left[\int_{[0,\zeta)} e^{-qt}\diff  {L^{(0,b)}(t)}  -\beta\int_{[0, \zeta)} e^{-qt} \diff {R^{(0,b)}(t)}
%		+e^{-q\zeta} w( {U^{(0,b)}(\zeta)}) \right]\\
%		&= \E_{x} \left[\int_{[0, \infty)} e^{-\alpha t}\diff  {L^{(0,b)}(t)} -\beta\int_{[0, \infty)} e^{-\alpha t} \diff {R^{(0,b)}(t)}
%		+r\int_0^\infty e^{-\alpha t} w( {U^{(0,b)}(t)})\diff t
%		\right].
		= v_b^L(x) - \beta v_b^R(x) + r v_b^w(x), \quad x \in \R,
	\end{align}
	where
	%For convenience, let {us denote}
	\begin{align}
		v_b^L (x)&:=\E_{x} \left[\int_{[0, \infty)} e^{-\alpha t}\diff  {L^{(0,b)}(t)} \right], \qquad
		v_b^R(x) :=\E_x \left[ \int_{[0, \infty)} e^{-\alpha t} \diff {R^{(0,b)}(t)}\right], \notag\\
		v_b^w(x)&:=\E_x\left[\int_0^\infty e^{-\alpha t} w( {U^{(0,b)}(t)})\diff t
		\right]. 
	\end{align}
	This decomposition makes sense by the following lemma, for $b \geq 0$ in the bounded variation case and $b > 0$ in the unbounded variation case {(recall Remark \ref{remark_zero_zero})}. This decomposition will simplify the subsequent analysis of the problem.  %\yellow{[Agreed.]}
	
%	 clearly violates the finiteness of the capital injection \eqref{9}.
%	\yellow{[Should we provide more details for the last statement?]} \blue{[actually we should say this is impossible to do because the capital injection process is of bounded variation?]}
	
%	\yellow{[How about this: This decomposition will simplify the subsequent analysis of the problem, as can be seen in the following result.]} \blue{[I think it meant that it is a feasible strategy and so leave it?]}
	\begin{lemma}\label{Lem301}
		For $b >0$, {$v_b^L(x)<\infty$, $v_b^R(x)<\infty$ and $|v_b^w(x)|<\infty$} 
		%\blue{[we don't need absolute value because $w$ is always positive?]} 
		and hence the double barrier strategy  {$\pi^{(0,b)}$} is admissible. %, which means that $v_b^R(x)<\infty$ for $x \in\R$.  
		{{When $b = 0$ and  $X$ has paths of bounded variation,} then  {$\pi^{(0,0)}$}} is admissible.  %\blue{[make $b \in [0,\infty)$ and delete this last sentence?]}
	%	\red{[I think it is better to separate.]}
	\end{lemma}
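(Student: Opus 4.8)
The plan is to estimate each of the three quantities $v_b^L$, $v_b^R$, $v_b^w$ by controlling the doubly reflected process $U^{(0,b)}$ and its two reflecting components. First I would record the standard pathwise description of double reflection on $[0,b]$: writing $U^{(0,b)} = X - L^{(0,b)} + R^{(0,b)}$, one has $0 \le U^{(0,b)}(t) \le b$ for all $t$, and both $L^{(0,b)}$ and $R^{(0,b)}$ increase only when $U^{(0,b)}$ is at the upper and lower barrier respectively. The key structural fact I would invoke is a comparison/monotonicity principle: the reflected process dominates (pathwise, or after suitable decomposition) a process reflected only at the lower barrier $0$, and is dominated by one reflected only at the upper barrier $b$. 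This lets me bound $R^{(0,b)}$ in terms of the one-sided reflection at $0$ of $X$ (whose total discounted cost is finite because $\E|X(1)|<\infty$ by Assumption \ref{AA1}), and bound $L^{(0,b)}$ in terms of the one-sided reflection at $b$ of $X$. Concretely, $R^{(0,b)}(t) \le \sup_{s\le t}(-(X(s) - L^{(0,b)}(s))) \le \sup_{s \le t}(b - X(s))^+$ type bounds, and $L^{(0,b)}(t)$ grows like the running supremum of $X + R^{(0,b)}$ above $b$; combined with $\int_0^\infty e^{-\alpha t}\,\E_x[\,\overline{X}(t)\,]\,\diff t < \infty$ (finite by Assumption \ref{AA1} and $\alpha>0$), this gives $v_b^R(x) < \infty$ and $v_b^L(x) < \infty$.

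For $v_b^w(x)$, since $0 \le U^{(0,b)}(t) \le b$ and $w$ is continuous hence bounded on the compact interval $[0,b]$, we get $|w(U^{(0,b)}(t))| \le \sup_{y \in [0,b]} |w(y)| =: M_b < \infty$, so $|v_b^w(x)| \le M_b \int_0^\infty e^{-\alpha t}\,\diff t = M_b/\alpha < \infty$. This is the easy part once the a priori bound $U^{(0,b)} \in [0,b]$ is in hand. Having all three pieces finite, the decomposition \eqref{v_pi} is well-defined, \eqref{9} holds, and the positivity requirement \eqref{positiveness} is immediate from $U^{(0,b)} \ge 0$; hence $\pi^{(0,b)}$ is admissible.

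For the boundary case $b = 0$ under bounded variation (so $X$ has the form \eqref{204}--\eqref{def_delta} with $\delta \ne 0$ or $\nu(\R\setminus\{0\}) = \infty$), the strategy $\pi^{(0,0)}$ keeps $U^{(0,0)} \equiv 0$, so $w(U^{(0,0)}(t)) \equiv w(0)$ and $v_0^w$ is trivially finite; the point is to show $v_0^R(x) < \infty$, i.e.\ that holding the bounded-variation process at $0$ costs only a finite discounted amount of capital. Here $R^{(0,0)}$ is the pathwise minimal non-decreasing process keeping $X - L^{(0,0)} + R^{(0,0)} \ge 0$ with reflection also at $0$ from above, and one identifies $\diff R^{(0,0)}(t)$ with the negative part of the drift plus the negative jumps, so that $\E_x[\int_{[0,\infty)} e^{-\alpha t}\,\diff R^{(0,0)}(t)]$ is finite precisely because $\int_{\R\setminus[-1,1]}|x|\,\nu(\diff x) < \infty$ (Assumption \ref{AA1}) controls the large negative jumps and the small-jump/drift part contributes at a bounded rate. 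I expect the main obstacle to be making the comparison bounds for $L^{(0,b)}$ and $R^{(0,b)}$ rigorous — i.e.\ carefully relating the doubly reflected components to one-sided reflections without circularity — and handling the $b=0$ bounded-variation case, where the reflection "from above at $0$" must be interpreted correctly; for these I would lean on the explicit construction of doubly reflected Lévy processes in \cite[Section 3]{Nob2019} or \cite[Section 4]{AvrPalPis2007} and the moment estimate from Assumption \ref{AA1}.
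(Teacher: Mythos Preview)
Your proposal is correct and takes essentially the same approach as the paper: the paper's proof simply cites \cite[Lemma 3.2]{Nob2019} (for $b>0$) and \cite[Lemma 3.3]{Nob2019} (for $b=0$, bounded variation) for the finiteness of $v_b^L$ and $v_b^R$, and then argues exactly as you do that $|v_b^w(x)|<\infty$ because $w$ is locally bounded and $U^{(0,b)}(t)\in[0,b]$. Your sketch of the comparison/one-sided-reflection bounds is precisely the content behind those cited lemmas, so you have merely unpacked what the paper delegates to \cite{Nob2019}.
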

	\begin{proof}
		From Assumption \ref{AA1} {together with} \cite[Lemma 3.2]{Nob2019}, we obtain {that} $v_b^L(x)<\infty$ and $v_b^R(x)<\infty$ for $b >0$ and $x \in \R$. {Additionally, using \cite[Lemma 3.3]{Nob2019} we obtain that this fact is true for $b =0$ and $x \in \R$ when $X$ {has paths of bounded variation}.} {Because $w$ is locally bounded and $0 \leq U^{(0,b)}(t) \leq b$ for all $t \geq 0$ a.s., we also have $ {|v_b^w(x)|} < \infty$ for all $x \in \R$.} %\blue{[and delete the rest?]}\red{[ok.]}\yellow{[Agreed.]}\yellow{[ok.]}
%		\par
%		{On the other hand,} since $w$ is an increasing function, we have for $x \in \R$
%		\begin{align*}
%			|v_b^w(x)|&=\E_x\left[\int_0^\infty e^{-\alpha t} |w( {U^{(0,b)}(t)})|\diff t \right]
%			\leq \E_x\left[\int_0^\infty e^{-\alpha t} (|w(0)|\lor|w(b)|)\diff t \right]=\frac{|w(0)|\lor|w(b)|}{\alpha}<\infty. 
%		\end{align*}
%		The proof is complete. 
	\end{proof}
%	\begin{remark}
%		From the proof of Lemma \ref{Lem301}, 
%		the function $v_b(x)$ is finite. \blue{[maybe we can just delete this remark.]}
%	\end{remark}

{Our main objective is to show  the optimality of a double-barrier strategy with a suitable (upper) barrier $b^*$. We will show this in two steps. We first show its optimality over the set of barrier strategies %(\green{[This "(" is not necessary?]}
	i.e.\ ${\tilde{\Pi} :=} (\pi^{(0,b)}; b \geq 0) {\subset \Pi}$ (resp.\ ${\tilde{\Pi} :=} ( \pi^{(0,b)}; b > 0){\subset \Pi}$) when $X$ is of bounded (resp.\ unbounded variation) in Lemma \ref{lemma_g_monotonicity}. This version of optimality will be strengthened 
to the optimality over all admissible strategies $\Pi$ in Section \ref{section_verification}.
}
	{
	
%	The next result confirms the optimality of the barrier-type dividend control strategies for the auxiliary problem. \blue{[need to move the following theorem?]}\red{[Should it be moved immediately after the definition of $b^\ast$?]}\yellow{[Maybe we can leave it here and mention that we devote the rest of this section to prove the existence and optimality?]} \yellow{[I can go either way...but if moved after the definition of $b^*$ maybe restate the theorem and remove the phrase "there exists a constant barrier $b^*$..."]} \blue{[I think we should define $b^*$ first. Otherwise it is redundant. In the introduction, we will say that a barrier strategy is optimal and so no need to repeat too many times.]}

	%\blue{[remove the following sentence and Theorem?]}
	%The next result confirms the optimality of the barrier type dividend control for the auxiliary problem.
	%\begin{theorem}\label{Prop102}
		%Under Assumptions \ref{AA1}, \ref{AA2}, \ref{AA4} and \ref{AA3}, there exists a constant barrier $0\leq b^*<\infty$ such that the refracted-reflected strategy at the level $b^*$, i.e. $\pi^{b^*}$, is optimal and the value function is given by $v(x)=v_{b^*}(x)=v_{\pi^*}(x)$ for all $x\geq 0$.
	%\end{theorem}

	\subsection{Selection of the candidate threshold}\label{Subsec_Aux_Candidate_Threshold}
	
	%\blue{[moved the definition of $Y$ here.]}
	
	{First, we define our candidate barrier, which we call $b^*$,  in terms of a {(single-sided)}  reflected L\'evy process $Y^b=\{{Y^{b}(t)}: t\geq0\}$ with an upper barrier $b$,}
		%Additionally, throughout the work we will consider the reflected L\'evy process 
		 with
	\begin{align}
		{Y^{b}(t)}={X(t)}-\Big\{\sup_{s\in[0,t]}({X(s)}-b) \lor 0\Big\}, \quad t\geq 0 , \label{A001}
	\end{align}
	and also its first passage time 
		\begin{align}
		\kappa^{b, -}_a :=\inf\{t\geq 0 : {Y^{b}(t)} < a\}, \quad a \in \R. \label{kappa_def}
		\end{align}
		%\blue{In particular, $Y^0(t) = 0$ uniformly in $t \geq 0$.}	
%	Let us define our candidate threshold by 
	%\red{[I have changed the definition of $g$ to ensure uniform definitions within the paper (I deleted "$-1$".).]}
	
	This process $Y^b$ and the first passage time $\kappa^{b,-}_a$ play important roles in concisely characterizing the optimal strategy: we set our candidate upper barrier as {the inverse}
	\begin{align}
		b^\ast:= \inf\left\{b\geq 0 : g (b) < 1 \right\}, \label{threshold}
	\end{align}
	{of the function}
	\begin{align} \label{def_g}
	\begin{split}
		g(b)&:= \beta\mathbb{E}_b\left[e^{-{{\alpha }}\kappa^{b,-}_0} \right]+r\mathbb{E}_b\left[\int_0^{\kappa^{b,-}_0}e^{-\alpha t}w'_+( {Y^b(t)}) \diff t\right]\\
		&=\beta - \E_b \left[ \int_0^{\kappa^{b,-}_0} e^{-\alpha t} l ( {Y^b(t)}) \diff t \right] = \beta - \E_0 \left[ \int_0^{\kappa^{0,-}_{-b}} e^{-\alpha t} l ( {Y^0(t)}+b) \diff t  \right], \quad b \geq 0.
		\end{split}
	\end{align}
	{
Here, we define
	\[
	l(y) := \beta \alpha -rw'_+ (y), \quad y \geq 0,
	\]
	which satisfies the properties summarized in the following remark.
	%[I used this short-hand notation throughout the paper.]
	}

\begin{remark} \label{remark_l} %\blue{[added this]} 
By  Assumption \ref{assum_w}, the function $l$ is non-decreasing with $l(0) = \beta \alpha - r w_+'(0)  \geq 0$ and $l(\infty) = \beta \alpha - r w_+'(\infty) > (\beta-1) \alpha$. %}\green{["$>$" is better?]}\blue{[Agreed]}
% and $l(\infty) := \lim_{y \uparrow \infty} l(y) = \beta \alpha - r w_+'(\infty)$. \blue{[If we only need $l(0) \geq 0$, we may be able to relax the condition of $w'(0+)$]}
%\red{[I agree with you. I guess we can change form $w'_+(0+)\leq \beta$ to $w'_+(0+)\leq \frac{\beta\alpha}{r}$]}
\end{remark}

	\begin{lemma} \label{lemma_g_monotonicity} %\blue{[added this]}
	The function $g$ is non-increasing and 
		\[
	\lim_{b \to \infty}g(b) = \frac  r \alpha w'_+(\infty) <1.
	\]
	
%	\blue{[in view of the above, Probably Assumption \ref{assum_w} can be relaxed to $w'_+(\infty):=\lim_{x\to\infty}w'_+(x)\in[0,\alpha/r)$ ]}	\red{[I agree with you.]}
	\end{lemma}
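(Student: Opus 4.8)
The plan is to work from the representation of $g$ given in the last line of \eqref{def_g}, namely
\[
g(b) = \beta - \E_0\left[ \int_0^{\kappa^{0,-}_{-b}} e^{-\alpha t}\, l\bigl(Y^0(t)+b\bigr)\,\diff t \right], \qquad b \geq 0,
\]
where $Y^0$ is the $X$-process reflected at the barrier $0$ from above (so $Y^0(t)\le 0$ for all $t$), and $\kappa^{0,-}_{-b}$ is its first passage below $-b$. Monotonicity should follow by a pathwise comparison: increasing $b$ both enlarges the integration window $[0,\kappa^{0,-}_{-b}]$ (since $\kappa^{0,-}_{-b}$ is non-decreasing in $b$, the passage to a lower level taking at least as long) and, because $l$ is non-decreasing by Remark \ref{remark_l}, increases the integrand $l(Y^0(t)+b)$ pointwise in $t$. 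A subtle point is that $l$ is only known to be non-negative at $0$, i.e.\ $l(y)\ge l(0)\ge 0$ for $y\ge 0$; since $Y^0(t)\le 0$ the argument $Y^0(t)+b$ lies in $[0,b]$ on the event $\{t<\kappa^{0,-}_{-b}\}$, so $l(Y^0(t)+b)\ge 0$ there and the integrand is non-negative, which is exactly what makes the enlargement-of-window argument increase the integral. Hence the subtracted term is non-decreasing in $b$, so $g$ is non-increasing.

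For the limit, I would let $b\to\infty$ and argue that $\kappa^{0,-}_{-b}\uparrow\infty$ almost surely. This uses Assumption \ref{assumption_compound_poisson}: because $X$ is not a driftless compound Poisson process, the reflected process $Y^0$ does not get stuck, and since $Y^0$ drifts according to $X$ minus the regulator at $0$, the downward passage times $\kappa^{0,-}_{-b}$ diverge as $b\to\infty$. By monotone convergence (the integrand being non-negative on the relevant event),
\[
\lim_{b\to\infty} \E_0\left[ \int_0^{\kappa^{0,-}_{-b}} e^{-\alpha t} l\bigl(Y^0(t)+b\bigr)\,\diff t \right] = \E_0\left[ \int_0^{\infty} e^{-\alpha t}\, l(\infty)\,\diff t \right] = \frac{l(\infty)}{\alpha},
\]
where we also use $Y^0(t)+b\to\infty$ for each fixed $t$ as $b\to\infty$, together with the monotone convergence $l(Y^0(t)+b)\uparrow l(\infty)$ (monotone in $b$ by Remark \ref{remark_l}). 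Therefore
\[
\lim_{b\to\infty} g(b) = \beta - \frac{l(\infty)}{\alpha} = \beta - \frac{\beta\alpha - r w'_+(\infty)}{\alpha} = \frac{r}{\alpha}\, w'_+(\infty),
\]
and this is strictly less than $1$ since $w'_+(\infty)\in[0,\alpha/r)$ by Assumption \ref{assum_w}.

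\textbf{Main obstacle.} The routine parts are the two monotonicity observations; the delicate step is justifying the interchange of limit and expectation for the limit computation, in particular confirming $\kappa^{0,-}_{-b}\to\infty$ a.s.\ and handling the possibility $l(\infty)=\infty$ is excluded (Remark \ref{remark_l} only gives $l(\infty)>(\beta-1)\alpha$, but Assumption \ref{assum_w} gives $w'_+(\infty)\ge 0$ so $l(\infty)=\beta\alpha - rw'_+(\infty)\le\beta\alpha<\infty$, so this is fine). One should be careful that the dominated/monotone convergence is applied correctly: since all the approximating integrals are bounded above by $\frac{l(\infty)}{\alpha}<\infty$ and the integrands are non-negative and increasing in $b$ on the event $\{t<\kappa^{0,-}_{-b}\}$, monotone convergence applies cleanly. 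I would also remark that these properties of $g$ are what guarantee $b^\ast=\inf\{b\ge 0: g(b)<1\}<\infty$ is well-defined, since $g$ is non-increasing and eventually drops below $1$.
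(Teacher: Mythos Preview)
Your proposal is correct and follows essentially the same route as the paper: both use the representation $g(b)=\beta-\E_0\bigl[\int_0^{\kappa^{0,-}_{-b}}e^{-\alpha t}l(Y^0(t)+b)\,\diff t\bigr]$, invoke the non-negativity and monotonicity of $l$ together with the monotonicity of $b\mapsto\kappa^{0,-}_{-b}$ for the first claim, and pass to the limit via a convergence theorem (the paper uses dominated convergence, you use monotone convergence---both are valid here since $0\le l(Y^0(t)+b)\le l(\infty)<\infty$). One small remark: your appeal to Assumption~\ref{assumption_compound_poisson} to justify $\kappa^{0,-}_{-b}\to\infty$ is unnecessary, since this follows simply from the fact that $Y^0$ has a.s.\ finite infimum on any compact time interval.
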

	\begin{proof} %\blue{[added this]}
%	By Assumption \ref{AA3} we have that $r w^\prime_+\leq \alpha \beta$. This together with the concavity of $w$ implies that
	
	By Remark \ref{remark_l},  the mapping $b \mapsto l (Y^0(t)+b)$ is non-negative and non-decreasing. In addition, $b \mapsto \kappa^{0,-}_{-b}$ is non-decreasing. This shows in view of the last expression of \eqref{def_g} that $g$ is non-increasing.
	
For the second claim, dominated convergence, together with $\kappa^{0,-}_{-b} \xrightarrow{b \uparrow \infty} \infty$, gives
$\lim_{b \to \infty} g(b) = \beta - \E_0 \left[ \int_0^\infty e^{-\alpha t} l ( \infty) \diff t  \right] = \frac  r \alpha w'_+(\infty)$ (see Remark \ref{remark_l}),
which is {less than $1$} by  Assumption \ref{AA3}.

%	 $x\mapsto g(x)$ is non-negative and non-decreasing.
%	
%		From the form of $g$, {the mapping $x \blue{\mapsto} g(x)$}\yellow{non-increasing}. \blue{[more detail?]}
%	\yellow{This follows form the fact that $b\to \beta \alpha -r w^\prime_+ (Y^0(t)+b)$ is non-negative and non-decreasing, since $r w^\prime_+\leq \alpha \beta$ and $w$ is concave. In addition, }
%	\yellow{[How about: ]}
	\end{proof}
	
	By Lemma \ref{lemma_g_monotonicity}, {our candidate barrier} $b^*$ is well-defined and finite. On the other hand, it may take the value $0$ depending on the path variation of $X$, as remarked below.
	
	%\blue{[above: added the last line and removed the first identity of the proof of Lemma 4.2.]}
	
	%\blue{[If $b^*=0$ can happen, we should mention that.]}
%	\red{$b^\ast$ may take value $0$. In fact, If $X$ has bounded variation paths and positive drift, then $\kappa^{b,-}_0>0$ $\bP_x$-a.s. Thus, $\lim_{\alpha \uparrow} ( \beta\mathbb{E}_b\left[e^{-{{\alpha }}\kappa^{b,-}_0} \right]+r\mathbb{E}_b\left[\int_0^{\kappa^{b,-}_0}e^{-\alpha t}w'_+( {Y^b(t)}) \diff t\right])=0$ by the dominated convergence theorem, which implies that $ \beta\mathbb{E}_b\left[e^{-{{\alpha }}\kappa^{b,-}_0} \right]+r\mathbb{E}_b\left[\int_0^{\kappa^{b,-}_0}e^{-\alpha t}w'_+( {Y^b(t)}) \diff t\right]<1$ for large enough $\alpha$. }\yellow{[How about we mention it in a Remark?]\\
\begin{remark} \label{Rem202}%\blue{[modified this]}
%\red{[I talked with Kei and modified this way. Necessary condition and sufficient condition have been mixed up. Could you check this?]}
%\green{[agree with the red/blue comments in this remark]}
If $b^* = 0$, then
%{if} \red{if and only if??} 
 $g(0) \leq 1$ %\red{[This is not correct? I guess $\Rightarrow$ is correct.]}
%\red{[This doesn't seem correct. Should we change the definition of $b^\ast$?]}
%\green{[I guess it is not equivalent because we define $b^\ast$ as the first $b$ satisfies $g(b)<1$ but $g$ may satisfies $g(b)=1$ for $b\in[0, \varepsilon]$ with small enough $\varepsilon>0$. Of course we can define $b^\ast$ as the first $b$ satisfies $g(b)\leq 1$ (if $g(b)=1$ for $b\in[0, \varepsilon]$ all such $b$ are optiomal) but we may have the risk of having to fix the details of the proof.]}\blue{[rather not change details of the proof...]}
where
	\begin{align} 
		g(0)= \beta - \E_0 \left[ \int_0^{\kappa^{0,-}_{0}} e^{-\alpha t} l( 0) \diff t  \right] = \beta - \left(\beta - \frac {rw'_+ ( 0)}  \alpha \right) \Big(1- \E_0 \left[ e^{-\alpha\kappa^{0,-}_{0}} \right] \Big).
	\end{align}
	\begin{enumerate}
\item When $X$ is of unbounded variation {or of bounded variation with negative drift {or $\nu (-\infty , 0)=\infty$}}, we have $\kappa^{0,-}_{0} = 0$ 
%\blue{$\kappa^{0,-}_{0} = 0$?} 
a.s.\ and hence $g(0) = \beta > 1$, showing $b^* > 0$.
\item {Otherwise} (i.e.\ $X$ is of bounded variation with {non-negative} drift {and $\nu(-\infty,0)<\infty$}), $\kappa^{0,-}_0>0$ a.s.\ and $b^\ast$ may take value $0$. 

{In this case, $\kappa^{0,-}_0$ is the first downward jump time of $X$, which is exponentially distributed with parameter $\nu(-\infty,0)$, and thus we can write
$$
g(0)=\beta - \left(\beta - \frac {rw'_+ ( 0)}  \alpha \right) \Big(1- \frac{\nu(-\infty,0)}{\alpha +\nu(-\infty,0)} \Big). %\leq1,
$$
}
Therefore, if $b^*=0$ then% if and only if 
\begin{align}
\nu(-\infty,0) (\beta - 1)-\alpha +rw^{\prime}_+(0) \leq 0. \label{52}
\end{align}
%\green{[This "if and only if" also should be changed. Or we need to change the definition of $b^\ast$.]}\blue{[Maybe better to remove the "if and only if" part?]}

%
%In particular, $g(0) \xrightarrow{{\alpha \uparrow \infty}} 0 < 1$ and hence $b^* = 0$  for large enough $r$ or $q$ (recall \eqref{alpha_def}).
\end{enumerate}
Note that, $g(0) \xrightarrow{{\alpha \uparrow \infty}} 0 < 1$ and hence $b^* = 0$  for large enough $r$ or $q$ (recall \eqref{alpha_def}).
%{Thus, when $b^\ast=0$, we have %\blue{does the expression below makes sense when $\nu(-\infty,0)=\infty$? }
%\begin{align}
%g(0)=\beta - \left(\beta - \frac {rw'_+ ( 0)}  \alpha \right) \Big(1- \frac{\nu(-\infty,0)}{\alpha +\nu(-\infty,0)} \Big)\leq1,
%\end{align}
%which is equivalent to 
%\begin{align}
%\nu(-\infty,0) (\beta - 1)-\alpha +rw^{\prime}_+(0) \leq 0. \label{52}
%\end{align}
%}% \green{[$\nu(-\infty,0)$ or $\nu((-\infty,0))$. Should be consistent]}\red{[Which is better?]}\blue{[maybe $\nu(-\infty,0)$?]}
%
%	Note that $b^\ast$ may take value 0. In fact, if $X$ has paths of bounded variation and positive drift, then $\kappa^{b,-}_0>0$ $\p_x$-a.s for every $b \in [0,\infty)$. Thus, $\lim_{\alpha \uparrow \infty} ( \beta\mathbb{E}_b\left[e^{-{{\alpha }}\kappa^{b,-}_0} \right]+r\mathbb{E}_b\left[\int_0^{\kappa^{b,-}_0}e^{-\alpha t}w'_+( {Y^b(t)}) \diff t\right])=0$ by the dominated convergence theorem, which implies that  
%	\[ \beta\mathbb{E}_b\left[e^{-{{\alpha }}\kappa^{b,-}_0} \right]+r\mathbb{E}_b\left[\int_0^{\kappa^{b,-}_0}e^{-\alpha t}w'_+( {Y^b(t)}) \diff t\right]<1, \quad b \in [0,\infty), \]
%	 for large enough $\alpha$. \yellow{[Above $\alpha$ depends on $b$ so I am not sure the last statement is true...How about just mentioning that $g(0)<1$ for large enough $\alpha$?
%	]}
\end{remark}	

%\blue{[probably better to mention the difference with the classical case without terminal payoff, in which case $b^* > 0$ all the time?]}

	In the next result we will show that the function $g$ is right-continuous. The proof is given in Appendix \ref{proof_Lem402}.
	
	%\blue{TODO: recall the definition of regular.}
%	\red{For $x \in \bR$ and $B\in \B (\bR)$, "$x$ is regular for $B$" means that $\bP_x(T_B=0)=1$, where $T_B:= \inf \{t>0 : X_t \in B\}$.}
	\begin{lemma}\label{Lem402} %Under Assumption \ref{AA2}  {[Kei...this result also holds for a driftless Poisson process?]}\red{[I think the assumption is not necessary.]}, 
		The {function $g$} is right-continuous {on $\R$}. In addition, if the point $0$ is regular for $(-\infty,0)$ (i.e.\ $\p(T_{(-\infty, 0)}=0)=1$, where $T_{(-\infty, 0)}:= \inf \{t>0 : X(t) \in (-\infty, 0) \}$),  then it is also left-continuous {(and thus continuous)}. 
	\end{lemma}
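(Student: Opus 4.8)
The plan is to work with the last expression for $g$ in \eqref{def_g}, namely $g(b)=\beta-h(b)$ where
\[
h(b):=\E_0\Big[\int_0^{\kappa^{0,-}_{-b}}e^{-\alpha t}\,l\big(Y^0(t)+b\big)\,\diff t\Big]=\E_0\Big[\int_0^{\infty}e^{-\alpha t}\,l\big(Y^0(t)+b\big)\I{t<\kappa^{0,-}_{-b}}\,\diff t\Big],
\]
and to prove the corresponding one‑sided continuity statements for $h$. By Remark \ref{remark_l} the function $l$ is non‑negative, non‑decreasing and bounded by $\beta\alpha$ (since $w'_+\ge w'_+(\infty)\ge 0$), and it is right‑continuous, being an affine function of the right‑derivative of the concave map $w$; being monotone it is continuous off a countable set $N\subset(0,\infty)$. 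In particular the integrand above is dominated by $\beta\alpha\,e^{-\alpha t}$, which is integrable on $[0,\infty)$, so the limit passages below will be justified by dominated convergence, applied first inside the $\diff t$‑integral for fixed $\omega$ and then under $\E_0$. Since $Y^0(t)\le 0$ for all $t$ (recall $Y^0(t)=X(t)-\sup_{s\in[0,t]}X(s)$ as $\sup_{s\in[0,t]}X(s)\ge X(0)=0$), for $b<0$ we have $Y^0(0)=0<-b$, whence $\kappa^{0,-}_{-b}=0$ and $g\equiv\beta$ on $(-\infty,0)$; thus only the values $b\ge0$ require attention.

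The second ingredient is an elementary monotone set‑convergence for the first passage times. Viewed (for fixed $\omega$) as families of subsets of $[0,\infty)$ one has $\bigcup_n\{t:Y^0(t)<-b_n\}=\{t:Y^0(t)<-b\}$ when $b_n\downarrow b$, and $\bigcap_n\{t:Y^0(t)<-b_n\}=\{t:Y^0(t)\le-b\}$ when $b_n\uparrow b$; taking infima yields, pathwise, $\kappa^{0,-}_{-b_n}\downarrow\kappa^{0,-}_{-b}$ when $b_n\downarrow b$, and $\kappa^{0,-}_{-b_n}\uparrow\theta_b:=\inf\{t\ge0:Y^0(t)\le-b\}$ when $b_n\uparrow b$. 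For right‑continuity this is all that is needed: with $b_n\downarrow b$ we have $l(Y^0(t)+b_n)\to l(Y^0(t)+b)$ for every $(t,\omega)$ by right‑continuity of $l$, and $\I{t<\kappa^{0,-}_{-b_n}}\to\I{t<\kappa^{0,-}_{-b}}$ for Lebesgue‑a.e.\ $t$ (the only exceptional time being $t=\kappa^{0,-}_{-b}$), so dominated convergence gives $h(b_n)\to h(b)$. Hence $g$ is right‑continuous on $[0,\infty)$, and, together with the previous paragraph, on all of $\R$.

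For the left‑continuity claim assume now that $0$ is regular for $(-\infty,0)$ and fix $b>0$ (the case $b=0$ is immediate: regularity forces $Y^0(t_n)\le X(t_n)<0$ along some $t_n\downarrow0$, so $\kappa^{0,-}_0=0$ and $g(0)=\beta=g(0-)$). The first task is to identify $\theta_b$ with $\kappa^{0,-}_{-b}$. On $\{\theta_b<\infty,\,Y^0(\theta_b)<-b\}$ this is clear; on $\{\theta_b<\infty,\,Y^0(\theta_b)=-b\}$ I would invoke the strong Markov property of $Y^0$ at the stopping time $\theta_b$ (the hitting time of the closed set $(-\infty,-b]$): since $-b<0$ the reflection at the barrier $0$ is locally inactive at level $-b$, so the post‑$\theta_b$ path of $Y^0$ coincides, until $Y^0$ first returns to $0$, with $-b$ plus an independent copy of $X$; regularity of $0$ for $(-\infty,0)$ then forces that copy, hence $Y^0$, to enter $(-\infty,-b)$ immediately after $\theta_b$, so $\theta_b=\kappa^{0,-}_{-b}$ a.s.\ and therefore $\kappa^{0,-}_{-b_n}\uparrow\kappa^{0,-}_{-b}$. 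The second task is the a.e.\ convergence $l(Y^0(t)+b_n)\to l(Y^0(t)+b)$ as $b_n\uparrow b$: since $l$ is left‑continuous off the countable set $N$, it suffices that $\mathrm{Leb}\{t<\kappa^{0,-}_{-b}:Y^0(t)+b\in N\}=0$ a.s., and because on $\{t<\kappa^{0,-}_{-b}\}=\{t<\theta_b\}$ one has $-b<Y^0(t)\le0$, it is enough to know that $\E_0\big[\int_0^\infty e^{-\alpha t}\I{Y^0(t)=c}\,\diff t\big]=0$ for every $c$ in the countable set $(N-b)\cap(-b,0]$. For $c<0$ this is the statement that the $\alpha$‑potential measure of $Y^0$ has no atom on $(-\infty,0)$, which holds precisely because, by Assumption \ref{assumption_compound_poisson}, $X$ is not a driftless compound Poisson process; for $c=0$ it reads $\mathrm{Leb}\{t:X(t)=\sup_{s\in[0,t]}X(s)\}=0$ a.s., which follows from regularity of $0$ for $(-\infty,0)$ since by time reversal $\p_0\big(X(t)=\sup_{s\in[0,t]}X(s)\big)\le\p_0\big(\inf_{s\in[0,\varepsilon]}X(s)\ge0\big)=0$ for every $\varepsilon>0$, so the running‑maximum set is Lebesgue‑null by Fubini. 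Granting both tasks, dominated convergence gives $h(b_n)\to h(b)$, so $g$ is also left‑continuous, and hence continuous.

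The main obstacle is the left‑continuity part, and within it the identification $\theta_b=\kappa^{0,-}_{-b}$ via the strong Markov property and the regularity hypothesis: this is exactly where regularity of $0$ for $(-\infty,0)$ enters in an essential way, and it cannot be dispensed with, since when $Y^0$ spends positive Lebesgue time at the barrier $0$ a kink of $w$ at $b$ would destroy left‑continuity at $b$. The occupation‑time (atomlessness) estimates in the last paragraph are routine consequences of Assumption \ref{assumption_compound_poisson} and the regularity hypothesis, but should be carried out with some care, particularly at the reflecting level $c=0$.
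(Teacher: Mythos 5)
Your overall plan mirrors the paper's: reduce to the quantity $h(b)=\beta-g(b)$, identify the limiting behaviour of the first passage times $\kappa^{0,-}_{-b_n}$ as $b_n\to b$ from the appropriate side, and pass to the limit inside the expectation by dominated convergence, handling the countable discontinuity set of $w'_+$ via an atomlessness argument. The right-continuity half is correct and in fact a bit cleaner than the paper's contradiction argument: for increasing unions, $\inf\big(\bigcup_n A_n\big)=\lim_n \inf A_n$ is genuinely a set-theoretic fact, and your dominated convergence then goes through. Your treatment of the $c=0$ case of the occupation-time estimate by time reversal is also a valid (and arguably more self-contained) alternative to the paper's citation-based argument.

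The left-continuity half, however, contains a genuine gap. You claim that $\kappa^{0,-}_{-b_n}\uparrow\theta_b:=\inf\{t\ge 0: Y^0(t)\le -b\}$ as $b_n\uparrow b$ is an ``elementary monotone set-convergence.'' It is not: for a \emph{decreasing} family of sets $A_n$, one only has $\lim_n\inf A_n\le\inf\big(\bigcap_n A_n\big)$, and equality can fail. Concretely, if $Y^0$ decreases continuously to $-b$ on $[0,1)$ and then jumps up to $0$ at $t=1$, then $\kappa^{0,-}_{-b_n}\uparrow 1$ while $\theta_b>1$, so the limit of the stopping times strictly undershoots $\theta_b$. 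What rules this out almost surely is not set theory but quasi-left-continuity of the L\'evy process (see \cite[Lemma 3.2]{Kyp2014}), which guarantees that $X$ (and hence $Y^0$) does not jump at the increasing limit $\bar\kappa:=\lim_n\kappa^{0,-}_{-b_n}$ of stopping times, forcing $Y^0(\bar\kappa)=Y^0(\bar\kappa-)\le -b$ and hence $\bar\kappa\ge\theta_b$. This is exactly the step the paper supplies and that your proof is missing; without it you cannot pass from $\bar\kappa\le\theta_b$ (which is trivial) to $\bar\kappa=\theta_b$. The subsequent step, identifying $\theta_b$ with $\kappa^{0,-}_{-b}$ via the strong Markov property of $Y^0$ at the hitting time of the closed half-line together with regularity of $0$ for $(-\infty,0)$, is fine and parallels the paper's use of the regularity hypothesis. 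So the fix is local: replace ``elementary set-convergence'' with an appeal to quasi-left-continuity, and the rest of your argument stands.
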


{We conclude this section by confirming its optimality  over all double reflection strategies $\tilde{\Pi}$.  This will be used to show the optimality over all admissible strategies $\Pi$ (see the proof of Lemma \ref{Lem410}).}

	\begin{lemma} \label{lemma_opt_barrier}
	%\blue{[rephrased.]}
		For $b>0$ (resp., $b\geq 0$), we have 
		\begin{align} \label{v_b_max}
			v_{b^\ast}(x) \geq v_b (x), \quad x\in\R, 
		\end{align}
		when $X$ has {unbounded} (resp., {bounded}) variation paths. 
	\end{lemma}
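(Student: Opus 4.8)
The plan is to exploit the probabilistic expression for the derivative $v_b'(x)$ that the paper announces it will derive, and to show that the family $b\mapsto v_b$ is ``unimodal'' in $b$ with a maximum exactly at $b^*$. Concretely, I would first establish, for $b$ in the admissible range, an identity of the form
\begin{align}
\frac{\partial}{\partial b}\,v_b(x) = \bE_x\!\left[e^{-\alpha\kappa^{b,-}_0}\right]\bigl(g(b)-1\bigr)\,h_b(x)
\end{align}
(or something structurally equivalent), where $h_b(x)\ge 0$ is some nonnegative quantity coming from how the reflected paths depend on the barrier, and the crucial sign-carrying factor is $g(b)-1$. The intuition, which the paper flags in its introduction, is that the slope of $v_b$ at the upper barrier equals one precisely at the optimal $b$; off the optimum, raising the barrier is beneficial when the marginal ``value of a unit of dividend retained'' exceeds one and harmful otherwise, and $g(b)$ measures exactly this marginal quantity (note $g(b)=\beta\bE_b[e^{-\alpha\kappa^{b,-}_0}]+r\bE_b[\int_0^{\kappa^{b,-}_0}e^{-\alpha t}w'_+(Y^b(t))\,\diff t]$ is the expected discounted cost of pushing the reflected process back up, which is what one pays to keep an extra increment of barrier).

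Next I would use Lemma \ref{lemma_g_monotonicity} (that $g$ is non-increasing with $\lim_{b\to\infty}g(b)<1$) and the definition $b^*=\inf\{b\ge0:g(b)<1\}$ to read off the sign of $g(b)-1$: for $b<b^*$ we have $g(b)\ge 1$, so $\partial_b v_b(x)\ge 0$; for $b>b^*$ we have $g(b)<1$ (here Lemma \ref{Lem402} on right-continuity of $g$ is what guarantees $g(b)<1$ strictly to the right of $b^*$ rather than merely $g(b)\le 1$, and the monotonicity from Lemma \ref{lemma_g_monotonicity} propagates it), so $\partial_b v_b(x)\le 0$. Integrating in $b$ over $[b,b^*]$ or $[b^*,b]$ then yields $v_{b^*}(x)\ge v_b(x)$ for every $x\in\R$. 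I would treat the two path-variation cases in parallel, using Lemma \ref{Lem301} to ensure all the quantities $v_b^L,v_b^R,v_b^w$ are finite on the relevant range ($b\ge 0$ in bounded variation, $b>0$ in unbounded variation), and invoking Remark \ref{Rem202} to handle the boundary case $b^*=0$ separately in the bounded-variation-with-nonnegative-drift situation (there one compares $v_0$ directly against $v_b$ for $b>0$ using the same derivative sign).

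The main obstacle I expect is \emph{rigorously establishing the derivative formula and, in particular, the nonnegativity and well-behavedness of the coupling factor $h_b(x)$} — i.e.\ carefully controlling how the doubly reflected path, the local time at the upper barrier, and the occupation term $\int_0^\infty e^{-\alpha t}w(U^{(0,b)}(t))\,\diff t$ respond to an infinitesimal shift of the barrier $b$. This is exactly the perturbation-of-sample-paths analysis the introduction says ``requires extensive techniques not provided in \cite{Nob2019}'' because of the terminal payoff term $rv_b^w$; one must differentiate through the $w$-occupation integral and show the interchange of differentiation and expectation is justified (dominated convergence using $0\le U^{(0,b)}(t)\le b$, local boundedness of $w$, and the integrability from Assumption \ref{AA1} and \cite[Lemmas 3.2--3.3]{Nob2019}). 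A secondary technical point is a continuity/monotonicity argument at $b=b^*$ itself, and making sure the inequality $v_{b^*}(x)\ge v_b(x)$ does not degrade at the non-differentiability points of $b\mapsto v_b$ — but since $v_b$ is monotone in $b$ on each of $(0,b^*]$ and $[b^*,\infty)$ by the sign of the (one-sided) derivative, this causes no real difficulty and the conclusion follows by monotonicity alone rather than by a clean antiderivative identity.
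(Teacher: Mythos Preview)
Your proposal is essentially correct and follows the same route as the paper. The paper derives exactly such a one-sided derivative formula (its equation \eqref{18}),
\[
\lim_{\varepsilon\downarrow 0}\frac{v_{b+\varepsilon}(x)-v_b(x)}{\varepsilon}
=\frac{\E_x\!\left[e^{-\alpha\rho^{b,(1)}_b}\right]}{1-\E_b\!\left[e^{-\alpha\kappa^{b,-}_0}\right]\E_0\!\left[e^{-\alpha\rho^{b,(1)}_b}\right]}\,(g(b)-1),
\]
so the nonnegative prefactor involves the first \emph{upper}-barrier hitting time $\rho^{b,(1)}_b$ of the doubly reflected process rather than $\kappa^{b,-}_0$ as you wrote; but you anticipated this with ``or something structurally equivalent,'' and you correctly identified both the sign-carrying factor $g(b)-1$ and the main technical hurdle (the pathwise perturbation analysis needed to differentiate the $w$-occupation term $v_b^w$, which the paper carries out via the sandwich \eqref{10}--\eqref{11}). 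The paper also handles $b=0$ in the bounded-variation case by the continuity argument you sketched.
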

	\begin{proof}
	{To show this lemma, we compute the derivative of $v_b(x)$ with respect to $b$ and verify that $b^*$ is the maximizer.}
	
		{We first suppose} $b>0$. %With $\rho^{b, (0)}_0:=0$, 
		We define {a} sequence of hitting times 
		\[
		{0 =:  \rho^{b, (0)}_0 < \rho^{b, (1)}_b <  \rho^{b, (1)}_0 <  \cdots < \rho^{b, (n)}_b <  \rho^{b, (n)}_0 <\cdots}
		\]
		recursively defined by
		%$(\rho^{b, (n)}_b, \rho^{b, (n)}_0)_{n \in \N}$ by 
		\begin{align*}
		\rho^{b, (n)}_0:= \inf\{t>\rho^{b, (n)}_b:  {R^{(0,b)}(t)}> {R^{(0,b)}(\rho^{b, (n)}_b)}\}, \quad
			\rho^{b, (n)}_b:= \inf\{t> \rho^{b, (n-1)}_0:  {L^{(0,b)}(t)}> {L^{(0,b)}(\rho^{b, (n-1)}_0)}\},
		\end{align*}
		for $n\in\N$. In particular, $\rho^{b, (1)}_b:= \inf\{t> 0:  {L^{(0,b)}(t)}> 0 \}$ is the first upper reflection time above. {Note that, because reflection from above (resp.\ below) happens at $\rho^{b, (n)}_b$ (resp.\ $\rho^{b, (n)}_0$), we must have
		\begin{align}
		U^{(0,b)}(\rho^{b, (n)}_{b}) = b \quad \textrm{and} \quad U^{(0,b)}(\rho^{b, (n)}_{0}) = 0, \quad b > 0, n \geq 1.  \label{rho_b_jump}
		\end{align}
		}
%		where $\rho^{b, (0)}_0=0$. 
		\par
		For $b>0$ and $x\in\R$, we have {by \cite[Lemma 4.2]{Nob2019}}
		\begin{align}
			\lim_{\varepsilon\downarrow0}\frac{v^L_{b+\varepsilon}(x)-v^L_{b}(x)}{\varepsilon}=&\frac{-\E_x \left[e^{-\alpha\rho^{b, (1)}_b}\right]}{1-\E_b\left[e^{-\alpha\kappa^{b,-}_0}\right]\E_0 \left[e^{-\alpha\rho^{b, (1)}_b}\right]}, \label{12}\\
			\lim_{\varepsilon\downarrow0}\frac{v^R_{b+\varepsilon}(x)-v^R_{b}(x)}{\varepsilon}=&\frac{-\E_x \left[e^{-\alpha\rho^{b, (1)}_b}\right]\E_b\left[e^{-\alpha\kappa^{b,-}_0}\right]}{1-\E_b\left[e^{-\alpha\kappa^{b,-}_0}\right]\E_0 \left[e^{-\alpha\rho^{b, (1)}_b}\right]}. \label{13}
		\end{align}
		%by \cite[Lemma 4.2]{Nob2019}. {[Delete here?]}
		\par
		We fix $\varepsilon>0$. 
		%\red{[moved this sentence here]}\green{[Ok]}
		%From \cite[Section A]{Nob2019}\yellow{From  the following equation of (4.7) in \cite{Nob2019}}\red{How about:"
		From the equation following identity (4.7) in \cite{Nob2019}, we know 
		\begin{align}
			0\leq  {U^{(0,b+\varepsilon)}(t)-U^{(0,b)}(t)} \leq \varepsilon, \quad t\geq 0.  \label{6}
		\end{align}
		We define, for $n\in\N$, 
		\begin{align}
		\bar{\rho}^{b, (n)}_b :&= \inf\{t >\rho^{b+\varepsilon, (n-1)}_{0}: { L^{(0,b)}(t) >L^{(0,b)}(\rho^{b+\varepsilon, (n-1)}_{0})} \}, \notag\\
			\bar{\rho}^{b, (n)}_0 :&= \inf\{t > \rho^{b+\varepsilon, (n)}_{b+\varepsilon}:  {R^{(0,b)}(t)>R^{(0,b)}(\rho^{b+\varepsilon, (n)}_{b+\varepsilon})} \}.  
		\end{align} 
		
		{For $n \in \N$,} by \eqref{6} and since $ {U^{(0,b+\varepsilon)}(\rho^{b+\varepsilon, (n)}_{b+\varepsilon})}=b+\varepsilon$ {(see \eqref{rho_b_jump})} {and $U^{(0,b)} \leq b$ uniformly in time}, %we have 
		\begin{align}
			 {U^{(0,b+\varepsilon)}(\rho^{b+\varepsilon, (n)}_{b+\varepsilon})-U^{(0,b)}(\rho^{b+\varepsilon, (n)}_{b+\varepsilon})}=\varepsilon. \label{7}
		\end{align}
		{On $[{\rho^{b+\varepsilon, (n)}_{b+\varepsilon}}, \bar{\rho}^{b, (n)}_0)$, before $U^{(0,b)}$ (and hence $U^{(0, b + \varepsilon)}$ as well) is  reflected from below, }
		by  \eqref{7} and the definitions of  {$U^{(0,b+\varepsilon)}$} and  {$U^{(0,b)}$}, the difference stays the same, i.e., %we have
		\begin{align}
			 {U^{(0,b+\varepsilon)}(t) -U^{(0,b)}(t)} =\varepsilon, \quad t\in [{\rho^{b+\varepsilon, (n)}_{b+\varepsilon}}, \bar{\rho}^{b, (n)}_0{)}.
		\end{align}
		This implies, {with $A_{\varepsilon} := \bigcup_{n \in \N} [{\rho^{b+\varepsilon, (n)}_{b+\varepsilon}}, \bar{\rho}^{b, (n)}_0)$,
		\begin{align}
		\begin{split}
		&v^w_{b+\varepsilon}(x)-v^w_b(x) \\
	&= 	\E_x \left[ \int_{A_{\varepsilon}}e^{-\alpha t} (w( {U^{(0,b)}(t)}+\varepsilon)-w( {U^{(0,b)}(t)})) \diff t \right] + \E_x \left[ \int_{A_{\varepsilon}^c}e^{-\alpha t} (w( {U^{(0,b+\varepsilon)}(t)})-w( {U^{(0,b)}(t)})) \diff t \right] \\
		 &\geq	\E_x \left[\int_{A_{\varepsilon}}e^{-\alpha t} (w( {U^{(0,b)}(t)}+\varepsilon)-w( {U^{(0,b)}(t)})) \diff t \right],
			 \quad x\in\R,
			 \end{split}
			 \label{10}
		\end{align}
		where the last inequality holds by \eqref{6} and the assumption that $w$ is non-decreasing.}
		% {[Kei: I am sorry but where does the $\varepsilon$ multiplying the expectation in the left-hand side comes from?]}
		%\red{[Sorry, this is my mistake. I deleted.]}
		
		{Similarly, }
		by \eqref{6} and since $ {U^{(0,b+\varepsilon)}(\rho^{b+\varepsilon, (n)}_{0})}=0$ {(see \eqref{rho_b_jump})}  {and $U^{(0,b)} \geq 0$ uniformly in time}, %we have 
		\begin{align}
			 {U^{(0,b+\varepsilon)}(\rho^{b+\varepsilon, (n)}_{0})-U^{(0,b)}(\rho^{b+\varepsilon, (n)}_{0})}=0, \label{8}
		\end{align}
		{which implies}
		%From \eqref{8} and the definitions of  {$U^{(0,b+\varepsilon)}$} and  {$U^{(0,b)}$}, we have
		\begin{align}
			 {U^{(0,b+\varepsilon)}(t) -U^{(0,b)}(t)} =0, \quad t\in [{\rho^{b+\varepsilon, (n-1)}_{0}}, \bar{\rho}^{b, (n)}_b{)}. \label{15}
		\end{align}
		By \eqref{6} and \eqref{15}, we have, {with $B_{\varepsilon} := \bigcup_{n \in \N} [\bar{\rho}^{b ,(n)}_{b},{\rho}_0^{b+\varepsilon,(n)})$,} 
		%\red{I think the set [$\bar{\rho}^{b ,(n)}_{b},{\rho}_0^{b+\varepsilon,(n)}]$ can be empty. Mabye safer to write $B_\varepsilon := [0,\infty) \backslash \bigcup_{n \in \N} [{\rho}_0^{b+\varepsilon,(n-1)}, \bar{\rho}^{b ,(n)}_{b})$??}
		\begin{align}
		\begin{split}
			v^w_{b+\varepsilon}(x)-v^w_b(x) 
			& = 	
			%\E_x \left[ \int_{A_{\varepsilon}}e^{-\alpha t} (w( {U^{(0,b)}(t)}+\varepsilon)-w( {U^{(0,b)}(t)})) \diff t \right] +
			 \E_x \left[ \int_{B_{\varepsilon}}e^{-\alpha t} (w( {U^{(0,b+\varepsilon)}(t)})-w( {U^{(0,b)}(t)})) \diff t \right] \\
			 &\leq \E_x \left[
			%\sum_{n\in\N}
			{\int_{B_{\varepsilon}}}
			%\int_{\bar{\rho}^{b ,(n)}_{b}}^{{\rho}^{b+\varepsilon,(n)}_{0}}
			e^{-\alpha t}(w( {U^{(0,b)}(t)}+\varepsilon)-w( {U^{(0,b)}(t)})) \diff t \right]
			, \quad x\in\R.
			\end{split}
			\label{11}
		\end{align}
		% {[Kei: I am sorry but where does the $\varepsilon$ multiplying the expectation in the left-hand side comes from?]}
		%\red{[I deleted it, as above.]}
		
		Note that $\rho^{b+\varepsilon, (n)}_{b+\varepsilon} \to \rho_b^{b,(n)}$, $\bar{\rho}^{b, (n)}_0 \to \rho_0^{b,(n)}$, $\bar{\rho}^{b ,(n)}_{b} \to   \rho_b^{b,(n)}$, ${\rho}_0^{b+\varepsilon,(n)} \to  \rho_0^{b,(n)}$ {as $\varepsilon \downarrow 0$} {a.s.} %\blue{[need to check this, but I believe this has been shown in \cite[Section A]{Nob2019}?]}
		%\red{[I think we can prove it by induction.]}
%		\red{by the similar argument of the proof of \cite[Lemma 1]{NobYam2020} and by induction.} \yellow{[How about:"
		by a similar argument to that of the proof of \cite[Lemma 1]{NobYam2020} and by induction.

		{{Because $w$ is concave,} %We note that
		\[
		\frac{w( {U^{(0,b)}(t)}+\varepsilon)-w( {U^{(0,b)}(t)})}{\varepsilon}\leq w_+^\prime(0), \qquad  t\geq0.
		\]
		Hence, using \eqref{10} and \eqref{11}, the fact that $\bE_x\left[\int_0^\infty e^{-\alpha t}w_+^\prime(0) \diff t\right]<\infty$, and dominated convergence gives, 
		%By \eqref{10} and \eqref{11}, \red{the dominated convergence theorem with $\bE_x\left[\int_0^\infty e^{-\alpha t}w_+^\prime(0) \diff t\right]<\infty$,}\blue{[Kei better to write $\bE_x\left[\int_0^\infty e^{-\alpha t}w_+^\prime ( {U^{(0,b)}(t)}) \diff t\right]<\infty$?]}\red{[Either way is fine. It might be better to do something like the following.]"the dominated convergence theorem with $\frac{w( {U^{(0,b)}(t)}+\varepsilon)-w( {U^{(0,b)}(t)})}{\varepsilon}\leq w_+^\prime(0)$ and $\bE_x\left[\int_0^\infty e^{-\alpha t}w_+^\prime(0) \diff t\right]<\infty$" or "the dominated convergence theorem with $\frac{w( {U^{(0,b)}(t)}+\varepsilon)-w( {U^{(0,b)}(t)})}{\varepsilon}\leq w_+^\prime(U^{(0, b)}(t))$ and $\bE_x\left[\int_0^\infty e^{-\alpha t}w_+^\prime(U^{(0, b)}(t)) \diff t\right]<\infty$"} and 
		by taking the limit as $\varepsilon \downarrow0$,} 
		\begin{align}
			\lim_{\varepsilon\downarrow0}\frac{v^w_{b+\varepsilon}(x)-v^w_b(x)}{\varepsilon}
			=&\E_x \left[\sum_{n\in\N}\int_{{\rho}^{b ,(n)}_{b}}^{{\rho}^{b,(n)}_{0}}e^{-\alpha t} w_+^\prime ( {U^{(0,b)}(t)})\diff t \right]\\
			=&\sum_{n\in\N}\E_x\left[ e^{-\alpha \rho^{b,(n)}_b}\right]\E_b \left[\int_0^{\kappa^{b,-}_0}e^{-\alpha t} w_+^\prime ( {U^{(0,b)}(t)})\diff t  \right]\\
			=&\sum_{n\in\N}\E_x\left[ e^{-\alpha \rho^{b,(1)}_b} \right]{\left( \E_b\left[e^{-\alpha\kappa^{b,-}_0}\right]\E_0 \left[e^{-\alpha\rho^{b, (1)}_b}\right]\right)}^{n-1}\E_b \left[\int_0^{\kappa^{b,-}_0}e^{-\alpha t} w_+^\prime ( {U^{(0,b)}(t)})\diff t \right]\\
			=&\frac{\E_x \left[e^{-\alpha\rho^{b, (1)}_b}\right]\E_b \left[\int_0^{\kappa^{b,-}_0}e^{-\alpha t} w_+^\prime ( {U^{(0,b)}(t)})\diff t \right]}{1-\E_b\left[e^{-\alpha\kappa^{b,-}_0}\right]\E_0 \left[e^{-\alpha\rho^{b, (1)}_b}\right]}, \qquad x\in\R. \label{14}
		\end{align}
		{Note that the second equality holds by \eqref{rho_b_jump} and the strong Markov property.}
		By {summing} \eqref{12}, \eqref{13} and \eqref{14}, {and the fact that $ {Y^b(t)}= {U^{(0,b)}(t)}$ for $t\in[0,\kappa_0^{b,-}) $} 
		we have 
		\begin{align}
			\lim_{\varepsilon \downarrow 0}\frac{v_{b+\varepsilon}(x)-v_b(x)}{\varepsilon}
			=\frac{\E_x \left[e^{-\alpha\rho^{b, (1)}_b}\right]}{1-\E_b\left[e^{-\alpha\kappa^{b,-}_0}\right]\E_0 \left[e^{-\alpha\rho^{b, (1)}_b}\right]}
			(g(b)-1), \quad x\in\R, b>0.\label{18}
		\end{align}
		This also shows that the mapping $b \mapsto v_b (x)$ is right-continuous on $(0, \infty)$. By the same argument as that of the proofs of \eqref{12}, \eqref{13} and \eqref{14}, it easy to check that $b \mapsto  v_b(x)$ is left continuous on $(0, \infty)$.
%		 and right continuous at $0$.\yellow{[I find this paragraph confusing because the proof is more than showing the right-continuity of $b\mapsto v_{b}(x)$ right? Here we have the computation for the right-hand derivative..which by \eqref{18_a} implies that $b\mapsto v_{b}(x)$ is non-decreasing on $(0,b^*)$ and non-increasing on $(b^*,\infty)$ which I guess is enough for the proof of the result? \blue{[probably]} Additionally, the right continuity at $0$ holds only for the bounded variation case? and I guess we need to show this?] }
		
		{Since the mapping $b \mapsto g(b)$ is non-increasing and using the definition of $b^*$ as in \eqref{threshold}, we obtain that}
		\begin{align}\label{18_a}
			g(x)
			\begin{cases}
				\geq 1, \qquad &x< b^\ast,\\ 
				\leq 1, \qquad &x\geq b^\ast.
			\end{cases}
		\end{align}
		{Therefore by the continuity of the mapping $b \mapsto v_b(x)$, identity \eqref{18_a}, 
		and the same argument} as that of the proof of \cite[Theorem 3.1]{NobYam2020}, the inequality {\eqref{v_b_max} holds over all $b > 0$.}

		%\blue{[added this] 
		It is now left to show that the result extends to $b=0$ for the case of bounded variation. As in the proof of \cite[Lemma 4.3]{Nob2019}, we have $v_b^R(x) \rightarrow v_0^R(x)$ and $v_b^L(x) \rightarrow v_0^L(x)$ as $b \downarrow 0$ for each $x \geq 0$. In addition, because $U^{(0,b)}(t) \xrightarrow{b \downarrow 0} U^{(0,0)}(t)$ a.s. for each $t \geq 0$ (again by  \cite[Lemma 4.3]{Nob2019}) dominated convergence gives $v_b^w(x) \rightarrow v_0^w(x)$ as well. Hence $v_{b^*}(x) \geq v_b(x) \xrightarrow{b \downarrow 0}  v_0(x)$.
			\end{proof}
		
\section{Verification of optimality} \label{section_verification} %\blue{[started a new section here because it was too long.]}

In this section, we upgrade the optimality given in Lemma \ref{lemma_opt_barrier} by showing that it is indeed optimal over all admissible strategies $\Pi$. The proof is via Ito's formula, which requires smoothness of the candidate value function. To this end, we assume the following in addition to Assumptions  \ref{AA3} and \ref{AA1}. %\red{[moved here ]}\green{[Ok]} 
It is noted that this is only assumed for the unbounded variation case and  is not restrictive in view of  Lemma  \ref{lemma_example_condition}, whose proof is deferred to Appendix \ref{Sec00B}.

%	{We will make the following assumption under which, we can provide a solution to the maximization problem with an exponential terminal time.} %\blue{[moved here]} 

	%\blue{[maybe better to move the definition of $Y^b$ and $\kappa$ here?]}
	\begin{assump}\label{Ass306}
		When $X$ has unbounded variation paths, we assume, for {any} %non-decreasing concave \blue{[can be relaxed to any measurable locally bounded function $f$? ]} 
		locally bounded measurable function $f$, $\theta> 0$ and $b > 0$, that the function $H^{(b,\theta)}_f$, {given by
			\begin{align}
				H^{(b, \theta)}_f (x):= \E_x \left[\int_0^{\kappa^{b, -}_0}e^{-\theta t} f({Y^b(t)}) \diff t  \right], \quad {x \in \R,}
		\end{align}}
%		\blue{[what is $Y$?]}\red{[$Y$ was defined at \eqref{A001}.]}
		%with measurable function $g$, 
		has an {a.e.-continuous Radon--Nikodym density locally bounded on $(0, b]$, } where $Y^b$ denotes the reflected \lev process defined in \eqref{A001}.
	\end{assump}

\begin{lemma} \label{lemma_example_condition} If the \lev measure satisfies $\nu (0, \infty) <\infty $ or $\nu (-\infty,0) <\infty $, Assumption \ref{Ass306} is satisfied. 
%Assume that $X$ satisfies {that} $\nu (0, \infty) <\infty $. 
%	Then {we can write the process $X$} as follows: %\blue{[Kei: changed $X^s\rightarrow X^{SN}$ because I found it confusing with the time $s$...]} 
%	\begin{align}
%	X_t ={X^{SN}_t} + \sum_{n=1}^{N^\nu_t} J_n
%	\end{align} 
%	where ${X^{SN}_t}=\{X^{SN}_t : t\geq 0\}$ is a spectrally negative L\'evy process with unbounded variation paths, $N^\nu=\{N^\nu_t : t\geq 0\}$ is {an} independent Poisson process with intensity $\nu(0, \infty)$ and ${\{ J_n \}}_{ n \in \N}$ is a sequence of i.i.d. random variables {with} distribution $\nu(\cdot \cap (0, \infty))/\nu(0, \infty)$. In the same way, we can prove that the case with $\nu(-\infty , 0)<\infty$ satisfies Assumption \ref{Ass306}.
\end{lemma}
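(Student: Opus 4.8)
The goal is to show that when the L\'evy measure has finitely many jumps of one sign — say $\nu(0,\infty) < \infty$ (the case $\nu(-\infty,0)<\infty$ being symmetric in spirit) — the resolvent-type functional $H^{(b,\theta)}_f$ admits an a.e.-continuous Radon--Nikodym derivative that is locally bounded on $(0,b]$. The plan is to exploit the decomposition of $X$ into a spectrally negative L\'evy process $\widehat X$ (the ``negative part'') plus an independent compound Poisson process of positive jumps. For the spectrally negative component one has the full machinery of scale functions available, and this is what will give the needed smoothness of the corresponding resolvent; the finitely many positive jumps can then be handled by a perturbation/series argument, conditioning on the successive arrival times of the positive jumps.

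Concretely, first I would write $X = \widehat X + J$, where $J(t) = \sum_{i=1}^{N(t)} \xi_i$ is a compound Poisson process with positive jumps, $N$ a Poisson process of rate $\lambda := \nu(0,\infty)$, and $\widehat X$ a spectrally negative L\'evy process (of unbounded variation, under Assumption \ref{assumption_compound_poisson}). Let $\widehat Y^b$ be the reflected-at-$b$ version of $\widehat X$ and let $\widehat\kappa^{b,-}_0$ be its first passage below $0$; for the reflected spectrally negative process the $\theta$-resolvent density is expressible through the $\theta$-scale functions $W^{(\theta)}$, $Z^{(\theta)}$ (see, e.g., \cite{AvrPalPis2007}), which are continuously differentiable on $(0,\infty)$ in the unbounded variation case. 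Hence $\widehat H^{(b,\theta)}_f(x) := \E_x[\int_0^{\widehat\kappa^{b,-}_0} e^{-\theta t} f(\widehat Y^b(t))\,\diff t]$ has the claimed a.e.-continuous, locally bounded density on $(0,b]$ — this establishes the base case. Next, I would set up a renewal-type identity: running $Y^b$ until either the first positive jump arrival $e_\lambda \sim \mathrm{Exp}(\lambda)$ or the exit time $\kappa^{b,-}_0$, whichever comes first, and using the strong Markov property at $e_\lambda \wedge \kappa^{b,-}_0$. Because a positive jump from a point $y \le b$ lands the reflected process instantaneously at $\min(y+\xi, b) = $ either $y+\xi$ (if $\le b$) or triggers immediate reflection down to $b$, the post-jump position lies in $(0,b]$, so the process restarts from a state in the same range. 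This yields a functional fixed-point equation
\begin{align*}
H^{(b,\theta)}_f (x) = \widehat H^{(b,\theta+\lambda)}_f(x) + \lambda\, \E_x\!\left[ e^{-(\theta+\lambda)(e_\lambda\wedge \widehat\kappa^{b,-}_0)} \I{e_\lambda < \widehat\kappa^{b,-}_0}\, \big(\mathcal{K} H^{(b,\theta)}_f\big)(\widehat Y^b(e_\lambda-)) \right],
\end{align*}
where $\mathcal{K}$ is the averaging operator over the jump distribution composed with the reflection map; in fact since $\theta+\lambda > \lambda$ the killing makes the natural iteration a contraction, and $H^{(b,\theta)}_f = \sum_{n\ge 0} G_n$ where $G_0 = \widehat H^{(b,\theta+\lambda)}_f$ and each $G_{n+1}$ is obtained from $G_n$ by one application of the (smoothing) spectrally negative resolvent operator. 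Since the spectrally negative resolvent sends bounded functions to functions with a.e.-continuous, locally bounded density on $(0,b]$, and since the series converges uniformly (geometric rate controlled by $\lambda/(\theta+\lambda) < 1$ after the exponential discount), I would conclude that $H^{(b,\theta)}_f$ inherits the same regularity.

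The main obstacle I anticipate is the bookkeeping at the reflection barrier: a positive jump that would overshoot $b$ is absorbed by the local-time push-down, so the map $y \mapsto \min(y+\xi,b)$ has an atom at $b$, and one must check that this atom does not destroy the a.e.-continuity or the local boundedness of the density of $H^{(b,\theta)}_f$ on $(0,b]$. This requires verifying that the measure induced on $(0,b]$ by one step of the renewal operator is absolutely continuous with locally bounded density away from $0$ — the possible mass accumulation is at $b$, an interior-type point of the interval $(0,b]$, not at $0$, and the pre-jump distribution of $\widehat Y^b(e_\lambda-)$ on $(0,b)$ is itself absolutely continuous with controlled density by the scale-function formulas, so the composition remains well-behaved. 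A secondary point is ensuring uniform-in-$n$ local bounds on the densities of the iterates $G_n$ so that term-by-term differentiation of the series is justified; this follows from a Gr\"onwall-type estimate once one has a single-step bound, because the supremum norm of $f$ (and hence of each $G_n$ via the resolvent bound $\|\widehat H^{(b,\theta+\lambda)}_g\|_\infty \le \|g\|_\infty/(\theta+\lambda)$) is propagated with the geometric factor. The symmetric case $\nu(-\infty,0)<\infty$ is handled by the dual construction, decomposing $X$ into a spectrally positive process plus finitely many negative jumps and reflecting; the role of ``below $0$'' exit is then played directly by the (finitely many) negative jumps, and an entirely parallel series expansion applies.
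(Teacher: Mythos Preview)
Your proposal is correct and follows essentially the same route as the paper: decompose $X$ as a spectrally negative process $X^{SN}$ plus an independent compound Poisson process of positive jumps, use the explicit scale-function resolvent of the reflected $X^{SN}$, and obtain $H^{(b,\theta)}_f$ via the strong Markov property at the first positive jump. The one simplification the paper exploits, which you may want to adopt, is that the full series expansion and the associated uniform-in-$n$ density bounds are unnecessary: the first-jump decomposition already yields a single implicit identity
\[
H^{(b,\alpha)}_f(x)=H^{SN,(b,\alpha)}_f(x)-\nu(0,\infty)\,H^{SN,(b,\hat\alpha)}_{H^{SN,(b,\alpha)}_f}(x)+\int_{(0,\infty)}H^{SN,(b,\hat\alpha)}_{H^{(b,\alpha)}_f(\cdot+u)}(x)\,\nu(\diff u),
\]
with $\hat\alpha=\alpha+\nu(0,\infty)$; since $H^{(b,\alpha)}_f$ is a priori bounded by $\alpha^{-1}\sup_{[0,b]}|f|$, and the spectrally negative operator $H^{SN,(b,\cdot)}$ sends bounded functions to functions with an explicit (scale-function) density that is continuous and locally bounded on $(0,b]$, the required regularity of $H^{(b,\alpha)}_f$ can be read off directly from the right-hand side without iterating. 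This also sidesteps your worry about the atom at $b$: the post-jump value enters only through the bounded function $H^{(b,\alpha)}_f(\cdot+u)$, which is then smoothed by $H^{SN,(b,\hat\alpha)}$.
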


	\begin{remark}\label{Rem401}
		From Assumption \ref{Ass306} and by {the fact that}
			$H^{(b, \theta)}_1(x)= \frac{1}{\theta}\left( 1- \E_x \left[\exp(-\theta \kappa^{b, -}_0 ) \right]\right)$,
			$\theta>0$, 
		the function $x \mapsto \E_x \left[ \exp(-\theta \kappa^{b, -}_0) \right]$ 
		has {a.e.-continuous Radon--Nikodym density locally bounded on $(0, b]$, } when $X$ has unbounded variation paths. % \red{Here, we assume that $e^{-\infty}$ is equal to $0$.}\yellow{[How about: "In the event $\kappa^{b, -}_0=\infty$, we interpret $e^{-\theta \kappa^{b, -}_0}$ to be zero."]} \blue{[let's add indicator or just say thing.  I do not think we need to spell this out.]}
	\end{remark}
	
%		\blue{TODO: We provide an example which satisfies Assumption \ref{Ass306} in Appendix \ref{Sec00B}. }

%\yellow{Our main result confirms the optimality of the double barrier dividend strategy for the control problem with termination at an exponential time.}

We now state the main result of this paper.
\begin{theorem}\label{Thm401}
	Under Assumptions \ref{AA3},  \ref{AA1}, and \ref{Ass306},
	% there exists a constant barrier $0 \leq b^*<\infty$ such that the double barrier strategy at the level $b^*$, i.e.
the {double} barrier strategy $\pi^{(0,b^*)}$, {with $b^*$ given by \eqref{threshold}}, is optimal and the value function is given by $v(x)=v_{b^*}(x)=v_{\pi^*}(x)$ for all $x\geq0$.
\end{theorem}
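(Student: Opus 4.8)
The plan is to run a classical verification argument: exhibit a function that solves the appropriate variational inequalities, has enough regularity to admit an It\^o formula for the two-sided-jump process $X$, and then show it dominates $v_\pi$ for every admissible $\pi$. Set $V := v_{b^*}$, extended to $(-\infty,0)$ by $V(x):=V(0)+\beta x$ for convenience. Since $V = v_{\pi^{(0,b^*)}}$ with $\pi^{(0,b^*)}$ admissible by Lemma \ref{Lem301}, once we prove $V(x)\ge v_\pi(x)$ for all $\pi\in\Pi$ and all $x\ge 0$, taking the supremum in \eqref{vf_def} yields $v=V=v_{\pi^{(0,b^*)}}$, i.e.\ $\pi^*=\pi^{(0,b^*)}$ is optimal. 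The verification lemma of this section reduces $V\ge v_\pi$ to three facts about $V$: (i) $V\in C^1(0,\infty)$, with $V'$ absolutely continuous and locally bounded on $(0,b^*)$ when $X$ has unbounded variation (only continuity and the natural one-sided derivatives being needed in the bounded-variation case); (ii) $1\le V'(x)\le\beta$ for $x>0$, with $V'\equiv 1$ on $(b^*,\infty)$; and (iii) $(\A-\alpha)V(x)+rw(x)\le 0$ for all $x>0$, with equality on $(0,b^*)$, where $\A$ is the full generator of $X$.

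For (i) and (ii), I would differentiate the probabilistic representations behind \eqref{12}--\eqref{14} in the initial value $x$ rather than in the barrier $b$, as was done for the $b$-derivatives; Assumption \ref{Ass306} and Remark \ref{Rem401} then give that $v_{b^*}^L$, $v_{b^*}^R$ and $v_{b^*}^w$ have the stated regularity on $(0,b^*)$. On $(b^*,\infty)$ the strategy pays dividends immediately, so $V$ is affine with slope $1$; the smooth fit $V'(b^*-)=1=V'(b^*+)$ is forced by the definition \eqref{threshold} of $b^*$ together with the sign pattern \eqref{18_a}, giving $V\in C^1(0,\infty)$. The same computations exhibit $V'$ as a nonnegative superposition of the terms in \eqref{12}--\eqref{14} (a monotonicity argument parallel to Lemma \ref{lemma_g_monotonicity}), so $V$ is concave on $[0,\infty)$; concavity and $V'(b^*-)=1$ give $V'\ge 1$ on $(0,b^*]$, while $V'(0+)\le\beta$ follows from the bound $w'_+(0+)\le\beta\alpha/r$ in Assumption \ref{AA3}, whence $V'\le\beta$ throughout by concavity (and $V'=\beta$ on $(-\infty,0)$ by construction). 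When $X$ has bounded variation and $b^*=0$ (Remark \ref{Rem202}), $V$ is constant on $[0,\infty)$ apart from the affine upper piece and the argument simplifies, with \eqref{52} playing the role of the smooth fit.

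For (iii), on $(0,b^*)$ the equality $(\A-\alpha)V+rw\equiv 0$ is the integro-differential equation satisfied by the NPV of the doubly reflected L\'evy process: it is obtained by applying the strong Markov property to $e^{-\alpha t}V(U^{(0,b^*)}(t))$ across consecutive reflection epochs $\rho^{b^*,(n)}_{b^*},\rho^{b^*,(n)}_0$ and using that $L^{(0,b^*)}$ (resp.\ $R^{(0,b^*)}$) increases only on $\{U^{(0,b^*)}=b^*\}$ (resp.\ $\{U^{(0,b^*)}=0\}$), where $V'=1$ (resp.\ $V'\le\beta$). On $(b^*,\infty)$, where $V(x)=V(b^*)+(x-b^*)$, one verifies by direct computation that $(\A-\alpha)V(x)+rw(x)\le0$: the defining inequality $g(b)\le1$ for all $b\ge b^*$, equivalently $\E_b[\int_0^{\kappa^{b,-}_0}e^{-\alpha t}l(Y^b(t))\diff t]\ge\beta-1$ (see \eqref{def_g} and Remark \ref{remark_l}), is precisely what makes the affine extension superharmonic there, and Assumption \ref{AA3} (in particular $w'_+(\infty)<\alpha/r$) controls the behaviour as $x\to\infty$.

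Finally, fix $\pi\in\Pi$, apply It\^o's formula (in Meyer--It\^o form) to $e^{-\alpha t}V(U^\pi(t))$ along a localizing sequence, and decompose via $\diff U^\pi=\diff X-\diff L^\pi+\diff R^\pi$: the drift/generator part is bounded above by $-rw(U^\pi)\diff t$ by (iii), the $\diff L^\pi$ part by $-\diff L^\pi$ by $V'\ge 1$, the $\diff R^\pi$ part by $\beta\diff R^\pi$ by $V'\le\beta$, and the compensated jump part is $\le 0$ by concavity of $V$. Taking expectations kills the martingale terms; letting the localization and then $t\to\infty$, using that $V(U^\pi(t))$ is bounded below (since $V(0)\ge w(0)>-\infty$ and $V$ is concave of at most linear growth) together with the integrability \eqref{9} and $e^{-\alpha t}\to0$, yields $V(x)\ge v_\pi(x)$. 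Combining with $V=v_{b^*}=v_{\pi^{(0,b^*)}}$ gives the theorem. The main obstacle is the pair (i) and the $(b^*,\infty)$ part of (iii): lacking a scale function, the regularity of $v_{b^*}$ needed to run It\^o for a general two-sided L\'evy process must be extracted from the sample-path perturbation analysis (this is exactly why Assumption \ref{Ass306} is imposed), and the superharmonicity of the affine extension above $b^*$ has to be checked by hand rather than read off a closed form.
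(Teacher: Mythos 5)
Your plan matches the paper's structure (concavity and $C^1$-regularity of $v_{b^*}$, the variational inequality with equality on $(0,b^*]$ and inequality above $b^*$, and then verification via It\^o through Lemma~\ref{verification}), but two steps you treat as routine are in fact the paper's hardest lemmas, and your sketches of them would not close.

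The claim that $(\mathcal{L}-\alpha)v_{b^*}(x)+rw(x)\le0$ on $(b^*,\infty)$ follows ``by direct computation'' from $g(b)\le1$ is not supportable: there is no pointwise one-line implication from the inequality on $g$ to superharmonicity of the affine extension. The paper's Lemma~\ref{Lem410} argues by contradiction, combining the identity $v_b(x)-v_{b^*}(x)=\E_x\big[\int_0^\infty e^{-\alpha t}h(U^{(0,b)}(t))\Ind_{(b^*,\infty)}(U^{(0,b)}(t))\,\diff t\big]$ for $h:=(\mathcal{L}-\alpha)v_{b^*}+rw$ with the concavity of $h$, $h(b^*)=0$, and the separately established optimality of $b^*$ over all barriers (Lemma~\ref{lemma_opt_barrier}); the condition $g\le1$ enters only at one remove, as the driver of Lemma~\ref{lemma_opt_barrier} through $\partial_b v_b\propto g(b)-1$. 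This detour through $v_b-v_{b^*}$ is essential, not a shortcut around a computation. Relatedly, the smooth fit $v'_{b^*}(b^*-)=1=v'_{b^*}(b^*+)$ is not forced by \eqref{threshold} and the sign pattern \eqref{18_a} alone, because Lemma~\ref{Lem402} only guarantees $g$ is right-continuous: when $0$ is irregular for $(-\infty,0)$ and $w$ has a corner, $g$ may jump strictly past $1$ at $b^*$, and a naive smooth-fit argument fails. The paper introduces the randomization parameter $p^*$ (Lemma~\ref{aux_1}), interpolating between $w'_\pm$ and the weak/strict first-passage times so that $g_{p^*}(b^*)=1$ exactly, derives the closed form $v'_{b^*}(x)=\beta-\E_x\big[\int_0^{\kappa^{[p^*]}_0}e^{-\alpha t}l_{p^*}(Y^{b^*}(t))\,\diff t\big]$ in Lemma~\ref{Lem304_a}, and then establishes its continuity at $b^*$ by a case analysis on whether $0$ is regular for $(0,\infty)$ or for $(-\infty,0)$; this technical layer is what actually produces the $C^1$ fit, and your sketch omits it.
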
}
The remainder of this section is devoted to proving Theorem \ref{Thm401}.
% the existence of the level $b^*$ and to prove the optimality of the double barrier strategy $\pi^{(0,b^*)}$.}

	\subsection{Verification lemma}\label{Subsec_Aux_Verification}
	We first state a verification lemma, which provides sufficient conditions for a strategy to be optimal.
%	In this section, we give {a} verification lemma, which {provides sufficient conditions for a strategy to be optimal}. 
%	In addition, we prove that a barrier strategy satisfies the conditions in the verification lemma. 

%	We recall classes of functions and an operator which were defined in \cite[Section 2.1]{Nob2019}.  \green{[Are we defining the class in the same way as \cite[Section 2.1]{Nob2019}? In our case, $f$ is differentiable in the classical case?]} \red{[We changed the definition of $C^{(1)}_{\text{line}}$ so we should change this sentence.]}\blue{[How about just deleting the sentence?]}
	First, let $C^{(1)}_{\text{line}}$ be the set of continuous {functions} $f:\bR\to\bR$ %which 
	%satisfies the following conditions: 
	{of at most linear growth and admits a continuous derivative on $(0,\infty)$, i.e.,}
	\begin{enumerate} 
	\item
	There exist $a_1, a_2 >0$ such that  
	$
	|f(x)|<a_1 +a_2 |x|
	$
	for all $x\in\bR$;
	\item {$f \in C^1(0, \infty)$.}  %\green{[agreed]}
	%On the positive half line, there exists a density function $f^\prime: (0, \infty)\to [0, \infty)$ which is uniformly bounded, is continuous a.e., and satisfies $f(0)+\int_0^x f^\prime(y) \diff y = f(x)$ for $x>0$. 
	\end{enumerate}
	Second, let $C^{(2)}_{\text{line}}$ be a subset of $C^{(1)}_{\text{line}}$ %which has a continuous derivative  $f^\prime$ on $(0, \infty)$, 
	such that the {(continuous)} derivative $f^\prime$ admits, 
	%which 
	%there exists, 
	on the positive half line, {an a.e.-continuous and uniformly bounded 
	density function $f^{\prime\prime}: (0, \infty)\to [0, \infty)$ 
	%which is uniformly bounded, is continuous a.e.\ and
	satisfying $f^\prime (0)+\int_0^x f^{\prime\prime}(y) \diff y = f^\prime(x)$ for $x>0$.}

	%Let $\mathcal{L}$ be the operator acting on the functions in $C^{(1)}_{\text{line}}$ (resp. $C^{(2)}_{\text{line}}$) when $X$ has bounded (resp. unbounded) variation paths. 
	For $f \in C^{(1)}_{\text{line}}$ {(resp.\ $C^{(2)}_{\text{line}}$) 
	%we fix the density $f^\prime$ (resp. $f^{\prime\prime}$) of $f$ (resp. $f^\prime$) on $(0, \infty)$, and 
	(we fix the density $f^{\prime\prime}$ of $f^\prime$ on $(0, \infty)$ when $X$ has unbounded variation paths)}, %\green{[agreed, but delete the last `and']}
	define the operator
%	\green{[When $X$ has unbounded variation paths, it is a little bit different from the infinitesimal generator. What should we write?]}\blue{[the extended generator maybe? or just "the following operator" to be safe?]}\green{[I think so.]}
	\begin{align}
		\mathcal{L} f(x) := \gamma f^\prime(x) +\frac{1}{2}\sigma^2 f^{\prime\prime}(x) 
		+\int_{\R \backslash \{0\} } (f(x+z)-f(x)-f^\prime(x) z\Ind_{\{|z|<1\}}) \nu(\diff z) , \quad x\in (0,\infty).
	\end{align}
%	\blue{[how is this defined at points $f'$ is not continuous?]}
%	\blue{[check if we need minus in front of $\gamma$]}\red{[I think it is not necessary.]}\yellow{[According to \cite{Ber1996} I agree it is not necessary.]}
	
	%Let $\mathcal{L}$ be the infinitesimal generator associated with the process $X$. We say that a measurable function on $f:(0,\infty)\red{\to}\R$ {is sufficiently smooth if it belongs to} $C^1(0,\infty)$ (resp. $C^2(0,\infty)$) for the case in which $X$ is of bounded (resp. unbounded) variation with
	%\begin{align}
	%	\mathcal{L} f(x)= \gamma f^\prime(x) +\frac{1}{2}\sigma^2 f^{\prime\prime}(x) 
	%	+\int_{\R \backslash \{0\} } (f(x+z)-f(x)-f^\prime(x) z1_{\{|z|<1\}}) \pi(\diff z) , \quad x\in (0,\infty).
	%\end{align}
	
	{A verification lemma is given as follows. We omit the proof because it is similar to that of \cite[Proposition 5.2]{Nob2019}.}
	
%	Now, we provide a verification lemma, {the proof is omitted as it is similar to that of \cite[Proposition 5.2]{Nob2019}. %whose proof is deferred to Appendix \ref{App0A}.} \blue{[Should we omit the proof of Lemma \ref{verification}?]} \green{[yes, but let's add `the proof is similar to that in the spectrally negative case' and refer to your papers?]}
	%\red{[In both of our previous regime switching papers, the proof of this is omitted. How about the proof of \cite[Proposition 5.2]{Nob2019}?]}\blue{[Yeah I agree, and I deleted the proof.]}
	\begin{lemma}\label{verification}
		Let {$v: \R \to \R$} belonging to $C^{(1)}_{\text{line}}$ when $X$ is of bounded variation and $C^{(2)}_{\text{line}}$ otherwise and satisfies
		\begin{align}
			(\mathcal{L} -\alpha )v(x)+ rw(x)&\leq 0, \quad x\in(0, \infty),\label{45} \\ \label{4}
			0\leq v^\prime (x) &\leq \beta, \quad x \in (0, \infty). 
		\end{align}
		Then we have $v(x) \geq \sup_{\pi\in{\Pi}}v_\pi(x)$ for all $x \in \R$. 
	\end{lemma}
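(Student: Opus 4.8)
\textbf{Proof proposal for the verification lemma (Lemma \ref{verification}).}

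The plan is to run a standard stochastic-verification argument built on an appropriate It\^o / change-of-variables formula, exactly as in \cite[Proposition 5.2]{Nob2019}, with the only new feature being the presence of the running reward term $r\,w(U^\pi(t))\diff t$ (equivalently, the terminal payoff $e^{-q\zeta}w(U^\pi(\zeta))$ after integrating out the independent exponential clock $\zeta$). Fix an admissible strategy $\pi$ and an initial value $x\in\R$. First I would apply the It\^o–Meyer formula to the process $e^{-\alpha t}v(U^\pi(t))$: this is licit because $v\in C^{(1)}_{\text{line}}$ in the bounded-variation case and $v\in C^{(2)}_{\text{line}}$ in the unbounded-variation case, which is precisely the regularity needed to handle the Gaussian and (small-jump) martingale parts. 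Splitting the dynamics of $U^\pi$ into the L\'evy part $X$, the decreasing dividend part $-L^\pi$ and the increasing injection part $R^\pi$, the formula produces four groups of terms: (i) the generator term $\int_0^t e^{-\alpha s}(\mathcal{L}-\alpha)v(U^\pi(s-))\diff s$; (ii) a local-martingale term (stochastic integrals against the Brownian motion and the compensated jump measure); (iii) the contributions of the continuous parts of $L^\pi$ and $R^\pi$, namely $-\int_0^t e^{-\alpha s}v'(U^\pi(s-))\diff L^{\pi,c}(s)+\int_0^t e^{-\alpha s}v'(U^\pi(s-))\diff R^{\pi,c}(s)$; and (iv) the jump terms of $L^\pi$ and $R^\pi$, which, after a mean-value / telescoping estimate using $0\le v'\le\beta$, are bounded by $-\int e^{-\alpha s}\diff L^\pi(s)+\beta\int e^{-\alpha s}\diff R^\pi(s)$ together with the lattice of dividend/injection jumps.

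Next I would insert the two hypotheses. Inequality \eqref{45}, $(\mathcal{L}-\alpha)v(y)+rw(y)\le 0$ on $(0,\infty)$, lets me replace the generator term in (i) by $-r\int_0^t e^{-\alpha s}w(U^\pi(s-))\diff s$ plus a nonpositive remainder; note $U^\pi(s-)\ge 0$ and, since $X$ does not jump at $\zeta$, the boundary point $0$ is negligible for this integral (for bounded variation one must be slightly careful, but the $C^{(1)}$ smoothness plus the structure of admissible strategies controls the time spent at $0$, exactly as in \cite{Nob2019}). Inequality \eqref{4}, $0\le v'\le\beta$, kills the sign of the continuous-reflection terms in (iii): $-v'(U^\pi(s-))\diff L^{\pi,c}(s)\le -(1)\cdot\cdots$ is handled together with the dividend jumps to yield a lower bound $\diff L^\pi$, while $+v'(U^\pi(s-))\diff R^{\pi,c}(s)\le \beta\,\diff R^{\pi,c}(s)$ bounds the injection terms from above. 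Rearranging, one arrives at the pathwise inequality
\begin{align}
e^{-\alpha t}v(U^\pi(t))\le v(x)-\int_0^t e^{-\alpha s}\diff L^\pi(s)+\beta\int_0^t e^{-\alpha s}\diff R^\pi(s)+r\int_0^t e^{-\alpha s}w(U^\pi(s-))\diff s\cdot(-1)^{?}+ M_t,
\end{align}
more precisely $v(x)\ge e^{-\alpha t}v(U^\pi(t))+\int_{[0,t]}e^{-\alpha s}\diff L^\pi(s)-\beta\int_{[0,t]}e^{-\alpha s}\diff R^\pi(s)+r\int_0^t e^{-\alpha s}w(U^\pi(s))\diff s - M_t$, where $M$ is a local martingale with $M_0=0$.

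Finally I would take expectations and pass $t\to\infty$. Localize $M$ by a reducing sequence of stopping times $T_n\uparrow\infty$ so that $\E_x[M_{t\wedge T_n}]=0$; the linear-growth bound on $v$ together with Assumption \ref{AA1} ($\E[|X(1)|]<\infty$, hence $\E_x[\,\sup_{s\le t}|X(s)|\,]<\infty$ and $\E_x[e^{-\alpha t}|X(t)|]\to 0$) and the admissibility condition \eqref{9} (finiteness of $\E_x\int e^{-\alpha s}\diff R^\pi(s)$, which via $U^\pi\ge0$ also controls $\E_x\int e^{-\alpha s}\diff L^\pi(s)$) provide the uniform integrability needed for dominated/monotone convergence. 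The term $\E_x[e^{-\alpha t}v(U^\pi(t))]\to 0$ because $0\le U^\pi(t)\le x^+ +\sup_{s\le t}X(s)^+ + R^\pi(t)$ and $v$ has at most linear growth, while $e^{-\alpha t}$ beats the at-most-linear growth of $\E_x[R^\pi(t)]$ implied by \eqref{9}. Using the identity
$v_\pi(x)=\E_x\big[\int_{[0,\infty)}e^{-\alpha s}\diff L^\pi(s)-\beta\int_{[0,\infty)}e^{-\alpha s}\diff R^\pi(s)+r\int_0^\infty e^{-\alpha s}w(U^\pi(s))\diff s\big]$
recorded in Section \ref{subsection_single_regime_formulation}, we conclude $v(x)\ge v_\pi(x)$, and taking the supremum over $\pi\in\Pi$ finishes the proof.

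The main obstacle, and the only place requiring genuine care, is the combination of two subtleties in the It\^o step: justifying the change-of-variables formula in the \emph{unbounded-variation} case when $v$ merely has a bounded a.e.-continuous second-derivative density on $(0,\infty)$ rather than a classical $C^2$ function (one uses the Meyer–It\^o / It\^o–Tanaka formula and the fact that $v''$ being a genuine density of $v'$ means the occupation-time/local-time correction terms at the ``bad'' set vanish), and controlling the contribution of the boundary $\{0\}$ — i.e.\ ensuring the generator inequality \eqref{45}, which is only assumed on $(0,\infty)$, and the derivative bounds \eqref{4} suffice despite $U^\pi$ being allowed to touch $0$. Both points are exactly those resolved in \cite[Proposition 5.2]{Nob2019}; since the argument there transfers verbatim once the extra term $r\int_0^\cdot e^{-\alpha s}w(U^\pi(s))\diff s$ — which is continuous, adapted, and (by local boundedness of $w$ and integrability of $R^\pi$) integrable — is carried along, we omit the details, as stated.
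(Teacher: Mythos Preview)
Your approach is exactly the one the paper has in mind: it omits the proof and points to \cite[Proposition~5.2]{Nob2019}, and what you have written is precisely that verification argument with the extra running reward $r\int_0^{\cdot}e^{-\alpha s}w(U^\pi(s))\diff s$ carried along. The structure (It\^o--Meyer for $e^{-\alpha t}v(U^\pi(t))$, generator bound from \eqref{45}, derivative bounds for the reflection terms, localization, and passage $t\to\infty$ using the linear-growth condition together with Assumption~\ref{AA1} and \eqref{9}) is correct.

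One point deserves a flag. When you bound the dividend contribution so as to produce the term $-\int_{[0,t]}e^{-\alpha s}\diff L^\pi(s)$, you are implicitly using $v'\ge 1$: for the continuous part you need $v'(U^\pi(s-))\diff L^{\pi,c}(s)\ge \diff L^{\pi,c}(s)$, and for a dividend jump of size $\Delta L^\pi(s)$ you need $v(U^\pi(s-))-v(U^\pi(s-)-\Delta L^\pi(s))\ge \Delta L^\pi(s)$. The hypothesis as printed in the lemma is only $0\le v'(x)\le\beta$, and with merely $v'\ge 0$ the conclusion is false (take any $v$ with $v'<1$ on some interval and the strategy that pays a lump dividend across that interval). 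This is almost certainly a misprint in the paper rather than a flaw in your reasoning: the referenced \cite[Proposition~5.2]{Nob2019} works with the bound $1\le v'\le\beta$, and Lemma~\ref{Lem408} in the present paper in fact establishes $1\le v_{b^\ast}'(x)\le\beta$, so the application in the proof of Theorem~\ref{Thm401} is unaffected. Your argument is valid for the intended hypothesis $1\le v'\le\beta$; just note the discrepancy explicitly. (Also clean up the stray ``$\cdot(-1)^{?}$'' in your first displayed inequality; the ``more precisely'' line that follows it is the correct rearrangement.)
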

	% {[Put this here]} 

%		\blue{[need discussion for the case $b = 0$?]}\red{[I forgot the case of $b=0$. We need to check. I guess it is sufficient to prove that $\lim_{b\downarrow 0}v_b (x)=v_0 (x)$ as \cite[Lemmaa 4.3]{Nob2019}.]}
%		\yellow{[I think it is a direct consequence of \cite[Lemma 4.3]{Nob2019} because we already have that $L^{\pi^b} \rightarrow L^{\pi^0}$ and $R^{\pi^b} \rightarrow R^{\pi^0}$, hence $v_b^w(x) \rightarrow v_0^w(x)$.]}

	%By the definition of $b^\ast$ and since $g$ is right-continuous, we have
	%\begin{align}
	%\beta\mathbb{E}_{b^\ast}\left[e^{-{{\alpha }}\kappa^{b^\ast,-}_0};\kappa_0^-<\infty\right]
	%+r\mathbb{E}_{b^\ast}\left[\int_0^{\kappa^{b^\ast,-}_0}e^{-\alpha t}w'_+(Y^{b^\ast}_t)dt\right]
	%\leq1 \label{1}
	%\end{align}

	\subsection{Derivative of $v_{b^*}$ %\blue{Are we still computing the density? Now I think the derivative is continuous on $(0,\infty)$?}
	}
	
%\red{[I reordered discussions to postpone the discussion involving $p$ as much as possible.]}
%\green{[Ok, I agree with you.]}
%	\red{[For the derivative with respect to a general $b$, I split the lemma into the one which does not involve $p$ and that involves $p$]}
	
	For the next result, we define {the} hitting times to half-lines of the \lev process $X$ as
	\begin{align}
	\tau^-_x =\inf \{t> 0 :{X(t)} <x \},\quad \tau^+_x =\inf \{t> 0 :{X(t)} >x \}, \quad x\in\bR. 
	\end{align}

%\red{[Below, I wrote a version not involving $p$.]}\green{[Ok]}
	\begin{lemma}\label{Lem304_v1}
	%\red{Fix $b \geq 0.$}\green{
	Fix $b>0$ (resp., $b\geq 0$) 
		when $X$ has {unbounded} (resp., {bounded}) variation paths. 
		The function $x \mapsto v_b(x)$ is continuous on $\bR$ and {continuously} differentiable on $\bR\backslash\{0, b\}$, {with} 
%In addition, its \red{continuous} derivative 
	%\yellow{[Agreed and the two below]} 
	%is written as %{Lebesgue-a.e.,}
		\begin{align}\label{density_v_1}
			v_{ b}^{\prime}(x) &= \E_x \left[e^{-\alpha \tau^+_b}; \tau^+_b<{\tau^{-}_0} \right] +\beta \E_x\left[e^{-\alpha{\tau^{-}_0}};{\tau^{-}_0}<\tau^+_b\right]
			+r\E_x \left[\int_0^{\tau^{+}_b \land {\tau^{-}_0}} e^{-\alpha t} w^\prime_+ ( {X(t)}) \diff t\right], %\quad {x \in \bR\backslash\{0, b\}.}
		\end{align}
		{which also holds when the last expectation is replaced with  $\E_x \left[\int_0^{\tau^{+}_b \land \tau^-_0} e^{-\alpha t} w^\prime_- ( {X(t)}) \diff t\right]$ where $w'_-$ is the left derivative of $w$.}
%		where $p$ {can be chosen to be any} value in $[0, 1]$. 
	%\yellow{[Should we add here that ${(v_{ b})}^{\prime}_{+}$ is right-continuous on $(0,b)$, I guess it might be needed for proving the concavity at least for $b^*$?]}
%		\blue{[should we just say, ``In addition, it has a density (3.6)'?]}
%		\blue{[Here is it assumed that $b> 0$ or does this hold for $b=0$ as well?]}
%		\red{[Here, we treat the case that $b>0$ since if $b=0$, 
%		\begin{align}
%		v^\prime_0 (x)=
%		\begin{cases}
%		\beta, \quad &x<0, \\
%		1 \quad &x>0,
%		\end{cases}
%		\end{align}
%		is obvious.
%		]}\yellow{[Do we mention this at the beginning of the proof or as a Remark?]}
		% with respect to the Lebesgue measure.
		%with respect to the Lebesgue measure on $(0, b)$. 
		%{Here, by the derivative at $0$ (resp. at $b$) we mean the right-derivative (resp. left-derivative)}. 
	\end{lemma}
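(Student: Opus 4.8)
The plan is to derive the expression for $v_b'$ by the same perturbation technique used in Lemma \ref{lemma_opt_barrier}, but now perturbing the \emph{initial value} $x$ rather than the barrier $b$. Fix $b$ (with $b>0$ in the unbounded variation case) and $x \in \R \setminus \{0,b\}$, and consider the coupling of the two doubly reflected processes $U^{(0,b)}$ started from $x$ and from $x+\varepsilon$ for small $\varepsilon>0$. As in \cite{Nob2019} (the argument around (4.7) there) one shows $0 \le U_{x+\varepsilon}^{(0,b)}(t)-U_x^{(0,b)}(t)\le \varepsilon$ for all $t$, and that the difference is exactly $\varepsilon$ up until the first time the upper process is reflected from above or the lower process is reflected from below, i.e.\ until $\tau_b^+ \wedge \tau_0^-$ (the hitting times of the \emph{driving} \lev process $X$, since before any reflection $U_x^{(0,b)}(t)=X(t)+x$). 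At that epoch either a chunk of size (asymptotically) $\varepsilon$ of dividends is paid, or a chunk of capital injection of size $\varepsilon$ is saved, and then the two coupled paths restart from a common point; by the strong Markov property the remaining contribution is $v_b(\cdot)$ evaluated at that common point, so the increments from the reflection terms telescope. This yields
\begin{align*}
v_b'(x) = \E_x\!\left[e^{-\alpha\tau_b^+};\tau_b^+<\tau_0^-\right] + \beta\,\E_x\!\left[e^{-\alpha\tau_0^-};\tau_0^-<\tau_b^+\right] + r\lim_{\varepsilon\downarrow0}\frac1\varepsilon\E_x\!\left[\int_0^{\tau_b^+\wedge\tau_0^-}e^{-\alpha t}\big(w(X(t)+x+\varepsilon-x_0)-w(X(t)+\cdots)\big)\diff t\right],
\end{align*}
and the $w$-term, by concavity of $w$ (so the difference quotients are bounded by $w_+'(0)$, which is integrable against $r\int_0^\infty e^{-\alpha t}\diff t$) and dominated convergence, converges to $r\,\E_x[\int_0^{\tau_b^+\wedge\tau_0^-}e^{-\alpha t}w_+'(X(t))\diff t]$. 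Since $w$ is differentiable a.e.\ and $w_+' = w_-'$ Lebesgue-a.e., and the occupation measure of $X$ on $(0,b)$ (for $\sigma>0$, or in general off the jump times) has no atoms, the same limit is obtained with $w_-'$, giving the asserted alternative form.

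For the coupling and telescoping step I would closely mirror the bookkeeping in the proof of Lemma \ref{lemma_opt_barrier}: define the successive reflection epochs for the process started at $x+\varepsilon$, observe from \eqref{6}-type bounds that on the intervals where $U^{(0,b)}_x$ is being reflected from above the difference $U^{(0,b+?)}$ — here the difference between the two \emph{initial conditions} — decreases by the reflected amount (capped so it never exceeds $\varepsilon$ nor drops below $0$), and on the intervals where it is reflected from below the difference increases back correspondingly, and that between reflections it is constant. Thus $L^{(0,b)}_{x+\varepsilon}-L^{(0,b)}_x$ and $R^{(0,b)}_x-R^{(0,b)}_{x+\varepsilon}$ are, up to $o(\varepsilon)$, concentrated at these epochs, and after normalizing by $\varepsilon$ and sending $\varepsilon\downarrow0$ one recovers the geometric series in $\E_b[e^{-\alpha\kappa_0^{b,-}}]\E_0[e^{-\alpha\rho_b^{b,(1)}}]$ exactly as in \eqref{12}, \eqref{13}, \eqref{14}; summing those three geometric series collapses (because before the first reflection $U^{(0,b)}_x$ coincides with $X+x$ and $\rho_b^{b,(1)},\rho_0^{b,(1)}$ correspond to $\tau_b^+,\tau_0^-$ under $\p_x$) to the stated three-term formula. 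Continuity of $v_b$ on $\R$ then follows because $v_b^L,v_b^R$ are continuous by \cite[Lemmas 3.2, 3.3]{Nob2019} and $v_b^w$ is continuous by dominated convergence ($w$ continuous, $U^{(0,b)}$ bounded), while continuous differentiability on $\R\setminus\{0,b\}$ follows from the explicit right-hand side: the maps $x\mapsto\E_x[e^{-\alpha\tau_b^+};\tau_b^+<\tau_0^-]$ and $x\mapsto\E_x[e^{-\alpha\tau_0^-};\tau_0^-<\tau_b^+]$ are continuous on $(0,b)$ (and, being $0$ or satisfying strong Markov identities outside, continuous on the complement of $\{0,b\}$), and the $w_+'$-integral term is continuous in $x$ by dominated convergence since $w_+'$ has only countably many discontinuities and the occupation measure of $X$ is a.s.\ atomless there.

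The main obstacle I anticipate is twofold. First, making the coupling bookkeeping fully rigorous when the two processes are reflected at the \emph{same} barrier $b$ but start from different points requires a careful pathwise argument showing the difference process stays in $[0,\varepsilon]$ and that the amount of dividends/capital differs by exactly the current value of that difference at each reflection — this is where the cited equation after (4.7) in \cite{Nob2019} and the convergence of the perturbed hitting times (as in \cite[Lemma 1]{NobYam2020}) must be invoked and adapted, and one must also handle the bounded variation case $b=0$ separately (as at the end of the proof of Lemma \ref{lemma_opt_barrier}, via $v_b\to v_0$). Second, the interchange of limit and expectation in the $w$-term needs the concavity-based domination $[w(y+\varepsilon)-w(y)]/\varepsilon\le w_+'(0)$ together with $\E_x[\int_0^\infty e^{-\alpha t}w_+'(0)\diff t]=w_+'(0)/\alpha<\infty$; this is exactly the estimate already used in \eqref{10}--\eqref{14}, so it transfers directly. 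Everything else — the geometric-series summation, the identification of $\rho_b^{b,(1)},\rho_0^{b,(1)}$ with $\tau_b^+,\tau_0^-$ under $\p_x$ for $x\in(0,b)$, and the passage to $w_-'$ — is routine given the machinery already developed in Section \ref{Sec_Bailout}.
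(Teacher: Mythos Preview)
Your core approach---perturb the initial value and couple the two doubly reflected processes---is exactly what the paper does. However, you have imported one piece of intuition from Lemma \ref{lemma_opt_barrier} that does not carry over and leads you to a needlessly complicated (and in fact incorrect) plan. When the \emph{barrier} is perturbed from $b$ to $b+\varepsilon$, the difference $U^{(0,b+\varepsilon)}-U^{(0,b)}$ oscillates: it resets to $\varepsilon$ after each upper reflection and to $0$ after each lower reflection, which is why the geometric series in $\E_b[e^{-\alpha\kappa_0^{b,-}}]\E_0[e^{-\alpha\rho_b^{b,(1)}}]$ appear in \eqref{12}--\eqref{14}. But when the \emph{initial value} is perturbed with the barrier fixed at $b$, the difference $U^{(\varepsilon)}-U^{(0)}$ is \emph{monotone non-increasing} in $t$ (this is \cite[(5.5)--(5.7)]{Nob2019}): each reflection, whether from above or below, can only shrink the gap, never ``increase it back''. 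So there is no series to sum; once the gap hits $0$ the paths coincide forever. The paper exploits this via a simple sandwich: the gap equals $\varepsilon$ on $[0,\tau^{(\varepsilon),+}_b\wedge\tau^{(0),-}_0]$ and equals $0$ on $[\tau^{(0),+}_b\wedge\tau^{(\varepsilon),-}_0,\infty)$, and both endpoints collapse to $\tau_b^+\wedge\tau_0^-$ as $\varepsilon\downarrow0$ by \cite[Lemma 1]{NobYam2020}.

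Two further points of comparison. First, the paper does not redo the $v_b^L,v_b^R$ derivatives from scratch; it quotes \cite[Lemma 5.3]{Nob2019} for \eqref{34_a} and carries out the coupling argument only for the new term $v_b^w$. Second, your passage from $w_+'$ to $w_-'$ via atomlessness of the potential measure is on target (the paper invokes Assumption \ref{assumption_compound_poisson} and \cite[Proposition I.15]{Ber1996}), and the continuity of $v_b'$ on $(0,b)$ is shown in the paper by spatial shifts of $X$ plus dominated convergence and \cite[Lemma 1(ii)]{NobYam2020}; your sketch of that step is in the right direction but would need to be fleshed out.
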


	\begin{proof}
		%\yellow{[Added this.] In the proof of this Lemma \yellow{[Remove "In the proof of this Lemma"?]} 
		%We focus on the case $b^\ast > 0$, since if $b^\ast = 0$ then the density of $v_0$ is given by
		On $(-\infty , 0) \cup (b, \infty)$, $v_b$ is {differentiable} and the derivative is 
			\begin{align}
				v^\prime_b (x)=
				\begin{cases}
					\beta, \quad &x<0, \\
					1 \quad &x>b,
				\end{cases}
				\label{50}
			\end{align}
{which agrees with \eqref{density_v_1}}. %Lebesgue-a.e\red{[I guess Lebesgue-a.e is not necessary]}.
	
	We now focus on $(0,b)$.
		{By Remark \ref{Rem401} and the proof of \cite[Lemma 5.3]{Nob2019} with  \cite[Lemma 1]{NobYam2020} (thanks to Assumption \ref{assumption_compound_poisson}), %and the smoothness of {the mappings given in} \eqref{19},  {[Kei: after removing the assumption of not being a driftless compound Poisson...we no longer have the smoothness of the two-sided exit problem?]}\red{[The driftless compound Poisson processes are also fine since the conditions in \cite[Lemma 5.3]{Nob2019} contains these.]} 
		$v_b^L$ and $v_b^R$} %{belong to $C^{(1)}_{\text{line}}$ % {[Kei: we need to define $C^{(1)}_{\text{line}}$ before?]} 
			are {differentiable} on $(0, b)$ with their derivatives, %and the derivative \red{are} given% \red{Lebesgue-a.e}
			%\red{Lebesgue-a.e $x \in (0, b)$}
			respectively, by
		\begin{align}
			v_b^{L,\prime} (x):= \E_x \left[e^{-\alpha \tau^+_b}; \tau^+_b<\tau^-_0\right], \quad
			v_b^{R,\prime} (x):= -\E_x \left[e^{-\alpha \tau^-_0}; \tau^-_0<\tau^+_b\right].% \quad x \in [0, b]\red{(0, b)?}. 
			\label{34_a}
		\end{align}
		In addition, $v_b^L$ and $v_b^R$ are continuous on $\bR$ by the proof of \cite[Lemma 5.4]{Nob2019}.
		
		%\green{[maybe this argument can be moved to the end of the proof and say $\tau_0^-$ can be replaced with $\tau_0^{[p]}$ and $w'$ can be replaced with $w'_p$ at the end.]}\red{[I agree with you.]}\blue{[Agreed]}
%		respectively. 
		
		\par
		We now compute the derivative of $v_b^w$ on $(0, b)$.
		For $\varepsilon\in\R$ and $t \geq 0$, we write  {$X^{(\varepsilon)}(t)=X(t)+\varepsilon$}. 
		We write $U^{(\varepsilon)}$ for the {doubly} reflected process {with boundaries $0$ and $b$ driven by  $X^{(\varepsilon)}$.}
%		 that represents the resulting process of $X^{(\varepsilon)}$ on which 
%		the double barrier strategy at $b$ {and $0$} has been imposed. 
		%Let $Y^{(\varepsilon)}$ be the reflected L\'evy process of $X^{(\varepsilon)}$ at $b$. 
		 {Additionally, w}e write $\tau^{(\varepsilon),-}_0:=\inf \{t> 0 :  {X^{(\varepsilon)}(t)}<0\}$ and $\tau^{(\varepsilon),+}_b:=\inf \{t> 0 :  {X^{(\varepsilon)}(t)}>b\}$.  %\blue{[added $+,-$.]} 
		 {We note,} {for $x \in \bR$,}
		\begin{align}
			\frac{v_b^w(x+\varepsilon)-v_b^w(x)}{\varepsilon}=
			%\frac{1}{\varepsilon}\left( \E_{x+\varepsilon}\left[\int_0^\infty e^{-\alpha t}w( {U^{(0,b)}(t)}) \diff t \right]-\E_{x}\left[\int_0^\infty e^{-\alpha t}w( {U^{(0,b)}(t)}) \diff t \right]\right)\\
			\E_x \left[\int_0^\infty e^{-\alpha t} \frac{w( {U^{(\varepsilon)}(t)})-w( {U^{(0)}(t)})}{\varepsilon}\diff t \right].\label{24}
		\end{align}
		%\blue{[above: ok to delete the first equality?]}
		%and this is decreasing  {[Kei: decreasing as a function of what?]} \red{[This may not have been necessary. ]}. 
		From \cite[(5.5)--(5.7)]{Nob2019}, we have 
		%\red{[I moved the following.]
		\begin{align}
			 {U^{(\varepsilon)}(t) -U^{(0)}(t)} \in [0, \varepsilon], \quad t\geq 0,\label{21}
		\end{align}
           and it is non-increasing.
		%}
		In particular, \cite[(5.7), (5.8) and (5.11)]{Nob2019} imply
		\begin{align}
			 {U^{(\varepsilon)}(t) -U^{(0)}(t)}=\varepsilon, \quad t\in[0, \tau^{(\varepsilon),+}_b \land \tau^{(0),-}_0],
		\end{align}
		and, so using the fact that $w$ is non-decreasing, we have 
		\begin{align}
%			\E_x \left[\int_0^\infty e^{-\alpha t} \left({w( {U^{(\varepsilon)}(t)})-w( {U^{(0)}(t)})}\right)\diff t \right] \\
		{v_b^w(x+\varepsilon)-v_b^w(x)} \geq \E_x \left[\int_0^{\tau^{(\varepsilon),+}_b \land \tau^{(0),-}_0} e^{-\alpha t} \left({w( {U^{(0,b)}(t)}+\varepsilon)-w( {U^{(0,b)}(t)})}\right)\diff t \right].
			\label{22}
		\end{align}
		%On the other hand, we have 
		%\begin{align}
		%U^{(0)}_{\tau^{(0)}_b}=b, \quad U^{(\varepsilon)}_{\tau^{(\varepsilon)}_0}=0.\label{20}
		%\end{align}
		On the other hand, from \cite[Section C]{Nob2019}, 
 by the equation after (5.8) and the equation following (5.11), we have $U^{(\varepsilon)}(\tau^{(0),+}_b \land \tau^{(\varepsilon),-}_0) -U^{(0)}(\tau^{(0),+}_b \land \tau^{(\varepsilon),-}_0)=0$. Since $t\mapsto U^{(\varepsilon)}(t) -U^{(0)}(t)$ is non-increasing, we have 
		\begin{align}
			 {U^{(\varepsilon)}(t) -U^{(0)}(t)}=0, \quad t\geq\tau^{(0),+}_b \land \tau^{(\varepsilon),-}_0. \label{20}
		\end{align}
		From \eqref{21} and \eqref{20}, we have 
		\begin{align}
%			\E_x \left[\int_0^\infty e^{-\alpha t} \left({w( {U^{(\varepsilon)}(t)})-w( {U^{(0)}(t)})}\right)\diff t \right]\\
			{v_b^w(x+\varepsilon)-v_b^w(x)}
			\leq \E_x \left[\int_0^{\tau^{(0),+}_b \land \tau^{(\varepsilon),-}_0} e^{-\alpha t} \left({w( {U^{(0,b)}(t)}+\varepsilon)-w( {U^{(0,b)}(t)})}\right)\diff t \right].
			\label{23}
		\end{align}
		
		%\blue{[maybe in \eqref{22} and \eqref{23}, we can write the version without the division by $\varepsilon$]}
		From \eqref{24}, \eqref{22} and \eqref{23}, %and by the dominated convergence theorem, the function $v_b^w$ is continuous\red{[We write about the continuity again after \eqref{55}. Thus, should we remove this?]}. In addition,
		{and by taking limit as $\varepsilon\downarrow0$, dominated convergence gives} for $x \in (0, b)$
		\begin{align*}
		{(v^{w}_b)_+' (x):=}
			\lim_{\varepsilon\downarrow0}\frac{v_b^w(x+\varepsilon)-v_b^w(x)}{\varepsilon}=
			\E_x \left[\int_0^{\tau^{+}_b \land \tau^-_0} e^{-\alpha t} w^\prime_+ ( {U^{(0,b)}(t)})\diff t \right]
			=\E_x \left[\int_0^{\tau^{+}_b \land \tau^-_0} e^{-\alpha t} w^\prime_+ ( {X(t)}) \diff t\right],
			%\label{25}
		\end{align*}
		%{Lebesgue-a.e.} 
		{because} $\lim_{\varepsilon \downarrow 0}\tau^{(\varepsilon),-}_0=\tau^-_0$ and $\lim_{\varepsilon \downarrow 0}\tau^{(\varepsilon),+}_b=\tau^+_b$ $\p_x$-a.s. for $x\in(0, b)$ by %{\cite[Lemma 1(iii)]{NobYam2020}} 
		\cite[Lemma 1(ii)]{NobYam2020}.%for \red{Lebesgue a.e.} $x \in (0, b)$ 
	{%\blue{[moved and  rewrote this]} 
	%\yellow{[Should the proof end here and delete the rest? I guess in this lemma we are no longer obtaining the density?]} \blue{[agree that it should not be here, but need to check if this is needed elsewhere]}\yellow{[OK, but I think is not needed anymore?]}

{
		By changing from $0$ to $-\varepsilon$ and from $\varepsilon$ to $0$ in \eqref{24}, \eqref{22} and \eqref{23}, we have 
		\begin{multline}\label{49}
		\E_x \left[\int_0^{\tau^{(0),+}_b \land \tau^{(-\varepsilon),-}_0} e^{-\alpha t} \left({w( {U^{(0,b)}(t)})-w( {U^{(0,b)}(t)-\varepsilon})}\right)\diff t \right]\\
		\leq v_b^w (x)-v_b^w (x - \varepsilon)
		=
		\E_x \left[\int_0^\infty e^{-\alpha t} \left({w( {U^{(0)}(t)})-w( {{U^{(-\varepsilon)}(t)}})}\right)\diff t \right] \\
		\leq
		\E_x \left[\int_0^{\tau^{(-\varepsilon),+}_b \land \tau^{(0),-}_0} e^{-\alpha t} \left({w( {U^{(0,b)}(t)})-w( {U^{(0,b)}(t)-\varepsilon})}\right)\diff t \right], 
		\end{multline}
		and thus we obtain, for $x \in (0, b)$, 
		\begin{multline}
		(v^{w}_b)_-' (x):=
			\lim_{\varepsilon\downarrow0}\frac{v_b^w(x)-v_b^w(x-\varepsilon)}{\varepsilon}
			=\lim_{\varepsilon \downarrow 0}\E_x \left[\int_0^\infty e^{-\alpha t} \frac{w( {U^{(0)}(t)})-w( {U^{(-\varepsilon)}(t)})}{\varepsilon}\diff t \right]\\
			=\E_x \left[\int_0^{\tau^{+}_b \land \tau^-_0} e^{-\alpha t} w^\prime_- ( {U^{(0,b)}(t)})\diff t \right]
			=\E_x \left[\int_0^{\tau^{+}_b \land \tau^-_0} e^{-\alpha t} w^\prime_- ( {X(t)}) \diff t\right]. 
		\end{multline}
		}	
%	Hence, we can rewrite the density of $v_b$ for $x\in(0, b)$ as,
%\begin{align}
%	{v_b^{\prime}}(x)= \E_x \left[e^{-\alpha \tau^+_b}; \tau^+_b<\tau^-_0 \right]+\beta \E_x\left[e^{-\alpha \tau^-_0};\tau^-_0<\tau^+_b\right]
%	+r\E_x \left[\int_0^{\tau^{+}_b \land \tau^-_0} e^{-\alpha t} w^\prime_{p^\ast} ({X(t)})\diff t\right]. \label{35}
%\end{align}
	}
	
%\red{[moved this sentence here.]}\blue{[Agreed]} 
Note that the inequalities \eqref{22}, \eqref{23} and \eqref{49} hold for $x\in\bR$ (also at $0$ and $b$), and hence
%\green{[The above calculation is performed by restricting $x$, but we can easily see that it also holds true for $x\in\bR$.
%how should we write?]}, and hence \blue{[How about: "It is not difficult to check that the inequalities \eqref{22}, \eqref{23} and \eqref{49} also hold for $x\in(-\infty,0)\cup(b,\infty)$"?]}\green{[I agree.]} \red{[$x\in(-\infty,0)\cup(b,\infty)$ has already been taken care of in the beginning. Here do we want to show it is continuous at $0$ and $b$?]}\blue{[I think so...maybe remove all the color tet and leave it like it is right now?]} 
$v_b^w$ is continuous on $\bR$ by the dominated convergence theorem.
	
	{Since the potential measure of $X$ does not have a mass by Assumption \ref{assumption_compound_poisson} and \cite[Proposition I.15]{Ber1996}, we have, for $x\in (0, b)$, %\red{[I changed as follows because we do not need to introduce $p$ yet]}\green{[Ok]} 
	{{that} $v^w_b$ is differentiable with}
	% and $p\in[0, 1]$
	\begin{align}
	%(v^{w}_b)_+' (x)=(v^{w}_b)_-' (x)
	{v^{w,\prime}_b(x)} 
	=\E_x \left[\int_0^{\tau^{+}_b \land \tau^-_0} e^{-\alpha t} w^\prime_+ ( {X(t)}) \diff t\right] = \E_x \left[\int_0^{\tau^{+}_b \land \tau^-_0} e^{-\alpha t} w^\prime_- ( {X(t)}) \diff t\right].\label{55}
	\end{align}

	%In addition, by \eqref{tau_same}, it is equal to $\E_x \left[\int_0^{\tau^{+}_b \land \tau^{[p]}_0} e^{-\alpha t} w^\prime_p ( {X(t)}) \diff t\right]$. 
	}

%{The derivative $v^\prime_b$ is continuous by \cite[Lemma 1(iii)]{NobYam2020}, the dominated convergence theorem and the the fact that the potential of $X$ does not have a mass. } \green{[Does \cite[Lemma 1(iii)]{NobYam2020} cover the case with $w$?]}\red{[The continuity of $w$ part comes from \eqref{24}, \eqref{22}, \eqref{23}, \eqref{49} with the dominated convergence theorem and the fact that the potential of $X$ dose not have a mass]}\blue{[Doesn't the continuity of $(v_b^{w})'$ follows from the expression $(v^{w}_b)' (x)=\E_x \left[\int_0^{\tau^{+}_b \land \tau^-_0} e^{-\alpha t} w^\prime_p ( {X(t)}) \diff t\right]=\E \left[\int_0^{\tau^{+}_{b-x} \land \tau^-_{-x}} e^{-\alpha t} w^\prime_p ( {X(t)+x}) \diff t\right]$, the continuity of the first passage times as in  \cite[Lemma 1(iii)]{NobYam2020}, the dominated convergence and the fact that the potential measure has no mass?]}
%\red{[How about the following?] 
From \eqref{34_a}, \eqref{55}, the dominated convergence theorem and  \cite[Lemma 1(ii)]{NobYam2020}, we have, for $x\in (0, b)$, 
\begin{align}
&v_b^{L,\prime} (x+)= v_b^{L,\prime} (x-)=\E \left[e^{-\alpha \tau^+_{b-x}}; \tau^+_{b-x}<\tau^-_{-x}\right]=v_b^{L,\prime} (x), \\
	&		v_b^{R,\prime} (x+)=v_b^{R,\prime} (x-)= -\E \left[e^{-\alpha \tau^-_{-x}}; \tau^-_{-x}<\tau^+_{b-x}\right]=v_b^{R,\prime} (x), \\
	&v^{w,\prime}_b (x+)=\E \left[\int_0^{\tau^{+}_{b-x} \land \tau^-_{-x}} e^{-\alpha t} w^\prime_+ ( X(t)+x) \diff t\right], \\
		&v^{w,\prime}_b (x-)=\E \left[\int_0^{\tau^{+}_{b-x} \land \tau^-_{-x}} e^{-\alpha t} w^\prime_- ( X(t)+x) \diff t\right], 
\end{align}
{where the last two equations hold by \eqref{55}.}
%Since the potential measure of $X$ does not have mass by \cite[Proposition I.15]{Ber1996}, we have $v^{w,\prime}_b (x)=v^{w,\prime}_b (x+)=v^{w,\prime}_b (x-)$ for $x\in(0, b)$. 
Therefore, the continuity of $v_b^\prime$ on $(0, b)$ is obtained.

	%\red{[I moved the following.]
	
		\end{proof}

{In the following lemma, we show that the function $v_{b^*}$ with our selection of $b^*$ is smooth also at $b^*$.}
%\red{[I guess this paragraph no longer makes sense. Delete?]}
Because {we consider a general \lev process $X$} and the function $w$ is allowed to be  
	%can be 
	non-differentiable, we use the following technique to obtain an useful expression of the derivative $v_{b^*}'$.
%	we use the following technique to obtain a density  function of $v_{b^*}$ \red{$v_{b^*}'$??}\green{[I think $p^\ast$ is necessary to rewrite the derivative of $v_{b^*}$ in a good form.] " to obtain a {nice} form of the derivative of $v_{b^*}$"}, 
%	\red{[We will show below that $v_{b^*}$ is continuosly differentiable even at $b^*$. So I think it is ok to call it the derivative.  How about ``we use the following technique to obtain an useful expression of the derivative $v_{b^*}'$'']}\blue{[Agreed.]}
%	satisfying the conditions described above. 
This is only {a} technicality and {for instance, it is not required for the case $X$ is of unbounded variation and $w$ is continuously differentiable,} and Lemma \ref{Lem304_a} below
	%\yellow{[Is it Lemma \eqref{Lem304_a} instead?]} 
	holds with $w'$.
	
		{Fix} $p\in[0, 1]$. We define a Bernoulli random variable $A_p$, with
	\begin{align}
		A_p=
		\begin{cases}
			0, \quad \text{with probability } p,\\
			1, \quad \text{with probability } 1-p,
		\end{cases}
	\end{align}
 independent of $X$. 
	{For the single-sided reflected process $Y^{b^*}$}, we define $\kappa^{[p]}_0$, parameterized by $p \in [0,1]$ as a modification of $\kappa^{b^\ast, -}_0$ (see  \eqref{kappa_def}) as follows {(recall the definition of regularity in Lemma \ref{Lem402})}:
	\begin{enumerate}
		\item 
		When $0$ is regular for $(-\infty, 0)$, we set $\kappa^{[p]}_0:= \kappa^{b^\ast, -}_0$;
		\item 
		When $0$ is irregular for $(-\infty , 0)$, {which occurs in the case that} $X$ has bounded variation paths and a non-negative drift, % {[I guess we can also have irregularity when the drift is $0$ and a condition on the L\'evy measure?]}\red{[As you say, it is irregular if, $X$ has no drift and a large fraction of positive jumps. ]}, 
		%Let $K^{(n)}_0$ be the n-th hitting time to $0$ of $Y^{b^\ast}$. 
		%\red{[For simplicity, I think we should change as follows:
		we set
		\begin{align}
			%\kappa^{[p]}_0:= \kappa^{b^\ast, -}_0 \land \inf\{K^{(n)}_0:  A^{(n)}_p\red{A_p }=0\}.
			\kappa^{[p]}_0:= \kappa^{b^\ast, -}_0\Ind_{\{A_p=1\}}+K^{b^\ast, -}_0\Ind_{\{A_p=0\}},
		\end{align}
		where
		$K^{b^\ast, -}_0 := \inf \{t\geq 0 :  {Y^{b^\ast}(t)} \leq 0 \}$.
		% {[Kei: I guess that $\kappa^{[0]}_0:=\kappa^{b^\ast, -}_0$?]}\red{[I changed the definition of $A_p$ a bit, in which case I guess it's correct.]}
	\end{enumerate}
	We also define 
	\begin{align}
		w^\prime_p(x):=(1-p)w^\prime_+(x)+pw^\prime_-(x), \quad x >  0.
	\end{align}
	%\blue{[do we need to understand $w'_-(0) = w'_+(0)$?]}
	%\blue{[defined this]}
	%For $p \in [0,1]$, let
	Let
			\begin{align*}
		g_p(b^*) := \beta\mathbb{E}_{b^\ast}\left[e^{-{{\alpha }}\kappa^{[p]}_0}\right]
			+r\mathbb{E}_{b^\ast}\left[\int_0^{\kappa^{[p]}_0}e^{-\alpha t}w'_{p}( {Y^{b^\ast}(t)}) \diff t\right] = \beta - \E_{b^*} \left[ \int_0^{\kappa^{[p]}_0} e^{-\alpha t} l_p ( {Y^{b^*}(t)}) \diff t  \right],
		\end{align*}
		where $l_p(y) := \beta \alpha -rw'_p (y)$ for $y>0$.

	Then we have the following {result} {whose proof is deferred to Appendix \ref{AppA03}}.  %\green{[agreed]}
	\begin{lemma}\label{aux_1}
	Suppose that $b^\ast>0$.
		There exists $p^\ast \in [0, 1]$ which satisfies %\blue{[below and for the rest $\kappa^{[p^\ast]}_0<\infty$ should hold a.s. and so we can just delte the indicators?]}
		\begin{align}
%			\beta\mathbb{E}_{b^\ast}\left[e^{-{{\alpha }}\kappa^{[p^\ast]}_0}\right]
%			+r\mathbb{E}_{b^\ast}\left[\int_0^{\kappa^{[p^\ast]}_0}e^{-\alpha t}w'_{p^\ast}( {Y^{b^\ast}(t)}) \diff t\right]	
			g_{p^*}(b^*) = 1. \label{26}
		\end{align}
	\end{lemma}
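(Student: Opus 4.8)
The plan is to establish \eqref{26} via an intermediate value argument applied to a cleverly constructed one-parameter family. First I would observe that the only reason $g(b^*)$ may fail to equal $1$ exactly is a jump discontinuity of $g$ at $b^*$: by the definition \eqref{threshold} and the monotonicity established in Lemma \ref{lemma_g_monotonicity}, we always have $g(b^*-) \geq 1 \geq g(b^*+) = g(b^*)$, where the right-continuity of $g$ (Lemma \ref{Lem402}) gives the last equality. By that same lemma, if $0$ is regular for $(-\infty,0)$ then $g$ is continuous and hence $g(b^*) = 1$; in this case we simply take $p^* = 1$ (or any $p$, since $\kappa^{[p]}_0 = \kappa^{b^*,-}_0$ and $g_p(b^*) = g(b^*) = 1$ regardless). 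So the substantive case is when $0$ is irregular for $(-\infty,0)$, i.e.\ $X$ has bounded variation paths with non-negative drift.

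In that irregular case, I would analyze the map $p \mapsto g_p(b^*)$ on $[0,1]$ and show it is continuous with $g_1(b^*) = g(b^*-)$ and $g_0(b^*) = g(b^*)$, so that the intermediate value theorem produces the desired $p^*$. The key identities: when $A_p = 1$ we run $\kappa^{b^*,-}_0 = \inf\{t \geq 0 : Y^{b^*}(t) < 0\}$, while when $A_p = 0$ we run $K^{b^*,-}_0 = \inf\{t \geq 0 : Y^{b^*}(t) \leq 0\}$; in the bounded-variation-with-nonnegative-drift case the process reaches $0$ continuously from above, so $K^{b^*,-}_0$ is strictly smaller (it is the hitting time of $0$, preceding the hitting time of $(-\infty,0)$ which requires a jump). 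Conditioning on $A_p$ and using the strong Markov property at $K^{b^*,-}_0$ (at which point $Y^{b^*}$ is at $0$), together with the left-continuity argument behind Lemma \ref{Lem402}, should give precisely
\begin{align}
g_0(b^*) = \beta - \E_{b^*}\left[\int_0^{K^{b^*,-}_0} e^{-\alpha t} l_0(Y^{b^*}(t))\,\diff t\right], \quad g_1(b^*) = \beta - \E_{b^*}\left[\int_0^{\kappa^{b^*,-}_0} e^{-\alpha t} l_1(Y^{b^*}(t))\,\diff t\right],
\end{align}
and I would identify $g_1(b^*)$ with $g(b^*)$ (since $w'_1 = w'_-$ and $w'_-= w'_+$ a.e., the integral is unchanged) and $g_0(b^*)$ with $g(b^*-)$ by matching the extra contribution over $[\kappa^{b^*,-}_0, K^{b^*,-}_0]$ against the left-limit; here the fact that $\kappa^{0,-}_{-b}$ is the first downward jump time, exponentially distributed (as in Remark \ref{Rem202}), makes the $b \uparrow b^*$ limit computation explicit. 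Linearity of $g_p(b^*)$ in the two "pieces" — namely $g_p(b^*) = \E[A_p]\cdot(\text{term with }\kappa) + \E[1-A_p]\cdot(\text{term with }K) = p\,g_1(b^*) + (1-p)\,g_0(b^*)$ up to the $w'_p$ interpolation, which is also affine in $p$ — then shows $p \mapsto g_p(b^*)$ is affine, so continuity and the IVT apply immediately once $g_0(b^*) \geq 1 \geq g_1(b^*)$ is known.

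The main obstacle I anticipate is verifying the two boundary identifications $g_1(b^*) = g(b^*)$ and $g_0(b^*) = g(b^*-)$ rigorously — in particular showing that the left-limit $g(b^*-)$ is captured exactly by switching $\kappa^{b^*,-}_0$ to the larger stopping time $K^{b^*,-}_0$. This requires a careful decomposition of $\E_b[\int_0^{\kappa^{0,-}_{-b}} e^{-\alpha t} l(Y^0(t)+b)\,\diff t]$ as $b \uparrow b^*$, using that for $b < b^*$ the reflected process started at $b$ must make a genuine downward excursion (jump) to go below $-b$ shifted appropriately, whereas at the limit the continuous passage to $0$ is available; the exponential law of the first downward jump when $\nu(-\infty,0) < \infty$ is the tool that makes this tractable, but one must handle the $\nu(-\infty,0) = \infty$ sub-case of irregularity separately (there regularity fails only through the drift, and a different excursion-theoretic bookkeeping is needed). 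Everything else — continuity in $p$, the IVT step, and the $p^* = 1$ shortcut in the regular case — is routine given the results already in the excerpt.
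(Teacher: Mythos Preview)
Your overall strategy --- evaluate $g_p(b^*)$ at the endpoints $p=0$ and $p=1$, show these bracket the value $1$, establish continuity in $p$, and invoke the intermediate value theorem --- is exactly the paper's approach. However, your execution has the endpoint identifications reversed. Recall from the definitions preceding the lemma that $A_p = 0$ with probability $p$ and $A_p = 1$ with probability $1-p$, and $\kappa^{[p]}_0 = \kappa^{b^*,-}_0 \Ind_{\{A_p=1\}} + K^{b^*,-}_0 \Ind_{\{A_p=0\}}$. Thus at $p=0$ one has $A_0 = 1$ a.s., so $\kappa^{[0]}_0 = \kappa^{b^*,-}_0$ and $w'_0 = w'_+$, giving $g_0(b^*) = g(b^*) \leq 1$; at $p=1$ one has $A_1 = 0$ a.s., so $\kappa^{[1]}_0 = K^{b^*,-}_0$ and $w'_1 = w'_-$, and it is $g_1(b^*)$ that should be identified with $g(b^*-) \geq 1$. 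Your display equations pair $K^{b^*,-}_0$ with $l_0$ and $\kappa^{b^*,-}_0$ with $l_1$, which matches neither endpoint of the paper's $g_p$; with the swap corrected, your argument goes through.

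Two minor corrections. First, $p \mapsto g_p(b^*)$ is not affine: since both the stopping time (through the independent Bernoulli $A_p$) and the integrand $l_p$ depend linearly on $p$, expanding the expectation produces a \emph{quadratic} polynomial in $p$, as the paper writes out explicitly. This is harmless for the IVT step. Second, there is no ``$\nu(-\infty,0)=\infty$ sub-case of irregularity'' to handle separately: by Remark \ref{Rem202}(1), $\nu(-\infty,0)=\infty$ already forces $0$ to be regular for $(-\infty,0)$, so that configuration falls into the trivial case you dispatched at the outset.
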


	%\blue{[Changed label of the next Lemma, because two lemmas were using the same label]}
	%\red{[Thanks!]}
	\begin{lemma}\label{Lem304_a}
	%\red{[Since we will delete Assumption \ref{AA2}, I think we should change as follows.] 
	%The density $v_{b^\ast}^\prime$ on $(0, b)$ of the function $v_b$ can be {chosen} as follows:
		%We have 
		{The function $v_{b^*}$ is {continuously} {differentiable} on $\bR\backslash\{0\}$. In addition,} %\blue{This holds for $x\in(0,b^*)$?}\red{[I guess this is correct on $\bR\backslash\{0\}$ because $v_{b^\ast}^\prime$ is continuous on $\bR\backslash\{0\}$.]}
{for all ${x\in\bR\backslash\{0\}}$,}%\blue{This holds for $x\in(0,b^*)$?}\red{[I guess this is correct since for $x \geq b^\ast$, we have $v_{b^\ast}^\prime (b^\ast)=g_{p^\ast}(b^\ast)=1$.]}
		%\yellow{[Agreed]}
		\begin{align}
			v_{b^\ast}^\prime (x) &=\beta \E_x\left[ e^{-\alpha \kappa^{[p^\ast]}_0} \right]+r\E_x \left[ \int_0^{\kappa^{[p^\ast]}_0}e^{-\alpha t}w^\prime_{p^\ast}( {Y^{b^\ast}(t)})\diff t \right] \\
			&=\beta - \E_x \left[ \int_0^{\kappa^{[p^\ast]}_0}e^{-\alpha t}{l_{p^\ast}}( {Y^{b^\ast}(t)}) \diff t \right],  %\quad x\in(0, b^\ast),
			\label{47}
		\end{align}
		{where $p^\ast$ is given in Lemma \ref{aux_1}}.
		%{In addition, $v_{b^\ast}^\prime$ }{is continuous on $(0, \infty)$.}
		%and it becomes a density. 
%		
%		\blue{[I wonder if \eqref{47} holds all the time or only a.e. Looking at the proof, looks like it holds all the time.]}\yellow{[Yeah I agree looking at the proof it appears that it holds for all $x>0$.]}
	\end{lemma}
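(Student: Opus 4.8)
\textbf{Proof proposal for Lemma \ref{Lem304_a}.}

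The plan is to prove smoothness of $v_{b^*}$ on $\bR \setminus \{0\}$ by gluing together three pieces of information: (i) the explicit form \eqref{density_v_1} of $v_b'$ on $\bR \setminus \{0, b\}$ valid for every $b > 0$, specialized to $b = b^*$; (ii) the one-sided derivative identity \eqref{18} which says $\lim_{\varepsilon \downarrow 0}(v_{b+\varepsilon}(x) - v_b(x))/\varepsilon$ is proportional to $g(b) - 1$, hence vanishes as $b \uparrow b^*$ by continuity of $g$ (Lemma \ref{Lem402}) and the defining property \eqref{18_a}; and (iii) the balancing parameter $p^*$ of Lemma \ref{aux_1} chosen precisely so that $g_{p^*}(b^*) = 1$. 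The point of (iii) is that the raw candidate $g(b^*)$ need only satisfy $g(b^*) \leq 1$ (it can be strictly less when $0$ is irregular for $(-\infty,0)$), so to get an exact smooth-fit one interpolates between the right- and left-derivative versions of $g$ via $p$.

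First I would dispose of the region $\bR \setminus \{0, b^*\}$. By Lemma \ref{Lem304_v1} applied with $b = b^*$, $v_{b^*}$ is continuously differentiable there with $v_{b^*}'$ given by \eqref{density_v_1}; I then rewrite \eqref{density_v_1} in terms of the reflected process $Y^{b^*}$ and the stopping time $\kappa^{b^*,-}_0$. The key observation is that, started from $x < b^*$, the reflected process $Y^{b^*}$ coincides with $X$ up to $\tau^+_{b^*} \wedge \tau^-_0$, and after hitting $b^*$ it is instantaneously reflected; a standard strong-Markov/excursion decomposition (as in the proof of \cite[Lemma 5.3]{Nob2019}) converts the three terms of \eqref{density_v_1} — the $e^{-\alpha \tau^+_{b^*}}$ term, the $\beta e^{-\alpha \tau^-_0}$ term, and the running $w'_+$ integral — into the single expression $\beta \E_x[e^{-\alpha \kappa^{b^*,-}_0}] + r\E_x[\int_0^{\kappa^{b^*,-}_0} e^{-\alpha t} w'_+(Y^{b^*}(t))\,\diff t]$, which is exactly \eqref{47} with $p^* $ replaced by the "pure $+$" choice. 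Since $0$ regular for $(-\infty,0)$ forces $\kappa^{[p]}_0 = \kappa^{b^*,-}_0$ and $w'_p$ interpolates between $w'_+$ and $w'_-$, in the regular case the formula \eqref{47} holds with any $p$, in particular $p^*$; in the irregular case the modified stopping time $\kappa^{[p^*]}_0$ enters and I must verify by the strong Markov property at the first entrance to $(-\infty,0]$ that the expectation still telescopes correctly — this uses that on $\{A_{p^*} = 0\}$ the process is stopped at $K^{b^*,-}_0$, the first time $Y^{b^*} \leq 0$.

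The crux is continuity of $v_{b^*}'$ at the point $b^*$ itself, i.e.\ matching the left limit $v_{b^*}'(b^*-)$ (computed from the $(0,b^*)$ formula just described, sending $x \uparrow b^*$) against the right value $v_{b^*}'(b^*+) = 1$ (from \eqref{50}). Letting $x \uparrow b^*$ in \eqref{47} and using the continuity/path-regularity lemma \cite[Lemma 1]{NobYam2020} to pass the limit inside the expectations, the left limit becomes $\beta\E_{b^*}[e^{-\alpha\kappa^{[p^*]}_0}] + r\E_{b^*}[\int_0^{\kappa^{[p^*]}_0} e^{-\alpha t} w'_{p^*}(Y^{b^*}(t))\,\diff t] = g_{p^*}(b^*)$, which equals $1$ by Lemma \ref{aux_1}. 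Hence $v_{b^*}'(b^*-) = 1 = v_{b^*}'(b^*+)$, giving continuity of the derivative across $b^*$; combined with continuity of $v_{b^*}$ itself (Lemma \ref{Lem304_v1}) this yields $v_{b^*} \in C^1(\bR \setminus \{0\})$. The main obstacle I anticipate is the bookkeeping in the irregular case: one must be careful that the Bernoulli randomization $A_{p^*}$ genuinely produces the exact value $1$ rather than merely an inequality, and that the strong-Markov telescoping which collapses \eqref{density_v_1} into \eqref{47} remains valid when $\kappa^{b^*,-}_0$ is replaced by the randomized $\kappa^{[p^*]}_0$ — this is where Assumption \ref{assumption_compound_poisson} (no mass in the potential measure) and the regularity dichotomy of Lemma \ref{Lem402} are used, and it is precisely the step for which Lemma \ref{aux_1} was prepared.
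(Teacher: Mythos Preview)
Your overall strategy matches the paper's: establish \eqref{47} on $\bR\setminus\{0,b^*\}$ by combining Lemma \ref{Lem304_v1} with a strong-Markov decomposition at $\tau^+_{b^*}$, and use $g_{p^*}(b^*)=1$ to get smooth fit at $b^*$. Two points deserve sharpening.

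First, on $(0,b^*)$ the strong-Markov collapse of \eqref{density_v_1} into the single expression \eqref{47} does not work with the ``pure $+$'' choice unless $g(b^*)=1$. The paper introduces $\tau^{[p^*]}_0:=\tau^-_0\Ind_{\{A_{p^*}=1\}}+T^-_0\Ind_{\{A_{p^*}=0\}}$ and observes that $\tau^-_0=T^-_0=\tau^{[p^*]}_0$ under $\p_x$ for $x>0$; thus \eqref{density_v_1} can be written with $\tau^{[p^*]}_0$ and $w'_{p^*}$ from the outset. The decomposition at $\tau^+_{b^*}$ then produces a term $a_1(x)\big(1-g_{p^*}(b^*)\big)$, and it is precisely Lemma \ref{aux_1} that kills it. Your narrative (first get the $p=0$ version, then swap to $p^*$) would leave a residual $a_1(x)(1-g(b^*))$ in the irregular case.

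Second, and more importantly, the left-continuity $A(b^*-)=A(b^*)$ is not as automatic as you suggest. The map $x\mapsto \E_x\big[\int_0^{\kappa^{[p^*]}_0}e^{-\alpha t}l_{p^*}(Y^{b^*}(t))\,\diff t\big]$ involves the reflected process, and for $x<b^*$ the reflection at $b^*$ is inactive until $\tau^+_{b^*}$, whereas at $x=b^*$ it is active immediately; the ``continuity of hitting times'' lemma from \cite{NobYam2020} does not by itself bridge this. The paper splits into two cases according to whether $0$ is regular for $(0,\infty)$ or for $(-\infty,0)$ (at least one holds by Assumption \ref{assumption_compound_poisson}). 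In the first case one writes $A(x)$ as a two-term decomposition at $\tau^+_{b^*}$ and uses $\E_{b^*}[e^{-\alpha\tau^+_{b^*}}]=1$ to pass to the limit; in the second case one uses the monotonicity of $A$ together with a squeeze argument starting from $A(b^*)$ and decomposing at $\kappa^{b^*,-}_x$. Your proposal should anticipate this case split rather than the Bernoulli bookkeeping, which is in fact straightforward once $\tau^{[p^*]}_0$ is set up correctly.
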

	\begin{proof}
	
	%\red{[moved here and revised]}\green{[Ok.]}
	We define %, for $p\in[0, 1]$, 
	%$T^-_0 := \inf\{t >0 : X_t \leq 0\}$ and 
		\begin{align}
			\tau^{[p^*]}_0:= \tau^{-}_0\Ind_{\{A_{p^*}=1\}}+T^{-}_0\Ind_{\{A_{p^*}=0\}} \quad \textrm{where } \; T^-_0 := \inf\{t >0 : X(t) \leq 0\}.
		\end{align} %\green{[changed the order above]}
	%\blue{[Below I rewrote everythig as follows] 
	Additionally, consider \begin{align}
		\label{39a}
		%{(v_{b^\ast})}_+^{\prime}\red{
		A(x)
		%			&= \frac{\E_x \left[e^{-\alpha \tau^+_{b^\ast}}; \tau^+_{b^\ast}<\tau^-_0 \right]\E_{b^\ast} \left[e^{-\alpha \kappa^{[p^\ast]}_0} \right]}{\E_{b^\ast} \left[e^{-\alpha \kappa^{[p^\ast]}_0} \right] } +\beta \E_x\left[e^{-\alpha \tau^-_0};\tau^-_0<\tau^+_{b^\ast}\right]\notag\\
		%			&+r\E_x \left[\int_0^{\tau^{+}_{b^\ast} \land \tau^-_0} e^{-\alpha t} w^\prime_{+} ( {X(t)})\diff t\right]\\
		&:= 
		%\frac{\E_x \left[e^{-\alpha \kappa^{[p^\ast]}_0}; \tau^+_{b^\ast}<\tau^-_0 \right]}{\E_{b^\ast} \left[e^{-\alpha \kappa^{[p^\ast]}_0} \right] }
		{a_1}(x)
		+\beta{a_2}(x)
		+r {a_3}(x), \qquad x\in {\bR}, 
\end{align}
%\red{[Should we give the other name for this function?]}
where 
\begin{align*}
{a_1}(x) &:= \E_x \left[e^{-\alpha \tau^+_{b^*}}; \tau^+_{b^*}<\tau^{{[p^\ast]}}_0 \right], \\
{a_2}(x) &:= \E_x\left[e^{-\alpha \tau^{{[p^\ast]}}_0};\tau^{{[p^\ast]}}_0<\tau^+_{b^\ast}\right], \\
{a_3}(x) &:= \E_x \left[\int_0^{\tau^{+}_{b^\ast} \land\tau^{{[p^\ast]}}_0} e^{-\alpha t} w^\prime_{{p^\ast}} ( {X(t)})\diff t\right].
\end{align*}
%\green{[changed like this. Can you check?]}
%\yellow{[I think it's correct.]}
For $\varepsilon > 0$ and $x > 0$, we have $\tau_{\varepsilon}^- \leq T^-_0\leq \tau^{[p^*]}_0 \leq \tau^-_0$, $\mathbb{P}_x$-a.s. Thus taking $\varepsilon \downarrow 0$ and thanks to the continuity as in \cite[Lemma 1(i)]{NobYam2020}, we have
%		Since $T^-_0\leq \tau^{[p^*]}_0 \leq \tau^-_0$ and by \cite[Lemma 1(i)]{NobYam2020}, we have
		\begin{align} \label{tau_same}
		\tau^-_0=T^-_0=\tau^{[p^*]}_0 \quad \mathbb{P}_x-a.s., \textrm{ for } x > 0.
		\end{align}
%		and thus we can rewrite $v_b^{L,\prime}$ and $v_b^{R,\prime}$ as 
%		\begin{align}
%		\begin{aligned}
%			v_b^{L,\prime} (x)= \E_x &\left[e^{-\alpha \tau^+_b}; \tau^+_b<\tau^{[p]}_0\right], \quad
%			v_b^{R,\prime} (x)= -\E_x \left[e^{-\alpha\tau^{[p]}_0};\tau^{[p]}_0<\tau^+_b\right], \\
%			& v^{w, \prime}_b(x)=\E_x \left[\int_0^{\tau^{+}_b \land \tau^{[p]}_0} e^{-\alpha t} w^\prime_p ( {X(t)}) \diff t\right],
%			\end{aligned}
%			\label{34}
%		\end{align}
%		for any $p \in [0, 1 ]$. 
		Therefore, as  a corollary of Lemma \ref{Lem304_v1},
		%\red{Fix $b \geq 0.$}
		%The function $\red{x \mapsto v_b(x)}$ is continuous on $\bR$ and {continuously} differentiable on $\bR\backslash\{0, b\}$, {with} 
%In addition, its \red{continuous} derivative 
	%\yellow{[Agreed and the two below]} 
	%is written as %{Lebesgue-a.e.,}
	for $x \in \bR\backslash\{0, b^*\}$,
		\begin{align}\label{density_v}
			v_{b^*}^{\prime}(x) =A(x). %\E_x \left[e^{-\alpha \tau^+_{b^*}}; \tau^+_{b^*}<{\tau^{[p^*]}_0} \right] \\ &+\beta \E_x\left[e^{-\alpha{\tau^{[p^*]}_0}};{\tau^{[p^*]}_0}<\tau^+_{b^*}\right]
			%+r\E_x \left[\int_0^{\tau^{+}_{b^*} \land {\tau^{[p^*]}_0}} e^{-\alpha t} w^\prime_{p^*} ( {X(t)}) \diff t\right]. %\quad {x \in \bR\backslash\{0, b\},}
		\end{align}
								By the definition of the single reflected L\'evy processes, we have 
		$\{Y^{b^\ast}_t : t< \tau^+_{b^\ast}\} =\{X_t : t< \tau^+_{b^\ast}\}$, and so we have 
		$\tau^{[p^\ast]}_0= \kappa^{[p^\ast]}_0$ on $\{\tau^{[p^\ast]}_0< \tau^+_{b^\ast}\}$, which is equal to $\{\kappa^{[p^\ast]}_0< \tau^+_{b^\ast}\}$. 
		
By these %, \eqref{tau_same}, 
and the strong Markov property (noting $Y^{b^*}(\tau_{b^*}^+) = b^*$ {on $\{\tau_{b^*}^+ < \infty\}$}), we have % \blue{[below: deleted the first equality and rearranged.]}\green{[Ok.]}
				\begin{align*}
		{a_2}(x) %= \E_x\left[e^{-\alpha \tau^{[p^*]}_0};\tau^{[p^*]}_0<\tau^+_{b^\ast}\right] 
		= \E_x\left[e^{-\alpha \kappa^{[p^\ast]}_0};\kappa^{[p^\ast]}_0<\tau^+_{b^\ast}\right]  
		&= \E_x\left[e^{-\alpha \kappa^{[p^\ast]}_0} \right] - \E_x\left[e^{-\alpha \kappa^{[p^\ast]}_0};\kappa^{[p^\ast]}_0> \tau^+_{b^\ast}\right]\notag\\
		&= \E_x\left[e^{-\alpha \kappa^{[p^\ast]}_0} \right] -
		{a_1}(x) 
%		\E_x\left[e^{-\alpha \tau^+_{b^\ast}};\kappa^{[p^\ast]}_0> \tau^+_{b^\ast}\right] 
		\E_{b^*}\left[e^{-\alpha \kappa^{[p^\ast]}_0} \right],
		\end{align*}
and %\blue{[below: ok to delete the first equality?]}\green{[Ok.]}
						%\red{In the last step, we transform $\blue{a_3}$.}
						%In addition, the potential measure of $X$ does not have a mass by Assumption \ref{assumption_compound_poisson} and \cite[Proposition I.15]{Ber1996}. Therefore,
	%\red{Combining \eqref{48} and \eqref{tau_same}, it holds, for $x\in (0, b)$,}
						\begin{align*}
%&\E_x \left[\int_0^{\tau^{+}_{b^\ast} \land \tau^-_0} e^{-\alpha t} w^\prime_{+} ( {X(t)})\diff t\right]	
{a_3}(x)%&= \E_x \left[\int_0^{\tau^{+}_{b^\ast} \land \tau^{[p^*]}_0} e^{-\alpha t} w^\prime_{p^\ast} ( {X(t)})\diff t\right]\\ 
%&= \E_x \left[ 1_{\{\tau_{b^*}^+ > \tau^{[p^\ast]}_0 \}} \int_0^{\tau^{[p^*]}_0} e^{-\alpha t} w^\prime_{p^\ast} ( {X(t)})\diff t\right]
%+ \E_x \left[ 1_{\{\tau_{b^*}^+ < \tau^{[p^\ast]}_0 \}} \int_0^{\tau^{+}_{b^\ast}} e^{-\alpha t} w^\prime_{p^\ast} ( {X(t)})\diff t\right]
%\\ 
&= \E_x \left[\int_0^{\kappa^{[p^\ast]}_0} e^{-\alpha t} w^\prime_{p^\ast} ( {Y^{b^*}(t)})\diff t\right] -
%		\E_x\left[e^{-\alpha \tau^+_{b^\ast}};\kappa^{[p^\ast]}_0> \tau^+_{b^\ast}\right] 
		\E_{x} \left[ \Ind_{\{\tau_{b^*}^+ < {\kappa^{[p^\ast]}_0}\}}\int_{\tau_{b^*}^+}^{\kappa^{[p^\ast]}_0 } e^{-\alpha t} w^\prime_{p^*}( {Y^{b^\ast}(t)})\diff t\right] \\
		&= \E_x \left[\int_0^{\kappa^{[p^\ast]}_0} e^{-\alpha t} w^\prime_{p^\ast} ( {Y^{b^*}(t)})\diff t\right] -
		{a_1}(x) 
%		\E_x\left[e^{-\alpha \tau^+_{b^\ast}};\kappa^{[p^\ast]}_0> \tau^+_{b^\ast}\right] 
		\E_{b^\ast} \left[ \int_0^{\kappa^{[p^\ast]}_0 } e^{-\alpha t} w^\prime_{p^*}( {Y^{b^\ast}(t)})\diff t\right].
		\end{align*}
		
Substituting these in \eqref{39a}, {for $x\in\bR$, }
				\begin{align*}
		%v_{b^\ast}^{\prime}(x)\red{ 
		{A(x)}&={a_1}(x) +\beta 
		\left( \E_x\left[e^{-\alpha \kappa^{[p^\ast]}_0} \right] - {a_1}(x) \E_{b^*}\left[e^{-\alpha \kappa^{[p^\ast]}_0} \right]  \right) \\
		%\E_x\left[e^{-\alpha \tau^{[p^\ast]}_0};\tau^{[p^\ast]}_0<\tau^+_{b^\ast}\right]
			&+r
			\left( \E_x \left[\int_0^{\kappa^{[p^\ast]}_0} e^{-\alpha t} w^\prime_{p^\ast} ( {Y^{b^*}(t)})\diff t\right] - {a_1}(x) \E_{b^\ast} \left[ \int_0^{\kappa^{[p^\ast]}_0 } e^{-\alpha t} w^\prime_{p^*}( {Y^{b^\ast}(t)})\diff t\right] \right) \\
			&={a_1}(x) + \left( \beta -  \E_x \left[\int_0^{\kappa^{[p^\ast]}_0} e^{-\alpha t} {l_{p^\ast}} ( {Y^{b^*}(t)})\diff t\right]\right) - {a_1}(x) \left( \beta -  \E_{b^*} \left[\int_0^{\kappa^{[p^\ast]}_0} e^{-\alpha t} {l_{p^\ast}} ( {Y^{b^*}(t)})\diff t\right]\right) \\
			&{=\beta - \E_x \left[ \int_0^{\kappa^{[p^\ast]}_0}e^{-\alpha t}{l_{p^\ast}}( {Y^{b^\ast}(t)}) \diff t \right]}
%						&=\E_x\left[e^{-\alpha \tau^+_{b^\ast}};\kappa^{[p^\ast]}_0> \tau^+_{b^\ast}\right] + \Big( \beta -  \E_x \left[\int_0^{\kappa^{[p^\ast]}_0} e^{-\alpha t} l^\prime_{p^\ast} ( {Y^{b^*}(t)})\diff t\right]\Big) - \E_x\left[e^{-\alpha \tau^+_{b^\ast}};\kappa^{[p^\ast]}_0> \tau^+_{b^\ast}\right]  \\
			%&=  \beta -  \E_x \left[\int_0^{\kappa^{[p^\ast]}_0} e^{-\alpha t} l^\prime_{p^\ast} ( {Y^{b^*}(t)})\diff t\right]. 
			%\\
			%\red{=\beta - \E_x \left[ \int_0^{\kappa^{[p^\ast]}_0}e^{-\alpha t}{l_{p^\ast}}( {Y^{b^\ast}(t)}) \diff t \right],}
		\end{align*}
		%which equals \eqref{47} 
		by the definition of $b^*$ as a solution to \eqref{26}. {On the other hand, 
		{the equation \eqref{density_v}}
		%Lemma \ref{Lem304} 
		implies that $v'_{b^*}(x)=A(x)$ for $x\in\bR\backslash\{0, b^\ast\}$. Hence, \eqref{47} holds true on $\bR\backslash\{0, b^\ast\}$}.

{
Since $v_{b^\ast}$ is continuous {on $\mathbb{R}$} and {$v'_{b^*}(x)=A(x)$ for $x\in\bR\backslash\{0, b^\ast\}$, the function $A$} is a density of the function $v_{b^\ast}$. 
By {Lemma \ref{Lem304_v1} and \eqref{density_v},}
%Lemma \ref{Lem304}, 
the function $v_{b^\ast}^\prime(x)={A(x)}$ is continuous on $(0, b^\ast)\cap (b^\ast, \infty)$, and so 
it is enough to prove the continuity of $x\mapsto {A(x)}$ at $b^\ast$ when $b^\ast> 0 $. {Indeed, this readily shows the existence of $v_{b^*}'(b^*)$ and it coincides with $A(b^*)$.}
%\red{[check: I do not know if this necessarily implies the derivative of $v_{b^*}$ exists and it coincides with $A(b^*)$? But I guess because $A$ is a density and it is increasing except at $b^*$, this implies the right- and left-derivative of $v_{b^*}$ at $b^*$ exists. Then, $A(b^*-)=A(b^*+)=A(b^*)$ implies the right- and left-derivative $v_{b^*}$ coincide with $A(b^*)$?]}

%For $x\geq b^\ast$, it holds $v_{b^\ast}^\prime (b^\ast)=1$ since $Y^{b^\ast}(0)=b^\ast$ when $X(0)=x$ and by the form of \eqref{47}. 
By \eqref{50} and since  ${A}(b^*)=1$ by Lemma \ref{aux_1}, the function $x\mapsto {A(x)}$ %\red{[For this, I guess we should give the other name for the right hand side of \eqref{39a}.]}\blue{[Kei: I do not understand this comment and where is the computation of $v_{b^*}(b^*)$ because \eqref{39a} only holds for $\R\backslash\{0,b^*\}$?]}
 is right continuous at $b^\ast$. 
Thus, {it remains to confirm} the left continuity {at $b^*$ (i.e.\ $A(b^*-) = A(b^*)=1$)}. 
Since $X$ is not a driftless compound Poisson process, $0$ is regular for $\bR\backslash\{0\}$ {--
meaning at least one of the following holds: (1) $0$ is regular for $(0, \infty)$ and/or (2) $0$ is regular for $(-\infty,0)$.
}

{(1)} If $0$ is regular for $(0, \infty)$ for $X$, we have, for $x \in (0, b^\ast)$, %\red{[delete the first equation below?]}\green{[Ok.]}
\begin{align*}
%v_{b^\ast}^\prime (x)
{A(x)}
%=&\beta - \E_x \left[ \int_0^{\kappa^{[p^\ast]}_0}e^{-\alpha t}{l_{p^\ast}}( {Y^{b^\ast}(t)}) \diff t \right]
%\\
=&\beta - \E_x \left[ \int_0^{\tau^+_{b^\ast}\land{\tau^{[p^\ast]}_0 }}e^{-\alpha t}{l_{p^\ast}}( {X(t)}) \diff t \right]
-\bE_x\left[e^{-\alpha\tau^+_{b^\ast}}; \tau^+_{b^\ast}<\tau^{[p^\ast]}_0\right]\E_{b^\ast} \left[ \int_0^{\kappa^{[p^\ast]}_0}e^{-\alpha t}{l_{p^\ast}}( {Y^{b^\ast}(t)}) \diff t \right]\\
=&\beta - \E_{b^\ast} \left[ \int_0^{\tau^+_{2b^\ast-x}\land{\tau^{[p^\ast]}_{b^\ast-x} }}e^{-\alpha t}{l_{p^\ast}}( {X(t)}-b^\ast+x) \diff t \right]\\
&\qquad \qquad 
-\bE_x\left[e^{-\alpha\tau^+_{b^\ast}}; \tau^+_{b^\ast}<\tau^{[p^\ast]}_0\right]\E_{b^\ast} \left[ \int_0^{\kappa^{[p^\ast]}_0}e^{-\alpha t}{l_{p^\ast}}( {Y^{b^\ast}(t)}) \diff t \right]\\
{\xrightarrow{x \uparrow b^\ast}}
%\rightarrow
&\beta-\E_{b^\ast} \left[ \int_0^{\kappa^{[p^\ast]}_0}e^{-\alpha t}{l_{p^\ast}}( {Y^{b^\ast}(t)}) \diff t \right]=
{1 = A(b^*)}
%v_{b^\ast}^\prime(b^\ast),
\end{align*}
%\red{[should we replace the last equality by $=1 = A(b^*)$?]}
%as $x \uparrow b^\ast$, 
where we used %the strong Markov property at the second equality and 
\cite[Lemma 1(ii), (iii)]{NobYam2020} with $\bE_{b^\ast}\left[e^{-\alpha \tau^+_{b^\ast}}\right]=1$ {(because $0$ is regular for $(0,\infty)$ and by the spatial homogeneity of $X$)} in the limit.

{(2)} If $0$ is regular for $(-\infty , 0)$ for $X$, we have, for $x\in (0, b^\ast)$,
\begin{align}
{1=}{A({b^*})}
%v_{b^\ast}^\prime (b^\ast)
=&\beta - \E_{b^*} \left[ \int_0^{\kappa^{[p^\ast]}_0}e^{-\alpha t}{l_{p^\ast}}( {Y^{b^\ast}(t)}) \diff t \right]
\\
=&\beta - \E_{b^\ast} \left[ \int_0^{\kappa^{b^\ast,-}_x}e^{-\alpha t}{l_{p^\ast}}( {Y^{b^\ast}(t)}) \diff t \right]
-\bE_{b^\ast} \left[  e^{-\alpha \kappa^{b^\ast,-}_x}  \bE_{Y^{b^\ast} (\kappa^{b^\ast,-}_x)} \left[ \int_0^{\kappa^{[p^\ast]}_0}e^{-\alpha t}{l_{p^\ast}}( {Y^{b^\ast}(t)}) \diff t\right]\right]\\
\geq &\beta - \E_{b^\ast} \left[ \int_0^{\kappa^{b^\ast,-}_x}e^{-\alpha t}{l_{p^\ast}}( {Y^{b^\ast}(t)}) \diff t \right]
-\bE_{b^\ast} \left[  e^{-\alpha \kappa^{b^\ast,-}_x} \right] \bE_{x} \left[ \int_0^{\kappa^{[p^\ast]}_0}e^{-\alpha t}{l_{p^\ast}}( {Y^{b^\ast}(t)}) \diff t\right]
\\
{\xrightarrow{x \uparrow b^\ast}}&\beta-\lim_{x\uparrow b^\ast}\bE_{x} \left[ \int_0^{\kappa^{[p^\ast]}_0}e^{-\alpha t}{l_{p^\ast}}( {Y^{b^\ast}(t)}) \diff t\right]= \lim_{x\uparrow b^\ast} {A(x)} % v_{b^\ast}^\prime (x), 
\end{align}
where we used \cite[Lemma 1(ii)]{NobYam2020} with $ e^{-\alpha \kappa^{b^\ast,-}_x} \geq  e^{-\alpha \tau^{-}_{x}} $ and $ \bE_{b^\ast}\left[  e^{-\alpha \tau^{-}_{b^\ast}} \right]=1$ {(because $0$ is regular for $(-\infty,0)$ and by the spatial homogeneity of $X$)} in the limit. 
Since %$v_{b^\ast}^\prime$
{$A(x)$}
 is non-increasing (because $Y^{b^*}(t)$ is monotone in the starting value), we have {${A(b^\ast)} \geq \lim_{x\uparrow b^\ast}{A(x)} \geq A(b^*)$, showing the left-continuity}. 
 
 %\red{[I don't know if this is true. Monotonicity implies $\lim_{x\uparrow b^\ast}{A(x)}$ exists but it does not necessarily mean it coincides $A(b^*)$?]}
Therefore, the function {$x\mapsto A(x)$} is continuous on $(0, \infty)$ in each case and it is the derivative of $v_{b^\ast}$ on $(0, \infty)$. 
}		
		The proof is complete. 
		%\red{[At the moment we have computed the right derivative almost everywhere, but I guess we have not proved that this is a density. I do not have a good idea.]}
		%\blue{[Kei: I was wondering if maybe proving the absolute continuity of $v_{b^*}$ is enough, and if it is straightforward to do this.	]}
	\end{proof}
	
	\subsection{Proof of optimality}
	From Lemma \ref{Lem304_a}, we obtain the following lemma. %{Below, we write $v_{b^\ast}^\prime = (v_{b^\ast}^\prime)_+$ as in Lemma \ref{Lem304_a}, which has been shown to be a density of $v'_{b^*}$. \green{[we can delete this sentence?]}  \red{[I agree.]} \blue%[how about this?]}\yellow{[Agreed]}
	\begin{lemma}\label{Lem408} %\blue{[rephrased]}
	\begin{enumerate}
		\item The function $v_{b^\ast}$ is concave {on $\R$} and belongs to $C^{(1)}_{\text{line}}$, with its {derivative} $v_{b^\ast}^\prime $ satisfying $1\leq v_{b^\ast}^\prime(x)\leq \beta $ for $x \in \R \backslash\{0\}$. 
%		\yellow{[in Lemma \ref{Lem304} the density is given by $(v_{b^\ast})^{\prime}_+$...so if we use the notation $v'_{b^*}$ we should say somewhere that in the rest of this section we denote the density of $v_{b^*}$ by $v_{b^*}'$?]} \blue{[just keep writing $(v_{b^\ast})^{\prime}_+$??]}
\item When $X$ has unbounded variation paths, $v_{b^\ast}$ belongs to $C^{(2)}_{\text{line}}$.
		\end{enumerate}
	\end{lemma}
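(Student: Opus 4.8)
The plan is to read off all the required properties of $v_{b^*}$ from the explicit formula \eqref{47} obtained in Lemma \ref{Lem304_a}, together with the elementary identities \eqref{50} outside $(0, b^*)$. First I would establish the derivative bound $1 \leq v_{b^*}'(x) \leq \beta$ for $x \in \R \backslash \{0\}$. On $(-\infty, 0)$ and $(b^*, \infty)$ this is immediate from \eqref{50}. On $(0, b^*)$, I use the representation $v_{b^*}'(x) = \beta - \E_x[\int_0^{\kappa_0^{[p^*]}} e^{-\alpha t} l_{p^*}(Y^{b^*}(t)) \diff t]$ from \eqref{47}. For the upper bound, note $l_{p^*}(y) = \beta\alpha - r w_{p^*}'(y) \geq \beta\alpha - r w_+'(0+) \geq 0$ by Assumption \ref{AA3} (via Remark \ref{remark_l}, using that $w_{p^*}'$ lies between $w_-'$ and $w_+'$, both of which are bounded above by $w_+'(0+)$ and which are monotone); hence the expectation is non-negative and $v_{b^*}'(x) \leq \beta$. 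For the lower bound, I would bound $l_{p^*}(Y^{b^*}(t)) \leq l_{p^*}(b^*)$ using monotonicity of $l_{p^*}$ and $Y^{b^*}(t) \leq b^*$, giving
\begin{align*}
v_{b^*}'(x) \geq \beta - l_{p^*}(b^*)\, \E_x\Bigl[\int_0^{\kappa_0^{[p^*]}} e^{-\alpha t}\diff t\Bigr] = \beta - \frac{l_{p^*}(b^*)}{\alpha}\Bigl(1 - \E_x[e^{-\alpha \kappa_0^{[p^*]}}]\Bigr).
\end{align*}
Comparing with the same formula evaluated at $x = b^*$, which equals $1$ by \eqref{26}, and using that $\E_x[e^{-\alpha\kappa_0^{[p^*]}}] \geq \E_{b^*}[e^{-\alpha\kappa_0^{[p^*]}}]$ for $x \leq b^*$ (the reflected process started lower exits $(-\infty,0)$ no later), one deduces $v_{b^*}'(x) \geq 1$.

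Next I would prove concavity. By Lemma \ref{Lem304_a}, $v_{b^*}$ is $C^1$ on $\R \backslash \{0\}$, and I claim $v_{b^*}'$ is non-increasing there. From the representation \eqref{47}, $v_{b^*}'(x) = \beta - \E_x[\int_0^{\kappa_0^{[p^*]}} e^{-\alpha t} l_{p^*}(Y^{b^*}(t))\diff t]$; since $Y^{b^*}$ is monotone in its starting value (the reflected process inherits the pathwise monotonicity of $X+x$ in $x$, and reflection at the barrier $b^*$ preserves it), both the integrand $l_{p^*}(Y^{b^*}(t))$ — by monotonicity of $l_{p^*}$ — and the horizon $\kappa_0^{[p^*]}$ are non-decreasing in the starting point $x$; hence the subtracted expectation is non-decreasing in $x$, so $v_{b^*}'$ is non-increasing on $(0, b^*)$. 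On $(-\infty,0)$ it equals $\beta$, on $(b^*,\infty)$ it equals $1$, and at the junctions the one-sided limits match ($v_{b^*}'(0+) \leq \beta = v_{b^*}'(0-)$ from the bound just proved, and $v_{b^*}'(b^*) = 1$ by continuity from Lemma \ref{Lem304_a}), so $v_{b^*}'$ is non-increasing on all of $\R$; together with continuity of $v_{b^*}$ this gives concavity on $\R$. Linear growth is clear since $v_{b^*}' $ is bounded (by $\beta$) and $v_{b^*}$ is continuous, so $v_{b^*} \in C^{(1)}_{\text{line}}$; combined with $C^1$-ness on $(0,\infty)$ this completes part (1).

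For part (2), assume $X$ has unbounded variation paths. I need to show $v_{b^*}'$ admits, on $(0,\infty)$, an a.e.-continuous uniformly bounded density $v_{b^*}''$ with $v_{b^*}'(0) + \int_0^x v_{b^*}''(y)\diff y = v_{b^*}'(x)$. On $(b^*,\infty)$ the derivative is constant so this is trivial. On $(0, b^*)$, I would write $v_{b^*}'$ in the form involving $H^{(b^*,\alpha)}_f$-type quantities: from \eqref{47},
\begin{align*}
v_{b^*}'(x) = \beta - H^{(b^*,\alpha)}_{l_{p^*}}(x),
\end{align*}
interpreting $H$ with the modified exit time $\kappa_0^{[p^*]}$ (which differs from $\kappa_0^{b^*,-}$ only when $0$ is irregular for $(-\infty,0)$, a case excluded here since unbounded variation implies regularity). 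Since $l_{p^*}$ is monotone hence locally bounded and measurable, Assumption \ref{Ass306} (and Remark \ref{Rem401}) gives that $H^{(b^*,\alpha)}_{l_{p^*}}$ has an a.e.-continuous Radon–Nikodym density locally bounded on $(0, b^*]$; I also need global (uniform) boundedness on $(0,\infty)$, which follows by patching: the density is $0$ on $(b^*,\infty)$, bounded on $(0, b^*]$ by Assumption \ref{Ass306}, and the single point $b^*$ is negligible — monotonicity of $v_{b^*}'$ together with its established bounds $1 \leq v_{b^*}' \leq \beta$ forces the total variation of $v_{b^*}'$ to be finite, but in fact since $v_{b^*}'$ is concave-function-derivative-like it is BV, and the a.e. density one constructs is non-negative and bounded by $\sup$ of the local bounds on $(0,b^*]$, which is finite. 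The absolute-continuity identity on all of $(0,\infty)$ follows by continuity of $v_{b^*}'$ at $b^*$ (Lemma \ref{Lem304_a}) gluing the pieces. Hence $v_{b^*} \in C^{(2)}_{\text{line}}$.

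The main obstacle I anticipate is part (2): carefully verifying that the locally bounded density supplied by Assumption \ref{Ass306} on $(0, b^*]$ can be upgraded to a genuinely uniformly bounded density on all of $(0,\infty)$ satisfying the fundamental-theorem-of-calculus identity, i.e. correctly handling the junction at $b^*$ and confirming no singular (e.g. atomic) part appears there — this is exactly where the $C^1$-smoothness at $b^*$ established in Lemma \ref{Lem304_a} (via the special choice of $p^*$) is essential. The monotonicity-based arguments for parts (1) rely on the pathwise monotonicity of the reflected process $Y^{b^*}$ in its starting point, which is standard (cf. \cite{NobYam2020}) but should be cited.
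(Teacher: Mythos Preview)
Your approach is essentially the paper's: deduce everything from the representation \eqref{47} in Lemma \ref{Lem304_a} together with the pathwise monotonicity of $Y^{b^*}$ and $\kappa_0^{[p^*]}$ in the starting point. However, your direct argument for the lower bound $v_{b^*}'(x) \geq 1$ contains an error. You bound $v_{b^*}'(x) \geq \beta - \frac{l_{p^*}(b^*)}{\alpha}\bigl(1 - \E_x[e^{-\alpha\kappa_0^{[p^*]}}]\bigr)$ and then claim this right-hand side equals $1$ at $x = b^*$ ``by \eqref{26}''. It does not: what equals $1$ at $x = b^*$ is $v_{b^*}'(b^*) = \beta - \E_{b^*}\bigl[\int_0^{\kappa_0^{[p^*]}} e^{-\alpha t} l_{p^*}(Y^{b^*}(t))\,\diff t\bigr]$, not the cruder expression with $l_{p^*}(b^*)$ pulled out; your own inequality applied at $x = b^*$ only shows the right-hand side is $\leq 1$, which is the wrong direction. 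Fortunately this step is redundant: once you have established that $v_{b^*}'$ is non-increasing (which you do correctly afterwards) and that $v_{b^*}'(b^*) = 1$, the lower bound on $(0, b^*]$ is immediate. This is exactly the paper's route---it never attempts a separate direct bound, but simply reads off $1 \leq v_{b^*}' \leq \beta$ from the monotonicity of $v_{b^*}'$ together with the boundary values \eqref{50}.

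For part (2) your outline is right in spirit, but one point is not pinned down: Assumption \ref{Ass306} supplies a density that is only \emph{locally} bounded on $(0, b^*]$, whereas $C^{(2)}_{\text{line}}$ demands a \emph{uniformly} bounded density on $(0,\infty)$. Your sentence ``bounded on $(0,b^*]$ by Assumption \ref{Ass306}'' overlooks possible blow-up near $0$. The paper closes this gap by invoking \cite[Lemma 5.6]{Nob2019}; you will need that citation (or an equivalent argument) to upgrade local to uniform boundedness near the lower endpoint.
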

	\begin{proof}
		%We have, {from Lemma \ref{Lem304}}
		 %\blue{[Kei: Formally the next identity does not follows from Lemma \ref{Lem304}? Given that this Lemma is for $x\in(0,b^*)$? I guess it follows from the definition of the double-barrier startegy? ]}\red{[That's right.]}\red{
		(1)  		Since $v_{b^\ast}$ has a {continuous derivative} $v_{b^\ast}^\prime$ {on $\bR\backslash\{0\}$} {({by Lemma \ref{Lem304_a}})}, %\blue{[for the continuity we should say more like "by the proof of Lemma \ref{Lem304_a} or  "by  \cite[Lemma 1]{NobYam2020}"?]}\red{[I changed the statement of Lemma \ref{Lem304_a}]}
		 {$v_{b^*}$ belongs to $C^{1}_{\text{line}}$}. %From the definition of $\pi^{(0,b^\ast)}$, we have
%		\begin{align}
%			v_{b^\ast}^\prime (x)=
%			\begin{cases}
%				\beta, \quad &x<0, \\
%				1, \quad& x> b^\ast.
%			\end{cases}
%			\label{50}
%		\end{align}
%		and $v_{b^\ast}$ is continuous on $\R$ by Lemma \ref{Lem304} \yellow{[Do we need to say this? We already said that $v_{b^*}$ belongs to $C^{1}_{\text{line}}$ above]}. 
		{By \eqref{50}}, it suffices to prove that $v_{b^\ast}^\prime $ is decreasing on ${\bR}$. %\yellow{[Hasn't we used this on the proof of Lemma \ref{Lem304_a} already to prove the concavity?]}
		%{[I changed the proof of Lemma \ref{Lem304_a}. Thus, probably the following is necessary.]}
		 %and $v_{b^\ast}^\prime(0+)\leq \beta$ and $v_{b^\ast}^\prime(b^\ast -) \geq 1$ hold. 
		
		%Since we know that $v_{b^\ast}^\prime\in C^1[0, b^\ast]$ 

	%\blue{[rephrased]}
	From {Lemma} \ref{Lem304_a}, {the fact that $l_{p^\ast}$ is non-negative and non-decreasing,}
	and because $Y^{b^*}$ {and $\kappa^{[p^\ast]}_0$ are} monotone in the starting value $x$, the mapping
			$x \mapsto v_{b^\ast}^\prime (x)
			%& =\beta \E_x\left[ e^{-\alpha \kappa^{[p^\ast]}_0}{;\kappa^{[p^\ast]}_0<\infty}\right]+r\E_x \left[ \int_0^{\kappa^{[p^\ast]}_0}e^{-\alpha t}w^\prime_{p^\ast}( {Y^{b^\ast}(t)})\diff t \right]\\
			=\beta - \E_x \Big[ \int_0^{\kappa^{[p^\ast]}_0}e^{-\alpha t}{l_{p^\ast}}( {Y^{b^\ast}(t)}) \diff t \Big]$ is non-increasing on $x \in{\bR}$. %{[Should we move it in the proof of Lemma \ref{Lem304_a}?]}. %This together with \eqref{26} show
%			
%			
%			
%%		\end{align}
%		From Assumption \ref{AA3} and the definition of $\alpha$ {and $w$}, the function $v_{b^\ast}^\prime$ is decreasing with respect to $x$ and {bounded above by} $\beta$ for $x \in (0, b^\ast)$. 
%		%By the left-continuity of $v_{b^\ast}^\prime$ at $b^\ast$, we have 
%		%\begin{align}
%		%	v_{b^\ast}^\prime (b^\ast-)= \beta \E_{b^\ast}\left[ e^{-\alpha \kappa^{[p^\ast]}_0}\right]+r\E_{b^\ast} \left[ \int_0^{\kappa^{[p^\ast]}_0}e^{-\alpha t}w^\prime_{p^\ast}(Y^{b^\ast}_t)dt \right]=1,
%		%\end{align}
%		In addition, from the above equality, we have
		%\begin{align}
		%v_{b^\ast}^\prime (x)
		%	&\geq \beta - \E_{b^\ast} \left[ \int_0^{\kappa^{[p^\ast]}_0}e^{-\alpha t}{l_{p^*}} ( {Y^{b^\ast}(t)}) \diff t \right]
%			& =\beta \E_{b^\ast}\left[ e^{-\alpha \kappa^{[p^\ast]}_0}{;\kappa^{[p^\ast]}_0<\infty}\right]+r\E_{b^\ast} \left[ \int_0^{\kappa^{[p^\ast]}_0}e^{-\alpha t}w^\prime_{p^\ast}( {Y^{b^\ast}(t)})\diff t \right]
		%	=1,
		%\end{align}
		%and hence $v_{b^\ast}^\prime(b^\ast -)= 1$ holds true. On the other hand, because $l_{p^*}$ is non-negative by Remark \ref{remark_l}, we must have $v'_{b^*}(0+) \leq \beta$.
		
		(2)
%\blue{[moved here]}
In addition, $v_{b^\ast}$ belongs to $C^{(2)}_{\text{line}}$ when $X$ has unbounded variation paths by Lemma \ref{Lem304_a}, \cite[Lemma 5.6]{Nob2019} and Assumption \ref{Ass306} and since $\kappa^{[p^\ast]}_0= \kappa^{b^\ast, -}_0$ when $0$ is regular for $(-\infty , 0)$. %\green{[is it ok to directly use Assumption \ref{Ass306} because now our killing time is a bit different (now depends on $p^*$)?]}\red{[No problem since we defined $\kappa^{[p]}_0= \kappa^{b^\ast, -}_0$ for $p\in[0, 1]$ when $0$ is regular for $(-\infty, 0)$.]}\blue{[Agreed]}
%\blue{[more details?? I do not think this is dealing with $v^w$]} 

		%Thus, it is enough to prove that $v_{b^\ast}^\prime$ is decreasing, {and that the conditions $v_{b^\ast}^\prime(0+)\leq \beta$ and $v_{b^\ast}^\prime(b^\ast -)= 1$ hold}. 

%		where in the last equality, we used \eqref{26}. 
%		The proof is complete. 
	\end{proof}
We take $v_{b^\ast}^{\prime\prime}$, which is the density of $v_{b^\ast}^{\prime}$, on $(-\infty, 0] \cup (b^\ast, \infty)$, as $0$. 

%	\red{[I guess that the following discussion should be divided into two parts when $X$ has unbound variation paths. In the first step, we assume that the function $w$ belongs to $C^2$ as \cite[Assumption 4.1]{NobYam2020}. Then $v_{b^\ast}$ is smooth enough. In the second step, we generalize $w$ by the approximation as \cite[Section 4.2]{NobYam2020}.]}\blue{[Agreed]}
	\begin{lemma}\label{Lem309}
	%\red{[I guess we must change "$b^\ast$" in this lemma to $b^\ast$.]}
	When $X$ has bounded variation paths, we have
		\begin{align}
			&(\mathcal{L} -\alpha )v_{b^\ast}(x)+r w(x)= 0, \quad x\in{(0, b^\ast{]}}. \label{42}%\red{(0, b^\ast) \text{ is better? We did not define }\mathcal{L} v_{b^\ast}(b^\ast)}.
		\end{align}
		%\blue{[agreed. let's check the whole paper including the proof of the verification lemma.]}
		When $X$ has unbounded variation paths, we can take $v_{b^\ast}^{\prime\prime}$ {on $(0, b^\ast]$} 
		%, which is the density of $v_{b^\ast}^{\prime}$, 
		such that \eqref{42} is satisfied.
	\end{lemma}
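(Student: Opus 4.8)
The plan is to establish the identity $(\mathcal{L}-\alpha)v_{b^*}(x)+rw(x)=0$ on $(0,b^*]$ by exploiting the probabilistic representation of $v_{b^*}$ on this interval together with the fact that $v_{b^*}$ encodes a doubly reflected process whose generator acts trivially in the interior. Recall that on $(0,b^*)$ the resolvent-type representation applies: the function $v_{b^*}$ restricted to this interval is (up to boundary corrections coming from the reflection) the expected discounted occupation functional of $w$ for $X$ killed upon exiting $(0,b^*)$, plus the dividend/injection terms $v_{b^*}^L-\beta v_{b^*}^R$. I would first record this by invoking the strong Markov property at $\tau_0^-\wedge\tau_{b^*}^+$ and the known first-passage identities used in Lemma \ref{Lem304_v1}, to write $v_{b^*}(x)=\E_x[\int_0^{\tau_0^-\wedge\tau_{b^*}^+}e^{-\alpha t}rw(X(t))\,\diff t]+\E_x[e^{-\alpha\tau_{b^*}^+}v_{b^*}(b^*);\tau_{b^*}^+<\tau_0^-]+\E_x[e^{-\alpha\tau_0^-}v_{b^*}(0);\tau_0^-<\tau_{b^*}^+]$ for $x\in(0,b^*)$, using that $v_{b^*}$ is continuous and that $X$ does not jump across $0$ instantly at an independent exponential time. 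From here a Dynkin/Itô argument on the killed process yields $(\mathcal{L}-\alpha)v_{b^*}+rw=0$ on $(0,b^*)$.

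The more delicate point is the smoothness required to make $\mathcal{L}v_{b^*}$ well-defined pointwise and to push the identity up to the right endpoint $b^*$. In the bounded variation case $\mathcal{L}$ only involves $v_{b^*}'$ and an integral term, and Lemma \ref{Lem304_a} gives $v_{b^*}\in C^1(\R\setminus\{0\})$ with $v_{b^*}'$ continuous at $b^*$; moreover $v_{b^*}'(x)=1$ for $x>b^*$, $v_{b^*}(x)=v_{b^*}(b^*)+(x-b^*)$ for $x\geq b^*$. Combined with the explicit value $g_{p^*}(b^*)=1$ from Lemma \ref{aux_1}, which is precisely the statement that the slope matches at $b^*$, one can take left limits $x\uparrow b^*$ in the interior identity: each term in $\mathcal{L}v_{b^*}(x)$ converges (the integral $\int_{\R\setminus\{0\}}(v_{b^*}(x+z)-v_{b^*}(x)-v_{b^*}'(x)z\Ind_{\{|z|<1\}})\nu(\diff z)$ by dominated convergence using the linear-growth bound and $|v_{b^*}'|\leq\beta$ from Lemma \ref{Lem408}), and $w$ is continuous, so the identity extends to $x=b^*$.

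For the unbounded variation case the subtlety is that $v_{b^*}''$ exists only a.e.\ and need not be continuous at $b^*$; the claim is only that one may \emph{choose} a version of $v_{b^*}''$ on $(0,b^*]$ for which \eqref{42} holds. Here I would argue as follows: by Lemma \ref{Lem309} (the bounded-variation reasoning adapted) the identity $(\mathcal{L}-\alpha)v_{b^*}+rw=0$ holds for a.e.\ $x\in(0,b^*)$ with the a.e.-defined density $v_{b^*}''$; since all other terms in this identity—namely $\gamma v_{b^*}'(x)$, $rw(x)$, $-\alpha v_{b^*}(x)$, and the jump integral—are continuous in $x$ on $(0,b^*]$ (again invoking Lemma \ref{Lem304_a}, Lemma \ref{Lem408}, Assumption \ref{Ass306} via Remark \ref{Rem401}, dominated convergence for the integral, and $g_{p^*}(b^*)=1$ to control the endpoint), the quantity $\tfrac{1}{2}\sigma^2 v_{b^*}''(x)$ must agree a.e.\ with a continuous function on $(0,b^*]$; redefining $v_{b^*}''(b^*)$ (and on the null set) to equal that continuous function gives a version for which \eqref{42} holds at every $x\in(0,b^*]$, including $b^*$. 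The main obstacle, and the step deserving the most care, is this last extension to the endpoint $b^*$: one must verify that the left limit of the jump integral term exists and equals its value at $b^*$, which hinges on the $C^1$-pasting $v_{b^*}'(b^*-)=1=v_{b^*}'(b^*+)$ established through Lemma \ref{aux_1}, so that there is no jump discontinuity in $v_{b^*}'$ at $b^*$ contaminating $\int_{|z|<1}v_{b^*}'(x)z\,\nu(\diff z)$ or the increment $v_{b^*}(x+z)-v_{b^*}(x)$ as $x\uparrow b^*$.
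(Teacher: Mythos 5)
Your overall strategy---establish the generator equation on $(0,b^*)$ via a strong Markov/Dynkin argument and then push to the endpoint using the $C^1$-pasting at $b^*$---is in the same spirit as the paper, and your endpoint argument essentially reproduces the paper's (continuity of all terms except $\tfrac12\sigma^2 v_{b^*}''$, plus a redefinition on a null set in the unbounded-variation case). The paper, however, takes a more modular route on the interior: it splits $v_{b^*} = v^{LR}_{b^*} + r v^w_{b^*}$, invokes \cite[Lemma 5.7]{Nob2019} directly for the $L$/$R$ part, and for the $w$ part constructs an explicit martingale $M^w(t) = \int_0^{t\wedge\tau^-_0\wedge\tau^+_{b^*}} e^{-\alpha u} w(U^{(0,b^*)}(u))\,\diff u + e^{-\alpha(t\wedge\tau^-_0\wedge\tau^+_{b^*})} v^w_{b^*}(X(t\wedge\tau^-_0\wedge\tau^+_{b^*}))$ before appealing to \cite{BifKyp2010,Nob2019} to pass from the martingale property to the generator equation. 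Your unified approach would re-derive the Nob2019 content from scratch rather than reuse it.

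There is, however, a genuine error in the strong Markov representation you record. You write
\begin{align}
v_{b^*}(x)=\E_x\!\left[\int_0^{\tau_0^-\wedge\tau_{b^*}^+}\!e^{-\alpha t}rw(X(t))\,\diff t\right]+\E_x\!\left[e^{-\alpha\tau_{b^*}^+}v_{b^*}(b^*);\tau_{b^*}^+<\tau_0^-\right]+\E_x\!\left[e^{-\alpha\tau_0^-}v_{b^*}(0);\tau_0^-<\tau_{b^*}^+\right],
\end{align}
but this drops the lump-sum dividend $X(\tau^+_{b^*})-b^*$ paid at the upper exit and the lump-sum injection $-\beta X(\tau^-_0)$ paid at the lower exit; for a two-sided jump L\'evy process, $X$ overshoots both boundaries with positive probability, so these terms are not zero. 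The correct identity, using \eqref{50} to absorb the overshoots, reads $v_{b^*}(x)=\E_x[\int_0^{T}e^{-\alpha t}rw(X(t))\,\diff t + e^{-\alpha T}v_{b^*}(X(T))]$ with $T=\tau_0^-\wedge\tau_{b^*}^+$; applying ``Dynkin/It\^o'' to your version as written would yield the wrong equation. Relatedly, the phrase ``a Dynkin/It\^o argument on the killed process yields \eqref{42}'' glosses over exactly the technical content the paper is careful about: one must justify that the relevant compensated process is a true martingale (not just local), that the It\^o/Meyer formula applies under the stated regularity (Assumption \ref{Ass306} / Lemma \ref{Lem304_a}), and that the expectations can be differentiated. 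The paper discharges these by verifying the martingale property of $M^w$ directly from the occupation-time representation and by citing the detailed argument in \cite[Lemma 5.7]{Nob2019} and \cite[(12)]{BifKyp2010}; your sketch leaves all of this implicit.

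Your treatment of the endpoint $b^*$ is sound and matches the paper's reasoning. One small clarification: for bounded variation paths $\sigma=0$, so the entire left-hand side of \eqref{42} is continuous on $(0,\infty)$ and the identity extends to $b^*$ by continuity alone; the redefinition of a density is only needed in the unbounded-variation case, where the point $\{b^*\}$ has Lebesgue measure zero and one simply assigns $v_{b^*}''(b^*)$ the value that makes \eqref{42} hold.
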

%	\red{[I think the proof is the same as other cases. The problem is how to prove or give the continuity ($C^2(0, \infty)$) for $v_{b^\ast}$ when $X$ has unbounded variation paths. Assumption \ref{AA5} is OK?]}\blue{[Kei: I believe it is...however do you think we maight be able to find a more friendly to verify condition? Shall we include the proof?]}\red{[I think it's safer to write.]}
	\begin{proof}
We write 
	\begin{align}
	v^{LR}_{b^\ast}(x) := v^L_{b^\ast}(x)-\beta v^R_{b^\ast}(x), \quad &x\in \bR.
	\end{align}
%	{where, by Lemma \ref{Lem304_a}, we can write}
%	\begin{align}
%	v^{LR, \prime}_{b^\ast}(x) := \red{\beta}\E_x\left[ e^{-\alpha \kappa^{[p^\ast]}_0}\right], \quad v^{w, \prime}_{b^\ast}(x):=\E_x \left[ \int_0^{\kappa^{[p^\ast]}_0}e^{-\alpha t}w^\prime_{p^\ast}( {Y^{b^\ast}(t)})\diff t \right], \quad &x \in (0, b^\ast).
%	\end{align}
 % \yellow{[This doesn't seem right to me. As a result, the sum of the above equations is the derivative of $v_{b^\ast}$, but I think it is not necessarily the case that the derivatives of the terms related to $L$ and $R$ are on the left, and the derivatives of the terms related to $w$ are on the right. In the calculations below, we are probably not using the value of the derivative. Therefore, it may be better to eliminate this equality.]}
%	By the proofs of {Lemmas} \ref{Lem304_v1} and \ref{Lem304_a}, the functions $v^{LR, \prime}_{b^\ast}$ and $v^{w, \prime}_{b^\ast}$ are the derivatives of $v^{LR}_{b^\ast}$ and $v^{w}_{b^\ast}$ on $(0, b^\ast)$, respectively. %In addition, these are the derivatives when $X$ has unbounded variation paths. \green{[aren't they derivatives for bounded variation case?]}
	{Proceeding as in the proof of} \cite[Lemma 5.7]{Nob2019}, %[The conditions may be slightly different, but the proof is the same.]
	 we have 
	\begin{align}
	(\mathcal{L} -\alpha)v^{LR}_{b^\ast}(x)=0, \quad x\in(0, b^\ast), \label{43}
	\end{align}
	when $X$ has bounded variation paths. In addition, {when $X$ has paths of unbounded variation,} we can take the density $v^{LR, \prime\prime}_{b^\ast}$ of $v^{LR, \prime}_{b^\ast}$ to satisfy \eqref{43} 
 by \cite[Lemma 5.7]{Nob2019}. 	
%	.  \red{[do we need explanation for this? Or similar argument is given in the the proof of \cite[Lemma 5.7]{Nob2019}?]}
	%\green{[In the statement of \cite[Lemma 5.7]{Nob2019}, I mention that I wrote "In addition, when $X$ has unbounded variation paths, we redefine $v^{\prime\prime}_{a^\ast}$ on $(0,a^\ast)$ to satisfy (5.31)". Thus, probably "by \cite[Lemma 5.7]{Nob2019}" is enough.]}\blue{[Agreed]}
	
	{On the other hand}, we define the process $M^w=\{M^w(t) :t\geq 0\}$ as 
	\begin{align}
	M^w(t)  := \int_0^{t\land \tau^-_0 \land \tau^+_{b^\ast}}e^{-\alpha u} w(U^{(0, b^\ast)}(u))\diff u +e^{-\alpha (t\land \tau^-_0 \land \tau^+_{b^\ast})} v^w_{b^\ast} (X(t\land \tau^-_0 \land \tau^+_{b^\ast})), \quad t\geq 0. 
	\end{align}
	The process $M^w$ is an  {$(\mathcal{F}(t))_{t\geq0}$}-martingale since for $s ,t \geq 0 $ with $s \leq t$, we have
	\begin{align}
	\E_x \left[ M^w(t) | \cF(s) \right]=&\Ind_{\{ \tau^-_0 \land \tau^+_{b^\ast} \leq s\}}
	\left(  \int_0^{ \tau^-_0 \land \tau^+_{b^\ast}}e^{-\alpha u} w(U^{(0, b^\ast)}(u))\diff u +e^{-\alpha (\tau^-_0 \land \tau^+_{b^\ast})} v^w_{b^\ast} (X( \tau^-_0 \land \tau^+_{b^\ast}))\right)\\
	&+\Ind_{\{ \tau^-_0 \land \tau^+_{b^\ast} > s\}}\left(  \int_0^{s}e^{-\alpha u} w(U^{(0, b^\ast)}(u))\diff u +e^{-\alpha s} v^w_{b^\ast} (X(s))\right)=M^w(s). 
	\end{align}
Indeed, for the second inequality in the previous identity, we note that the strong Markov property gives
\begin{align*}
	&\E_x\left[M^w(t)\Ind_{\{ \tau^-_0 \land \tau^+_{b^\ast} > s\}}\Big|\mathcal{F}(s)\right]=\E_x\Bigg[\Ind_{\{ \tau^-_0 \land \tau^+_{b^\ast} > s\}}\Bigg(  \int_0^{t\land \tau^-_0 \land \tau^+_{b^\ast}}e^{-\alpha u} w(U^{(0, b^\ast)}(u))\diff u \notag\\&+\E_{x}\left[\int_{t\land \tau^-_0 \land \tau^+_{b^\ast}}^{\infty}e^{-\alpha u}w(U^{(0, b^\ast)}(u))\diff u\Bigg{|}\mathcal{F}(t\land \tau^-_0 \land \tau^+_{b^\ast})\right]\Bigg)\Bigg{|}\mathcal{F}(s)\Bigg]\notag\\
	&=\Ind_{\{ \tau^-_0 \land \tau^+_{b^\ast} > s\}} \E_{x}\left[\int_{0}^{\infty}e^{-\alpha u}w(U^{(0, b^\ast)}(u))\diff u\Bigg{|}\mathcal{F}(s)\right]\notag\\
	&=\Ind_{\{ \tau^-_0 \land \tau^+_{b^\ast} > s\}}\Bigg(  \int_0^{s}e^{-\alpha u} w(U^{(0, b^\ast)}(u))\diff u +\E_{x}\left[\int_{s}^{\infty}e^{-\alpha u}w(U^{(0, b^\ast)}(u))\diff u\Bigg{|}\mathcal{F}(s)\right]\Bigg)\notag\\
	&=\Ind_{\{ \tau^-_0 \land \tau^+_{b^\ast} > s\}}\left(  \int_0^{s}e^{-\alpha u} w(U^{(0, b^\ast)}(u))\diff u +e^{-\alpha s} v^w_{b^\ast} (X(s))\right).
\end{align*}
	Thus, by {Assumption \ref{Ass306}} %\blue{[Kei: Here we no longer have the existence of a density for the resolvent of the L\'evy process stopped at $\tau_0^-\wedge\tau_b^+$...]}\red{[I changed.]} 
	and the same argument as {in} the proof of \cite[(12)]{BifKyp2010} and \cite[Lemma 5.7]{Nob2019}, we have
	\begin{align}
	(\mathcal{L} -\alpha )v^w_{b^\ast}(x)+ r w(x)= 0, \quad x\in{(0, b^\ast)},\label{44}
	\end{align}
	when $X$ has bounded variation paths. In addition, 
	we can take the density $v^{w, \prime\prime}_{b^\ast}$ of $v^{w, \prime}_{b^\ast}$ to satisfy \eqref{44} {as in the proof of \cite[Lemma 5.7]{Nob2019}}. 
	From \eqref{43} and \eqref{44}, %the proof is complete. 
	{we obtain \eqref{42} on $(0, b^\ast)$. 
	
	Since $v_{b^\ast}\in C^1 (0, \infty)$ by Lemma \ref{Lem304_a} %\blue{[I guess Lemma \ref{Lem304_a} does not tackles the continuity of $v_{b^*}'$?]} 
	and \cite[Remark 2.5]{Nob2019}, the map $x\mapsto (\mathcal{L} -\alpha )v_{b^\ast}(x)+r w(x)-\frac{1}{2}\sigma^2v_{b^\ast}^{\prime\prime}(x)$ is continuous on $(0, \infty)$. Thus, \eqref{42} holds when $X$ has bounded variation paths.   In addition, we can take $v_{b^\ast}^{\prime\prime}(b^\ast)$ to satisfy \eqref{42}.  Here, since $\{b^\ast\}$ is a null set of the Lebesgue measure, we can assign any value at $b^\ast$ of $v_{b^\ast}^{\prime\prime}$. Thus, we can take $v_{b^\ast}^{\prime\prime}(b^\ast)$ to satisfy \eqref{42}.
%	\red{[Don't we need to say why? Or is it clear from the proof of \cite[Lemma 5.7]{Nob2019}?]}\green{[How about the following?] Since $\{b^\ast\}$ is a null set of the Lebsegue measure, we can assign any value at $b^\ast$ of $v_{b^\ast}^{\prime\prime}$. Thus, we can take $v_{b^\ast}^{\prime\prime}(b^\ast)$ to satisfy \eqref{42}.}\blue{[Agreed]} 
	The proof is complete.
	}
	\end{proof}
	%{We assume that $v_{b^\ast}^{\prime\prime}(x)=0$ for $x \in (-\infty , 0] \cup (b^\ast , \infty)$.} \red{[already said this?]}
	\begin{lemma}\label{Lem410}
	{When $X$ has bounded variation paths, we have}
		%We have 
		\begin{align}
			&(\mathcal{L} -\alpha )v_{b^\ast}(x)+r w(x) \leq 0, \quad x> b^\ast. \label{51}
		\end{align}
	{When $X$ has unbounded variation paths, we can take $v_{b^\ast}^{\prime\prime}$ on $(b^\ast, \infty)$ %\green{[isn't this set to be zero already?]}
	%, which is the density of $v_{b^\ast}^{\prime}$, 
	such that \eqref{51} is satisfied.}
	\end{lemma}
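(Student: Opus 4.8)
The plan is to verify \eqref{51} directly on the region $x > b^*$, where by \eqref{50} we have the explicit form $v_{b^*}(x) = v_{b^*}(b^*) + (x - b^*)$, i.e.\ $v_{b^*}$ is affine with slope $1$ there. First I would write out $(\mathcal{L} - \alpha) v_{b^*}(x) + r w(x)$ for $x > b^*$ using this affine form: the drift term contributes $\gamma$, the second-order term vanishes (we set $v_{b^*}'' = 0$ on $(b^*, \infty)$ in the bounded variation case, and in the unbounded variation case this is exactly the freedom we exploit), and the jump integral splits according to whether $x + z$ lands above $b^*$ (where $v_{b^*}$ is still affine, so the integrand $v_{b^*}(x+z) - v_{b^*}(x) - v_{b^*}'(x) z \mathbf{1}_{\{|z|<1\}}$ contributes only the truncation correction) or below $b^*$ (where we must use the concavity-based bound $v_{b^*}(x+z) \leq v_{b^*}(x) + z$ from Lemma \ref{Lem408}(1), since $v_{b^*}' \geq 1$ everywhere would give the wrong direction — rather $v_{b^*}' \leq \beta$ and the slope-$1$ tangent at $x$ dominates $v_{b^*}$ because $v_{b^*}$ is concave and has slope $1$ at $b^* \leq x$).

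The key observation is that, because $v_{b^*}$ is concave on $\R$ with $v_{b^*}'(x) = 1$ for all $x > b^*$, the affine function $y \mapsto v_{b^*}(x) + (y - x)$ lies \emph{above} $v_{b^*}$ on all of $\R$ (it is the tangent line at $x$, and concavity puts the graph below every tangent). Hence $v_{b^*}(x+z) - v_{b^*}(x) - z \leq 0$ for every $z \in \R$, and in particular $v_{b^*}(x+z) - v_{b^*}(x) - v_{b^*}'(x) z\mathbf{1}_{\{|z|<1\}} \leq z \mathbf{1}_{\{|z| \geq 1\}} \cdot \mathbf{1}_{\{z < 0\}} \cdot(\text{something} \le 0) $ — more precisely it equals $v_{b^*}(x+z)-v_{b^*}(x) - z$ plus $z\mathbf{1}_{\{|z| \geq 1\}}$, the first piece being $\leq 0$. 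This lets me bound the jump integral from above. Then I would recognize that the resulting upper bound for $(\mathcal{L} - \alpha)v_{b^*}(x) + r w(x)$ is monotone (non-increasing) in $x$ on $(b^*, \infty)$: as $x$ increases past $b^*$, $w(x)$ is bounded above by an affine function of slope $w_+'(\infty) < \alpha/r$ by Assumption \ref{AA3}, while the $-\alpha v_{b^*}(x) = -\alpha(v_{b^*}(b^*) + x - b^*)$ term decreases at rate $\alpha$; since $r w_+'(\infty) < \alpha$, the whole expression is eventually decreasing, and in fact one checks it is $\leq$ its value obtained by a limiting/tangent comparison to the already-established identity \eqref{42} at $b^*$.

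The cleanest route, which I would actually carry out, is a continuity-plus-monotonicity argument: by Lemma \ref{Lem309} we have $(\mathcal{L} - \alpha)v_{b^*}(x) + r w(x) = 0$ at $x = b^*$ (taking the appropriate version of $v_{b^*}''(b^*)$ in the unbounded variation case, and noting the map is continuous there by \cite[Remark 2.5]{Nob2019} and Lemma \ref{Lem304_a}). Then I would show that the right derivative in $x$ of $\Phi(x) := (\mathcal{L} - \alpha)v_{b^*}(x) + r w(x)$ is $\leq 0$ for $x \geq b^*$: differentiating, the $\mathcal{L}$ part applied to the affine piece has derivative $(\mathcal{L} - \alpha)v_{b^*}'(x) = (\mathcal{L} - \alpha)(1) = -\alpha$ (plus jump corrections from the part of the mass landing below $b^*$, all of which are non-positive by concavity again), and $r w'_+(x) \leq r w_+'(0+) \leq \beta \alpha$ is controlled but, more to the point, $r w_+'(x) - \alpha v_{b^*}'(x) = r w_+'(x) - \alpha < r \cdot (\alpha/r) - \alpha = 0$ by the $w_+'(\infty) < \alpha/r$ bound combined with monotonicity of $w_+'$... here care is needed since $w_+'(x)$ near $b^*$ could be as large as $\beta\alpha/r > \alpha/r$, so the naive bound fails and one must instead integrate: $\Phi(x) - \Phi(b^*) = \int_{b^*}^x \Phi'(y)\,dy$ and bound $\int_{b^*}^x r w_+'(y)\,dy = r(w(x) - w(b^*)) \leq r w_+'(\infty)(x - b^*) + C$ using concavity, which beats the $\alpha(x-b^*)$ from the $-\alpha v_{b^*}$ term only for large $x$; for $x$ close to $b^*$ one uses instead the sign of $\Phi'$ directly. \textbf{The main obstacle} I anticipate is precisely reconciling these two regimes — the "near $b^*$" behaviour, where $w_+'$ may exceed $\alpha/r$ and one must lean on the jump-measure terms and the exact identity at $b^*$, versus the "far from $b^*$" behaviour, where the linear-growth gap $r w_+'(\infty) < \alpha$ does the work — and handling the unbounded variation case where $v_{b^*}''$ on $(b^*,\infty)$ must be chosen (we take it $\equiv 0$) consistently with the density structure required by $C^{(2)}_{\text{line}}$ and with \eqref{51}; this should follow the template of \cite[Lemma 5.8 or analogous]{Nob2019} but with the extra $r w(x)$ term carried throughout. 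I expect the argument to close by combining Lemma \ref{Lem309}, the concavity of $v_{b^*}$ (Lemma \ref{Lem408}), and Assumption \ref{AA3}.
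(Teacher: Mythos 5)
Your proposal takes a genuinely different route from the paper, but it has a gap that you already sensed yourself: the ``two-regime'' reconciliation you flag at the end is not a finishing technicality, it is precisely where the argument breaks. Your plan ultimately reduces to showing $\Phi(x) := (\mathcal{L}-\alpha)v_{b^*}(x) + rw(x)$ is non-positive on $(b^*,\infty)$, given $\Phi(b^*+)=0$, by proving $\Phi$ is non-increasing there. But $\Phi$ need not be monotone near $b^*$: its right-derivative (formally) is
\begin{equation}
\Phi'_+(x) = \int_{z < b^* - x}\bigl(v'_{b^*,+}(x+z) - 1\bigr)\,\nu(\diff z)\;-\;\alpha\;+\;r\,w'_+(x),
\end{equation}
and the jump term is non-negative (since $v'_{b^*}\geq 1$) while $r w'_+(x)$ can be as large as $\beta\alpha > \alpha$ near $b^*$; moreover, when $\nu(-\infty,0)=\infty$ the integral above may not even be tractable as $x\downarrow b^*$ because its domain shrinks to a neighbourhood of $0$ where $\nu$ explodes. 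Concavity of $\Phi$ (which you correctly obtain from Lemma~\ref{Lem408} and Assumption~\ref{AA3}) together with $\Phi(b^*)=0$ does \emph{not} force $\Phi\leq 0$ on $(b^*,\infty)$: a concave function vanishing at $b^*$ can rise before falling. You have no ingredient that rules this out.

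The paper's proof supplies exactly that missing ingredient, and it is a different mechanism than any direct differential estimate. Following \cite[Lemma 5.8]{Nob2019}, one proves the identity
\begin{equation}
v_b(x) - v_{b^*}(x) = \E_x\!\left[\int_0^\infty e^{-\alpha t}\, h\bigl(U^{(0,b)}(t)\bigr)\,\Ind_{(b^*,\infty)}\bigl(U^{(0,b)}(t)\bigr)\,\diff t\right], \qquad b > b^*,
\end{equation}
with $h(x) = (\mathcal{L}-\alpha)v_{b^*}(x)+rw(x)$, and then argues by contradiction: if $h(a)>0$ for some $a>b^*$, then concavity and $h(b^*)=0$ force $h>0$ on $(b^*,a)$; taking $b=a$ (so that $U^{(0,a)}\leq a$) makes the integrand above non-negative and strictly positive with positive probability, giving $v_a(x)>v_{b^*}(x)$ --- contradicting the optimality of $b^*$ over barrier strategies established in Lemma~\ref{lemma_opt_barrier}. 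The separate case $b^*=0$ is handled by a direct derivative bound, but that one works because the extra structure from Remark~\ref{Rem202} (bounded variation, non-negative drift, $\nu(-\infty,0)<\infty$, and inequality \eqref{52}) makes $h'_+\leq 0$ provable; that is the special case where your plan is actually sound. The general lesson is that Lemma~\ref{Lem410} is not provable by local PDE-style estimates alone --- it leans on the global information encoded in Lemma~\ref{lemma_opt_barrier}, which your proposal never invokes.
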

\begin{proof}
	By following the same procedure as \cite[Lemma 5.8]{Nob2019}, it can be shown that, for all $b > b^*$,
	\[
	v_b(x) - v_{b^*} (x) = \E_x \left[ \int_0^\infty e^{-\alpha t} h(U^{(0,b)}(t)) \Ind_{(b^*, \infty)}(U^{(0,b)}(t)) \diff t \right],\label{53}
	\] 
	where  for $x\geq b^*$
	\begin{align}
		h(x)&:= (\mathcal{L} -\alpha )v_{b^\ast}(x)+rw(x) \notag\\
		&=\gamma+\int_{\bR\backslash \{0\}} (v_{b^\ast}(x+z)-(x+\hat{b})-z\Ind_{\{|z|<1\}})\nu(\diff z)
		-\alpha (x+\hat{b})+ r w(x),
	\end{align}
%\yellow{[Agreed to both comments above.]}
where $\hat{b} := v_{b^\ast}(b^\ast) - b^\ast$.
	By Lemma \ref{Lem408} and Assumption \ref{assum_w}, $h$ is concave. 
	{Note that $h(b^\ast)=0$ by Lemma \ref{Lem309} when $b^\ast>0$. } % \red{[deleted the comment. It seems that only concavity of $v$ has been used.]}
	
	%\red{[I think some of the results depend on the differentiability of $v_{b^*}$ at $b^*$. We should mention that.]}\blue{[Agreed.]}
	%In addition, $h(b^*) = (\mathcal{L} -\alpha )v_{b^\ast}(b^*)+ r w(b^*) = 0$, by Lemma \ref{Lem309} and the smoothness of $v_{b^*}$ at $b^*$ (cf.\ Lemma \ref{Lem408}) \blue{[this is not smooth in the classical sense. probably ok, but let's check]}\yellow{[Yes I agree it is not clear because the derivative (second derivative) in the bounded (unbounded) variation case is not continuous everywhere...but can we modify the density so that $h(b^*)=0$?]} and the continuity of $w$.
	
	%\blue{[moved here]}

	{Suppose (to derive a contradiction), {$b^\ast>0$ and} there exists $a > b^*$ such that $h(a) > 0$. 	%\red{[moved the following here (it was placed before $a$ was defined)]}\green{[Ok.]}
		 Setting $b=a$ and by Lemma \ref{lemma_opt_barrier},
	\begin{align} \label{nonegative_v_a_v_b}
	0 \geq v_a(x) - v_{b^*} (x) = \E_x \left[ \int_0^\infty e^{-\alpha t} h(U^{(0,a)}(t)) \Ind_{{(}b^*, \infty)}(U^{(0,a)}(t)) \diff t \right].
	\end{align}
	Then, by the above properties of $h$ {(concavity, $h(b^*) = 0$ and $h(a) > 0$)}, } necessarily $h(x) > 0$ for all $x \in (b^*,a)$. 
	On the other hand, because $U^{(0,a)} \leq a$ a.s. and Lemma \ref{Lem309},  ${\int_0^\infty e^{-\alpha t}}h(U^{(0,a)}(t)) \Ind_{[b^*, \infty)}(U^{(0,a)}(t)) {\diff t} \geq 0$ {(and strictly positive with a positive probability)}, {which contradicts \eqref{nonegative_v_a_v_b}.} Hence $h(b) \leq 0$ for all $b > b^*$ {when $b^\ast>0$}. Thus, \eqref{51} holds.

	{Suppose that $b^\ast=0$.  Then $X$ has bounded variation paths with non-negative drift {and $\nu(-\infty,0)<\infty$} by Remark \ref{Rem202} and thus we have {using \eqref{50}}, for $x \geq 0$, 
	\begin{align}
	h^\prime_+ (x)=&\nu (-\infty , x) (\beta - 1)-\alpha +rw^{\prime}_+(x)\\
	\leq& \nu(-\infty,0) (\beta - 1)-\alpha +rw^{\prime}_+(0)\leq 0, \label{54}
	\end{align}
	where the last inequality comes from \eqref{52}.} 
	%Combine this with \eqref{nonegative_v_a_v_b}, we get $h(b)\leq 0$ for $b> b^\ast =0$.
	%}{[maybe mention something like Combine this with \eqref{nonegative_v_a_v_b} and proceeding like in the case $b^*>0$"...]}
	{%[How about the following?]
		Suppose that $h({0})>0$. Then there exists $a>{0}$ such that $h(b)>0$ for $b \in [0, a ]$. 
	We have ${\int_0^\infty e^{-\alpha t}}h(U^{(0,a)}(t)) \Ind_{[b^*, \infty)}(U^{(0,a)}(t)) {\diff t} > 0$ a.s.
	% which goes against \eqref{nonegative_v_a_v_b_zero}. 
	{This contradicts with  \eqref{nonegative_v_a_v_b} with $b^* = 0$
	(which holds by Lemma \ref{lemma_opt_barrier}).
	%Then, by Lemma \ref{lemma_opt_barrier},
	%Setting $b=a$ and by Lemma \ref{lemma_opt_barrier},
%	\begin{align} \label{nonegative_v_a_v_b_zero}
%	0 \geq v_a(x) - v_{0} (x) = \E_x \left[ \int_0^\infty e^{-\alpha t} h(U^{(0,a)}(t)) 1_{{(}0, \infty)}(U^{(0,a)}(t)) \diff t \right],
%	\end{align}
%	which holds by Lemma \ref{lemma_opt_barrier}.
	}
	Thus, it holds $h({0}) \leq 0$. 
	By \eqref{54}, we obtain $h(b)\leq 0$ for all $b>{0}$. 
	}
\end{proof}

	\begin{proof}[Proof of Theorem \ref{Thm401}]
	From Lemma \ref{Lem408}, the function $v_{b^\ast}$ is sufficiently smooth to apply the operator $\mathcal{L}$. 
	In addition, by Lemma \ref{Lem408}, Lemma \ref{Lem309} and Lemma \ref{Lem410}, 
	the function $v_{b^\ast}$ satisfies \eqref{45} and \eqref{4}. 
	Therefore, by Lemma \ref{verification}, the strategy $\pi^{b^\ast}$ is optimal and the proof is complete. 
	\end{proof}

	\section{An application: Bail-out optimal dividend problem with regime switching}\label{sec3}
	In this section, {as} a direct application of the results from previous sections, we consider the bailout dividend problem driven by a Markov additive process (MAP) with two-sided jumps. We will prove the optimality of double barrier strategies, and will also present a way to obtain the optimal barriers through a combination of the results of previous sections and some recursive operators.
	
	\subsection{Markov additive processes}
	Let us consider a bivariate process  {$(X,H)=\{{(X(t),H(t))} : t \geq 0\}$}%\red{[Should we write "$\{(X_t, H_t) ; t \geq 0\}$" to match the other part? Either notation is fine.]}
	, where the component $H$ is a continuous-time Markov chain with finite state space $E$ and the generator matrix $Q={(q_{ij})}_{i, j\in E}$. 
	When the Markov chain $H$ is in the state $i \in E$, the process $X$ behaves as a L\'evy process $X^i$. %In addition, when the process $H$ changes to a state $j\not=i$, 
	At the $n$-th jump of $H$ and if the state of $H$ changes from $i$ to $j$ (with $i\not=j$),
	the process $X$ jumps 
	{with size}
	%according to a random variable 
	$J^{(n)}_{i, j}$, {having a common distribution function $F_{ij}: \R \to [0,1]$.}
	 %which has the same distribution as a random variable $J_{ij}$. \red{[TODO]} 
	 The components $(X^i)_{i\in E}$, $H$, and 
	 %$(J_{ij})_{i,j\in E}$ 
	 {$(J_{ij}^{(n)})_{i,j\in E, n \in \mathbb{N}}$} are assumed to be independent and are defined on some filtered probability space
	$(\Omega , \mathcal{F} , {\bf F} , {\mathbb{P}} )$ where ${\bf F} := \{ {\cF} {(t)}, t \geq 0\}$
	%\red{[As ${\bf F}$, should we change $\cF_t$ to ${\mathfrak{F}}_t$ or others?]} 
	denotes the right-continuous complete filtration jointly
	generated by the processes $(X,H)$. %and the family of random variables $(J^{\red{(n)}}_{ij})_{\red{n\in\mathbb{N},}~ i,j\in E}$. 
%	\blue{[$X$ also contains information about $J$?]}\yellow{[I think it does, because on each regime shift we have $X(\zeta) = X(\zeta -) + J_{ij}$ is the chain $H$ moved from $i$ to $j$.]}
	We will denote by ${\mathbb{P}}_{(x, i)}$ the law of the process conditioned on the event $\{ {X(0)=x , H(0) =i} \}$ and by $\E_{(x,i)}$ its associated expectation operator. %\blue{[Below deleted all about the characteristic exponents of $X^i$.]}

For $i \in E$ \textrm{and} $x\in\R$, we denote by $\bP^i_x$ the law of the \lev process $X^i$ when it starts at $x$ and by $\bbE^i_x$ its corresponding expectation operator. In particular, we {use the notation} $\bP^i = \bP^i_0$ and {$\bbE^i = \bbE^i_0$}. Additionally, we denote the L\'evy measure of $X^i$  by $\nu(i, \cdot)$, that satisfies $	\int_{\R\backslash \{ 0\}}(1\land x^2)  {\nu} (i,\diff x) < \infty$, {$i \in E$}.
%	\begin{align}
%		e^{-t\psi(\lambda)}=\E\left[e^{\mathbf{i}\lambda {X(t)}}\right],  \quad \lambda \in \R, ~t\geq 0. 
%	\end{align} 
%	The characteristic exponent $\psi$ is known to take the form  
%	\begin{align}
%		\psi (\lambda) := -\mathbf{i}\gamma\lambda +\frac{1}{2}\sigma^2 \lambda^2 
%		+\int_{\R \backslash \{0\} } (1-e^{\mathbf{i}\lambda x}+\mathbf{i}\lambda x1_{\{|x|<1\}}) {\nu}(\diff x) , ~~~~~~\lambda\in\R. \label{202a}
%	\end{align}
	
	\subsection{Bail-out optimal dividend problem with Markov-switching regimes} \label{Sec302}
{Similar to the single-regime model studied in previous sections, } a strategy  is a pair $\pi := \left( {L^{\pi}(t), R^{\pi}(t)}; t \geq 0 \right)$ %\red{[Should this one be written "$\pi := \left\{ (L^{\pi}_t, R^{\pi}_t); t \geq 0 \right\}$" as well? Should I leave it as it is? Either is fine.]} 
	of non-decreasing, right-continuous, and adapted processes (with respect to the filtration ${\bf F}$) starting at zero where $L^{\pi}$ is the cumulative amount of dividends and $R^{\pi}$ is that of injected capital. {The corresponding controlled process starts at} ${U^\pi(0-)} := x$ and {follows} ${U^\pi(t)} := {X(t)} - {L^\pi(t)} + {R^\pi(t)}$, $t \geq 0$. %For a given function $\delta: E\red{\to} \R_+$, we require that $L^\pi$ is absolutely continuous with respect to the Lebesgue measure of the form $L^\pi(t) = \int_0^t \ell^\pi(s) \diff s$, $t \geq 0$, with $\ell^\pi$ restricted to take values in $[0,\delta(H(t))]$ uniformly in time.
	
	Let 
$\qq:E \to {(0, \infty)}$
	% \blue{[does this include 0?]}\red{[I guess $0$ is not included.]}\yellow{[I don't think it includes 0]}\yellow{[we always assume in the auxiliary problem that the discount is always positive right?]} 
	be the Markov-modulated rate of discounting and let
	%\blue{[I think $r$ needs to be changed to $q$ to be consistent.]}\red{
%	[As you said, we used $r$ differently in the previous chapter, so maybe we should change. How about using an ${\bf{q}}$ or a different symbol to emphasize that it's a function rather than a constant, unlike the whole chapter?]} and 
	\[
	\Lambda(t)=\int_0^t\qq(H(s)) \diff s, \quad t \geq 0, 
	\]
	be the cumulative discount.

	Given $\beta>1$, representing the cost per unit of injected capital, we want to maximize
	\begin{align*}
		V_{\pi} (x,i) := \mathbb{E}_{(x,i)} \left[ \int_{[0,\infty)} e^{-\Lambda(t)}   \diff {L^\pi(t)}  -  \beta\int_{[0, \infty)} e^{-\Lambda(t)} \diff {R^\pi(t)} \right], \quad x \geq 0, \quad {i \in E,}
	\end{align*}
	over all admissible strategies $\mathbf{\Pi}$ %\red{[made this bald face to distinguish from the single-regime case]}\green{[Ok.]} 
	such that  %\red{[moved this here]}\green{[Ok.]} 
	${U^\pi(t)} \geq 0$ a.s.\ uniformly in $t$ and 
	\begin{align}
		\bE_{(x, i)} \left[ \int_{[0, \infty)} e^{-\Lambda(t)} \diff {R^\pi(t)}\right]< \infty.
		% \quad \forall\ x\geq 0, i\in E.
	\end{align}
	Hence our aim is to find the value function of the problem, i.e.,
	\begin{equation}\label{vf_rs}
		V(x,i):=\sup_{\pi \in \mathbf{\Pi}}V_{\pi}(x,i), \quad x \geq 0 \quad {i \in E.}
	\end{equation}
%	where {$\Pi$} is the set of all admissible strategies that satisfy the constraints described as above.
	
	%\green{$\bbE^j$, $\nu(j, (-\infty,0))$ etc has not been defined?} \red{[I added.]}

	For the problem with regime-switching in this section, the following assumptions are mandated.
	\begin{assump}\label{AAA1}
		%We assume that $\bbE^j\left[|{X^j(1)}|\right]<\infty$ {and $X^j$ is not a {driftless} compound Poisson process} for all $j \in E$.
		%\red{[How about ``
		We assume that $(X^j, \bP^j)$ satisfies Assumption \ref{AA1} and \ref{assumption_compound_poisson} for all $j \in E$.%'' And delete Assumption 4.3?]}\green{[I agree with you.]}
	\end{assump}
	\begin{assump}\label{AAA2}
		For all $i, j\in E$ with $i\neq j$, we assume that $\E[|{J_{ij}^{(1)}}|]<\infty$.
		%? because the probability measure is denoted by $\mathbb{P}$?]}.
	\end{assump}
	%\begin{assump}\label{AAA3} %\blue{[simplified this; check if this is sufficient.]}
	%{We assume that $(X^j, \bP^j)$ satisfies Assumption \ref{Ass306} for all $j \in E$.}
%	
%	
%		When $X^j$ has unbounded variation paths, then for {any} non-decreasing concave function $f$ and $\theta> 0$, we assume that the function $H^{(j, b,\theta)}_f$, {given by
%			\begin{align}
%				H^{(j, b, \theta)}_f (x):= \bbE^{j}_x \left[\int_0^{\kappa^{j, b, -}_0}e^{-\theta t} f({Y^{j, b}(t)}) \diff t  \right],
%		\end{align}}
%		where 
%		\begin{align}
%			{Y^{j, b}(t):= X^j(t)}- \left\{\sup_{s\in[0,t]}({X^j(s)}-b) \lor 0\right\}, \quad t\geq 0, 
%			\quad \text{and}
%			\quad
%			\kappa^{j,b, -}_{0} :=\inf\{t\geq 0 : {Y^{j, b}(t)}< 0\},  
%		\end{align}
%		%with measurable function $g$, 
%		has a Radon--Nikodym density with respect to the Lebesgue measure, which is continuous almost everywhere and locally bounded on $(0, b]$.
%	\end{assump}

\begin{remark}	%\blue{[added this]}
By Lemma \ref{lemma_example_condition}, Assumption \ref{AAA1} is satisfied if $\nu (j,(0, \infty)) <\infty $ or $\nu (j,(-\infty,0)) <\infty $, {and $\int_{\R\backslash[-1,1]}|x| \nu (j, \diff x) <\infty$} for all $j \in E$.
%Assume that $X$ satisfies {that} $\nu (0, \infty) <\infty $. 
%	Then {we can write the process $X$} as follows: %\blue{[Kei: changed $X^s\rightarrow X^{SN}$ because I found it confusing with the time $s$...]} 
%	\begin{align}
%	X_t ={X^{SN}_t} + \sum_{n=1}^{N^\nu_t} J_n
%	\end{align} 
%	where ${X^{SN}_t}=\{X^{SN}_t : t\geq 0\}$ is a spectrally negative L\'evy process with unbounded variation paths, $N^\nu=\{N^\nu_t : t\geq 0\}$ is {an} independent Poisson process with intensity $\nu(0, \infty)$ and ${\{ J_n \}}_{ n \in \N}$ is a sequence of i.i.d. random variables {with} distribution $\nu(\cdot \cap (0, \infty))/\nu(0, \infty)$. In the same way, we can prove that the case with $\nu(-\infty , 0)<\infty$ satisfies Assumption \ref{Ass306}.
\end{remark}
{Let
\[
\zeta := \min \{ t > 0: H(t) \neq H(t-) \}
\] be the epoch of the first regime switch, which is exponentially distributed with parameter $q_i  := \sum_{j \neq i} q_{ij}$ under $\bE_{(x, i)}$. % independent of $X$
}

%\red{[moved these here and revised]}\green{[Ok.]}
We define an operator $\Gamma$ applied to $f: [0,\infty) \times E \to \mathbb{R}$ defined by
	\begin{equation}\label{Supremum_Operator}
		(\Gamma f)(x,i) := \sup_{\pi \in \Pi} {\bE}_{(x,i)}\left[ \int_{[0,\zeta)} e^{-\qq(i)t} \diff L^{\pi}(t) - \beta \int_{[0,\zeta)} e^{-\qq(i)t} \diff R^{\pi}(t) + e^{-\qq(i)\zeta} \widehat{f}(U^{\pi}(\zeta -),i) \right],
	\end{equation}
	where
		\begin{align}\label{Expectation_Transform}
		\widehat{f}(x,i) := \sum_{j \neq i} \frac{q_{ij}}{q_i} \int_{\R} \left[ \left( \beta(x+y)+ f(0,j) \right)\Ind_{\lbrace x +y \leq 0 \rbrace} + f(x+y,j) \Ind_{\lbrace x+y > 0 \rbrace} \right] \diff F_{ij}(y),
	\end{align}
	%\red{[I guess we need to split $\Ind_{\lbrace x+y \in(0, b(j)) 0 \rbrace}$ and $\Ind_{\lbrace x+y > b(j) \rbrace}$. However, maybe it should be changed according to the definition of $\B$. ]}\blue{[Agreed]}
	%\red{[How about the following according to the definition of $\B$?]
	%	\begin{equation}\label{Expectation_Transform}
	%	\hat{f}(x,i) := \sum_{j \neq i} \frac{q_{ij}}{q_i} \int_{\R}  f(x+y,j)  \diff F_{ij}(y),
	%\end{equation}
	%}
	where {we recall} $F_{ij}$ is the distribution function of the random variables {$J_{ij}^{(n)}$}.
	{Here, the supremum is over admissible (single-regime) strategies $\Pi$ in the sense defined in Section \ref{subsection_single_regime_formulation}. That is, \eqref{Supremum_Operator} is the value function of a single-regime version as studied in previous sections (see \eqref{vf_def}) driven by $X^i$ with the terminal time given by the first jump time of $H$ and the final payoff given by $\widehat{f}$.}

%\red{[moved this here]} 
%In the next result, we claim that the dynamic programming principle for the value function of our control problem holds valid, which will play a key role in finding the fixed point of some recursive iterations later on. 
%\red{[We should be a bit modest because we are basically borrowing the existing results. How about ``
As shown in, for example \cite{MMNP,NobPerYu}, the value function is a fixed point of the maximization operator $\Gamma$ defined in \eqref{Supremum_Operator}. %]}
%\green{[I agree with you.]}
The proof is omitted as it follows verbatim from the proof of Proposition 3.4 in \cite{NobPerYu} {(see also the proof of Proposition 3.1 in \cite{MMNP})}. 

%	\red{we consider a single-regime version as studied in previous sections (see \eqref{vf_def}) driven by $X^i$ with the terminal time given by the first jump time of $H$ and the final payoff given by $\widehat{f}$.} 

%\red{As shown in, for example \cite{NobPerYu, MMNP}, the value function is a fixed point of the op.} 
\begin{proposition}\label{DPP}
For $x\in\bR$ and $i\in E$, we have
\begin{align}
{V(x, i)} = &
\sup_{\pi \in \Pi}
\bE_{(x, i)} %\red{\bbE^i_x?}
\bigg{[}\int_{[0, {\zeta})} e^{- {{\qq(i)t} }} \diff {L^\pi(t)} 
-\beta\int_{[0, {\zeta})}  e^{-{{\qq(i)t} }} \diff {R^\pi(t)} 
%+ \}}e^{-\zeta q(i) t } w(U^\pi_\zeta , Z_\zeta)
+%1_{\{ \zeta < \zeta\}}
e^{{-{\qq(i) \zeta} }}V(U^\pi (\zeta), H(\zeta))\bigg{]}. \label{dpp}
\end{align}
{In other words, $V$ is a fixed point of $V = \Gamma V$.}
%where  \red{[This can be written more concisely using $\Gamma$ right?]}
\end{proposition}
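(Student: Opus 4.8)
The plan is to establish the fixed-point identity $V = \Gamma V$ by the two standard inequalities, exploiting the fact that before the first regime switch $\zeta$ the controlled process $(X,H)$ evolves exactly as a single L\'evy process $X^i$, and that $\zeta$ is an independent exponential random variable with parameter $q_i = \sum_{j\neq i}q_{ij}$ under $\bE_{(x,i)}$; moreover, at the switch time the state moves to $j$ with probability $q_{ij}/q_i$ and $X$ is shifted by an independent increment with law $F_{ij}$, so that $U^\pi(\zeta) = U^\pi(\zeta-) + J^{(1)}_{H(\zeta-),H(\zeta)}$ where $H(\zeta-) = i$. First I would fix $(x,i)$ and an admissible strategy $\pi\in\mathbf{\Pi}$, and decompose the discounted payoff at $\zeta$: since $\Lambda(t) = \qq(i)t$ for $t<\zeta$, conditioning on $\cF(\zeta)$ and using the strong Markov property of $(X,H)$ at $\zeta$ gives
\begin{align*}
V_\pi(x,i) = \bE_{(x,i)}\Big[\int_{[0,\zeta)} e^{-\qq(i)t}\diff L^\pi(t) - \beta\int_{[0,\zeta)} e^{-\qq(i)t}\diff R^\pi(t) + e^{-\qq(i)\zeta}\, V_{\pi'}\big(U^\pi(\zeta),H(\zeta)\big)\Big],
\end{align*}
where $\pi'$ denotes the shifted strategy (the restriction of $\pi$ after $\zeta$), which is admissible for the post-$\zeta$ problem. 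Bounding $V_{\pi'}(\cdot,\cdot)\leq V(\cdot,\cdot)$ and taking the supremum over $\pi$ yields $V(x,i)\leq \Gamma V(x,i)$ once one recognizes that the right-hand side, after taking conditional expectation over $(H(\zeta),J^{(1)})$ given $U^\pi(\zeta-)$, is exactly the single-regime functional in \eqref{Supremum_Operator} with terminal payoff $\widehat{V}$ — here one uses that on $\{U^\pi(\zeta-)+J \leq 0\}$ admissibility forces an immediate capital injection of size $-(U^\pi(\zeta-)+J)$ at cost $\beta$, producing the term $\beta(x+y)+V(0,j)$ in \eqref{Expectation_Transform}.

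For the reverse inequality $V(x,i)\geq \Gamma V(x,i)$, I would take an $\epsilon$-optimal single-regime strategy $\tilde\pi$ for the problem on the right-hand side of \eqref{Supremum_Operator}, and, for each terminal state $(u,j)$ reachable at $\zeta$, an $\epsilon$-optimal strategy $\pi^{(u,j)}$ for the regime-switching problem started from $(u,j)$; concatenating $\tilde\pi$ on $[0,\zeta)$ with $\pi^{(U(\zeta),H(\zeta))}$ on $[\zeta,\infty)$ produces an admissible strategy for the original problem whose value is within $C\epsilon$ of $\Gamma V(x,i)$, and letting $\epsilon\downarrow 0$ finishes. Admissibility of the concatenation (in particular the finiteness of the expected discounted capital injection) follows from the admissibility of the pieces together with the exponential discounting past $\zeta$; the measurable-selection issue for $(u,j)\mapsto \pi^{(u,j)}$ can be handled by a standard argument (or avoided by invoking that the single-regime value function has already been identified with an explicit barrier strategy in Theorem \ref{Thm401}, which depends measurably on the data).

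The main obstacle I anticipate is the bookkeeping at the switch time $\zeta$: carefully justifying the strong Markov property for the piecewise-L\'evy process $(X,H)$ at the (independent exponential) time $\zeta$, and correctly accounting for the possibly negative post-jump position $U^\pi(\zeta-)+J^{(1)}_{ij}$, which must be instantaneously lifted to zero by capital injection — this is precisely what the indicator decomposition in \eqref{Expectation_Transform} encodes. Since, as the excerpt notes, this argument is verbatim the same as the proof of Proposition 3.4 in \cite{NobPerYu} (and Proposition 3.1 in \cite{MMNP}), I would simply cite those references for the routine details and only highlight that Assumptions \ref{AAA1} and \ref{AAA2} guarantee the integrability needed for all the conditional expectations above to be well-defined and finite.
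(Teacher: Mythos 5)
Your proposal is correct and takes essentially the same route as the paper, which omits the proof and points to the standard two-inequality dynamic programming argument of \cite[Proposition 3.4]{NobPerYu} (and \cite[Proposition 3.1]{MMNP}): decompose at the first switch time $\zeta$ via the strong Markov property for the $\leq$ direction, concatenate $\epsilon$-optimal single-regime and post-switch strategies for the $\geq$ direction, and encode the forced capital injection after a negative post-jump position through the indicator split in $\widehat{f}$ as in \eqref{Expectation_Transform}. Your explicit remarks on measurable selection (or, alternatively, appealing to the barrier-type optimizer from Theorem \ref{Thm401}) and on the integrability guaranteed by Assumptions \ref{AAA1}--\ref{AAA2} are exactly the routine points those references handle.
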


	%\blue{[Kei:Added this]}\red{[JL: Thanks!]}
	%%		We assume that for all $i\in E$, $\E_i[\beta(H(\zeta))]=\sum_{j\in E, j\not=i}\frac{q_{ij}}{q_i}\beta(j)\leq \beta(i)$, where $\zeta$ denotes the first regime switch epoch.
	%	\end{assump}

%%		This mild condition is needed because if Assumption \ref{cond_cir} does not hold, then the expected cost of capital injection is higher for the new regime. Therefore in order to avoid injecting capital in the next regime, it might be better to have paid few dividends in the previous one. Hence the refraction-reflection strategy may no longer be optimal for arbitrary cost functions.

%		Note that if we assume that $\beta(i)=\beta>1$ for all $i\in E$, then Assumption \ref{cond_cir} holds trivially. The condition of constant cost per unit of injected capital in all regimes is assumed in \cite{ZY2016} in which the underlying surplus model is given by a diffusion process.
%	\end{remark}
%	On the other hand, to avoid that the process $Y^i:=\{ Y^i(t) = X^i(t) -\delta(i) t: t\geq 0\}$ has monotone paths for $i\in E$, we make the following assumption.
%%		If the process $X^i$ has bounded variation paths, we require that $c(i)>\delta(i)$ for $i\in E$.
%	\end{assump}	

	\subsection{Optimal Strategies with Regime Switching}
	%\yellow{[I changed to notation from \red{$\bbE$} to $\bE$. At the the start of the section we define the law of the MAP by $\p$ so I think it makes more sense that the expectation operator is written as $\bE$.]\yellow{[Agreed.]}\\}
	%In this section, we present {the solution to the problem} given in Section \ref{Sec302}. \red{[I think it is ok to delete this sentence.]}\green{[Ok.]}
	
	%\blue{[Move this paragraph before Theorem \ref{main_regime_switch}]}\red{[I moved.]} 
	The {Markov-modulated} double barrier strategy \linebreak{$\pi^{ {(0,{\bb})}}= \{  ({L^{(0,\bb)}(t), R^{(0,\bb)}(t))}  :  t \geq 0\}$ with a set of upper barriers} ${\bb} = ({\bb}(i))_{i \in E}$ is {a} strategy {that} pushes the process $X$ downward by paying dividends whenever the process $X$ attempts to upcross above $\bb(i)$ {when the current regime is} $i$, while {pushing it} upward by injecting capital whenever the process $X$ attempts to downcross below $0$.  The corresponding controlled process  $U^{(0, \bb)} := X -   L^{(0,\bb)} +  R^{(0,\bb)}$ can be constructed in an obvious way; it is nothing but the concatenation of the doubly-reflected \lev process of Section \ref{Sec103}. 
	We denote its expected NPV by
	\[
	V_{\bb} (x, i) := V_{\pi^{ {(0,{\bb})}}} (x, i) = \mathbb{E}_{(x,i)} \left[ \int_{[0,\infty)} e^{-\Lambda(t)}   \diff {L^{(0,\bb)}(t)}  -  \beta\int_{[0, \infty)} e^{-\Lambda(t)} \diff {R^{(0,\bb)}(t)} \right], \quad x \geq 0, \; i \in E.
	\]
%The $U_j^{(0, \bb(j))} := \diff  X^j (t) - \diff  L_j^{(0,\bb(j))}(t) + \diff  R_j^{(0,\bb(j))}(t). $ TODO
%Here, we can write
%\[
%\diff {L^{(0,\bb)}(t)} = \sum_{j \in E} \Ind_{\{H(t-) = j \}} \diff L_j^{(0,\bb(j))}(t) \quad \textrm{and} \quad \diff {R^{(0,\bb)}(t)} = \sum_{j \in E} \Ind_{\{H(t-) = j \}} \diff R_j^{(0,\bb(j))}(t)
%\]
%such that, on $\{t \geq 0: H(t) = j \}$, the controlled process follows a doubly reflected \lev process with upper barrier $\bb(j)$, i.e.,
%\begin{align}
%\diff  U_j^{(0, \bb(j))} := \diff  X^j (t) - \diff  L_j^{(0,\bb(j))}(t) + \diff  R_j^{(0,\bb(j))}(t). \label{U_j}
%\end{align}
%	%The definition is the almost the same as the cases of L\'evy processes, thus we omit it.
%\green{[I guess that space is needed to explain $U_j$, $L_j$ and $R_j$, so it may not be written in the second half.]}\blue{[Maybe just say that $U_j$, $L_j$ and $R_j$ are defined 
%	analogous to those in Section \ref{Sec103} but for $X^j$ and $b(j)$, or something like that?]}
%	\green{[I think so.]}
	
	The main theorem is the following. 
	\begin{theorem}\label{main_regime_switch}
	Under Assumptions \ref{AAA1}, and \ref{AAA2}, there exists a Markov-modulated % [I thought that the name should be changed to match the previous research.]} 
	double barrier strategy which is optimal {and achieves \eqref{vf_rs}.}	
	%\blue{[currently not so precise]}.%\red{[I added a remark below.]}
	\end{theorem}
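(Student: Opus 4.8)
The plan is to prove the theorem by exhibiting the optimal strategy as a fixed point obtained via contraction on the space of value functions, leveraging the single-regime optimality result (Theorem \ref{Thm401}) at each iteration. First I would set up the iteration: define a sequence of functions $V_0 \equiv 0$ and $V_{n+1} := \Gamma V_n$, where $\Gamma$ is the operator in \eqref{Supremum_Operator}. The crucial observation is that, for each $i \in E$, the inner optimization problem defining $(\Gamma f)(\cdot, i)$ is precisely a single-regime bail-out dividend problem with exponential termination at rate $q_i$ (so $r = q_i$ and $q = \qq(i)$ in the notation of Section \ref{Sec_Bailout}), driven by $X^i$, with terminal payoff $w(\cdot) = \widehat{f}(\cdot, i)$. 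I would verify that whenever $f(\cdot, j)$ is non-decreasing, concave, and of at most linear growth for every $j$ with appropriate slope bounds, the transformed function $\widehat{f}(\cdot, i)$ inherits these properties — in particular that $\widehat{f}$ satisfies Assumption \ref{assum_w} with respect to $\alpha = \qq(i) + q_i$ and $r = q_i$. (The constant $\beta > 1$ is exactly what makes $x \mapsto \beta(x+y) + f(0,j)$ dominate $f(x+y,j)$ near the boundary and keep $\widehat f$ concave across the gluing point $x+y=0$; the slope bounds $w'_+(0+) \le \beta\alpha/r$ and $w'_+(\infty) \in [0,\alpha/r)$ translate into the corresponding bounds on $f$.) Granting this, Theorem \ref{Thm401} applies and gives that each $V_{n+1}(\cdot, i) = (\Gamma V_n)(\cdot,i)$ is attained by a double barrier strategy with an explicit barrier $b^*_n(i)$ depending on $V_n$, and moreover $V_{n+1}(\cdot, i)$ is again non-decreasing, concave, and of at most linear growth with the required slope bounds — so the class is stable under $\Gamma$ and the induction closes.

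Next I would establish that $\Gamma$ is a contraction (or at least that $V_n$ converges) on the relevant function space under the supremum norm weighted to handle linear growth. The key estimate is that the discount factor $e^{-\qq(i)\zeta}$ appearing in \eqref{Supremum_Operator} has $\bE_{(x,i)}[e^{-\qq(i)\zeta}] = q_i/(q_i + \qq(i)) < 1$, uniformly bounded away from $1$ since $E$ is finite and $\qq(i) > 0$. This yields $\|\Gamma f - \Gamma g\|_\infty \le \big(\max_i \tfrac{q_i}{q_i+\qq(i)}\big)\|f - g\|_\infty$ on bounded differences, and a standard localization/truncation argument (as in \cite{NobPerYu, MMNP}) extends this to the linear-growth class. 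Hence $V_n \to V_\infty$ uniformly on compacts, $V_\infty$ is the unique fixed point $V_\infty = \Gamma V_\infty$, and by Proposition \ref{DPP} the true value function $V$ is also a fixed point; uniqueness of the fixed point forces $V = V_\infty$. Along the way the barriers $b^*_n(i)$ converge to some limit $\bb^*(i)$ (by continuity of the map $f \mapsto b^*$ through the defining equation \eqref{threshold}, using right-continuity of $g$ from Lemma \ref{Lem402}), and the associated double barrier strategies converge.

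Finally I would identify the limiting object with the Markov-modulated double barrier strategy $\pi^{(0,\bb^*)}$ and verify optimality. The argument is: by construction $V_\infty(\cdot, i)$ equals the single-regime value $(\Gamma V_\infty)(\cdot, i)$ which, by Theorem \ref{Thm401}, is attained by the double barrier strategy with barrier $\bb^*(i)$ and terminal payoff $\widehat{V_\infty}(\cdot, i)$. Concatenating these single-regime optimal strategies across successive regime epochs reproduces exactly $\pi^{(0,\bb^*)}$, so by the dynamic programming principle $V_{\pi^{(0,\bb^*)}} = \Gamma V_{\pi^{(0,\bb^*)}}$; combined with $V_{\pi^{(0,\bb^*)}} \le V = V_\infty$ and the fact that $V_\infty$ is the fixed point matched at every step, one concludes $V_{\bb^*}(x,i) = V(x,i)$. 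I expect the main obstacle to be the stability of the function class under $\Gamma$ — specifically, showing that $\widehat{f}$ genuinely satisfies Assumption \ref{assum_w} (concavity of the glued function at $x+y=0$, and the two one-sided slope conditions $\widehat f{}'_+(0+) \le \beta\alpha/q_i$ and $\widehat f{}'_+(\infty) < \alpha/q_i$) for every $f$ in the class, since these are exactly the hypotheses required to invoke Theorem \ref{Thm401}, and they must be checked carefully using the bound $\beta > 1$ together with the inherited slope bounds on $f$. The contraction estimate itself and the convergence of barriers are routine given the machinery of \cite{NobPerYu, MMNP}.
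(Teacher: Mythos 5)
Your overall architecture — reduce to single-regime problems via $\Gamma$, show stability of the admissible function class under $\Gamma$, and run a contraction/fixed-point argument to identify $V$ with $V_{\bb^*}$ — is the same as the paper's. However, there are two concrete gaps in your proposal, one of which would cause the argument to fail as stated.

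First, initializing the iteration at $V_0 \equiv 0$ is a genuine problem. The contraction $\|\Gamma f - \Gamma g\|_\infty \le K\|f-g\|_\infty$ (with $K = \max_i q_i/(q_i + \qq(i)) < 1$) is only meaningful when $\|f-g\|_\infty < \infty$, but $V_1 = \Gamma 0$ already has linear growth (the dividend term alone gives this, even with zero terminal payoff), so $\|V_1 - V_0\|_\infty = \infty$ and the Cauchy estimate is vacuous from the very first step. Your ``standard localization/truncation'' remark doesn't resolve this: the difficulty is not that the iterates fail to converge on compacts but that you never get a finite starting bound to contract. The paper instead sandwiches $V$ between two explicit functions $V_- \le V \le V_+$ (Lemma \ref{proof_lemma_vv}) that are both of linear growth but whose \emph{difference} is $\|\cdot\|_\infty$-bounded, and iterates $v_n^\pm := \Gamma v_{n-1}^\pm$ from these two ends so that $\|v_n^+ - v_n^-\|_\infty \le K^n \|v_0^+ - v_0^-\|_\infty \to 0$ squeezes $V$. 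You would need something equivalent. Relatedly, your final identification step quietly needs $\|V - V_{\bb^*}\|_\infty < \infty$ before you can apply the contraction of $T_{\bb^*}$ to conclude $V = V_{\bb^*}$; the paper proves this separately (Lemma \ref{inf_norm_finite}) by exploiting that $V_{\bb}$ grows exactly linearly with slope $1$ above $\max_i \bb(i)$, and you cannot simply cite ``uniqueness of the fixed point'' without it.

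Second, your claim that the optimal barriers $b_n^*(i)$ converge by ``continuity of the map $f \mapsto b^*$'' is shaky and, fortunately, unnecessary. The function $g$ in \eqref{def_g} is only guaranteed to be right-continuous (Lemma \ref{Lem402}), so $b^* = \inf\{b : g(b) < 1\}$ need not depend continuously on the terminal payoff $w$ (equivalently on $\widehat f$), particularly when $0$ is irregular for $(-\infty,0)$ for some $X^i$. The paper avoids any barrier-convergence argument: it first shows that $V$ itself lies in $\DD$ (by passing the concavity and slope bounds of $\widehat{v_n^\pm}$ through the uniform $\|\cdot\|_\infty$-limit), and only then invokes the single-regime Theorem \ref{Thm401} \emph{once}, applied directly to $\widehat{V}$, to produce the optimal barriers $\bb^*(i)$. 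You should restructure the last step the same way rather than trying to pass to a limit of barriers.

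The part you flag as the main obstacle — that $\widehat f$ inherits concavity and the slope bounds from $f$, i.e.\ stability of the class $\DD$ under $\Gamma$ — is indeed essential, and your reasoning there (the $\beta > 1$ comparison at the gluing point $x+y=0$, and the translation of $(\widehat f)'_+(0+) \le \beta$ and $(\widehat f)'_+(\infty) \le 1$ into the bounds $\beta\alpha/r$ and $\alpha/r$ of Assumption \ref{assum_w}) matches the paper's Proposition \ref{gammainD} and Remark \ref{Remark_Optimal_Barrier}.
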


	We proceed with the proof of Theorem \ref{main_regime_switch} as follows. We introduce in \eqref{Recursion_Operator} the operator $T_{\bb}$, acting on the space $\mathcal{B}$ given in \eqref{space_b}.
	 %associated {with} the expected NPV under a Markov-modulated double barrier strategy at ${\bb} = ({\bb}(i))_{i \in E}$. 
	 We show that the expected NPV under the Markov-modulated double barrier strategy, %$V_{\pi^{{(0,{\bb})}}}$ 
	{$V_{\bb}$}, is a fixed point of the operator $T_{\bb}$ ({see} Proposition \ref{Prop_FixedPoint}) and that $T_{\bb}$ is a contraction mapping. In the next step we show that the value function $V$ belongs to the class $\mathcal{D}$ defined in \eqref{57}, {which} is the content of Proposition \ref{Prop_Supremum_FixedPoint}.
	This fact allows us to apply Theorem \ref{Thm401} to show that there exists {${\bb}^* = ({\bb}^*(i))_{i \in E}$} such that the right hand-side of \eqref{dpp} is given by the expected NPV under a double barrier strategy at $\bb^*(i)$ for each $i \in E$. Hence, an application of the dynamic programming principle (Proposition \ref{DPP}), gives that $\Gamma V=T_{\bb^\ast}V=V$ where $\Gamma$ is the operator defined in \eqref{Supremum_Operator}. 
	Finally using that both 
	%$V_{\pi^{{(0,{\bb})}}}$ 
	{$V_{\bb^*}$} and $V$ are fixed points of the contraction mapping $T_{\bb^*}$
	%, the fact that $T_{\bb^*}$ is a contraction 
	implies that  $V=V_{\bb^\ast}$. % \red{[check this sentence]}
	
	%for $\tilde{V}$, which is defined in \eqref{Expectation_Transform}, and $\Gamma V$ where $\Gamma$ is a operator for optimization defined in \eqref{Supremum_Operator} can be written by 
	%$T_{\bb^\ast}V$ where $T_{\bb^\ast}$ is a operator of a double barrier strategy defined by \eqref{Recursion_Operator}. 
	%By the dynamic programming principle Proposition \ref{DPP}, $\Gamma V=V$ and so $V=T_{\bb^\ast}V$. 
	%On the other hand, it is easy to prove that $T_{\bb^\ast}$ have an unique fixed point $V_{\bb^\ast}$ where $V_{\bb^\ast}$ is the expected NPV of dividends and capital injections when we use the {Markov-modulated} double barrier strategy at $\bb^\ast$. Therefore, $V=V_{\bb^\ast}$. 
	%}
	%\red{[Is the conclusion of this chapter correct? Where should we put the proof?]}
	{\begin{remark}
	{In the proof of Theorem \ref{main_regime_switch} we use a fixed point argument, hence} we cannot obtain explicitly the optimal barrier of a Markov-modulated double barrier strategy as in Theorem \ref{Thm401}. However, we can obtain it by approximation procedure. For more details, see the proof of Proposition \ref{Prop_Supremum_FixedPoint}.  
	%\green{[do we need this remark?]}\red{[I think we should put this remark to make it easy to understand the image of the main result.]}
	\end{remark}}

	First, we show that the NPV, {$V_{\bb}$}
	%$V_{\pi^{{(0,{\bb})}}}$,
	 under a double barrier strategy solves a {recursive functional equation} {for any ${\bb} = ({\bb}(i))_{i \in E}$}.
	
	We consider the space of functions
%	\[
%	\B := \lbrace f : f(\cdot,i) \in C([0,\infty)), f(x,i) \text{ in bounded, for } i \in E \rbrace,
%	\]
%	endowed with the norm $\| f \| = \max_{i \in E} \sup_{x \geq 0} |f(x,i)|$. 
	%\red{[I guess $x \red{\to} f(x, i)$ should be a linear growth function on $[0, \infty)$. This is a difference from the case of refraction. How about the following?] 
	%$\B$ is a set of measurable function $f : \R \times E \red{\to} \R$ satisfying the following: 
	%[I tried to use enumerate, but I could not use here.]
	%For all $i\in E$, $f(\cdot ,i)$ is a non-decreasing continuous function. 
	%$f$ satisfies $f(x, i)=f(0, i)+\beta x$ for $i\in E$ and $x<0$. 
	%For $i\in E$, there exists $a_i\geq 0$ such that $f(x, i)=f(a_i, i)+(x- a_i)$ for $x\geq a_i$. 
	%}\blue{[Agreed]}
	%\red{
	%[I tried to think about the definition again. I think the following is sufficient.]
%	\blue{[How about: ]
	\begin{equation}\label{space_b}
	\B := \lbrace f : f(\cdot,i) \in C([0, \infty)), x \mapsto f(x,i)/\left(1+|x|\right) \text{ is bounded for all $i \in E$} \rbrace,
	\end{equation}
	endowed with the norm $\displaystyle\| f \| := \max_{i \in E} \sup_{x \geq 0} \frac{|f(x,i)|}{\left(1+|x|\right)}$.  %\blue{[better to write briefly here the difference with the case of \cite{NobPerYu}. ]}
%	\red{
%	\begin{remark}
%	For the proof of Theorem \ref{main_regime_switch}, the set $\B$ needs to contain the value function $V$ and 
%	the elements of $\B$ must be applied by some operators below. 
%	We introduced the norm $\| \cdot\|$ since we can expect $\| V\|<\infty$.  
%	\end{remark}
%	}
	{%[How about:]
		\begin{remark}
		Unlike the case of absolutely continuous dividend payments where the value function is bounded (see \cite[Section 5]{NobPerYu}), here the value function $V$ has linear growth. Therefore we introduce the norm $\| \cdot\|$ since  $\| V\|<\infty$.% \red{[we can delete this if you already mentioned something similar in your other paper.]}\green{[I guess the previous papers did not mention.]}\blue{[Yeah, this is only to point the difference with  \cite[Section 5]{NobPerYu}]}
	\end{remark}
}

%	}
%	\red{[I think this definition of $\B$ is the most appropriate. However, it might be better to use the usual uniform norm for the contraction. It seems difficult to modify the proof of Pistorius' paper. ]}

%	For any measurable function $f:[0,\infty)\times E \rightarrow \R$, we define
%	\red{[changed from hat to wide hat because it was a bit hard to see.]}
%	% the function $\hat{f}$ as
%	\begin{align}\label{Expectation_Transform}
%		\widehat{f}(x,i) := \sum_{j \neq i} \frac{q_{ij}}{q_i} \int_{\R} \left[ \left( \beta(x+y)+ f(0,j) \right)\Ind_{\lbrace x +y \leq 0 \rbrace} + f(x+y,j) \Ind_{\lbrace x+y > 0 \rbrace} \right] \diff F_{ij}(y),
%	\end{align}
%	%\red{[I guess we need to split $\Ind_{\lbrace x+y \in(0, b(j)) 0 \rbrace}$ and $\Ind_{\lbrace x+y > b(j) \rbrace}$. However, maybe it should be changed according to the definition of $\B$. ]}\blue{[Agreed]}
%	%\red{[How about the following according to the definition of $\B$?]
%	%	\begin{equation}\label{Expectation_Transform}
%	%	\hat{f}(x,i) := \sum_{j \neq i} \frac{q_{ij}}{q_i} \int_{\R}  f(x+y,j)  \diff F_{ij}(y),
%	%\end{equation}
%	%}
%	where  $F_{ij}$ is the distribution function of the random variable $J_{ij}$.
%	

\begin{remark}\label{bound_tilde_f}
Note that, for $(x,i) \in [0,\infty) \times E$ and $y\in\R$,
		\begin{align*}
	\frac{ \beta(x+y)+ f(0,j) }{1+|x|}\Ind_{\lbrace x +y \leq 0 \rbrace} + \frac{f(x+y,j)}{1+|x|} \Ind_{\lbrace x+y > 0 \rbrace}&\leq \beta
	{\frac { (|x|+|y|)} {1+|x|}}%\yellow{[\E[|J_{ij}|]?]}
	%\right) 
	+ \frac{ |f((x+y)\lor 0,j)|}{1+|(x+y)\lor 0|}\frac{1+|(x+y)\lor 0|}{1+|x|} \notag\\
	&\leq \beta\left( 1+|y| \right)+ \| f \| (1 + |y|).
		\end{align*}
	{Under Assumption \ref{AAA2}}, for $(x,i) \in [0,\infty) \times E$, we have
	\[
	\int_{\R}\left[\beta\left( 1+|y| \right)+ \| f \| (1 + |y|)\right] \diff F_{ij}(y)= (\beta+\| f \|)\left( 1+\E\left[|J_{ij}^{(1)} |\right] \right)<\infty. 
	\]
Hence, \eqref{Expectation_Transform} and dominated convergence imply that the mapping $x\mapsto\frac{ \widehat{f}(x,i) }{1 + |x|}$ is continuous {on $[0,\infty)$} and therefore, so is the mapping $x\mapsto \widehat{f}(x,i)$. Additionally,
%	{
%%	|\hat{f}(x,i)| \leq \sum_{j \neq i} \frac{q_{ij}}{q_i} \left[ \beta\left(|x|+ \E\left[|J_{ij} |\right]
	%\yellow{[\E[|J_{ij}|]?]}
%	\right) + \int_{\R} |f((x+y)\lor 0,j)| \diff F_{ij}(y)] \right],
%	\end{align}
%	}
%	and hence
%	\[
%	\begin{split}
%		\frac{ |\hat{f}(x,i)| }{1 + |x|} &\leq \sum_{j \neq i} \frac{q_{ij}}{q_i} \left[ \beta \left(1+ \red{\bbE}[|J_{ij}|\right)+\|f\| + \int_{\R} \frac{|f(x+y,j)|}{ 1+|x| } \diff F_{ij}(y) \right] \\
%		& \leq \sum_{j \neq i} \frac{q_{ij}}{q_i}\left[ \beta \left(1+ \red{\bbE}[|J_{ij}|\right)+2\|f\|\right].
%	\end{split}
%	\]
	\begin{align}
	\frac{ |\widehat{f}(x,i)| }{1 + |x|} \leq %&\sum_{j \neq i} \frac{q_{ij}}{q_i} \left[ \beta%\left(|x|+\E\left[|J_{ij} |\right] 
	%{\frac { (|x|+\E\left[|J_{ij} |\right])} {1+|x|}}%\yellow{[\E[|J_{ij}|]?]}
	%\right) 
	%+ \int_{\R}\frac{ |f((x+y)\lor 0,j)|}{1+|(x+y)\lor 0|}\frac{1+|(x+y)\lor 0|}{1+|x|}  \diff F_{ij}(y)] \right]\\
	%\leq &{\sum_{j \neq i} \frac{q_{ij}}{q_i} \left[ \beta\left( 1+\E\left[|J_{ij} |\right] %\yellow{[\E[|J_{ij}|]?]}
	%\right) +    \int_{\R} \| f \| (1 + |y|)  \diff F_{ij}(y)] \right] } \\
	%= &
	\sum_{j \neq i} \frac{q_{ij}}{q_i}  \left(1+\E\left[|J_{ij} |\right]
	%\yellow{[\E[|J_{ij}|]?]}
	\right) \left( \beta +\|f\|\right).	\label{56}
	\end{align}
	%\blue{[how does the last inequality hold?]}\red{[Should we change the last inequality into an equality?]}\yellow{[Agreed]}
	%\red{[Here, $\|f\|$ means $\sup_{x\in\bR, i\in E} \frac{f(x, i)}{1+|x|}$?]}\blue{[Thanks Kei, wrote it at the beginning of the Section now]}
%	\red{[In the second inequality, is it necessary to use Assumption \ref{AAA2}?]}
%\red{Because $\frac{ \beta(x+y)+ f(0,j) }{1+|x|}\Ind_{\lbrace x +y \leq 0 \rbrace} + \frac{f(x+y,j)}{1+|x|} \Ind_{\lbrace x+y > 0 \rbrace}$ is dominated by $ \beta\left( 1+\E\left[|J_{ij} |\right] \right)+ \| f \| (1 + |y|) $ and by \eqref{56}, we can use the dominated convergence theorem and get the continuity of $x\mapsto\frac{ |\hat{f}(x,i)| }{1 + |x|}$ and also $\hat{f}(\cdot,i) $}\blue{[Why do we need the continuity of $x\mapsto\frac{ |\hat{f}(x,i)| }{1 + |x|}$? ]}
%\red{[For $\hat{f}\in\B$, we have to prove that the continuity of $\hat{f}$ and this is equivalent to the continuity of $x\mapsto\frac{ |\hat{f}(x,i)| }{1 + |x|}$.]}
	Hence, if $f \in \B$, we have that $\widehat{f} \in \B$ as well. %\green{[I think we have not shown that $\hat{f}$ is continuous, here right? Maybe it is done in the proof of Proposition 4.3.]}
	\end{remark}
	%\red{[Can I change the notation to match the previous chapter as follows? 
	%"$b={(b(i))}_{i\in E} \Rightarrow \bb={(\bb (i))}_{i\in E}$", "$R_i^{0,b(i)}(t) \Rightarrow R^{0,\bb (i)}_t$", "$T_b \Rightarrow T_{\bb}$", "$\pi_b \Rightarrow \pi_{\bb}$".]}
	Given ${\bb} = ({\bb}(i))_{i \in E}$ we define the following operator acting on $\B$ %\red{[below: combined the expectations to make them shorter]}\green{[Ok]}
	\begin{align}
			(T_{{\bb}} f)&(x,i) := \bE_{(x,i)}\left[ \int_{[0,\zeta)} e^{-\qq(i)t} \diff L_i^{ {(0,{\bb}(i))}}(t)  - \beta \int_{[0,\zeta)} e^{-\qq(i)t} \diff R_i^{ {(0,{\bb}(i))}}(t) +  e^{-\qq(i)\zeta} \widehat{f}(U_i^{ {(0,{\bb}(i))}}(\zeta-),i) \right]\notag\\
			 &=\bE_{(x,i)}\left[ \int_{[0,\infty)} e^{-{\boldsymbol \alpha}(i)t} \diff L_i^{ {(0,{\bb}(i))}}(t)  - \beta  \int_{[0,\infty)} e^{-{\boldsymbol \alpha}(i)t} \diff R_i^{ {(0,{\bb}(i))}}(t) + q_i\int_0^{\infty}e^{-{\boldsymbol \alpha}(i)t}\widehat{f}(U_i^{ {(0,{\bb}(i))}}(t),i)\diff t \right], \label{Recursion_Operator}
		\end{align}
	where the processes $(U_i^{ {(0,{\bb}(i))}}, L_i^{ {(0,{\bb}(i))}}, R_i^{ {(0,{\bb}(i))}}: i \in E)$ are those under the double-barrier strategy with upper barrier ${\bb}(i)$ driven by $X^i$
	%\green{[It depends \eqref{U_j}]} 
	%\blue{[Yeah, I guess they are not actually defined]}  
	and
	\[
	\boldsymbol \alpha(i):=\qq(i)+q_i, \quad {i \in E.}
	\]

		{The proofs of the following two results are omitted as they follow verbatim from the proofs of Proposition 5.3  and Corollary 5.4 in \cite{NobPerYu}, respectively.
		}
	%\blue{The proof of the following proposition is omitted because it is very similar to that of Proposition 5.3 in \cite{NobPerYu}.}
	\begin{proposition}\label{Prop_FixedPoint} For ${\bb} = ({\bb}(i))_{i \in E}$, we have,  for $(x,i) \in [0,\infty) \times E$, 
		\[
		%V_{\pi^{ {(0,{\bb})}}}
		{V_{\bb}}(x,i) = (T_{{\bb}} 
		{V_{\bb}})
		%V_{\pi^{ {(0,{\bb})}}}
		(x,i). 
		\]
		%\red{[maybe less confusing to delete $(x,i)$ because this is an opertor of vectors?]}
	\end{proposition}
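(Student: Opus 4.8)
\emph{Proof proposal.}
The plan is to condition on the epoch $\zeta$ of the first regime switch and to invoke the strong Markov property of the Markov additive process $(X,H)$ at $\zeta$ (see, e.g., \cite{Asmussen, Ivanovs_Palmowski}), using that on $\{t<\zeta\}$ the triple $(U^{(0,\bb)},L^{(0,\bb)},R^{(0,\bb)})$ coincides with the single-regime triple $(U_i^{(0,\bb(i))},L_i^{(0,\bb(i))},R_i^{(0,\bb(i))})$ driven by $X^i$ and reflected at $0$ and $\bb(i)$, and that under $\bE_{(x,i)}$ the time $\zeta$ is exponentially distributed with parameter $q_i$ and independent of $X^i$.

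First I would record two preliminary facts. By Lemma \ref{Lem301} applied regime by regime together with the finiteness of $E$, the strategy $\pi^{(0,\bb)}$ is admissible, so $V_{\bb}$ is well defined; moreover, arguing as in the single-regime case (starting above $\bb(i)$ one pays a lump dividend and restarts at $\bb(i)$, starting below $0$ one injects a lump and restarts at $0$, cf.\ \eqref{density_v_1}) the map $x\mapsto V_{\bb}(x,i)$ grows at most linearly, is affine with slope $1$ on $[\bb(i),\infty)$ and affine with slope $\beta$ on $(-\infty,0]$. In particular $V_{\bb}\in\B$, so $\widehat{V_{\bb}}$ from \eqref{Expectation_Transform} is well defined by Remark \ref{bound_tilde_f}; and, because $V_{\bb}(\cdot,j)$ has slope $\beta$ below $0$, the bracketed integrand in \eqref{Expectation_Transform} with $f=V_{\bb}$ equals $V_{\bb}(x+y,j)$ for every $y\in\R$, so that $\widehat{V_{\bb}}(x,i)=\sum_{j\neq i}(q_{ij}/q_i)\int_{\R}V_{\bb}(x+y,j)\,\diff F_{ij}(y)$.

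Next I would split the two integrals defining $V_{\bb}(x,i)$ over $[0,\zeta)$ and $[\zeta,\infty)$. On $[0,\zeta)$ we have $H\equiv i$ and $\Lambda(t)=\qq(i)t$, so the $[0,\zeta)$ contribution is $\bE_{(x,i)}\!\big[\int_{[0,\zeta)}e^{-\qq(i)t}\diff L_i^{(0,\bb(i))}(t)-\beta\int_{[0,\zeta)}e^{-\qq(i)t}\diff R_i^{(0,\bb(i))}(t)\big]$, i.e.\ the first two terms of \eqref{Recursion_Operator}. For the $[\zeta,\infty)$ contribution, at $\zeta$ the component $X$ jumps by $J^{(1)}_{i,H(\zeta)}$, the free post-jump value of the controlled process is $U_i^{(0,\bb(i))}(\zeta-)+J^{(1)}_{i,H(\zeta)}$, and from $\zeta$ onward $\pi^{(0,\bb)}$ runs, in the regime $H(\zeta)$, the doubly-reflected strategy at $0$ and $\bb(H(\zeta))$, any immediate lump injection or dividend forced by the jump being captured by $V_{\bb}(\cdot,H(\zeta))$ evaluated at this free value (the affine-below-zero extension of $V_{\bb}$ is what encodes the injection). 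Since $\Lambda(\zeta)=\qq(i)\zeta$, the strong Markov property at $\zeta$ yields that the conditional expectation given $\cF(\zeta)$ of the $[\zeta,\infty)$ contribution equals $e^{-\qq(i)\zeta}\,V_{\bb}\big(U_i^{(0,\bb(i))}(\zeta-)+J^{(1)}_{i,H(\zeta)},H(\zeta)\big)$. Taking expectations and using that, given $\zeta$ and the path of $X^i$ on $[0,\zeta)$, the pair $(H(\zeta),J^{(1)}_{i,H(\zeta)})$ is independent with $H(\zeta)=j$ of probability $q_{ij}/q_i$ and $J^{(1)}_{ij}\sim F_{ij}$, the formula for $\widehat{V_{\bb}}$ turns this into $\bE_{(x,i)}\!\big[e^{-\qq(i)\zeta}\,\widehat{V_{\bb}}(U_i^{(0,\bb(i))}(\zeta-),i)\big]$. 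Adding the two contributions gives exactly the first line of \eqref{Recursion_Operator} with $f=V_{\bb}$, i.e.\ $V_{\bb}=T_{\bb}V_{\bb}$; passing to the $\boldsymbol\alpha(i)$-discounted form (the second line of \eqref{Recursion_Operator}) is then the routine step of integrating out the independent exponential clock $\zeta$.

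I expect the main obstacle to be the careful bookkeeping at $\zeta$: checking that the jump of $X$ at the regime switch, and the lump capital injection (or dividend) it may trigger, are correctly encoded by evaluating $V_{\bb}(\cdot,H(\zeta))$ at the free post-jump value, and that the independence/measurability statements underlying the nested conditional expectations are valid. The remaining steps are routine and mirror the single-regime decompositions of Section \ref{section_verification} and the MAP computations in \cite{NobPerYu,MMNP}.
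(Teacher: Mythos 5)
Your proposal is correct and takes essentially the same route as the paper's: the paper omits the proof, citing Proposition 5.3 of \cite{NobPerYu} as verbatim, and that proof likewise conditions on the first regime-switch time $\zeta$ and applies the strong Markov property of the MAP at $\zeta$ together with the observation that the affine (slope-$\beta$) extension of $V_{\bb}(\cdot,j)$ below $0$ collapses the two cases in \eqref{Expectation_Transform}. Your bookkeeping at $\zeta$ — evaluating $V_{\bb}(\cdot,H(\zeta))$ at the free post-jump value, then averaging out the transition target $H(\zeta)$ and the jump $J^{(1)}_{i,H(\zeta)}$ to produce $\widehat{V_{\bb}}(U_i^{(0,\bb(i))}(\zeta-),i)$ — is precisely the required argument.
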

%	{The proof is omitted as it follows verbatim from the proof of Proposition 5.3 in \cite{NobPerYu}.}
	%	{We consider the following space of functions
	%	\[
	%	\hat{\mathcal{B}}:=\lbrace f:f\in\mathcal{B}, f(x,i) \text{ is bounded, for } i \in E \rbrace,
	%	\]
%	endowed with the norm $\| f \|_{\infty} = \max_{i \in E} \sup_{x \geq 0} |f(x,i)|$. 	
%}

Define $\| h \|_{\infty} := \max_{i \in E} \sup_{x \geq 0} | h(x,i)|$ for any $h: [0,\infty) \times E \to \R$.

	\begin{corollary}\label{cor_conv}
		%The operator $T_{\blue{\bb}}$ is a contraction on $\hat{\B}$ with respect to the norm $\| \cdot \|$.
	%	For any $f,g\in\mathcal{B}$ such that $f-g\in\hat{\mathcal{B}}$, we have  $$\|T_{{\bb}}f-T_{{\bb}}g\|_{\infty}<\|f-g\|_{\infty}.$$
	%	\yellow{[Maybe we can omit the definition of the space $\hat{\mathcal{B}}$ as it is not used extensively in this section. Insctead we could write the statement as: 
	For any $f,g \in \B$ such that $\| f-g \|_{\infty} < \infty$, we have $$\|T_{{\bb}}f-T_{{\bb}}g\|_{\infty} < K \|f-g\|_{\infty},$$ where 
	\begin{align}
	K:= \max_{i \in {E}} {\bE}_{(0,i)}\left[ e^{-\qq(i)\zeta} \right] \in (0,1). \label{def_K}
	\end{align}
	 %\green{[changed $f$ to $h$ here to avoid confusion.]} \green{[$K = \max_{i \in {E}} {\bE}_{(0,i)}\left[ e^{-\qq(i)\zeta} \right]$, right? Then we should say so.]} \red{[ok]}
	\end{corollary}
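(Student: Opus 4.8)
The plan is to notice that the dividend and capital-injection integrals in \eqref{Recursion_Operator} do not involve the argument function, so that in the difference $T_{\bb}f-T_{\bb}g$ only the terminal term survives. Subtracting the two copies of the first line of \eqref{Recursion_Operator}, for $(x,i)\in[0,\infty)\times E$,
\begin{align*}
(T_{\bb}f)(x,i)-(T_{\bb}g)(x,i)=\bE_{(x,i)}\left[e^{-\qq(i)\zeta}\Big(\widehat{f}\big(U_i^{(0,\bb(i))}(\zeta-),i\big)-\widehat{g}\big(U_i^{(0,\bb(i))}(\zeta-),i\big)\Big)\right],
\end{align*}
so the estimate reduces to bounding $\widehat f-\widehat g$ together with the quantity $\bE_{(x,i)}[e^{-\qq(i)\zeta}]$.

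First I would bound $\widehat f-\widehat g$ in the unweighted norm $\|\cdot\|_\infty$. From \eqref{Expectation_Transform},
\begin{align*}
\widehat{f}(x,i)-\widehat{g}(x,i)=\sum_{j\neq i}\frac{q_{ij}}{q_i}\int_{\R}\Big[(f(0,j)-g(0,j))\Ind_{\{x+y\leq 0\}}+(f(x+y,j)-g(x+y,j))\Ind_{\{x+y>0\}}\Big]\diff F_{ij}(y),
\end{align*}
and since $|f(z,j)-g(z,j)|\leq\|f-g\|_\infty$ for every $z\geq0$ and $j\in E$ (in particular at $z=0$), while $\sum_{j\neq i}q_{ij}/q_i=1$ and $\int_{\R}\diff F_{ij}=1$, this gives $|\widehat f(x,i)-\widehat g(x,i)|\leq\|f-g\|_\infty$ for all $x\geq0$, $i\in E$; that is, $\|\widehat f-\widehat g\|_\infty\leq\|f-g\|_\infty$. (That $\widehat f,\widehat g$ are well defined and again belong to $\B$ is already recorded in Remark \ref{bound_tilde_f}.)

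Inserting this pointwise bound into the identity displayed above and using $U_i^{(0,\bb(i))}(\zeta-)\geq0$ a.s.\ yields
\begin{align*}
\big|(T_{\bb}f)(x,i)-(T_{\bb}g)(x,i)\big|\leq\bE_{(x,i)}\big[e^{-\qq(i)\zeta}\big]\,\|f-g\|_\infty,\qquad (x,i)\in[0,\infty)\times E.
\end{align*}
Since $\zeta$ is independent of $X^i$ and exponentially distributed with parameter $q_i$ under $\mathbb{P}_{(x,i)}$, the factor $\bE_{(x,i)}[e^{-\qq(i)\zeta}]=q_i/(q_i+\qq(i))$ is independent of $x$ and, using $\qq(i)>0$, lies strictly between $0$ and $1$. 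Taking the supremum over $x\geq0$ and then the maximum over $i\in E$ then gives $\|T_{\bb}f-T_{\bb}g\|_\infty\leq K\|f-g\|_\infty$ with $K$ as in \eqref{def_K} and $K\in(0,1)$, which is the claimed contraction estimate.

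There is no genuine difficulty here; the only points needing a little care are that the estimate on $\widehat f-\widehat g$ must be carried out in the unweighted norm $\|\cdot\|_\infty$ rather than in the weighted norm of $\B$ — which is precisely why the hypothesis $\|f-g\|_\infty<\infty$ is imposed — and the elementary but essential observation that $\bE_{(x,i)}[e^{-\qq(i)\zeta}]$ does not depend on the starting point $x$, so that the supremum over $x$ drops out cleanly before maximising over the finitely many states.
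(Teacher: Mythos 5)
Your proof is correct and follows what is surely the intended argument (the paper defers to Corollary~5.4 of \cite{NobPerYu}, whose proof is exactly this): the $L$- and $R$-integrals in \eqref{Recursion_Operator} are independent of the argument function and cancel, the $\beta(x+y)$ term in \eqref{Expectation_Transform} likewise cancels, the weights $q_{ij}/q_i$ and the probability measures $F_{ij}$ give $\|\widehat f-\widehat g\|_\infty\le\|f-g\|_\infty$, and since $\zeta$ is independent of $X^i$ and exponential with rate $q_i>0$ the remaining factor $\bE_{(x,i)}[e^{-\qq(i)\zeta}]=q_i/(q_i+\qq(i))$ is $x$-free and lies in $(0,1)$. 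One small remark: what you obtain (and what is actually true) is the non-strict estimate $\|T_{\bb}f-T_{\bb}g\|_\infty\le K\|f-g\|_\infty$; the strict inequality in the corollary as printed cannot hold in general (e.g.\ it fails when $f=g$) and should be read as $\le$, which is all that is needed for the contraction argument.
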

%	{The proof is omitted since it is almost the same the proof of Corollary 5.4 in \cite{NobPerYu}.}
%	\yellow{[We also have to prove that the Dynamic Programming Principle (DPP) holds for $V$, but it follows using standard DPP arguments, right?]}
%	\red{[Sorry, I have not read other papers using DPP, thus I do not know the standard DPP arguments.]}\blue{[I gues we can mimic the proof of Proposition 3.4 in \cite{NPY} or Proposition 2.2 in \cite{JP}.]}
%	\\
\subsection{Verification of barrier strategies.}  \label{subsection_verfication_barrier_MAP}
	We define the space of functions %\red{[below I added parenthesis to write $(\widehat{f})'_+$ because it is the derivative of the transformed function $\widehat{f}$, correct?]}\green{[I agree with you.]}
	% \blue{[fixed below]}
	\begin{align}
		\mathcal{D}:= \lbrace f \in \B : \widehat{f}(\cdot,i) &\text{ is concave and satisfies that $(\widehat{f})'_+(0,i)\leq \beta$ }\notag\\&\hspace{4cm}\text{and $(\widehat{f})'_+(\infty,i) := \lim_{x \to \infty}(\widehat{f})'_+(x,i)\in[0,1]$ for  all $i \in E$} \rbrace. \label{57}
	\end{align}
	%	 {[Here we can prove a result similar to Proposition 5.3 in Noba el at. in order to characterise functions that belong to $\DD$.]}\red{[OK.]}
The proof {of the} next result follows closely  that of Proposition 5.5 in \cite{NobPerYu}, {and} so we omit it.
	\begin{proposition}\label{gammainD}
		{If} $f \in \B$ {is} such that,  for all $i \in E$, $f(\cdot,i) \in \mathcal{C}^1([0,\infty))$, %\mazenta{[I guess $\mathcal{C}^1([0,\infty))$ is not necessary.]}, 
		%$f(\cdot, i)$} 
		is concave and nondecreasing, and satisfies that $f'_{+}(\cdot,i)\leq \beta$ and $f'_{+}(\infty,i)\leq 1$, then $f \in \DD$. %\blue{[Is this Proposition used?]}\green{[This may be needed a little further down.]} %\blue{[I guess with the changes in the previous section we now have that we can assume that $f(\cdot,i)\in\mathcal{C}^1((0,\infty)$ hence we can perhaps remove the proof and say that it is similar to the proof of Proposition 5.5 in \cite{NobPerYu}?]}
	\end{proposition}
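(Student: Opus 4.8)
The plan is to reduce everything to a one–dimensional concave function and then average. For $i\neq j$ set
\[
\phi_j(z):=\bigl(\beta z+f(0,j)\bigr)\Ind_{\{z\le 0\}}+f(z,j)\,\Ind_{\{z>0\}},\qquad z\in\R,
\]
so that, by \eqref{Expectation_Transform}, $\widehat f(x,i)=\sum_{j\neq i}\frac{q_{ij}}{q_i}\int_{\R}\phi_j(x+y)\,\diff F_{ij}(y)$. The first step is to record the properties of $\phi_j$: it is continuous (the two pieces agree at $0$ because $f(\cdot,j)$ is continuous); it is nondecreasing and $\beta$-Lipschitz (affine with slope $\beta$ on $(-\infty,0]$, and on $[0,\infty)$ equal to $f(\cdot,j)$, whose derivative lies in $[0,f'_+(0,j)]\subseteq[0,\beta]$ by monotonicity, concavity and the hypothesis $f'_+(\cdot,j)\le\beta$); and it is \emph{concave}, because two concave pieces are glued at $0$ and the left slope there, $\beta$, dominates the right slope $f'_+(0,j)$, so $(\phi_j)'_+$ is nonincreasing on $\R$. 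Consequently $0\le(\phi_j)'_+\le\beta$ everywhere, and $(\phi_j)'_+(z)\to f'_+(\infty,j)\in[0,1]$ as $z\to\infty$.

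Next I would deduce concavity of $\widehat f(\cdot,i)$: for each fixed $y$ the map $x\mapsto\phi_j(x+y)$ is concave, and concavity is preserved by the averaging $\int(\cdot)\,\diff F_{ij}(y)$ against the probability measure $F_{ij}$ and by the convex combination $\sum_{j\neq i}(q_{ij}/q_i)(\cdot)$, whose weights are nonnegative and sum to one; the relevant integrals are finite by Remark \ref{bound_tilde_f}, which also gives that $\widehat f(\cdot,i)$ is real-valued and continuous. Hence $\widehat f(\cdot,i)$ is concave, so its right derivative $(\widehat f)'_+(\cdot,i)$ exists everywhere and is nonincreasing.

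For the two derivative bounds I would differentiate under the integral. For $h>0$,
\[
\frac{\widehat f(x+h,i)-\widehat f(x,i)}{h}=\sum_{j\neq i}\frac{q_{ij}}{q_i}\int_{\R}\frac{\phi_j(x+y+h)-\phi_j(x+y)}{h}\,\diff F_{ij}(y),
\]
and, since $\phi_j$ is $\beta$-Lipschitz, the integrand is bounded in modulus by $\beta$ and converges pointwise in $y$ to $(\phi_j)'_+(x+y)$ as $h\downarrow 0$; dominated convergence then gives $(\widehat f)'_+(x,i)=\sum_{j\neq i}\frac{q_{ij}}{q_i}\int_{\R}(\phi_j)'_+(x+y)\,\diff F_{ij}(y)$. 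Taking $x=0$ and using $(\phi_j)'_+\le\beta$ yields $(\widehat f)'_+(0,i)\le\beta$; letting $x\to\infty$ and using dominated convergence with $(\phi_j)'_+(x+y)\to f'_+(\infty,j)$ yields $(\widehat f)'_+(\infty,i)=\sum_{j\neq i}\frac{q_{ij}}{q_i}f'_+(\infty,j)$, a convex combination of numbers in $[0,1]$ and therefore in $[0,1]$ (the limit exists because $(\widehat f)'_+(\cdot,i)$ is nonincreasing and nonnegative). Together with the hypothesis $f\in\B$, this shows $f\in\mathcal D$.

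The argument is essentially routine: the only point needing care is the concavity of the glued function $\phi_j$ at its kink at $0$, which is precisely where the hypothesis $f'_+(\cdot,j)\le\beta$ is used, while the interchange of limit and integral is immediate from the uniform $\beta$-Lipschitz bound, so I do not expect a genuine obstacle.
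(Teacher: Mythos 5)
Your proof is correct. The key observations are exactly right: writing $\widehat f(x,i)=\sum_{j\neq i}\tfrac{q_{ij}}{q_i}\int_{\R}\phi_j(x+y)\,\diff F_{ij}(y)$ with $\phi_j$ obtained by gluing the affine piece of slope $\beta$ to $f(\cdot,j)$; noting that the hypothesis $f'_+(0,j)\le\beta$ is precisely what makes the glued function concave at the kink; and invoking the uniform $\beta$-Lipschitz bound to justify interchanging the $h\downarrow0$ and $x\to\infty$ limits with the integral. This is the same averaging-and-gluing argument that underlies the cited proof of \cite[Proposition 5.5]{NobPerYu} (the paper omits its own proof for this reason), so your route matches the intended one.

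One small point worth making explicit: since $f(\cdot,j)\in C^1([0,\infty))$, $f'_+$ coincides with the ordinary derivative $f'$ on $[0,\infty)$, so the right-derivative of $\phi_j$ at $0$ from the right is $f'(0,j)$ and the comparison $\beta\ge f'(0,j)$ indeed yields a nonincreasing $(\phi_j)'_+$ across the junction, which is the clean way to conclude concavity of the glued function. You use this correctly but implicitly.
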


	\begin{remark}\label{Remark_Optimal_Barrier}
		%Since 
		{If} $f\in \DD$, %\blue{[Kei: above in \eqref{Supremum_Operator} as the function $w$ for the auxiliary problem $\hat{f}$ appears and not $f$?]}\red{[By the definition of $\DD$, $f\in\DD$ means that $\hat{f}$ satisfies Assumption \ref{AA3}. 
		%Thus, when $f\in \DD$, 
		then \eqref{Supremum_Operator} has the same form as the %dividend 
		single-regime problem with exponential horizon with terminal payoff $\widehat{f}$ studied in Section \ref{Sec02}. {Then, from the results of Subsection \ref{Subsec_Aux_Candidate_Threshold}, the supremum of \eqref{Supremum_Operator} is attained by a double reflection strategy at the barrier for some upper barrier, say $b_i^*$  for each $i \in E$. By setting $\bb^f = (\bb^f(i))_{i\in E}$ where $\bb^f(i) = b_i^*$ for each $i\in E$,}
%		}
%		{$\bb^f(i)$}, for each $i \in E$.
%		 we obtain that there exists  {$\bb^f(i)$} such that  Hence, by taking  {$\bb^f = (\bb^f(i))_{i\in E}$} 
		 we get $\Gamma f =  {T_{\bb^f}} f$. 
%		\green{[I don't understand how you can conclude this. Isn't this something you get after using the contraction mapping argument?]} 
%		{\color{blue}[At this step we are not using the contraction argument. What we are saying is that if $f\in\mathcal{D}$ then we have that the maximization problem
%			\[
%			\sup_{\pi \in \Pi} {\bE}_{(x,i)}\left[ \int_{[0,\zeta)} e^{-\qq(i)t} \diff L^{\pi}(t) - \beta \int_{[0,\zeta)} e^{-\qq(i)t} \diff R^{\pi}(t) + e^{-\qq(i)\zeta} \widehat{f}(U^{\pi}(\zeta -),i) \right],
%			\]
%			is a dividend problem driven by the L\'evy process $X^i$ until an exponential terminal time with payoff given by $\widehat{f}(\cdot,i)$. Therefore by the results of Subsection \ref{Subsec_Aux_Candidate_Threshold} we obtain that there exists  {$\bb^f(i)$} such that  $\Gamma f =  {T_{\bb^f}} f$.
%			
%			In the next step we show that $V\in\mathcal{D}$ so that we can apply the above result and obtain that there exists $\bb^*$ such that $\Gamma V =  {T_{\bb^*}} V$. 
%			
%			Finally in the last step we need to identify $V$ as the expected NPV of a barrier strategy, to this end we use that both $V$ and $V_{\pi^{ {(0,{\bb})}}}(x,i)$ are fixed points of the operator $T_{\bb^*}$ and hence, they must coincide.
%			]}
		
		{In addition, } from the verification results of Subsection \ref{Subsec_Aux_Verification} it follows that $(\Gamma f)(\cdot,i) \in C^{(1)}_\text{line}$ for all $i \in E$ and satisfies Assumption \ref{AA3}, hence %\green{[I guess Proposition \ref{gammainD} necessary here.]
		by Proposition \ref{gammainD}, $\Gamma f \in \DD$.	%\red{[I changed $\Gamma f(\cdot,i)$ to $(\Gamma f)(\cdot,i)$]}\green{[Ok.]}
	\end{remark}
%\red{[Is the conclusion of this chapter correct? Where should we put the proof?]}
The following result follows the same line of reasoning as in Proposition 5.7 in Noba et al. \cite{NobPerYu}.
	\begin{proposition}\label{Prop_Supremum_FixedPoint}
	%\red{[I rephrased]}\green{[Ok.]}
		Suppose $v_0^-, v_0^+ \in \DD$ are such that $v^-_0 \leq V \leq v^+_0$ and $\| v^+_0-v^-_0 \|_{\infty}<\infty $. We define recursively $v_n^- := \Gamma v^-_{n-1}$ and $v^+_n := \Gamma v^+_{n-1}$ for $n \geq 1$. Then, we have $v^-_n \leq V \leq v^+_n$ for all $n \geq 1$. Moreover, we have
		\[
		V= \lim_{n \rightarrow \infty} v^-_n = \lim_{n \rightarrow \infty} v^+_n,
		\]
		%\red{[above, delete $(x,i)$ because convergence is in the sense of ${\|\cdot\|}_{\infty}$-norm?]}\green{[Yse.]}
		where the convergence is in the ${\|\cdot\|}_{\infty}$-norm. In particular, $V \in \DD$.
	\end{proposition}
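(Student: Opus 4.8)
The plan is to run a sandwich/monotone-iteration argument exploiting three facts: (i) $\Gamma$ is monotone (if $f \le g$ pointwise then $\Gamma f \le \Gamma g$, which is immediate from the definition \eqref{Supremum_Operator} since $\widehat{f} \le \widehat{g}$ pointwise when $f \le g$, and the supremand is monotone in $\widehat{f}$); (ii) $\Gamma$ maps $\DD$ into $\DD$ (Remark \ref{Remark_Optimal_Barrier}); and (iii) $V$ is a fixed point of $\Gamma$ (Proposition \ref{DPP}). Starting from $v_0^- \le V \le v_0^+$ with both endpoints in $\DD$, monotonicity of $\Gamma$ together with $\Gamma V = V$ gives by induction $v_n^- = \Gamma v_{n-1}^- \le \Gamma V = V \le \Gamma v_{n-1}^+ = v_n^+$ for all $n \ge 1$, and by Remark \ref{Remark_Optimal_Barrier} each $v_n^\pm$ stays in $\DD$. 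So the sequences are sandwiched around $V$ for all $n$.

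Next I would establish convergence in $\|\cdot\|_\infty$. The key is that for $f \in \DD$ we have $\Gamma f = T_{\bb^f} f$ by Remark \ref{Remark_Optimal_Barrier}, and one checks the contraction-type estimate $\|\Gamma v_{n-1}^+ - \Gamma v_{n-1}^-\|_\infty \le K \|v_{n-1}^+ - v_{n-1}^-\|_\infty$ with $K \in (0,1)$ from \eqref{def_K}. Indeed, using $\Gamma v_{n-1}^\pm = T_{\bb^{\pm}} v_{n-1}^\pm$ for the respective optimal barriers, a standard comparison of the suprema in \eqref{Supremum_Operator} (bounding the difference of the two value functions by fixing the barrier of whichever is larger, exactly as in the proof of Corollary \ref{cor_conv}) yields $0 \le \Gamma v_{n-1}^+ - \Gamma v_{n-1}^- \le \max_i \bE_{(0,i)}[e^{-\qq(i)\zeta}]\,\|\widehat{v_{n-1}^+} - \widehat{v_{n-1}^-}\|_\infty \le K\|v_{n-1}^+ - v_{n-1}^-\|_\infty$, where the last bound uses that $|\widehat{f}(x,i) - \widehat{g}(x,i)| \le \|f-g\|_\infty$ pointwise (visible from \eqref{Expectation_Transform}, since the transform is an average of point evaluations of $f(\cdot,j)$ on $(0,\infty)$ plus a term affine in $x$ that cancels in the difference). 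Iterating, $\|v_n^+ - v_n^-\|_\infty \le K^n \|v_0^+ - v_0^-\|_\infty \to 0$. Since $v_n^- \le V \le v_n^+$, we get $\|V - v_n^-\|_\infty \to 0$ and $\|V - v_n^+\|_\infty \to 0$, proving $V = \lim_n v_n^- = \lim_n v_n^+$ in $\|\cdot\|_\infty$.

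Finally I would argue $V \in \DD$. Each $v_n^\pm$ lies in $\DD$, so in particular $\widehat{v_n^\pm}(\cdot,i)$ is concave with $(\widehat{v_n^\pm})'_+(0,i) \le \beta$ and $(\widehat{v_n^\pm})'_+(\infty,i) \in [0,1]$. From uniform convergence $v_n^\pm \to V$ (in $\|\cdot\|_\infty$, hence locally uniformly) one deduces $\widehat{v_n^\pm}(\cdot,i) \to \widehat{V}(\cdot,i)$ locally uniformly via \eqref{Expectation_Transform} and dominated convergence (using the integrability bound in Remark \ref{bound_tilde_f}); concavity is preserved under pointwise limits, and the one-sided slope bounds pass to the limit because concavity controls difference quotients. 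Also $V \in \B$ since $\|V\| < \infty$ and continuity is inherited. Hence $V \in \DD$. The main obstacle is the contraction estimate in the second step: one must carefully justify that comparing the two supremum problems in \eqref{Supremum_Operator} — which have different optimizing barriers — still yields the clean bound by $\max_i \bE_{(0,i)}[e^{-\qq(i)\zeta}]$ times the sup-distance of the terminal payoffs; this is precisely the argument behind Corollary \ref{cor_conv}, applied here with $f = v_{n-1}^+$, $g = v_{n-1}^-$, and it is the only place where the structure of $\Gamma$ (rather than a fixed $T_{\bb}$) is used.
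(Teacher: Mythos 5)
Your proposal is correct and follows essentially the same route as the paper: induction with $\Gamma V = V$ and monotonicity for the sandwich; the barrier-comparison / contraction estimate $\|\Gamma f - \Gamma g\|_\infty \le \sup_{\bb}\|T_{\bb}f - T_{\bb}g\|_\infty \le K\|f-g\|_\infty$ (the paper invokes Corollary \ref{cor_conv} and justifies the first inequality by the same ``fix the barrier of whichever is larger'' device you describe, stated for arbitrary $f,g$ rather than exploiting the ordering); and inheritance of concavity and the one-sided slope bounds for $\widehat{V}$ from $\widehat{v_n^\pm}$ under uniform convergence, where you could even note that the same estimate $|\widehat{f}-\widehat{g}| \le \|f-g\|_\infty$ you used in the contraction step already gives $\|\widehat{V}-\widehat{v_n^\pm}\|_\infty \to 0$ directly, exactly as in the paper's display \eqref{59}, so dominated convergence and locally uniform convergence are not strictly needed.
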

	\begin{proof}
		First, from the definition of $\Gamma$ we have that if $v^-_{n-1} \leq V \leq v^+_{n-1}$ for some $n \geq 1$, then
		\begin{equation}\label{ine_V}
		v^-_n \leq \Gamma V \leq v^+_n.
		\end{equation}
		Moreover, due to Proposition \ref{DPP} we have
		$V = \Gamma V$, hence the first claim follows.
		
		Following Remark \ref{Remark_Optimal_Barrier}, for any $f \in \DD$ there exists a vector ${\bb}^f$ such that $\Gamma f = \sup_{\bb} T_{\bb} f = T_{{\bb}^f} f$.
%		\green{[probably the first equality is true, but I don't know if the existence of optimizer is guaranteed.]}\blue{[Maybe the blue comments can clarify this.]} 
		Hence, for $f,g \in \DD$ {such that $\| f-g \|_{\infty} < \infty$,}
		\[
		\| \Gamma f - \Gamma g \|_{\infty}  {=\| T_{\bb^f} f - T_{\bb^g} g \|_{\infty}} \leq \sup_{\bb} \| T_{\bb} f - T_{\bb} g \|_{\infty} \leq K \| f-g \|_{\infty},
		\]
		{with $K$ as in \eqref{def_K}.}
%		\red{[actually, how does the above work?
%		\[
%		\| \Gamma f - \Gamma g \|_{\infty} \red{=\| T_{{\bb}^f} f - T_{{\bb}^g} g \|_{\infty}},
%		\]
%		Is this smaller than $ \sup_{\bb} \| T_{\bb} f - T_{\bb} g \|_{\infty}$??
%		]}
{
To see how the first inequality holds, for any $(x,i) \in [0, \infty) \times E$, if $(T_{\bb^f} f) (x,i) \geq  (T_{\bb^g} g) (x,i) $,
\begin{align*}
0 \leq (T_{\bb^f} f) (x,i)- (T_{\bb^g} g) (x,i) \leq (T_{\bb^f} f) (x,i)- (T_{\bb^f} g) (x,i) \leq  \sup_{\bb} \| T_{\bb} f - T_{\bb} g \|_{\infty};
\end{align*}
similarly, if $(T_{\bb^f} f) (x,i) \leq  (T_{\bb^g} g) (x,i) $, we obtain the same bound by symmetry.
%\begin{align*}
%0 \leq  (T_{\bb^g} g) (x,i) - (T_{\bb^f} f) (x,i) \leq  (T_{\bb^f} g) (x,i) - (T_{\bb^f} f) (x,i) \leq  \sup_{\bb} \| T_{\bb} f - T_{\bb} g \|_{\infty};
%\end{align*}\blue{[Maybe we can delete the inequality above and just say "by a similar argument the inequality holds if $(T_{\bb^f} f) (x,i) \leq  (T_{\bb^g} g) (x,i) $"?]}\green{[Ok.]}
Taking the supremum of their absolute values, we obtain the inequality.}		%{where $K = \max_{i \in {E}} {\bE}_{(0,i)}\left[ e^{-\qq(i)\zeta} \right]$.\\}
		{%[How about we rewrite the following paragraphs like this?]
		Thus, {by iterating} the previous identity and the definition of $v_n^{+}$ and $v_n^{-}$, we obtain
		\[
		\| v_n^{+} - v_n^{-} \|_{\infty} \leq K^n \| v_0^{+} - v_0^{-} \|_{\infty} \quad \text{for } n \in \N.
		\]
		Since $K \in (0,1)$ it follows that $\| v_n^{+} - v_n^{-} \|_{\infty} \rightarrow 0$ as $n \rightarrow \infty$. Hence, using \eqref{ine_V} {together with Proposition \ref{DPP} gives}
		\begin{align}
		\lim_{n\to\infty}v_n^+=\lim_{n\to\infty}v_n^-=\Gamma V=V, \label{58}
		\end{align}
		{in the $\|\cdot\|_{\infty}$-norm.}
	}

	%	It follows that $\Gamma$ is a contraction on $\DD$, hence the sequences $(v^+_n)_{n \geq 0}$ and $(v^-_n)_{n \geq 0}$ are Cauchy in the space $\hat{\mathcal{B}}$, and hence in the space $\mathcal{B}$.
		%Additionally we note that for $n\in\mathbb{N}$
	%	\[
	%	\| v_n^+-v_n^- \|_{\infty} \leq  \left(\max_{i \in \red{\bbE}} \red{\bbE}_{(0,i)}\left[ e^{-\qq(i)\zeta} \right]\right)^n \| v_0^+-v_0^- \|_{\infty}.
	%	\]
	%	Hence, the previous identity together with \eqref{ine_V} implies
	%	\[
	%	\lim_{n\to\infty}v_n^+=\lim_{n\to\infty}v_n^-=\Gamma V=V.
	%	\]	
		%converge to the unique fixed point of $\Gamma$, which is given by $V$ due to the Dynamic Programming Principle.\\
		%\red{JL: Could you clean up the following? Then, I will go through.}
		
		Following Remark \ref{Remark_Optimal_Barrier} we have that the functions $v_n^+, \, v^-_n$ belong to $\DD$ for all $n \geq 1$. %On the other hand, using dominated convergence we obtain for $(x,i) \in [0,\infty) \times E$
	%	\begin{align*}
		%	\widehat{V}(x,i) &= \lim_{n \rightarrow \infty}  \sum_{j \neq i}  \frac{q_{ij}}{q_i} \Bigg( \int_{(-x,\infty)} \left(  \widehat{v_n}^{\pm}\green{{v_n}^{\pm}?} (x+y,j) - \beta(x+y) -  \widehat{v_n}^{\pm}\green{{v_n}^{\pm}?}(0,j) \right) \diff F_{ij}(y) \\
			%	& \quad  + \int_{(0,b(j)-x)} \left( f(x+y,j) - (x + y - b(j)) - f(b(j),j) \right) \diff F_{ij}(y)  \\ 
			%	& \quad  + (x + f(0,j))\beta F_{ij}(0) + \beta \red{\bbE}[J_{ij} \Ind_{\lbrace J_{ij} \leq 0 \rbrace}] + (x-b(j))(1 - F_{ij}(0)) + \red{\bbE}[J_{ij} \Ind_{\lbrace J_{ij} > 0 \rbrace}] \Bigg) \\
		%	&+\beta(x+\E\left[J_{ij}\right]
			%\yellow{[\E[|J_{ij}|]?]}
		%	)+ \widehat{v_n}^{\pm}\green{{v_n}^{\pm}?}(0,j)\Bigg)\notag\\
		%	&= \lim_{n \rightarrow \infty}  \widehat{v_n}^{\pm}(x,i).
	%	\end{align*}
	%	\red{[Above do you mean $\widehat{v_n}^-(x,i)\leq \widehat{V}(x,i) \leq  \widehat{v_n}^+(x,i) $ and taking the limit they coincide?]}\green{[The following is better?]
		%By \eqref{58}, the function $V$ satisfies $\int_{(0, \infty) \red{\R??}} |V(x, i)|\diff F_{ij}(x)<\infty$ for $i, j \in E$, and thus we have \blue{[I guess this line is not necessary by \eqref{ine_V}?]} \blue{[Below following Kei's suggestion, wrote as follows]
		Hence, by \eqref{ine_V} we have %\red{[below added second inequality]}
				\begin{align*}
			&\|\widehat{V}-\widehat{v_n^{\pm}}\|_{\infty} \\
			&\leq \sup_{x\in\bR, i\in E} \sum_{j \neq i}  \frac{q_{ij}}{q_i} \Bigg( \int_{(-x,\infty)}  |V(x+y, j)-v_n^{\pm} (x+y,j)|  \diff F_{ij}(y) 
			+|V(0,j)-{v_n^\pm}(0,j) | F_{ij}(-x)\Bigg) \\
			&\leq \sup_{x\in\bR, i\in E} \sum_{j \neq i}  \frac{q_{ij}}{q_i} \int_{\R} \|{V}-v_n^{\pm}\|_{\infty} \diff F_{ij}(y) 
			=  \|{V}-v_n^{\pm}\|_{\infty}.
		\end{align*}
		This together with \eqref{58} shows
		\begin{align}
		\lim_{n\uparrow\infty}\|\widehat{V}-\widehat{v_n^\pm}\|_{\infty} =0. \label{59}
		\end{align}
		%\green{[I agree.]}
%		}
		%Hence, using the fact that the functions $v^{\pm}_n \in \DD$ we obtain that $V \in \DD$ as well. \red{[If you think is necessary, we can include a proof that $\mathcal{D}$ is closed]}
	%	\green{[I think the following should suffice.]
		%By \eqref{59}, the convexity, the right derivative and the properties of the right derivative at infinity of $\widehat{v_n}^{\pm}\in\DD$ are inherited by $V$. Thus $V$ in included in %$\DD$. \blue{[How about: "
		The uniform convergence of $(\widehat{v_n^\pm})_{n\in\mathbb{N}}$ to $\widehat{V}$ as in \eqref{59}, and the fact that $(v_n^{\pm})_{n\in\mathbb{N}}\subset\mathcal{D}$ implies that the concavity, as well as the properties of the right-hand derivative are inherited by $\widehat{V}$. Thus, $V\in\DD$.
		%\green{[I agree.]}
		%}
	\end{proof}
	%[Added this]
	We will provide two auxiliary results that will be used in the proof of Theorem \ref{main_regime_switch}. The first result guarantees the existence of functions $v_0^-, v_0^+ \in \DD$ that satisfy the conditions of Proposition \ref{Prop_Supremum_FixedPoint}, and its proof is deferred to Appendix \ref{appendix_lemma_vv}.
	\begin{lemma}\label{proof_lemma_vv}
		There exist $V_{-}, V_{+} \in \DD$ such that $\|V_--V_+\|_{\infty}<\infty$ and 
		\[
		V_{-}(x,i) \leq V(x,i) \leq V_{+}(x,i), \quad (x,i) \in [0,\infty) \times E.
		\]
	\end{lemma}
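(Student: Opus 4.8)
The plan is to exhibit two \emph{affine} functions that sandwich $V$, both lying in $\DD$, and whose difference is bounded. Concretely I would set
\[
V_+(x,i):=x+K,\qquad V_-(x,i):=x-K',\qquad (x,i)\in[0,\infty)\times E,
\]
for finite constants $K,K'\ge 0$ to be identified below. Then $\|V_+-V_-\|_\infty=K+K'<\infty$ is immediate, membership $V_\pm\in\B$ is clear, and the work reduces to (a) the sandwich $x-K'\le V(x,i)\le x+K$ and (b) the membership $V_\pm\in\DD$, i.e.\ the three conditions on $\widehat{V_\pm}$ in \eqref{57} (recall $\widehat f$ in \eqref{Expectation_Transform} only sees $f$ on $[0,\infty)$).

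For the lower bound, I would test $V$ against the admissible strategy $\pi_x$ that pays the lump-sum dividend $x$ at time $0$ and thereafter never pays dividends, injecting only the minimal capital needed to keep the surplus non-negative (reflection at $0$). By spatial homogeneity of the MAP, $V_{\pi_x}(x,i)=x-\beta\,\bE_{(0,i)}[\int_{[0,\infty)}e^{-\Lambda(t)}\diff R(t)]$, where $R$ is the minimal injection of the MAP started at $0$. Since $R$ is pathwise dominated by the capital injected under any Markov-modulated double-barrier strategy, \cite[Lemma 3.2]{Nob2019} applied regime by regime, together with Assumption \ref{AAA2} to absorb the regime-switch jumps, gives $\bE_{(0,i)}[\int e^{-\Lambda}\diff R]<\infty$; setting $K':=\beta\max_{i\in E}\bE_{(0,i)}[\int e^{-\Lambda}\diff R]$ yields $V(x,i)\ge V_{\pi_x}(x,i)\ge x-K'$.

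For the upper bound, fix an arbitrary admissible $\pi$ and apply the product rule to $e^{-\Lambda(t)}U^\pi(t)$ on $[0,T]$ (no covariation term, as $\Lambda$ is continuous and of finite variation with $\diff e^{-\Lambda(t)}=-\qq(H(t))e^{-\Lambda(t)}\diff t$), obtaining after rearrangement
\[
\int_{[0,T]}e^{-\Lambda(t)}\diff L^\pi(t)-\int_{[0,T]}e^{-\Lambda(t)}\diff R^\pi(t)=x-e^{-\Lambda(T)}U^\pi(T)-\int_0^T\qq(H(t))e^{-\Lambda(t)}U^\pi(t)\diff t+\int_0^T e^{-\Lambda(t)}\diff X(t).
\]
Subtracting $(\beta-1)\int_{[0,T]}e^{-\Lambda}\diff R^\pi\ge 0$, taking $\bE_{(x,i)}$, and discarding the two non-positive terms coming from $U^\pi\ge 0$ and $\qq>0$, I would write $X(t)=\int_0^t\mu(H(s))\diff s+N(t)$ with $N$ a martingale (whose stochastic integral against the bounded predictable $e^{-\Lambda}$ has zero expectation after a routine localization) and $\mu(j)$ the mean-increment rate of the MAP in regime $j$, finite by Assumptions \ref{AAA1}--\ref{AAA2}. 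Hence $\bE_{(x,i)}[\int_0^T e^{-\Lambda(t)}\diff X(t)]=\bE_{(x,i)}[\int_0^T e^{-\Lambda(t)}\mu(H(t))\diff t]$ is bounded in absolute value by $\bar\mu/\underline{\qq}$, where $\bar\mu:=\max_j|\mu(j)|$ and $\underline{\qq}:=\min_i\qq(i)>0$ (using $e^{-\Lambda(t)}\le e^{-\underline{\qq}t}$). Letting $T\to\infty$ by monotone convergence in the $L^\pi,R^\pi$ integrals gives $V_\pi(x,i)\le x+\bar\mu/\underline{\qq}=:x+K$, hence $V(x,i)\le x+K$.

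It remains to verify $V_\pm\in\DD$. Writing $\phi_+(u):=\beta u+K$ for $u\le 0$ and $\phi_+(u):=u+K$ for $u>0$ (and $\phi_-$ with $-K'$ in place of $K$), \eqref{Expectation_Transform} gives $\widehat{V_\pm}(x,i)=\sum_{j\ne i}(q_{ij}/q_i)\,\bE[\phi_\pm(x+J^{(1)}_{ij})]$; each $\phi_\pm$ is continuous (the two pieces agree at $0$) and concave (its slope drops from $\beta$ to $1<\beta$ at $0$), so $\widehat{V_\pm}(\cdot,i)$ is concave, with right derivative $(\widehat{V_\pm})'_+(x,i)=\sum_{j\ne i}(q_{ij}/q_i)\bigl(1+(\beta-1)\,\p(x+J^{(1)}_{ij}<0)\bigr)$ (dominated convergence in the difference quotient, using the bounded slopes of $\phi_\pm$). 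This quantity is $\le\beta$ at $x=0$ (since $\sum_{j\ne i}q_{ij}/q_i=1$) and tends to $1\in[0,1]$ as $x\to\infty$ (as $J^{(1)}_{ij}$ is a.s.\ finite by Assumption \ref{AAA2}). Thus $V_\pm\in\DD$, and since $\|V_+-V_-\|_\infty=K+K'<\infty$ the lemma follows. I expect the only genuinely delicate step to be the upper bound — making the product-rule identity and the vanishing-in-expectation of the martingale integral precise for the MAP with regime-switch jumps, and bounding $\bE_{(x,i)}[\int_0^T e^{-\Lambda}\diff X]$ uniformly in $T$; the rest is routine.
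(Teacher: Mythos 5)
Your proposal is correct in outline, and it is a genuinely different route from the paper's. The paper takes $V_-(x,i):=x+V(0,i)$ (the sandwich $V_-\le V$ then follows immediately from the admissibility of ``pay the lump sum $x$ at time $0$ and then play optimally from $0$'' --- no auxiliary one-sided-reflection strategy is needed, and no pathwise-domination claim about injections has to be justified), and $V_+(x,i):=\bE_{(x,i)}\bigl[\int_{[0,\infty)}e^{-\Lambda(t)}\diff\overline{X}(t)\bigr]=x+c_i$, where $\overline{X}$ is the running supremum. The inequality $V_+\ge V$ is obtained by a purely pathwise Tonelli argument: for any admissible $\pi$ one has $\overline{X}(u)-L^\pi(u)+\beta R^\pi(u)\ge 0$ for all $u$ (because $U^\pi\ge 0$ gives $X(u)\ge L^\pi(u)-R^\pi(u)$, and $\beta>1$), and hence
\[
\int_{[0,\infty)}e^{-\Lambda}\diff\overline{X}-\Bigl(\int_{[0,\infty)}e^{-\Lambda}\diff L^\pi-\beta\int_{[0,\infty)}e^{-\Lambda}\diff R^\pi\Bigr)=\int_0^\infty\qq(H(u))e^{-\Lambda(u)}\bigl(\overline{X}(u)-L^\pi(u)+\beta R^\pi(u)\bigr)\diff u\ge 0.
\]
This avoids entirely the It\^o product rule, the semimartingale decomposition of the MAP, and the localization step needed to show that the stochastic integral against the martingale part has zero expectation --- these are exactly the technicalities you yourself flag as delicate in your upper bound. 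Your approach buys a perhaps more transparent ``supermartingale bound'' intuition and a uniform constant $K=\bar\mu/\underline{\qq}$ in place of the paper's $c_i$, but it requires establishing the true-martingale property of $\int e^{-\Lambda}\diff N$ (including the regime-switch jump compensator) and, on the lower-bound side, the admissibility of the one-sided-reflection strategy, which you assert via an unproved pathwise domination $R^{0}\le R^{(0,\bb)}$. Neither of these is a fatal gap, but each requires an argument that the paper's shorter route bypasses. Your verification that $V_\pm\in\DD$ (which the paper leaves implicit) is correct and applies with only cosmetic changes to the paper's $i$-dependent constants.
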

	
%	\red{[maybe good to give a specific example which satisfies above?]}
%	\green{[I think an example is given in Appendix \ref{appendix_lemma_vv}.]}
	We now provide the second auxiliary result and defer its proof to Appendix \ref{Sec00A?}.
	\begin{lemma}\label{inf_norm_finite}
		For any $\bb=(\bb(i))_{i\in E}$, %\red{[$\mathcal{E}$ defined?]}\green{[We should delete it.]} 
		we have that $\|V-
		{V_{\bb}}
		%{V_{\pi^{0,\bb}}}
		\|_{\infty}<\infty$.
\end{lemma}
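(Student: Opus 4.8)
The plan is to fix a regime $i\in E$ and to show that $x\mapsto V(x,i)-V_{\bb}(x,i)$ is constant for all sufficiently large $x$ and bounded on every bounded interval; since $E$ is finite, this yields $\|V-V_{\bb}\|_{\infty}<\infty$. The mechanism behind the first claim is that $V(\cdot,i)$ and $V_{\bb}(\cdot,i)$ are \emph{both} affine with slope exactly $1$ on a half-line, so their leading linear parts cancel.

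First I would pin down the behaviour at infinity of the two functions. For $V_{\bb}$: under $\pi^{(0,\bb)}$, started in regime $i$ from $x\geq\bb(i)$, a lump dividend of size $x-\bb(i)$ is paid at time $0$ (undiscounted, since $\Lambda(0)=0$), which takes the controlled process to $\bb(i)$, after which it evolves exactly as $U^{(0,\bb)}$ started from $(\bb(i),i)$; the strong Markov property then gives $V_{\bb}(x,i)=(x-\bb(i))+V_{\bb}(\bb(i),i)$ for $x\geq\bb(i)$. For $V$: by Propositions \ref{DPP} and \ref{Prop_Supremum_FixedPoint} we have $V=\Gamma V$ and $V\in\DD$, so Remark \ref{Remark_Optimal_Barrier} provides a vector $\bb^{V}$, with $b_i^{\ast}:=\bb^{V}(i)$, such that $V=\Gamma V=T_{\bb^{V}}V$. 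Hence $V(\cdot,i)$ is exactly the NPV $v_{b_i^{\ast}}$ of the double barrier strategy at $b_i^{\ast}$ for the single-regime problem of Section \ref{Sec_Bailout} driven by $X^{i}$, with discount $\qq(i)$, killing rate $q_i$ (so $\alpha=\boldsymbol{\alpha}(i)$) and terminal payoff $\widehat V(\cdot,i)$; the latter satisfies Assumption \ref{assum_w}, because $V\in\DD$ and $\boldsymbol{\alpha}(i)/q_i=(\qq(i)+q_i)/q_i>1$. By \eqref{50} in Lemma \ref{Lem304_v1}, $V'(x,i)=1$ for $x>b_i^{\ast}$, whence $V(x,i)=(x-b_i^{\ast})+V(b_i^{\ast},i)$ for $x\geq b_i^{\ast}$.

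Writing $M_i:=\max\{\bb(i),b_i^{\ast}\}$ and subtracting, for every $x\geq M_i$,
\[
V(x,i)-V_{\bb}(x,i)=\bigl(V(b_i^{\ast},i)-b_i^{\ast}\bigr)-\bigl(V_{\bb}(\bb(i),i)-\bb(i)\bigr),
\]
which does not depend on $x$ (and is nonnegative, since $\pi^{(0,\bb)}$ is admissible -- the integrability of injected capital following as in Lemma \ref{Lem301} together with Assumption \ref{AAA2} -- so that $V_{\bb}\leq V$). On the compact interval $[0,M_i]$ the function $V(\cdot,i)$ is bounded because $V\in\B$, and $V_{\bb}(\cdot,i)$ is bounded because it is finite and continuous (continuity being obtained from a pathwise coupling bound of the form $0\leq U'(t)-U(t)\leq\varepsilon$ as in the proof of Lemma \ref{lemma_opt_barrier}, which yields a local Lipschitz estimate for $x\mapsto V_{\bb}(x,i)$). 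Therefore $\sup_{x\geq0}|V(x,i)-V_{\bb}(x,i)|<\infty$, and taking the maximum over the finite set $E$ completes the argument.

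The step I expect to be the main obstacle is the identification of $V(\cdot,i)$ with a single-regime double-barrier NPV of unit slope past its barrier: this is precisely what forces the linear parts of $V(\cdot,i)$ and $V_{\bb}(\cdot,i)$ to agree, and it rests on $V\in\DD$ (Proposition \ref{Prop_Supremum_FixedPoint}), which guarantees that the induced terminal payoff $\widehat V(\cdot,i)$ meets Assumption \ref{assum_w} so that Lemma \ref{Lem304_v1} (equivalently Theorem \ref{Thm401}) can be invoked. The remaining ingredients -- the lump-payment description of $V_{\bb}$ for large $x$ and boundedness on compact intervals -- are routine.
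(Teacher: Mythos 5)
Your proof is correct, but it takes a genuinely different route from the paper's. You establish that $V(\cdot,i)$ itself is affine with slope $1$ on a half-line by invoking the fixed-point identity $V=\Gamma V=T_{\bb^V}V$ (available via Proposition \ref{DPP}, Proposition \ref{Prop_Supremum_FixedPoint} and Remark \ref{Remark_Optimal_Barrier}), identifying $V(\cdot,i)$ with a single-regime double-barrier NPV $v_{b_i^*}$ to which \eqref{50} applies, and then comparing directly with the analogous affine tail of $V_{\bb}(\cdot,i)$. The paper instead sidesteps any structural description of $V$ at infinity: it uses the triangle inequality with the explicit surrogate $V_-(x,i)=x+V(0,i)$ from Lemma \ref{proof_lemma_vv}, which is manifestly affine with slope $1$, and then only has to show the elementary fact that $V_{\bb}(x,i)-V_-(x,i)$ is constant for $x\geq\bb_M:=\max_i\bb(i)$ and bounded on $[0,\bb_M]$. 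Both arguments hinge on the same cancellation of leading linear terms, but the paper's version is more elementary -- it needs only the crude two-sided bound $V_-\leq V\leq V_+$ and does not invoke the single-regime optimality theory (Lemma \ref{Lem304_v1}) or the full strength of the fixed-point representation of $V$. Your version is more direct conceptually and there is no circularity (Proposition \ref{Prop_Supremum_FixedPoint} and Lemma \ref{proof_lemma_vv} precede the present lemma and do not use it), but it carries more machinery than needed. Minor points: the nonnegativity of $V-V_{\bb}$ that you mention is superfluous for boundedness of the $\|\cdot\|_\infty$-norm, and your verification that $\widehat V(\cdot,i)$ meets Assumption \ref{assum_w} because $V\in\DD$ and $\boldsymbol\alpha(i)/q_i>1$ is the right check, though the paper bypasses this need entirely.
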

Using Lemma \ref{proof_lemma_vv} together with Proposition \ref{Prop_Supremum_FixedPoint}, we provide an iterative construction of the value function as follows: %Initialize $n=0$ and let $v = v_0$ for some $v_0 \in \DD$, then we proceed as follows: %gives an iterative construction of the value function as follows: 
Initialize by $n=0$ and $v=v_0$ for some $v_0\in\DD$ {that satisfies the conditions of Proposition \ref{Prop_Supremum_FixedPoint}}, we can operate the iteration scheme as in Section 4.1 in \cite{JP2012} by 
%\red{[cleaned up like this. Could you check?]} %\red{[below, I changed them back to the original notation. I guess we need to delete the counter $n$?]}
	\begin{itemize}
		\item[(1)] Find the barriers $\bb^v=(\bb^v(i);i\in E)$ 
%		\blue{[To standarize notation maybe better to use $\bb=(\bb(i))_{i\in E}$?]} \red{$\bb^{v_n}=(\bb^{v_n}(i);i\in E)$??}\green{[I think right.]}\blue{[I am not sure, that is why we are using the variable $v$ that takes the place of $v_n$. But I guess we can remove $v$ and just use $v_n,v_{n+1}$, any is OK]} 
		as in Remark \ref{Remark_Optimal_Barrier};
		\item[(2)] Set $T_{\bb^v}v\to v$
		%\red{$T_{\bb^{v_n}}v_n\to v_{n+1}$?}\green{[I think right.]}  
		%$n+1\to n$, and $v\to v_n$ 
		and return to step (1).
	\end{itemize}
%	\red{[above: $n+1\to n$, and $v\to v_n$ are not necessary?]}\blue{[We took the algorithm from Pistorius paper \cite{JP2012}, I guess the $v_n$ and $n$ just indicate that to pass from $n$ to $n+1$ we take $v$ as $v_n$ apply $T_b$ and then make $v_{n+1}=v$. But if you consider it is too confusing please change it as you see fit Kazu]}
%	\red{[The above is almost identical to Pistorius's paper. We should at least mention this and try to change the way we present.]}
%	\green{[As you said this is the almost the same as the sentences before Proposition 4.4 Pistorius's paper. However, the writing style is different, so I think you should write it down..]}
	By Proposition \ref{Prop_Supremum_FixedPoint}, this algorithm produces a sequence that converges to the true value function $V$.
%\yellow{In addition, in Proposition \ref{Prop_Exist_V0} we show the existence of $v_0^{+}$ and $v_0^{-}$ satisfying he conditions of Proposition \ref{Prop_Supremum_FixedPoint}.}
	{
	\subsection{Proof of Theorem \ref{main_regime_switch}}
	Proposition \ref{Prop_Supremum_FixedPoint} gives that $V\in\DD$. %\red{[To use Proposition \ref{Prop102} (Remark 2.2) for $V$, I think it is necessary to prove the differentiability of $V$. I think we have not proved the differentiability of $V$ yet.]}
	Hence Proposition \ref{DPP}, together with Remark \ref{Remark_Optimal_Barrier}, implies that there exists $\bb^*:=(\bb^*(i))_{i\in E}$ such that {$V=\Gamma V=T_{\bb^*}V$}. %for $(x,i)\in[0,\infty)\times E$. \red{[omit $(x,i)$?]}\green{[Ok.]} 
%		This further yields that
%	\begin{align*}
%		V(x, i)=\lim_{n\uparrow \infty} (T_{\bb^\ast}^n V)(x, i), \qquad\text{for $(x,i)\in[0,\infty)\times E$.} 
%	\end{align*}
%%	\green{[Does this $\bb^*$ depend on $n$? Or is it already the equilibrium barrier?]}\blue{[Yeah it does not depend on $n$ is already the optimal barrier.]}
%\blue{[I guess the previous limit is not needed?]}\green{[I think so.]}
	Finally, an application of Proposition \ref{Prop_FixedPoint} together with Corollary \ref{cor_conv} concludes that
	\begin{align}\label{fix_point_final}
		\|V- {V_{\bb^*}}
		%V_{\pi^{0,\bb^*}}
		\|_{\infty} = \|T_{\bb^*}^nV-T_{\bb^*}^n
		{V_{\bb^*}}
		%V_{\pi^{0,\bb^*}}
		\|_{\infty}\leq  
	 K^n \| V-
	 {V_{\bb^*}}
	 %V_{\pi^{0,\bb^*}}
	 \|_{\infty},\qquad\text{$n\in\mathbb{N}$.}
	\end{align}
	{By Lemma \ref{inf_norm_finite} we have that $\|V-
	{V_{\bb^*}}
	%{V_{\pi^{0,\bb^*}}}
	\|_{\infty}<\infty$. This together with \eqref{fix_point_final} implies that $V(x, i)={
	{V_{\bb^*}}
	%V_{\pi^{\bb^*}}
	}(x,i)$ for $(x,i)\in[0,\infty)\times E$.}
	\qed
	%\yellow{[The existence of $v_0^-$ and $v_0^+$ follows from the fact that $v_{\pi^b}^L$ and $v_{\pi^b}^R$ are bounded, right? I think the proof should be somewhat similar to Lemma 3.2 in Noba (2021), but we still have to do some calculations.]\\}
	%\yellow{[Added this. I am still finding a good upper bound for $v_{\pi^b}^L$.]}
	%\red{[I think that we need to give upper and lower bounds which are linearly growth. 
	%I guess that an example of the set of lower bound and upper bound is $V_-(x, i)= x +V_-(0, i)$ and $V_+(x, i) = \red{\bbE}_{(x, i)} \left[\int_{[0, \infty)}e^{-r(H(t))}\diff \overline{X}_t \right]=x + \red{\bbE}_{(0, i)} \left[\int_{[0, \infty)}e^{-r(H(t))}\diff \overline{X}_t \right]$. I'm thinking about this proof now.]
	%}
	%\red{[To the contraction, I guess we should use the normal uniform norm. 
	%Then, $V_-(x, i)= x +V_-(0, i)$ and $V_+(x, i) = \red{\bbE}_{(x, i)} \left[\int_{[0, \infty)}e^{-r(H(t))}\diff \overline{X}_t \right]=x + \red{\bbE}_{(0, i)} \left[\int_{[0, \infty)}e^{-r(H(t))}\diff \overline{X}_t \right]$ are lower bound and upper bound, respectively. %The proof is the following.]
%	Let us define $V_-(x, i):= x +V_-(0, i)$ and $V_+(x, i) := \red{\bbE}_{(x, i)} \left[\int_{
%	[0, \infty)}e^{-r(H(t))}\diff \overline{X}_t \right]=\red{\bbE}_{(0, i)} \left[\int_{[0, \infty)}e^{-r(H(t))}\diff \overline{X}_t \right]$ with $\bar{X}_{0-}=0$. 
%	We confirm that $V_-$ and $V_+$ are lower and upper bound, respectively. 
%	}
%	\par
%	\red{
%\blue{[Agreed]}

%\yellow{[Should we move the proof of this Proposition to the Appendix?]}
	%\newpage

	\subsection{Numerical experiments}
	
	We conclude this section with numerical experiments to confirm the convergence using a simple example. We consider the two-regime case ($E = \{ 0, 1\}$)  with
	\begin{align*}
	\diff X^i(t) = \mu_i \diff t + \sigma \diff W(t) + \diff Z^+(t) - \diff Z^-(t), \quad t \geq 0, \quad i \in E.
	\end{align*}
Here, $W = (W(t); t \geq 0)$ is a standard Brownian motion, $Z^+ = (Z^+(t) := \sum^{N^+(t)}_{j=1} Y_j^+; t \geq 0)$ and $Z^- = (Z^-(t) := \sum^{N^-(t)}_{j=1} Y_j^-; t \geq 0)$ are compound Poisson processes where $N^+$ (resp.\ $N^-$) is a Poisson process with rate $\lambda^+$ (resp.\ $\lambda^-$) and $Y^+$ (resp.\ $Y^-$) is an i.i.d.\ sequence of random variables. These processes are assumed mutually independent. 

For this experiment, we set $\mu_0 = 1.5$, $\mu_1 = 1.1$, $\sigma = 0.2$, $\lambda^+ = 0.8$, $\lambda^- = 0.2$ and $Y^+$ (resp.\ $Y^-$) is a sequence of (1-parameter) Weibull random variables with shape parameter $2$ (resp.\ the absolute values of  standard normal random variables). We assume $J_{ij}^{(n)} = 0$, $i \neq j$ and $n \geq 1$, and $q_{01} = q_{10} = 0.1$ for simplicity. For the state-dependent discount factor, we set $\qq(0) = 0.05$ and $\qq(1) = 0.075$.
	
We employ Monte-Carlo simulation. We first generate $M = 10,000$ number of paths of $(W, Z^+, Z^-)$ as well as the Markov chain $H$. With these, we obtain $M$ realizations of the (uncontrolled) Markov additive process $X$ when started at $H(0)=0,1$. These i.i.d.\ samples are used repeatedly to approximate expectations, instead of generating new sets at each iteration, to enhance the computational efficiency. We use the classical Euler-method with a truncated horizon $50$ and a time step of $\Delta t = 0.05$.

To confirm  the convergence of the iteration scheme outlined in (1)-(2) at the end of Section \ref{subsection_verfication_barrier_MAP}, we first set $\bb_0 = (\bb_0(0), \bb_0(1)) = (0.5,0.5)$ and recursively obtain  $\bb_{n+1} = (\bb_{n+1}(0), \bb_{n+1}(1))$ from $\bb_{n}$ via $v_n := T_{\bb^{v_{n-1}}} v_{n-1}$ for each $n \geq 1$. More precisely, the values of $\bb_{n+1}$ are obtained by solving \eqref{Supremum_Operator} for $f = V_{\bb_n}$ (where we already know that an optimal strategy is of barrier type). Table \ref{table_convergence} shows the values of $\bb_{n}$, confirming the convergence results obtained in the previous subsection. In our experiment, the convergence is fast and our algorithm stops after $5$ iterations.
\begin{table}
\begin{center}
\begin{tabular}{ |c|c|c| } 
 \hline
 $n$ & $\bb_{n}(0)$ & $\bb_{n}(1)$ \\ 
 \hline
 0 & 0.5 & 0.5 \\
 1 &1.0729839785687219 &0.8454152772983684 \\
 2 & 1.0724932895265710 & 0.8471510786024958 \\ 
3 & 1.0724622610024720 & 0.8471511240717061 \\ 
4 & 1.0724622494726461 & 0.8471625181626047 \\
5 & 1.0724622024111183 & 0.8471626732511570 \\
 \hline
\end{tabular}
\end{center}
\caption{The values of $(\bb_{n}; n = 0, 1, \ldots, 5)$ obtained when $\bb_0= (0.5,0.5)$.
} \label{table_convergence}
\end{table}

 In order to confirm that the obtained barriers $(\bb^*(0), \bb^*(1))  := (\bb_5(0), \bb_5(1))$ are indeed precise approximation of the optimal barriers, we verify that $\bb^*(0)$ is the optimal barrier when in state $0$ and that $\bb^*(1)$ is the optimal barrier when in state $1$. To this end, we analyze the  net present value of dividends $V_{\bb} = V_{(\bb(0),\bb(1))}$ under barrier strategies  with barriers $\bb = (\bb(0), \bb(1))$, considering $V_{\bb}$ as a function of $\bb(0)$ with $\bb(1)$ fixed at $\bb^*(1)$ and as a function of $\bb(1)$ with $\bb(0)$ fixed at $\bb^*(0)$. These analyses are shown in Figure \ref{plot_value}, focusing on the case where the starting value is $x=0$.
  These figures demonstrate that our obtained barriers $\bb^*(0)$ and $\bb^*(1)$ are indeed maximizers of
$\bb(0) \mapsto V_{(\bb(0), \bb^*(1)) }(0)$ and $\bb(1) \mapsto V_{(\bb^*(0), \bb(1)) }(0)$, respectively. 
This confirms the effectiveness of our Theorem \ref{main_regime_switch}-based algorithm.
    
  %  indeed confirms the optimality of the the obtained barriers.
 % $n \geq 1$ %where the operator $T_{\bb^v}$ (recall $\Gamma f = \sup_{\bb} T_{\bb} f = T_{{\bb}^f} f$).

% to obtain the pair of optimal barriers $b_0$ and $b_1$. Here we set $v_0$ to be the value function when $b_0 = TODO$ and $b_1 = TODO$. The objective is to analyze the convergence of the sequence defined by $v_n := T_{\bb^v} v_{n-1}$

\begin{figure}[htbp]
\begin{center}
\begin{minipage}{1.0\textwidth}
\centering
\begin{tabular}{cc}
 \includegraphics[scale=0.6]{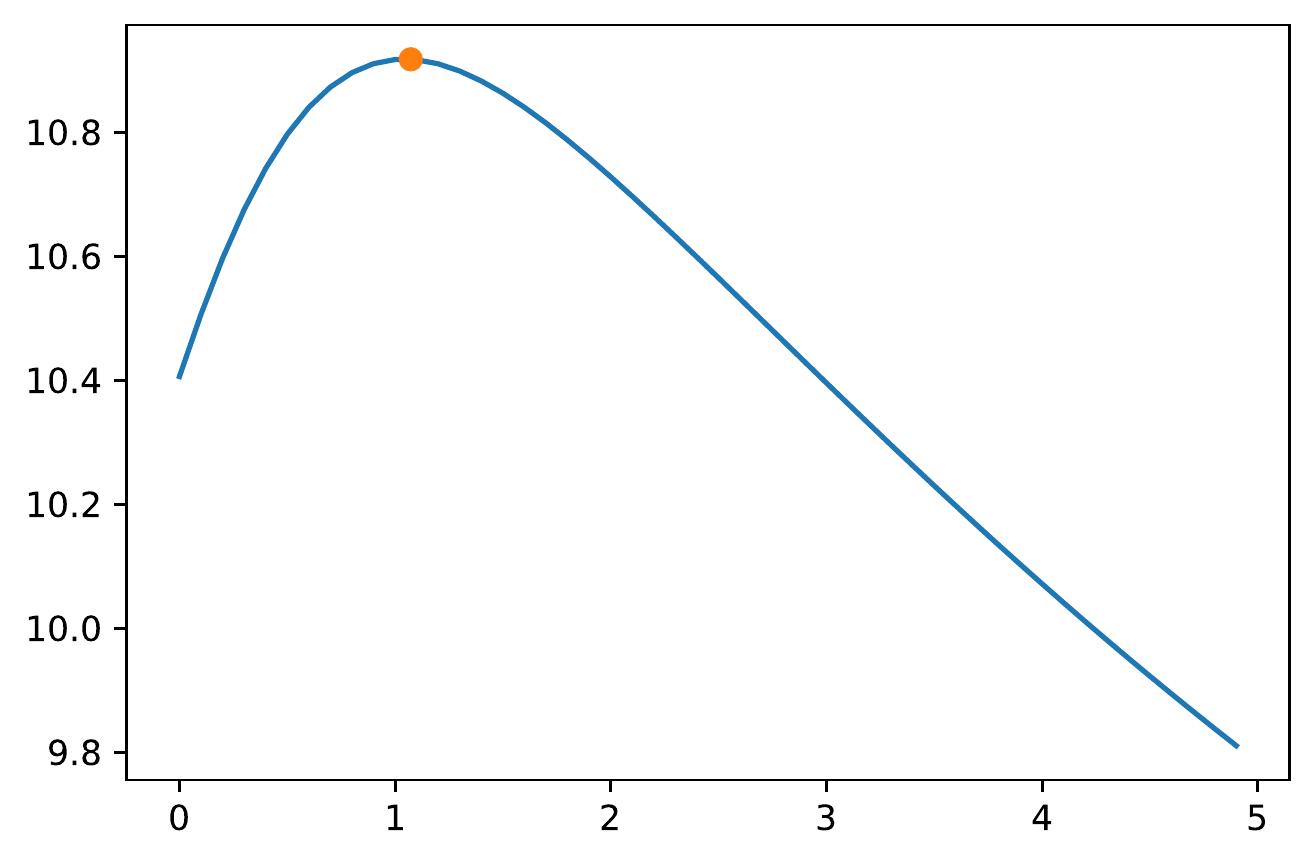} & \includegraphics[scale=0.6]{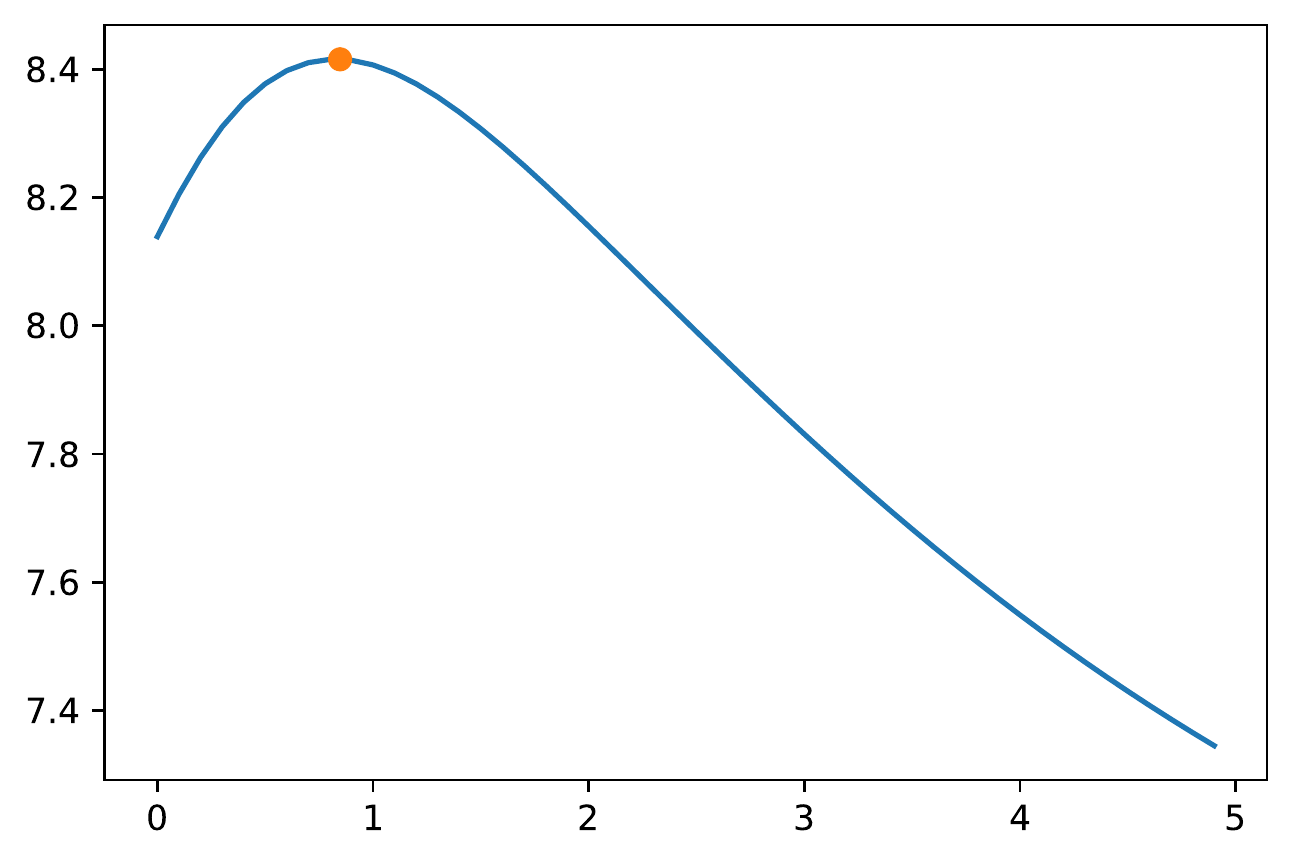}   \\
$\bb(0) \mapsto V_{(\bb(0), \bb^*(1)) }(0)$  & $\bb(1) \mapsto V_{(\bb^*(0), \bb(1)) }(0)$
 \end{tabular}
\end{minipage}
\caption{Optimality of $\bb^* = (\bb^*(0), \bb^*(1))$, obtained by the recursive algorithm. The left plot shows the values of $V_{\bb}(0)$ as a function of $\bb(0)$ for fixed $\bb(1) = \bb^*(1)$, while the right plot shows $V_{\bb}(0)$ as a function of $\bb(1)$ for fixed $\bb(0) = \bb^*(0)$. The values for the case $\bb(0) = \bb^*(0)$ and $\bb(1) = \bb^*(1)$ are indicated by circles.
} \label{plot_value}
\end{center}
\end{figure}

	\appendix
	
	\section{Proofs}\label{section_proofs}
	
		%\begin{proof} \blue{[To be moved to the appendix.]}
	%\red{[OK]} %\red{[Ok]}\yellow{[Agreed]}\yellow{[OK]}
	
	\subsection{Proof of Lemma \ref{Lem402}} \label{proof_Lem402}
	% {[The proof is similar to \eqref{19}?]}
		{(i)}  Fix $b\geq0$. 
%		First we note that
%		\begin{align}
%			\E_b \left[ \int_0^{\kappa^{b,-}_0} e^{-\alpha t} \left(\beta \alpha -rw'_+ ( {Y^b(t)})\right)\diff t \right]
%			=\E_0 \left[ \int_0^{\kappa^{0,-}_{-b}} e^{-\alpha t} \left(\beta \alpha -rw'_+ ( {Y^0(t)}+b)\right)\diff t  \right].
%		\end{align}
%\red{[I modified the proof ]}
We first prove
		\begin{align}
			\lim_{\delta\downarrow0}\kappa^{0,-}_{-b-\delta}=\kappa^{0,-}_{-b} \quad \textrm{a.s.\ on } \{ \kappa^{0,-}_{-b} < \infty\}. \label{proof_conv_kappa}
		\end{align}
Suppose $\kappa^{0,-}_{-b} < \infty$. Because $\delta \mapsto \kappa^{0,-}_{-b-\delta}$ is non-decreasing, %\kappa^{0,-}_{-b-0} :=
$\lim_{\delta \downarrow 0}\kappa^{0,-}_{-b-\delta}$ exists a.s.; for \eqref{proof_conv_kappa} it suffices to show it coincides with $\kappa^{0,-}_{-b}$. To derive a contradiction, suppose otherwise, i.e., $\bar{\epsilon}:= \lim_{\delta \downarrow 0}\kappa^{0,-}_{-b-\delta} -  \kappa^{0,-}_{-b} > 0$, which implies, thanks to the monotonicity of $\delta \mapsto \kappa^{0,-}_{-b-\delta}$,
\begin{align}
\kappa^{0,-}_{-b-\delta}  > \kappa^{0,-}_{-b} + \bar{\epsilon}, \quad \delta > 0. \label{eq_cont}
\end{align}
%\blue{[Why does $\overline{\epsilon}>0$ implies that $\kappa^{0,-}_{-b-\delta}>\lim_{\delta \downarrow 0}\kappa^{0,-}_{-b-\delta}$ for all $\delta>0$? Maybe safer to write: 
%	"To derive a contradiction, suppose otherwise, i.e., $\bar{\epsilon}:= \lim_{\delta \downarrow 0}\kappa^{0,-}_{-b-\delta} -  \kappa^{0,-}_{-b} > 0$. First, we note that
%	\begin{align}
%		\kappa^{0,-}_{-b-\delta}  \geq  \kappa^{0,-}_{-b} + \bar{\epsilon}, \quad \delta > 0. \label{eq_cont}
%	\end{align}"
%	?]} \red{[JL: we are using the fact that $\delta \mapsto \kappa^{0,-}_{-b-\delta}$ is non-decreasing. I added an extra explanation above. Do you agree? ]}
%	Isn't this used on the fact that $\kappa^{0,-}_{-b}+\overline{\epsilon} > \kappa^{0,-}_{-b}+\underline{\epsilon}$?]}
%\blue{[I guess that $\kappa^{0,-}_{-b-\delta}\leq \lim_{\delta \downarrow 0}\kappa^{0,-}_{-b-\delta}$ always hold? So it doesn't follow from the assumption that $\bar{\epsilon}>0$?]}

Now, take $\underline{\epsilon} := \bar{\epsilon}/2 > 0$.
%On the other hand, 
From the definition of $\kappa^{0,-}_{-b}$, 
%for all $\varepsilon>0$ 
there exists $\tilde{\delta}>0$ %\blue{[Here changed $\delta\mapsto\tilde{\delta}$ to avoid confudion with $\delta$ used above...]} 
such that 
			$\inf_{s\in[0, \kappa^{0,-}_{-b}+\underline{\epsilon}]} {Y^0(s)}< -b-\tilde{\delta}$,
		which implies that  $\kappa^{0,-}_{-b}+\overline{\epsilon} > \kappa^{0,-}_{-b}+\underline{\epsilon} \geq  \kappa^{0, -}_{-b-\tilde{\delta}} \geq \kappa^{0,-}_{-b}$. This contradicts with \eqref{eq_cont}. Hence, \eqref{proof_conv_kappa} holds.
	%	\red{[can you check?]}
		 % for all $\varepsilon>0$.
		%\end{align}

		{By \eqref{proof_conv_kappa} and} since $w_+^\prime$ is right-continuous, we have {by dominated convergence}
		\begin{align}
			{\lim_{\varepsilon\downarrow 0}\left( \beta-g(b+\varepsilon)\right)=}\lim_{\varepsilon\downarrow0}
			&\E_0 \left[ \int_0^{\kappa^{0,-}_{-(b+\varepsilon)}} e^{-\alpha t} l( {Y^0(t)}+b+\varepsilon) \diff t \right]\\
			%&=\E_0 \left[ \int_0^{\infty}\lim_{\varepsilon\downarrow0}1_{\{t<\kappa^{0,-}_{-(b+\varepsilon)}\}} e^{-\alpha t} l ( {Y^0(t)}+b+\varepsilon) \diff t  \right]\\
			&=\E_0 \left[ \int_0^{\kappa^{0,-}_{-b}} e^{-\alpha t}l ( {Y^0(t)}+b) \diff t \right]= {\beta-} g(b),
		\end{align}
%\blue{[above ok to delete middle line?]} 
showing the right-continuity.
	\par
	%\red{In addition, $g$ is left-continuous when $0$ is regular for $(-\infty , 0)$ and $w \in C^1[0, \infty)$  {[$\mathcal{C}^1([0,\infty))$?]}.  {[Kei: I was wondering if this can follow from the quasi-left-continuity of L\'evy process Lemma 3.2 in \cite{Kyp2014}, without having to use the regularity for $(-\infty , 0)$?]}
	%\red{[You're right. Also, left-continuity may not be needed for proof.]}
	{(ii)} 	By the quasi-left-continuty of {the} L\'evy process (see, {e.g.,} Lemma 3.2 in \cite{Kyp2014}), we obtain
		\begin{align}
			 {Y^{0}(\lim_{\varepsilon\downarrow 0}\kappa^{0,-}_{-(b-\varepsilon)})}=\lim_{\varepsilon\downarrow 0} {Y^{0}(\kappa^{0,-}_{-(b-\varepsilon)})}\leq -b.
		\end{align}
		%On the other hand, i
		%If the point $0$ is regular for $(-\infty,0)$ then 
	%	$Y^{0}$ hits  {$(-b,0)$}   {before} time $\lim_{\varepsilon\downarrow 0}\kappa^{0,-}_{-(b-\varepsilon)}$. 
		%\red{[The following is better?]
		If the point $0$ is regular for $(-\infty,0)$ then 
		$Y^{0}$ hits $(-\infty , -b)$ immediately after time $\lim_{\varepsilon\downarrow 0}\kappa^{0,-}_{-(b-\varepsilon)}$.
	%	}
		Therefore, $\lim_{\varepsilon\downarrow 0}\kappa^{0,-}_{-(b-\varepsilon)}=\kappa^{0,-}_{-b}$. 
		%From ???[I will fill], $\E_0 \left[ \int_0^{\kappa^{0,-}_{-b}} e^{-\alpha t} \left(\beta \alpha -rw'_- (Y^0_t+b)\right)dt \right]$ is equal to $\E_0 \left[ \int_0^{\kappa^{0,-}_{-b}} e^{-\alpha t} \left(\beta \alpha -rw'_+ (Y^0_t+b)\right)dt \right]$ for all $b\in\R$. 
	 	Since $\lim_{\varepsilon\downarrow0}w_+^\prime (x-\varepsilon)=w_-^\prime(x)$ for $x\in\bR$ by \cite[Proposition 3.1 in the Appendix]{RevYor1999},%\yellow{[Should we cite {\cite[Proposition 3.1, Appendix]{RevYor1999}} for this result?]} \blue{[we don't need this here, right?]}
		\begin{multline}
			{\lim_{\varepsilon\downarrow0}\left(\beta-g(b-\varepsilon)\right)=}\lim_{\varepsilon\downarrow0}
			\E_0 \left[ \int_0^{\kappa^{0,-}_{-(b-\varepsilon)}} e^{-\alpha t} l( {Y^0(t)}+b-\varepsilon) \diff t \right]\\
			=\E_0 \left[ \int_0^{\infty}\lim_{\varepsilon\downarrow0}\Ind_{\{t<\kappa^{0,-}_{-(b-\varepsilon)}\}} e^{-\alpha t} l ( {Y^0(t)}+b-\varepsilon) \diff t \right]
			=\E_0 \left[ \int_0^{\kappa^{0,-}_{-b}} e^{-\alpha t} l_- ( {Y^0(t)}+b))\diff t  \right],% {=g(b)}.
			%&=\E_0 \left[ \int_0^{\kappa^{0,-}_{-b}} e^{-\alpha t} \left(\beta \alpha -rw'_+ (Y^0_t+b)\right)dt \right].
					\end{multline}
					{with $l_-(y) := \beta \alpha -rw'_{-} (y)$.}
%where in the last equality, we used {\cite[Proposition 3.1, Appendix]{RevYor1999}} \yellow{[Remove this phrase? I guess we are only using that $\lim_{\varepsilon\downarrow0}w_+^\prime (x-\varepsilon)=w_-^\prime(x)$?]}. % {[Kei: In may version of \cite{RevYor1999}, the Appendix does not appear as C, So it is clear that \cite[Proposition C.3.1]{RevYor1999} refers to the Appendix?]}.
%By the proof of \cite[Lemma 3.5]{NobYam2020} and by our assumption, the potential density of $Y^0$ does not have mass, and 
%\yellow{It is shown after (3.10) in \cite{NobYam2020} that the potential measure of $Y^0$ has no mass when $X$ has unbounded variation paths, and the same conclusion can be drawn in the same way for the case that $X$ has bounded variation paths.} \red{how about: "
	It is shown after identity (3.10) in \cite{NobYam2020} that the potential measure of $Y^0$ has no mass when $X$ has unbounded variation paths. Similarly,  for the case where $X$ has bounded variation paths, we follow the same computation leading to the equation at the bottom of \cite[p.1217]{NobYam2020}. Then, using the fact that $Y^0$ reaches $0$ a finite number of times on a compact interval when $0$ is regular for $(-\infty,0)$, as stated in \cite[p.158]{Kyp2014}, we conclude that $Y^0$ has no mass on $(-\infty, 0]$. %, right? Maybe better to clarify this.]}
%\yellow{[Yes, it's correct. Should we add "and 0 is regular for $(-\infty, 0)$" at the end of the sentence Kazu wrote? ]}
Since $w_-^\prime \neq w_+^\prime$ at most countably {many points} by {\cite[Proposition 3.1, Appendix]{RevYor1999}}, {$l_-(\cdot)$ can be replaced with $l(\cdot)$ in the last expectation, showing  $\lim_{\varepsilon\downarrow0}g(b-\varepsilon) =g(b)$ when  $0$ is regular for $(-\infty,0)$, as desired.} %\blue{[simplified like this ok?]}

\subsection{Proof of Lemma \ref{lemma_example_condition}}\label{Sec00B} %\red{[moved here to be in order]}
{In this section, we provide} an example of {a L\'evy process} $X$ which has unbounded variation paths and satisfies Assumption \ref{Ass306}. 
	\par
	Assume that $X$ satisfies {that} $\nu (0, \infty) <\infty $. 
	Then {we can write the process $X$} as follows: %\blue{[Kei: changed $X^s\rightarrow X^{SN}$ because I found it confusing with the time $s$...]} 
	\begin{align}
	X(t) ={X^{SN}(t)} + \sum_{n=1}^{N^\nu(t)} J_n
	\end{align} 
	where ${X^{SN}(t)}=\{X^{SN}(t) : t\geq 0\}$ is a spectrally negative L\'evy process with unbounded variation paths, $N^\nu=\{N^\nu(t) : t\geq 0\}$ is {an} independent Poisson process with intensity $\nu(0, \infty)$ and ${\{ J_n \}}_{ n \in \N}$ is a sequence of i.i.d.\ random variables {with} distribution $\nu(\cdot \cap (0, \infty))/\nu(0, \infty)$. 
	{For $\theta\geq 0$,} let $W^{(\theta)}$ {denote the} $\theta$-scale function of ${X^{SN}}$. 
	For the definition of the scale functions, see, e.g., \cite[Section 8]{Kyp2014}.
	{For $b>0$,} let ${Y^{SN, b}}=\{{Y^{SN, b}(t)} : t\geq 0\}$ with $b>0$ the {associated} reflected L\'evy process of $X^{SN}$ at $b$, i.e. 
	${Y^{SN, b}(t)}= {X^{SN}(t)} - \sup_{s \in [0, t]} \left\{({X^{SN}(s)} -b) \land 0\right\}$. 
	In addition, we define ${\kappa^{SN, b}_0} := \inf\{t> 0: {Y^{SN, b}(t)}< 0\}$ and 
	\begin{align}
	{H^{SN, (b, \theta)}_f}(x):=\E_x \left[\int_0^{{\kappa^{SN, b}_0} }e^{-\theta t} f({Y^{SN, b}(t))} \diff t \right], \quad \theta>0 , x\in\bR.  
	\end{align} 
	From \cite[Theorem 8.11]{Kyp2014} and by our assumption that $X$ is of unbounded variation, we have, for $\theta \geq 0$ and $x\in (0, b)$ and {a} non-negative measurable function $f$, 
	\begin{align}
	\E_x \left[\int_0^{{\kappa^{SN, b}_0} }e^{-\theta t} f({Y^{SN, b}(t))} \diff t \right]
	=\int_0^b f(y) \left( W^{(\theta)}(x)\frac{W^{(\theta)\prime}(b-y)}{W^{(\theta)\prime}(b)} -W^{(\theta)}(x-y)\right)\diff y. 
	\end{align}
	Thus, we have  
	\begin{align}
	H^{(b, \alpha)}_f(x)
	=&{H^{SN, (b, \alpha)}_f}(x) -\E_x \left[e^{-\alpha T^{N^\nu}}{H^{SN, (b, \alpha)}_f} (Y^{SN, b}_{T^{N^\nu}}) ; T^{N^\nu}< \kappa^{SN, b}_0\right]
	\\
	&+\E_x  \left[ e^{-\alpha T^{N^\nu}}{H^{(b, \alpha)}_f(Y^{SN, b}_{T^{N^\nu}}+J_1)} ; T^{N^\nu}< \kappa^{SN, b}_0\right]\\
	=&{H^{SN, (b, \alpha)}_f}(x)
	-\nu(0, \infty) {H^{SN, (b, \hat{\alpha})}_{H^{SN, (b, \alpha)}_f}}(x)
	+\int_{(0, \infty)} {H^{SN(b, \hat{\alpha})}_{H^{ (b, \alpha)}_f(\cdot + u) }}(x)\nu(\diff u), 
	\end{align}
%\blue{[In the last integral...remove $\nu(0,\infty)$?]}
	where $T^{N^\nu}$ is the first jump time of $N^\nu$ and $\hat{\alpha}=\alpha+\nu(0, \infty)$. 
	Since {the process $X^{SN}$ is of unbounded variation, it is known that $W^{(\theta)}(0+)=0$, hence} for a function $g\in C^1 (\bR)$, we have
	\begin{align}
	{H^{SN, (b, \theta)}_g} (x)=
	\int_0^x \int_0^b g(y) \left( W^{(\theta)\prime}(z)\frac{W^{(\theta)\prime}(b-y)}{W^{(\theta)\prime}(b)} -W^{(\theta)\prime}(z-y)\right)\diff y \diff z,
	\end{align}
	the function $H_f^{(b, \alpha)}$ has the density 
	\begin{align}
	H_f^{(b, \alpha),\prime}(z):=&
	\int_0^b f(y) \left( {W^{(\alpha)\prime}(z)\frac{W^{(\alpha)\prime}(b-y)}{W^{(\alpha)\prime}(b)} -W^{(\alpha)\prime}(z-y)}\right)\diff y\\
	&-\nu(0, \infty)\int_0^b {H^{SN, (b, \alpha)}_f}(y) \left({W^{(\hat{\alpha})\prime}(z)\frac{W^{(\hat{\alpha})\prime}(b-y)}{W^{(\hat{\alpha})\prime}(b)} -W^{(\hat{\alpha})\prime}(z-y)}\right)\diff y\\
	&+\int_0^b\int_{(0,\infty)} {H^{ (b, \alpha)}_f}(y+u)\nu(du) \left( {W^{({\hat{\alpha}})\prime}(z)\frac{W^{({\hat{\alpha}})\prime}(b-y)}{W^{({\hat{\alpha}})\prime}(b)} -W^{({\hat{\alpha}})\prime}(z-y)}\right)\diff y
	,\ z\in(0, b].
	\end{align}
	Since $f$ is bounded on $[0, b]$ and 
	\begin{align}
	\sup_{x \geq 0}|H^{(b, \alpha)}_f(x )|
	\lor \sup_{x \geq 0}|H^{{SN}, (b, \alpha)}_f(x )| \leq \frac{1}{\alpha} \sup_{x\in[0, b]}|f(x)|,
	\end{align}
%	\blue{[Kei: inside the expression for $H_f^{(b, \alpha),\prime}$ inside the integrals it only appears the terms $H^{ (b, \alpha)}_f$] so shouldn't it be
%		\begin{align}
%			\sup_{x \geq 0}|H^{(b, \alpha)}_f(x )|
%			\lor \sup_{x \geq 0}|H^{SN, (b, \alpha)}_f(x )| \leq \frac{1}{\alpha} \sup_{x\in[0, b]}|f(x)|,
%		\end{align}
%	instead?]}
%	\red{[I changed from $\hat{\alpha}$ to $\alpha$.]}
	%\yellow{[Should it be
	%\[
	%\sup_{x \geq 0}|H^{s, (b, \alpha)}_f(x )|
	%\leq \frac{1}{\alpha} \sup_{x\in[0, b]}|f(x)|,
	%\]?]
	%}
	the density $H_f^{(b, \alpha),\prime}$ is bounded on any compact intervals in $(0, b]$. 
	In addition, it is not difficult to prove the continuity of $H_f^{(b, \alpha),\prime}$. 
\par
In the same way, we can prove that the case with $\nu(-\infty , 0)<\infty$ satisfies Assumption \ref{Ass306}.

\subsection{Proof of Lemma \ref{aux_1}}	\label{AppA03}
 %\blue{[I rewrote this. could you check?]}
	%\blue{[To be moved to the appendix.]}\yellow{[Agreed]}
	First, we have
		\begin{align}
		g_0(b^*) = g(b^*) \leq 1,  \label{g_0}
		\end{align}
		where the inequality holds by  the definition of $b^\ast$ and since $g$ is right-continuous.
		
		On the other hand,
		\begin{align}
		1 \leq \lim_{b\uparrow b^\ast} g(b)%=
			%&\lim_{b\uparrow b^\ast}\left(\beta\mathbb{E}_{b}\left[e^{-{{\alpha }}\kappa^{b,-}_0} \right]
			%+r\mathbb{E}_{b}\left[\int_0^{\kappa^{b,-}_0}e^{-\alpha t}w'_+( {Y^{b}(t)})\diff t \right]\right)\\
			=&\lim_{b\uparrow b^\ast}
			\left( \beta - \E_{b^*} \left[ \int_0^{\kappa^{b^*,-}_{b^*-b}} e^{-\alpha t} l ( {Y^{b^*}(t)}+b-b^*) \diff t  \right] \right) \\
%			
%			\left(\beta\mathbb{E}_{b^\ast}\left[e^{-{{\alpha }}\kappa^{b^\ast,-}_{b^\ast -b}}\right]
%			+r\mathbb{E}_{b^\ast}\left[\int_0^{\kappa^{b^\ast,-}_{b^\ast-b }}e^{-\alpha t}w'_+( {Y^{b^\ast}(t)}+b-b^\ast)\diff t \right]\right)}\\
			=&
			 \beta - \E_{b^*} \left[ \int_0^{K^{b^\ast, -}_0} e^{-\alpha t} l_- ( {Y^{b^*}(t)}) \diff t  \right] =  g_1(b^*),
%			\beta\mathbb{E}_{b^\ast}\left[e^{-{{\alpha }}{K^{b^\ast, -}_0}}\right]
%			+r\mathbb{E}_{b^\ast}\left[\int_0^{{K^{b^\ast, -}_0}}e^{-\alpha t}w'_-( {Y^{b^\ast}(t)}) \diff t\right] \geq 1.
			\label{37}
		\end{align}
		where the first inequality holds by the definition of $g$ (and by shifting vertifically the process $Y^{b^*}$) and the limit holds because $\kappa^{b^*,-}_{b^*-b} \xrightarrow{b \uparrow b^*} K^{b^{\ast}, -}_0$ a.s.

For $p \in [0,1]$, denoting $l_{\pm}(y):=\beta\alpha-rw'_{\pm}(y)$ for $y>0$,
					\begin{align*}
		g_p(b^*) &= \beta -  p  \E_{b^*} \left[ \int_0^{K^{b^*,-}_{0}} e^{-\alpha t} l_p ( Y^{b^*}(t)) \diff t   \right] - (1-p)  \E_{b^*} \left[ \int_0^{\kappa^{b^*,-}_{0}} e^{-\alpha t} l_p ( Y^{b^\ast}(t)) \diff t   \right] \\
		&= \beta -  p (1-p)  \E_{b^*} \left[ \int_0^{K^{b^*,-}_{0}} e^{-\alpha t} l_+(Y^{b^*}(t)) \diff t   \right] - (1-p)^2  \E_{b^*} \left[ \int_0^{\kappa^{b^*,-}_{0}} e^{-\alpha t} l_+ ( Y^{b^*}(t)) \diff t\right] \\
		 &-  p^2  \E_0 \left[ \int_0^{K^{b^*,-}_{0}} e^{-\alpha t} l_- ( Y^{b^*}(t)) \diff t   \right] - p (1-p) \E_{b^*} \left[ \int_0^{\kappa^{b^*,-}_{0}} e^{-\alpha t} l_- (Y^{b^*}(t)) \diff t   \right],
		\end{align*}
		which is a continuous function of $p$.
By this continuity together with the values of the end points as in  \eqref{g_0} and \eqref{37}, there exists $p^\ast \in[0, 1]$ satisfying \eqref{26} and the proof is complete. 

\subsection{Proof of Lemma \ref{proof_lemma_vv}}\label{appendix_lemma_vv}
	%\begin{proposition}\label{Prop_Exist_V0}
	We define 
	\begin{align}
	V_-(x, i)&= x +
	V(0,i)
	%V_-(0, i) 
	\qquad\text{and} \label{60}\\ 
	V_+(x, i) &= {\bE}_{(x, i)} \left[\int_{[0, \infty)}e^{- \int_0^t \qq(H(s)) \diff s}\diff \overline{X}(t) \right]\\
 &= x+{\bE}_{(x, i)} \left[\int_{(0, \infty)}e^{- \int_0^t \qq(H(s)) \diff s}\diff \overline{X}(t) \right]\\
 &=x +{\bE}_{(0, i)} \left[\int_{[0, \infty)}e^{- \int_0^t \qq(H(s)) \diff s}\diff \overline{X}(t) \right],
	\end{align}
% \yellow{[agreed.]}
	where $\overline{X}(t):=\sup_{s\leq t}X(s)$. %\yellow{and $\overline{X}(0-)=0$ under $\p_x$}.
	%\red{[$\overline{X}$ defined?]}
	%It follows that $V_{-}, V_{+} \in \DD$ and 
	%\[
	%V_{-} (x, i)< V(x,i) < V_{+}(x, i), \quad (x,i) \in \R \times E.
	%\] 
%\red{[simplified the proof.]}	
First, we have% we prove that 
	\begin{align}
		V_- (x , i)\leq V(x , i),\quad x\in\bR, i\in E. \label{40}
	\end{align}
	This is immediate by considering a subset of admissible strategies with an extra condition that the process must be moved instantaneously to $0$ at time $0$. The function $V_-$ is precisely the value function of this version with more constraints and thus is bounded from above by the value function $V$ of the original formulation. %\red{[can you check?]}
	% and is at most over a subset of admissible strategies in the original formulation.
	
%	and is suboptimal in the original formulation.  \blue{[I guess at this point we haven't proved that the strategies associated to $V_-$ are suboptimal...but as the strategies associated to $V_-$ are a subset all admissible strategies then \eqref{40} must hold. Maybe say this and remove "suboptimal"? Additionally, in the strict sense does it make sense to say that the value function $V_-$ is suboptimal?]}

%	To this end, let $\cA_0$ be the set of admissible strategies $\pi$ satisfying 
%	\begin{align}
%		X(t)-X(0)-L^\pi(t) +R^\pi(t) \geq 0, \quad t\geq 0. 
%	\end{align}
%	In addition, let $\cA^\prime_0$ be the set of strategies $\pi^\prime$ satisfying  
%	\begin{align}
%		L^{\pi^\prime}(t) = x\land 0 +L^{\pi}(t), \quad R^{\pi^\prime}(t) = R^{\pi}(t), \quad t\geq 0,
%	\end{align}
%	{under} the law $\p_x$ with $x \in\bR$, 
%	for some $\pi\in\cA_0$. 
%	
%	By the definition of $v_-$ and the value function $V$, we have 
%	\begin{align}
%		{V_-}(x , i)= \sup_{\pi \in \cA_0^\prime} V_{\pi} (x , i), \quad x\in\bR, i\in E, 
%	\end{align}
%	and thus we obtain \eqref{40}. 
	
	Now, we prove that 
	\begin{align}
		V_+ (x , i)\geq V(x , i),\quad x\geq 0, i\in E. \label{41}
	\end{align}
	%Let $S_n$ be the $n$-th jump point of $H$ for $n\in\N$ and $S_0=0$. 
	We fix $\pi\in\cA$. 
By Tonelli's theorem, it holds
 \begin{align}
 \int_{[0, \infty)}e^{- \int_0^t \qq(H(s)) \diff s} \diff \overline{X}(t)
 =&\int_{[0, \infty)} \int_t^\infty \qq(H(u)) e^{- \int_0^u \qq(H(s)) \diff s} \diff u \diff \overline{X}(t)\\
 =& \int_0^\infty \qq(H(u)) e^{- \int_0^u \qq(H(s)) \diff s} \int_{[0, u]}\diff \overline{X}(t) \diff u \\
 =& \int_0^\infty \qq(H(u)) e^{- \int_0^u \qq(H(s)) \diff s}  \overline{X}(u) \diff u .
 \end{align}
Since the same calculations can be made for both $L^\pi$ and $R^\pi$}, we have
	\begin{align}
		& \int_{[0, \infty)}e^{- \int_0^t \qq(H(s)) \diff s} \diff \overline{X}(t)
		- \left( \int_{[0, \infty)}e^{- \int_0^t \qq(H(s)) \diff s} \diff L^\pi(t) -\beta \int_{[0, \infty)}e^{- \int_0^t \qq(H(s)) \diff s} \diff R^\pi(t)  \right)\\
		%&=x+ \int_{(0, \infty)}e^{- \int_0^t \qq(H_s) ds} d\blue{\overline{X}}_t- \left( \int_{(0, \infty)}e^{- \int_0^t \qq(H_s) ds} dL^\pi_t - \int_{(0, \infty)}e^{- \int_0^t \qq(H_s) ds} dR^\pi_t  \right)\\
		%&=x+ \sum_{n\in\N} e^{- \int_0^{S_{n-1}} \qq(H_s) ds}\bigg{(}\int_{(0, S_n-S_{n-1}]} e^{-\qq(H_{s_{n-1}}) t}d\blue{\overline{X}}_{t+S_{n-1}} \\
		%&\qquad -\int_{(0, S_n-S_{n-1}]} e^{-\qq(H_{s_{n-1}}) t}dL^\pi_{t+S_{n-1}}+\int_{(0, S_n-S_{n-1}]} e^{-\qq(H_{s_{n-1}}) t}dR^\pi_{t+ +S_{n-1}}\bigg{)}\\
		% &=x+ \sum_{n\in\N} \qq(H_{s_{n-1}})e^{- \int_0^{S_{n-1}} \qq(H_s) ds}\int_0^\infty e^{-\qq(H_{s_{n-1}} ) s}\Big{(}\left(\blue{\overline{X}}_{s\land (S_n-S_{n-1})+S_{n-1}}-\blue{\overline{X}}_{S_{n-1}}\right)\\
		%&\qquad  -\left(L^\pi_{s\land (S_n-S_{n-1})+S_{n-1}}-L^\pi_{S_{n-1}}\right) +\left(R^\pi_{s\land (S_n-S_{n-1})+S_{n-1}}-R^\pi_{S_{n-1}}\right)\Big{)}ds,	
		&=\int_0^\infty  \qq(H(u)) e^{-\int_0^u \qq(H(s))\diff s} \left(\overline{X}(u)-L^\pi(u) + \beta R^\pi(u) \right)
		\diff u \geq 0,
	\end{align}
%	\red{[ther are long overlines and short overlines now. Need to check.]}\blue{[Think I changed all the $\bar{X}$ to $\overline{X}$.]}
	where %in the equality, we used {integration by parts \red{can we change "integration by parts" by "Fubini's Theorem"?} and 
 in the inequality, we used condition \eqref{positiveness} for admissible strategies.
	By taking the expectation, we obtain \eqref{41}.

\subsection{Proof of {Lemma \ref{inf_norm_finite}}} \label{Sec00A?}	

By Lemma \ref{proof_lemma_vv} and the triangle inequality, it suffices to show $\|V_{\bb}-V_- \|_\infty<\infty$, where $V_-$ is the function defined in \eqref{60}. 

	By the definition of $\pi^{(0, \bb)}$, we have 
\begin{align}
V_{\bb}(x, i)=
V_{\bb}(\bb_M, i)+(x-\bb_M),\qquad &x\geq\bb_M, 
\end{align}
where $\bb_M:= \max_{i\in E}\bb(i)$, 
and is continuous by the same proof as \cite[Lemma 5.4]{Nob2019}. 
%\red{
Thus, for $x\geq\bb_M$ and $i \in E$, 
\begin{align}
V_{\bb}(x, i) - V_-(x, i)=
V_{\bb}(\bb_M, i)+(x-\bb_M) - x - V(0,i) = V_{\bb}(\bb_M , i) -\bb_M -V(0, i), \label{62}
\end{align}
as desired.
%For $ \leq x \leq \bb_M$,
%\begin{align*}
%|V_{\bb}(x, i) - V_-(x, i) | \leq  | V_{\bb}(\bb_M, i) - V_-(\bb_M,i)  | + | V_{\bb}(\bb_M, i) - V_{\bb}(x,i)  | + | V_-(\bb_M, i) - V_-(x,i)  | \\
%\leq  | V_{\bb}(\bb_M, i) - V_-(\bb_M,i)  | + 2 \sup_{i\in E, x\in[0,\bb_M]} |V_{\bb}(x, i)| + \bb_M ???
%\end{align*}
%}
%
%
%Thus, 
%\begin{align}
%\|V_{\bb}-V_- \|_\infty \leq & \sup_{i\in E}|V_{\bb}(\bb_M , i) -\bb_M -V_-(0, i)|
%+ \sup_{i\in E, x\in[0,\bb_M]}V_{\bb}(x, i)+ \bb_M<\infty. 
%\end{align}
%\red{
From \eqref{62}, we have 
\begin{align*}
\|V_{\bb}-V_- \|_\infty &\leq \sup_{i\in E, x\geq \bb_M} |V_{\bb}(x, i)-V_- (x, i)| +\sup_{i\in E, x\in[0,\bb_M]} |V_{\bb}(x, i)-V_- (x, i)|\\
&\leq \sup_{i\in E, x\geq \bb_M} |V_{\bb}(x, i)-V_- (x, i)| +\sup_{i\in E, x\in[0,\bb_M]} |V_{\bb}(x, i)|
+\sup_{i\in E, x\in[0,\bb_M]} |V_- (x, i)|\\
&\leq \max_{i\in E} |V_{\bb}(\bb_M , i) -\bb_M -V(0, i)|+ \sup_{i\in E, x\in[0,\bb_M]} |V_{\bb}(x, i)|+(\max_{i\in E} |V (0, i)|+\bb_M)
<\infty. 
\end{align*}
%}

%\green{[I guess the above is true for $x\geq \bb_M$ so the other part is necessary.]}

%Combining {with} Lemma \ref{proof_lemma_vv}, the proof is complete. 

	\section*{Acknowledgements}
	
	K. Noba was supported by JSPS KAKENHI grant no.\ JP21K13807. In addition, K.\ Noba stayed at Centro de Investigaci\'on en Matem\'aticas in Mexico as a JSPS Overseas Research Fellow and received support regarding the research
environment there. K. Noba is grateful for their support during his visit. K.\ Yamazaki was supported by JSPS KAKENHI grant no.\ JP20K03758, JP24K06844 and JP24H00328
and
 the start-up grant by the School of Mathematics and Physics of the University of Queensland. K.\ Noba and K.\ Yamazaki were supported by JSPS Open Partnership Joint Research Projects grant no. JPJSBP120209921 and JPJSBP120249936.


\begin{thebibliography}{99}
	
\bibitem{Albrecher} \sc Albrecher, H., Gerber, H. U., Yang, H. \rm A direct approach to the discounted penalty function. {\it North American Actuarial Journal}, {\bf 14(4)}, 420-434, (2010).
	
	\bibitem{Asmussen} \sc Asmussen, S.,  Kella, O. \rm A multi-dimensional martingale for Markov additive processes and its applications. {\it Advances in Applied Probability}, {\bf 32(2)}, 376-393, (2000).
	
		\bibitem{AvrPalPis2007}\sc Avram, F., Palmowski, Z., Pistorius, M.R. \rm  On the optimal dividend problem for a spectrally negative L\'evy process. {\it Annals of Applied Probability} {\bf 17}, 156-180, (2007).
		\bibitem{Ber1996} \sc Bertoin, J. {\it L\'evy processes} \rm Cambridge Tracts in Mathematics, (1997).
		\bibitem{BifKyp2010} \sc Biffis, E.,  Kyprianou, A.E. \rm A note on scale functions and the time value of ruin for L\'evy insurance risk processes. {\it Insurance: Mathematics and Economics}, {\bf 46} (1), 85--91, (2010).
		
	\bibitem{CGMY}\sc Carr, P., Geman, H., Madan, D. B., Yor, M. \rm The fine structure of asset returns: An empirical investigation. {\it The  Journal of Business}, {\bf 75}(2), 305--332, (2002).
	
\bibitem{Cheung} \sc Cheung, E. C., Liu, H.,  Willmot, G. E. \rm Joint moments of the total discounted gains and losses in the renewal risk model with two-sided jumps. {\it Applied Mathematics and Computation}, {\bf 331}, 358-377, (2008).
		
				\bibitem{AG} \sc Granata, A. \rm A geometric characterization of of $n$th order conve functions. {\it Pacific Journal of Applied Mathematics} {\bf 98(1)}, 91--98, (1982).
		 
		\bibitem{HirLem2001} \sc Hiriart-Urruty, J.B.,  Lemar\'echal, C. {\it Fundamentals of convex analysis} \rm Grundlehren Text Editions, Springer- Verlag, Berlin, (2001).
		
		\bibitem{Ivanovs_Palmowski} \sc  Ivanovs, J.,  Palmowski, Z.  \rm Occupation densities in solving exit problems for Markov additive processes and their reflections. {\it Stochastic Processes and their Applications}, {\bf 122(9)}, 3342-3360, (2012).
		
		\bibitem{Kolkovska}
	\sc	Kolkovska, E. T.,  Martín-González, E. M. \rm Gerber–Shiu functionals for classical risk processes perturbed by an $\alpha$-stable motion. {\it Insurance: Mathematics and Economics}, {\bf 66}, 22-28, 2016.
		
		%\bibitem{HirLem2001}
		%\sc J. B. Hiriart-Urruty and C. Lemar\'echal, 
		%\rm Claude Fundamentals of convex analysis. Abridged version of Convex analysis and minimization algorithms. I [Springer, Berlin, 1993; MR1261420] and II [ibid.; MR1295240]. Grundlehren Text Editions. Springer-Verlag, Berlin, 2001. 
		\bibitem{Kyp2014} \sc Kyprianou, A.E. {\it Fluctuations of L\'evy processes with applications. 2nd Edition} \rm Springer, Heidelberg, (2014).
		%\bibitem{GemHor1980} \sc Geman, D. and Horowitz, J. \rm Occupation densities. {\it Ann. Probab.} {\bf 8} (1), 1--67, (1980).
			
\bibitem{LL} \sc Lu, Y.,  Li, S. \rm   The Markovian regime-switching risk model with a threshold dividend strategy. {\it Insurance: Mathematics and Economics}, {\bf 44(2)}, 296-303, (2009).

				\bibitem{JP2012} \sc Jiang, Z., Pistorius, M. \rm Optimal dividend distribution under Markov regime switching. {\it Finance and Stochastics}, {\bf 16}, 449--476, (2012).
		\bibitem{Nob2019}\sc Noba, K. \rm On the optimality of double barrier strategies for L\'evy processes.  {\it Stochastic Processes and their Applications} 131, 73--102 (2021).
		
		
		\bibitem{MGMSP} \sc Martín-González, E. M., Murillo-Salas, A.,  Pantí, H. \rm Gerber-Shiu function for a class of Markov-modulated Lévy risk processes with two-sided jumps. {\it Methodology and Computing in Applied Probability}, {\bf 24(4)}, 2779-2800, (2022).
		
		
		\bibitem{MGK} \sc Martín-González, E. M.,  Kolkovska, E. T. \rm Expected discounted penalty function and asymptotic dependence of the severity of ruin and surplus prior to ruin for two-sided Lévy risk processes. {\it Communications in Statistics-Theory and Methods}, {\bf 52(23)}, 8566-8583, (2023).
		
		\bibitem{MMNP} \sc Mata, D., Moreno-Franco, H., Noba, K.,  P\'erez, J.L. \rm{On the bailout dividend problem with periodic dividend payments for spectrally negative Markov additive processes}. {\it Nonlinear Analysis: Hybrid Systems}, {\bf 48}, 101332, (2023).
		
		
		
			\bibitem{NobPerYu}{\sc Noba, K., P\'erez, J.L.,  Yu, X.} \rm On the bailout dividend problem for spectrally negative Markov additive models. {\em SIAM Journal on Control and Optimization}, {\bf 58}(2), 1049--1076, (2020).
		\bibitem{NobYam2020}\sc Noba, K., Yamazaki, K. \rm On singular control for L\'evy processes. {\it Mathematics of Operations Research}, {\bf 48(3)}, 1213--1234, (2023). 
		
		
	\bibitem{PV} \sc Palmowski, Z.,  Vatamidou, E.  \rm Phase-type approximations perturbed by a heavy-tailed component for the Gerber-Shiu function of risk processes with two-sided jumps. {\it Stochastic Models}, {\bf 36(2)}, 337-363, (2020).
		
		
		\bibitem{Perez_yamazaki_yu}
\sc P\'erez, J. L., Yamazaki, K.,  Yu, X.  \rm On the bail-out optimal dividend problem. {\it J. Optim. Theory Appl.}, {\bf 179(2)}, 553-568, (2018).
		\bibitem{Pro2005} \sc Protter, P.  {\it Stochastic integration and differential equations. 2nd Edition}, Springer, Berlin, (2005).
		\bibitem{RevYor1999}  \sc Revuz, D., Yor, M.:  {\it Continouus martingales and Brownian motion. 3rd Edition}, Springer, Berlin, (2005).
		%\bibitem{Yam2017}\sc Yamazaki, K. \rm Inventory control for spectrally positive L\'evy demand processes. {\it Math. Oper. Res.}, {\bf 42} (1), 212--237, (2017). 

\bibitem{WWCW} \sc Wang, W., Wang, Y., Chen, P.,  Wu, X. \rm Dividend and capital injection optimization with transaction cost for Lévy risk processes. {\it Journal of Optimization Theory and Applications}, {\bf 194(3)}, 924-965, (2022).

\bibitem{WXY}
\sc Woo, J. K., Xu, R., Yang, H.  \rm Gerber–Shiu analysis with two-sided acceptable levels. {\it Journal of  Computational and Applied Mathematics}, {\bf 321}, 185-210, (2017).

\bibitem{Zhang} \sc Zhang, L. \rm The Erlang (n) risk model with two-sided jumps and a constant dividend barrier. {\it Communications in Statistics-Theory and Methods}, {\bf 50(24)}, 5899-5917, (2021).

\bibitem{ZYL} \sc
Zhang, Z., Yang, H., Li, S. \rm The perturbed compound Poisson risk model with two-sided jumps. {\it Journal of Computational and Applied Mathematics}, {\bf 233(8)}, 1773-1784, (2010).



\bibitem{ZhangYang}
\sc Zhang, Z.,  Yang, H. \rm A generalized penalty function in the Sparre–Andersen risk model with two-sided jumps. {\it Statistics \& Probability Letters}, {\bf 80(7-8)}, 597-607, (2010).
		
				\bibitem{ZCY}\sc
Zhao, Y., Chen, P., Yang, H. \rm   Optimal periodic dividend and capital injection problem for spectrally positive L\'evy processes. {\it Insurance: Mathematics and Economics}, {\bf 74}, 135-146, (2017).
	
	\bibitem{ZWYC} \sc Zhao, Y., Wang, R., Yao, D., Chen, P. \rm  Optimal dividends and capital injections in the dual model with a random time horizon. {\it Journal of Optimization Theory and Applications},  {\bf 167(1)}, 272-295, (2015).

	
\bibitem{ZY} \sc Zhu, J.,  Yang, H. \rm  Ruin theory for a Markov regime-switching model under a threshold dividend strategy. {\it Insurance: Mathematics and Economics}, {\bf 42(1)}, 311-318, (2008).


	
	
	\end{thebibliography}
\end{document}